\newcommand{\both}[1]{\mathbf{#1}}
\newcommand{\bi}{\both{i}}
\newcommand{\bj}{\both{j}}
\newcommand{\bk}{\both{k}}
\newcommand{\bI}{\both{I}}
\newcommand{\Lip}{\operatorname{Lip}}
\newcommand{\Mat}{\operatorname{Mat}}
\newcommand{\PP}{\bar{P}}
\newcommand{\xx}{\mathrm{x}}
\newtheorem{theorem}{Theorem}[section]
\newtheorem{lemma}[theorem]{Lemma}
\newtheorem{proposition}[theorem]{Proposition}
\newtheorem{condition}[theorem]{Condition}
\newtheorem{remark}[theorem]{Remark}
\newtheorem{definition}[theorem]{Definition}
\newtheorem{algorithm}[theorem]{Algorithm}
\newcommand{\slow}{\mbox{\tiny \textup{slow}}}
\newcommand{\fast}{\mbox{\tiny \textup{fast}}}
\newcommand{\unst}{\mbox{\tiny \textup{unst}}}
\newcommand{\eps}{\epsilon}
\newcommand{\R}{\mathbb{R}}
\newcommand{\C}{\mathbb{C}}
\newcommand{\N}{\mathbb{N}}
\newcommand{\Z}{\mathbb{Z}}
\newcommand{\kQ}{\mathbf{Q}}
\newcommand{\kP}{\mathbf{P}}
\newcommand{\kR}{\mathbf{R}}
\newcommand{\cC}{\kQ^2}
\newcommand{\cA}{\mathcal{A}}
\newcommand{\cB}{\mathcal{B}}
\newcommand{\cE}{\mathcal{E}}
\newcommand{\cG}{\mathcal{G}}
\newcommand{\cH}{\mathcal{H}}
\newcommand{\cL}{\mathcal{L}}
\newcommand{\cN}{\mathcal{N}}
\newcommand{\cO}{\mathcal{O}}
\newcommand{\cQ}{\mathcal{Q}}
\newcommand{\cT}{\mathcal{T}}
\newcommand{\fL}{\mathfrak{L}}
\newcommand{\loc}{\mbox{\scriptsize \textup{loc}}}
\title{Validated Numerical Approximation of  Stable Manifolds \\ for Parabolic Partial Differential Equations}
\author[a]{Jan Bouwe van den Berg
	\thanks{This work is part of the research program \emph{Connecting Orbits in Nonlinear Systems} with project number NWO-VICI 639.033.109, which is (partly) financed by the Dutch Research Council (NWO).}
	}
\affil[a]{\small
	Department of Mathematics, Vrije Universiteit Amsterdam, de Boelelaan 1111, 1081HV Amsterdam, the Netherlands. \url{janbouwe@few.vu.nl}
}
\author[b]{
	Jonathan Jaquette \thanks{
		Present Address:	Department of Mathematics and Statistics, Boston University,
		Boston, MA 02215, USA. \url{jaquette@bu.edu}. 
		}\thanks{
		This material is based upon work supported by the National Science Foundation under Grant No. DMS-1440140 while JJ was in residence at the Mathematical Sciences Research Institute in Berkeley, California, during the Fall 2018 semester.}
}
\affil[b]{ 
	Department of Mathematics, Brandeis University,
	Waltham, MA 02453, USA. 
	}
\author[c]{
		J.D. Mireles James 	
	\thanks{JDMJ was partially supported by National Science Foundation grant DMS - 1813501 during work on this project.}
}
\affil[c]{
	Department of Mathematical Sciences, Florida Atlantic University, 
	Boca Raton, FL 33431, USA. \url{jmirelesjames@fau.edu}
}
\begin{document}

\maketitle

\begin{abstract}
This paper develops validated computational methods for studying infinite dimensional stable manifolds at equilibrium solutions of parabolic PDEs, synthesizing disparate errors resulting from numerical approximation. To construct our approximation, we decompose the stable manifold into three components: a finite dimensional slow component, a fast-but-finite dimensional component, and a strongly contracting infinite dimensional ``tail''. We employ the parameterization method in a finite dimensional projection to approximate the slow-stable manifold, as well as the attached finite dimensional invariant vector bundles. This approximation provides a change of coordinates which largely removes the nonlinear terms in the slow stable directions. In this adapted coordinate system we apply the Lyapunov-Perron method, resulting in mathematically rigorous bounds on the approximation errors. As a result, we obtain significantly sharper bounds than would be obtained using only the linear approximation given by the eigendirections. As a concrete example we illustrate the technique for a 1D Swift-Hohenberg equation.
\end{abstract}

\bigskip
\centerline{{\bf Keywords  }}
\centerline{
	parabolic partial differential equations, stable manifold, Lyapunov-Perron method,
}
\centerline{
	 parameterization method, rigorous numerics, computer assisted proof
}

\bigskip
\centerline{{\bf AMS subject classifications}}
\medskip
\centerline{
  35B40,	
  35B42, 
  35K55, 
  37L15, 
  37L25, 
  37L65, 
  37M21 
 }

\bigskip 


\clearpage



\section{Introduction}

In this paper we develop a novel method for representing the 
\emph{infinite dimensional} stable manifold of an equilibrium solution of a 
parabolic PDE.  The method 
makes extensive use of numerical calculations, 
 results in an approximation valid in an explicitly prescribed 
neighborhood of the equilibrium, and comes equipped with  
mathematically rigorous bounds on all truncation 
and discretization errors.  The work is motivated by our intention to 
use this method as an ingredient in further mathematically rigorous
computer assisted proofs (see also Section \ref{sec:motivation}).
The method is able to provide validated 
bounds on the linear approximation of the stable manifold
by the stable eigenspace, but gives dramatically improved 
results when combined with  a nonlinear change of coordinates which
``flattens out'' a finite dimensional slow stable manifold.
The main tools used here are the Lyapunov-Perron method,
a parameterization method for slow-stable manifolds and their invariant 
normal bundles (see \cite{MR3541499}),
and an iterative strategy for 
bootstrapping Gronwall's inequality in subspaces associated with
various linear growth rates.  

We remark first on the need for the present work,
noting that while the abstract theory for 
invariant manifolds of compact semi-flows is well developed, there are 
obstacles preventing its direct application in
computer assisted proofs. 
One complication stems from the fact that in a given example
we generally do not have explicit formulas for either the 
equilibrium or the eigendecomposition 
of the linearized operator:  instead we have approximations. 
To perform computer assisted proofs, these approximation 
errors must be incorporated into the set-up from the start.

A second difficulty concerns 
localizing the estimates, which is necessary 
because the nonlinearities are not globally
 Lipschitz. Moreover, in infinite dimensions 
 we do not generally have access to smooth cut-off functions.  
 Finally, even in situations where it is possible 
 to apply the general theory, this typically leads  
 to bounds that are valid in an inconveniently small neighborhood of the 
 equilibrium.

To overcome these difficulties, 
we project the Lyapunov-Perron operator
into various judiciously chosen subspaces, corresponding to collections of
approximate eigendirections.  The assumption that the PDE is parabolic 
gives that the spectrum is comprised entirely of 
isolated eigenvalues (of finite multiplicity) 
which ``accumulate to minus infinity''.  More precisely,
 for any $M \in \mathbb{R}$ there are only finitely 
many eigenvalues with real part greater than $M$.
 We choose an approximation of the 
(finite dimensional) unstable subspace, 
and split the approximate stable space 
into finite dimensional ``slow'' and infinite dimensional ``fast'' parts.  
As a subtle refinement,
we further decompose the finite dimensional stable
eigenspace into slow-finite dimensional stable and fast-finite dimensional 
stable subspaces.

We remark that the Lyapunov-Perron operator acts on candidate
functions $\alpha$, 
which map (an approximation of) the linear stable eigenspace to the 
(approximate) unstable eigenspace.  The main
technical difficulty is to choose 
the domain of the candidate
functions so as to maximize the portion of the manifold
represented, while minimizing the final error bounds.
To manage this problem we take domains which are products
 of balls, having aspect ratios determined by the 
growth rates in the various subspaces.
We perform an explicit 
change of coordinates, which may be linear or nonlinear,
and which provides more 
flexibility in choosing a good domain for the stable manifold approximation.

To show that the Lyaponuv-Perron operator is a contraction
we need explicit bounds on the  
projections of the nonlinearities 
onto the specified subspaces. 
To obtain effective bounds, 
i.e.\ bounds that guarantee contraction for functions defined on a reasonably 
large neighborhood of the equilibrium, a naive Gronwall estimate does not suffice.
Instead we take a more refined approach, in which we bootstrap a system of 
Gronwall inequalities (roughly, decomposed along eigendirections) 
exploiting the different decay rates in different directions.  
The applications to 
computer assisted proofs of transverse connecting orbits we
have in mind (see again Section \ref{sec:motivation}),
introduce the additional technical complication that we
would like a $C^{1,1}$ description of the stable manifold.


\begin{figure}[t!]
	\centering
	\includegraphics[width=0.7\linewidth]{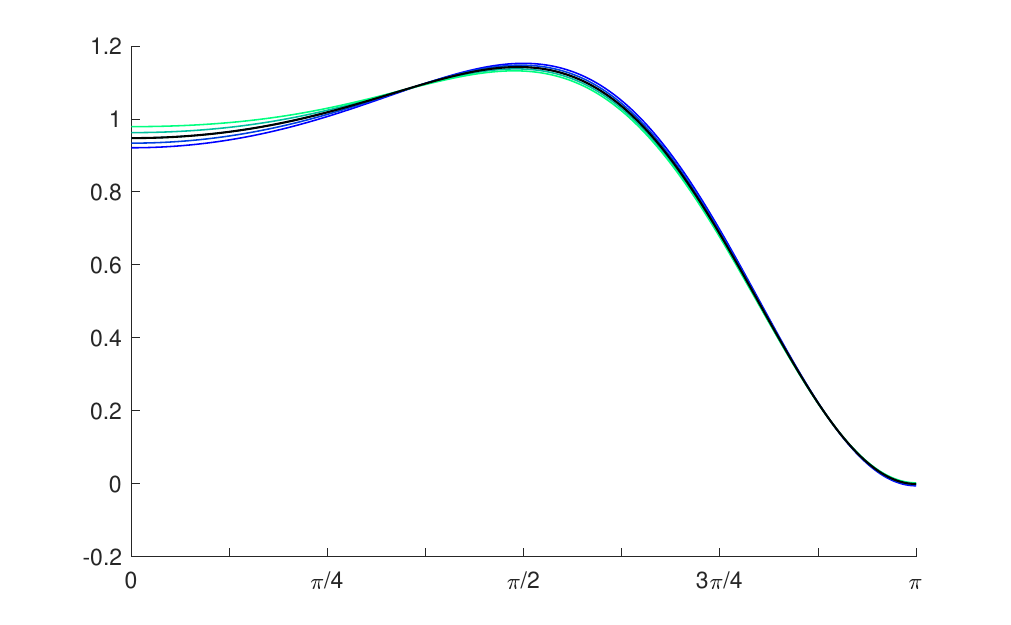} 
	\caption{A verified numerical approximation of an unstable 
	equilibrium (black curve) for the Swift-Hohenberg 
	PDE~\eqref{eq:SwiftHohenberg} with $\beta_1=0.05$ and 
	$\beta_2=-0.35$ and several (numerical approximations of) 
	``points'' -- that is functions -- 
	along its verified slow stable manifold. Near this slow stable 
	manifold we find a description of the full, co-dimension~1, 
	stable manifold, with validated computer assisted error bounds.
}
	\label{fig:SlowManifoldEndPoints}
\end{figure}

\subsection{Example results for Swift-Hohenberg} \label{sec:SH_eq}
The utility of the method is best illustrated through application 
to an explicit example.  To this end we 
 provide a complete numerical 
implementation of our method for the Swift-Hohenberg PDE
\begin{equation}\label{eq:SwiftHohenberg}
	u_t = - \beta_1  u_{xxxx} + \beta_2 u_{xx} + u -u^3,	
\end{equation}
posed on a one-dimensional spatial domain $x \in [0,\pi]$
with Neumann boundary conditions
\[
u_x(0)=u_x(\pi)=0 \qquad \text{and} \qquad u_{xxx}(0)=u_{xxx}(\pi)=0 .
\]
The parameters of the problem are $\beta_1 > 0$ and $\beta_2 \in \R$.  
For comparison, we illustrate the use of our method for 
 both a linear, and a nonlinear change
 of variables near the equilibrium. 
As a result, we obtain  stable manifold theorems of 
varying accuracy, and in neighborhoods of the equilibrium 
having various sizes and shapes. 

For example, in Theorem \ref{prop:CAP_Linear} we focus
on a non-trivial equilibrium solution of Swift-Hohenberg
with Morse index 1.  The equilibrium solution is illustrated in Figure 
\ref{fig:SlowManifoldEndPoints}.  
To obtain the results described in  Theorem \ref{prop:CAP_Linear},
we represent the local stable manifold 
as the graph of a function over the stable eigenspace.
We take a 31 dimensional Galerkin projection, so that 
the stable eigenspace is decomposed into a 30 dimensional
finite part, and an infinite dimensional remainder.  The domain of the graph 
is taken to be the product of a box of radius $2.2\times 10^{-2}$ in 30 
dimensional subspace,  and a box of radius $10^{-5}$ in the tail.
The chart for the local stable manifold has $C^0$ norm bound by 
$3.36 \times 10^{-3}$.  That is, the true stable manifold 
has distance no more than
$3.36 \times 10^{-3}$ away from the stable eigenspace, over 
the box just described.

Contrast this with the results described in 
Theorem~\ref{prop:NonlinearComputerAssistedProof}.
In this case we use the nonlinear change of coordinates discussed in 
Section \ref{sec:goodCoordinates}, and represent the local stable 
manifold as the graph of a function over a one dimensional
 slow-stable manifold and its 29 dimensional invariant stable vector 
 bundles.  This time the domain 
 of the graph is the product of three boxes: a box of radius $3.18 \times 10^{-2}$
 in the slow stable direction, a box of radius $10^{-6}$ in the remaining
 29 dimensions of the finite dimensional eigenspace, and a
 box of radius $10^{-10}$ in the tail.   
The chart for the local stable manifold has $C^0$ norm bound by 
$7.34 \times 10^{-12}$.  That is, the true stable manifold is 
$7.34 \times 10^{-12}$ close to the slow stable manifold and 
its stable vector bundles over the box just described. 
 
Comparing the results of 
Theorem \ref{prop:CAP_Linear}  with the results of 
 Theorem~\ref{prop:NonlinearComputerAssistedProof}
illustrate the power of the techniques developed in the present work.
The two representaitons of the infinite dimensional stable manifold
are valid in neighborhoods having size on the order of
$10^{-2}$ away from 
the equilibrium (in some directions).  
Exploiting the nonlinear change of variables improves the
validated error bounds by nine order of magnitude in the 
unstable directions (bounds on the graph) and by five orders of
magnitude in the stable tail directions.  These are by far the most
accurate mathematically rigorous computer assisted error bounds
for an infinite dimensional manifold appearing in the literature
up until now.
More details and comparisons are found in Sections~\ref{sec:Linear_Results} 
and~\ref{sec:ConclusionAndNumerics}.

%

\subsection{Motivation: saddle-to-saddle connects for parabolic PDEs}
\label{sec:motivation}

When viewed as ODEs on Banach spaces, nonlinear parabolic PDEs
 fit well within the qualitative theory of dynamical systems.  Theorems 
 regarding the stability of equilibria, periodic orbits, and their attached invariant 
 manifolds follow in analogy with the finite dimensional case. Connecting orbits 
 between invariant sets serve as a kind of a road map to the global dynamics,
 illuminating  transitions between distinct regions of the phase space and signaling 
 global bifurcations. Such  orbits are 
main ingredients in forcing theorems like those of Smale 
and Shilnikov: theorems which guarantee the existence of rich dynamics.
Connecting orbits are 
essential for defining geometric chain groups and boundary operators 
in the homology theories of Witten and Floer.

Precisely because of their global and nonlinear nature, 
connecting orbits are difficult to work with analytically.  
These difficulties are compounded in infinite dimensional settings. 
In specific applications researchers typically perform numerical 
calculations to gain insights into the properties of important invariant objects. 
Recent progress in computer-assisted methods of proof for infinite dimensional systems brings the mathematically rigorous quantitative study of  connecting 
orbits for PDEs within the realm of possibility. 

We refer for example to the work of \cite{MR3773757,reinhardt2019fourier}
for some examples of computer assisted proofs for connecting orbits
in PDEs.  In particular the authors study connections frome saddle 
to attracting equilibrium solutions. 
The works just mentioned study the finite dimensional unstable 
manifold attached to an equilibrium,
 and develop mathematically rigorous tools for extending this 
manifold into a trapping neighborhood of a sink. 
Similarly, in a nonconservative
nonlinear Schr\"{o}dinger equation, the work  \cite{jaquette2020global} 
computes  connecting orbits from  saddle equilibria to a center equilibrium.
 In each of the studies just mentioned the authors obtain 
explicit and mathematically rigorous bounds on the basin of attraction of the 
limiting equilibrium -- which is an open set.  

Controlling the asymptotic behavior of a connecting orbit requires an explicit 
description of the local stable and unstable manifolds of the equilibrium solutions
(or other limiting invariant sets). 
The major obstacle to extending the methods of 
\cite{MR3773757,jaquette2020global,reinhardt2019fourier} to the general
case of a saddle-to-saddle connection is obtaining an explicit description 
of the local stable manifold.  
It is worth mentioning that rigorous numerical integration of a PDE is a 
nontrivial task, and invariably suffers from the so called wrapping effects resulting 
from the accumulation of numerical error. 
Consequently, in computer assisted arguments involving connecting orbits it is 
desirable to minimize integration time by absorbing as much of the connecting 
orbit into the local stable and unstable manifolds as possible.  
This motivates out interest in the nonlinear coordinate changes utilized 
in the present work.

We refer the interested reader also to the related work of
\cite{MR2136516},
where saddle-to-saddle connections are established using topological 
methods based on Conley Index theory and its connection matrix. Being 
topological in nature these methods require much less in the way of $C^1$
 information, resulting in a softer description of the dynamics. The challenge 
 in applying these methods is the rigorous calculation of index information for
  macroscopic regions in the infinite dimensional phase space. 

The computational framework developed here is rather general, and will be  
useful for describing invariant manifolds in a variety of other settings.  
We have in mind examples such as (un)stable and center-(un)stable 
manifolds in delay differential equations and partial differential equations
 on domains in $\R^n$, as well as stable and unstable manifolds in strongly 
 indefinite problems, where both the dimension and the co-dimension of the 
 manifold are infinite dimensional (e.g. \cite{cheng2020stable}).
In \cite{takayasu2019rigorous} a similar methodology is used to construct  
a local representation for a co-dimension~0 center-stable manifold 
of the homogeneous equilibrium 
in a complex-valued nonlinear heat equation. 

\subsection{Related work} \label{sec:relatedWork}

The present work grows out of the thriving literature on methods 
of computer assisted proof in dynamical systems theory going back to 
the first proofs of the Feigenbaum conjectures 
\cite{MR759197,MR775044,MR883539,MR727816},
the first proofs of chaotic motions in the Lorenz equations 
\cite{MR1276767,MR1808460,MR1459392,MR1661345} and for Chua's circuit 
\cite{MR1453709}, and the computer assisted resolution of Smale's 
14th problem \cite{MR1701385,MR1870856}. 
In particular, we build on the substantial literature on 
computer assisted proofs for 
studying the dynamics of parabolic PDEs.    
A thorough review of this literature 
beyond the scope of the present work, 
and we refer the reader to the work of
\cite{MR944817,MR1100582,MR1131109,MR1838755,MR2443030,
MR2776917,MR3377562,MR2679365,MR2852213,MR3904424}.
See also the book of 
\cite{MR3971222},  and the review articles  
\cite{MR3990999,MR3444942,MR759197}

A number of techniques for computer assisted proofs involving finite 
dimensional invariant manifolds have emerged from this literature. 
One family of methods for proving existence of unstable manifolds involves 
checking a number of geometric covering and cone conditions 
near the equilibrium in the same spirit as  Fenichel theory
 \cite{zgliczy2009covering,capinski2011cone,capinski2015geometric}. 
Since time reversal is well defined for ODEs, equivalent bounds for stable 
manifolds follow as a trivial corollary.  
Applications of these methods to the study
	of stable manifolds for PDEs requires substantial modification and 
	have -- to the best of our knowledge -- not yet appeared in the literature. 
We refer the interested reader to the recent work of \cite{danielKS_chaosProof}
where, following 
\cite{MR1276767,MR1808460,MR1459392,MR1661345,MR1453709},
 the authors bypass consideration of  stable/unstable manifolds 
and provide a direct computer 
assisted proof of the existence of a geometric horseshoe in the 
Kuramoto-Sivashinsky equation, by studying covering relations in a 
Poincar\'e section.

Another technique for obtaining validated bounds on invariant manifolds
which has been applied successfully in a number of finite dimensional 
settings is the parametrization method 
\cite{cabre2003parameterizationI,cabre2003parameterizationII,
cabre2005parameterizationIII}, see also to the book 
\cite{haro2016parameterization} 
for detailed discussions of the method and its applications. 
Briefly, the idea is to study a conjugacy equation between the 
dynamics on the manifold and the linear dynamics in an eigenspace.
The conjugacy equation is reduced to a set of linear homological 
equations via recursive power matching, and one obtains a high order 
Taylor expansions for the manifold, as well as remainder estimates on the 
truncation errors in the tail of the series. This method 
recovers both the embedding of the manifold and the dynamics on it, and 
is very effective for representing invariant manifolds far beyond a small 
neighborhood of the equilibrium, periodic orbit, or invariant torus, where 
the linear approximation is valid. 

There is a substantial literature devoted to validated numerics based 
on the parameterization method for invariant manifolds of ODEs.
We refer the interested reader to the works of 
\cite{MR3281845,MR2773294,MR3437754,MR3518609,MR3871613,MR3792792}
for more a complete discussion.
Such methods have also been extended for studying finite dimensional 
invariant manifolds of infinite dimensional systems.  The case of compact 
infinite dimensional maps is treated in \cite{MR3735860}, the case of PDEs 
is studied in \cite{reinhardt2019fourier},
and DDEs are considered in \cite{groothedde2017parameterization,validatedParmMethod_ddes}. 

However, there is an obstruction to applying the parameterization 
method to infinite dimensional manifolds in PDEs, which is that the existence of 
a conjugacy depends certain non-resonance conditions between the 
eigenvalues.  
There are techniques to deal with the case of a finite  number of 
resonant eigenvalues \cite{cabre2003parameterizationI,parmChristian}. 
Nonetheless, to describe an infinite dimensional manifold one will have an 
infinite number of resonance conditions to check, which seems to be a 
major obstruction.  Indeed, there is no good reason to think that 
a parabolic PDE can in practice satisfy infinitely many non-resonance
conditions.

Instead, we consider the 
two widespread approaches for studying 
infinite dimensional invariant manifolds in Banach spaces: these are
the graph transform method   (e.g.~see  \cite{bates1998existence}),
 and the Lyapunov-Perron  method   (e.g.~see  \cite{chow1988invariant}). 
 We refer to~\cite[Section 1.4]{eldering2013} for a  comparison of these 
 methods, but the important point to mention here is that the 
graph transform method is most natural for discrete time dynamical systems. 
Indeed, in ~\cite{Llave2016connecting},
 a graph transform-type argument was used to obtain
 validated computer assisted error bounds for the infinite
 dimensional stable manifold of a compact infinite dimensional 
 map generated by convolution against a smooth kernel.
 The result just cited was a significant motivation for the present work.
%
%
%
 The graph transform method applies to continuous 
 time systems by considering the implicitly defined time-$1$ map generated 
 by the semi-flow. But this requires direct access to the time-$1$ maps, 
 which are defined only implicitly by the PDE.  Because of this,
  we have opted to work with the 
 Lyapunov-Perron method.  
The present work extends the work of \cite{Llave2016connecting}
  to parabolic PDEs, exploiting geometric techniques in the 
  projection space which allow us to obtain validated results on 
  much larger domains.

\subsection{Organization of the present work}

The outline of the paper is as follows. 
In Section \ref{sec:Notation} we discuss the notation to be used in this paper, and the level of generality to be considered. 
Abstractly, we assume that our approximate (un)stable eigenspaces are decomposed into further subspaces, with (potentially) different time scales. 
This corresponds to our plan to develop distinct methods of approximation along the slow-stable, fast-but finite-stable, and infinite-stable eigenvalues. 
We intend to compute $C^{1,1}$ bounds on our manifold, and here we define a number of constants relating to our nonlinearity $ \cN$.

In Section \ref{sec:ExponentialTracking} we discuss how we  explicitly bootstrap Gronwall’s inequality to get component-wise bounds on the  exponential tracking problem.  
This iterative bootstrapping of Gronwall's inequality is described in Algorithm \ref{alg:BootStrap}. 
The approach is quite versatile, and we apply the same procedure several times in different scenarios. 
A general description for where this approach can be taken is described in Algorithm \ref{alg:GeneralBootStrap}.  

In Section \ref{sec:LyapunovPerron} we discuss the Lyapunov-Perron Operator $\Psi$, which is given in Definition  \ref{def:LyapunovPerron}. 
We formulate conditions for when $\Psi$ maps a ball of $C^{0,1}$ functions into itself in 
Theorem \ref{prop:Endomorphism}, and 
for when $\Psi$ maps a ball of $ C^{1,1}$ functions into itself in 
Theorem \ref{prop:LambdaC2Bounds}.

In Section \ref{sec:Contraction} we obtain the necessary estimates to show that the Lyapunov-Perron Operator is a contraction mapping. 
In Definition \ref{def:SemiNorm} we define a norm in which we wish to prove we have a contraction mapping. 
We then give conditions for when we have a contraction in Theorem \ref{prop:Contraction},  and the results of Sections \ref{sec:ExponentialTracking}--\ref{sec:Contraction} are summarized in  Theorem  \ref{prop:UniqueStableManifold}.   

In Section \ref{sec:SH_Linear} we apply our results to the Swift-Hohenberg equation, obtaining the appropriate estimates for a linear change of variables at a nonlinear equilibrium. 
Finally in Section \ref{sec:SH_Nonlinear} we discuss how to get the estimates to work using a nonlinear change of coordinates at a nontrivial equilibrium. 
Computer assisted proofs of a stable manifold theorem using a linear approximation and a nonlinear approximation are given in Theorem \ref{prop:CAP_Linear} and Theorem \ref{prop:NonlinearComputerAssistedProof} respectively, and the source code is available online \cite{StableManifoldCode}.


\section{Background and Notation}
\label{sec:Notation}

A useful first step in studying stable/unstable manifolds 
is to perform a change of coordinates taking the equilibrium to zero 
and aligning the (possible generalized) eigendirections with the coordinate axes.  
For ordinary differential equations (ODEs) such a transformation always 
exists.  Nevertheless, in a particular problem it may be impractical to compute 
this transformation exactly due to the lack of explicit formulas and
the finite numerical precision. 
For PDEs, the situation is even worse, 
as the desired change of coordinates is infinite dimensional.
In the present work we settle for coordinate transformations which move the 
origin approximately to zero, and approximately align the coordinate axes with 
eigendirections.  This is achieved by computing good numerical 
approximations of the equilibrium and the eigendata for a finite dimensional 
Galerkin projection, and approximating the eigendata in
 the infinite dimensional complement
via the linearization of the homogeneous equilibrium.  
To obtain mathematically rigorous results it is necessary to quantify these 
 errors, and  formalizing this discussion requires 
a good deal of notation.  


\subsection{Parabolic PDEs and Semigroup Operators} 
\label{sec:background_PDE}

Let $X$ be a Banach space with norm $|\cdot|=|\cdot|_X$, 
and consider the differential equation
\begin{align}
	\dot{\xx} = \tilde{\Lambda} \xx
	 + \tilde{\cN} (\xx) ,
	\label{eq:PreOriginalODE}
\end{align}
where $\tilde{\Lambda}: \mbox{Dom}(\tilde{\Lambda}) \subseteq  X \to X$ is a densely defined 
linear operator with bounded inverse, 
and  $ \tilde{\cN} \in C^2_{\text{loc}} ( X , X)$.
We will need explicit bounds on $D \tilde{\cN}(0)$ and a 
local (uniform) bound on the second derivative(s).  
See Proposition~\ref{prop:Initial_Constant_Bounds} below. 
Assume that  $ \tilde{h} \in X$ is a hyperbolic equilibrium solution of
Equation \eqref{eq:PreOriginalODE}, where we think of $\tilde{h}$ as being small.
Making the change of variables $\xx \to \xx + \tilde{h}$ leads to the 
differentail equation 
\begin{align}
\dot{\xx} &= \Lambda \xx + L \xx + \hat{\cN}(\xx) .
\label{eq:OriginalODE}
\end{align} 
where
%
\begin{align}
\Lambda  &:= \tilde{\Lambda}, 
&
L &:= 
D \tilde{\cN}(\tilde{ h }),
&
\hat{\cN}(\xx) &:= 
 \tilde{\cN}(\tilde{h}+\xx) - \tilde{\cN}(\tilde{ h }) - D \tilde{\cN}(\tilde{ h })\xx .
\label{eq:ImplicitNonlinearity}
\end{align}
Equation \eqref{eq:OriginalODE}
 has that the origin is an equilibrium solution 
 and that  $\hat{\cN}(0)=0$ and $ D \hat{\cN}(0)=0$.  


\begin{definition}[Stable and unstable decomposition] \label{def:indiexSetSpaces}
Let $X = X_s \times X_u$ denote the decomposition of 
$X$ into stable and unstable eigenspaces of the operator $\Lambda$.
Fix integers $m_s, m_u \in \N $, and define two index 
sets $I := \{ 1, 2, \dots , m_s\}$ and $I' := \{ 1', 2', \dots , m_u'\}$. 
For $i \in I$ and $ i' \in I'$, assume that
$ X_i \subseteq X_s$ and $ X_{i'} \subseteq X_u$ 
are closed subspaces of $X$ with:  
\begin{align*}
	X_s &:= \prod_{1 \leq i \leq m_s} X_i ,&
	X_u &:= \prod_{1' \leq i' \leq m_u'} X_{i'}  .
\end{align*}  
\end{definition}

\begin{remark}[primed and un-primed indices] \label{rem:primes}
Throughout the paper
we use a primed notation, such as $i'$ or $j'$, to index over
the unstable eigenspace $X_u$ and un-primed indices for the 
stable. It is sometimes convenient to have an index ranging over all 
stable and unstable indices, so we define 
$\bI  := I \cup I'$ and write $\bi \in \bI$ to signify that 
$\bi$ may be a primed or un-primed index.
\end{remark}
 
For the projections onto the subspaces $X_i$, $X_{i'}$, $X_s$ and $X_u$ we use the notation $\pi_i$, $\pi_{i'}$, $\pi_s$ and $\pi_u$, respectively. 
Since these subspaces are closed, the projection maps are  
bounded linear operators.  That is, there exist constants $ p_s$, $p_u$, $p_\bi < \infty$ 
so that  
\begin{align} \label{eq:ProjectionBound}
\| \pi_s \| & \leq p_s 
&
\| \pi_u \| & \leq p_u 
&
\| \pi_\bi \| & \leq p_\bi .
&
\end{align}
We use the notation, $\xx_{\bi} = \pi_{\bi} \xx$, $\xx_s =\pi_s \xx$, etc,
hence $\xx = \xx_s + \xx_u$, $\xx_s = \sum_{i\in I} \xx_i$
 and $ \xx_u = \sum_{i' \in I' } \xx_{i'}$, as well as $\xx= \sum_{\both{i} \in \both{I}} \xx_{\both{i}}$.

Assume that $ \Lambda$ is invariant along the subspaces $ X_i$, $X_{i'}$.  
That is to say, assume that there exist $\Lambda_{i} : X_i \to X_i$ and $ \Lambda_{i'} : X_{i'} \to X_{i'}$ such that 
\[
\Lambda \xx = \sum_{i \in I} \Lambda_i \xx_i + \sum_{i' \in I'} \Lambda_{i'} \xx_{i'} .
\]
Furthermore, assume there are constants $\lambda_i <0$ such that for $1\leq i \leq m_s$
\begin{align}
|e^{\Lambda_i     t} \xx_{i} | &\leq  e^{\lambda_{ i} t } |\xx_{i}|,  &
t \geq 0&, \xx_{i} \in X_i ,
\label{eq:stableEigenvalueEstimate}
\end{align}
and $\lambda_{i'} >0$ such that for $1' \leq i' \leq m_u'$
\begin{align}
|e^{\Lambda_{i'} t} \xx_{i'} | &\leq  e^{\lambda_{ i'} t } |\xx_{i'}|,  &
t \leq 0&, \xx_{i'} \in X_{i'}.
\label{eq:UnstableEigenvalueEstimate}
\end{align}
In particular, this implies that the norm on $X$ aligns well with flow of $\Lambda$ on the subspaces $X_i$ in the sense that the vector field $\Lambda_i$ points inwards on the boundary of the unit ball in $X_i$. 

The linear operator $L$ is decomposed in the following manner: 
for all $\both{i},\both{j} \in \both{I}$, define the bounded linear 
operators $L_{\both{i}}^{\both{j}} : X_{\both{j}} \to X_{\both{i}}$ by 
\begin{align*}
[L \xx]_{\both{i}}  &= \sum_{\both{j} \in \both{I}} L_{\both{i}}^{\both{j}} \xx_{\both{j}}.
\end{align*}
Restricting $ \Lambda$ and $L$ to $X_s$ and~$X_u$ gives operators
\begin{align*}
\Lambda_s \xx_s &: X_s \to X_s &  
L_s^s \xx_s &: X_s \to X_s& 
L_s^u \xx_u &: X_u \to X_s\\
\Lambda_u \xx_u &: X_u \to X_u &  
L_u^s \xx_s &: X_s \to X_u &  
L_u^u \xx_u &: X_u \to X_u &  
\end{align*}
defined by 
\begin{align*}
 \Lambda_s \xx_s &:= \sum_{i \in I} \Lambda_i \xx_i &  
  L_s^s \xx_s &:= \sum_{i,j \in I} L_i^{j} \xx_j & 
  L_s^u \xx_u &:= \sum_{i \in I, j'\in I'} L_i^{j'} \xx_{j'} \\
   \Lambda_u \xx_u &:= \sum_{i' \in I'} \Lambda_{i'} \xx_{i'} &  
  L_u^s \xx_s &:= \sum_{i' \in I, j \in I} L_{i'}^{j} \xx_j & 
  L_u^u \xx_u &:= \sum_{i' \in I', j'\in I'} L_{i'}^{j'} \xx_{j'}  .
\end{align*}
Assume that $ -( \Lambda_u + L_u^u)$ and $(\Lambda_s + L_s^s)$ are negative operators,
 in the sense that there exist constants $ C_s,C_u$ and $ \lambda_s < 0 $ and $ \lambda_u >0$ 
 so that  
\begin{alignat}{2}
|e^{(\Lambda_s + L_s^s)t } \xx_{s} | &\leq  C_s e^{\lambda_s t } |\xx_{s}|,  &\qquad&
t \geq 0, \xx_{s} \in X_s ,
\label{eq:TOTALstableEigenvalueEstimate}  \\
|e^{(\Lambda_u + L_u^u)t} \xx_{u} | &\leq  C_u e^{\lambda_u t } |\xx_{u}|,  &\qquad&
t \leq 0, \xx_{u} \in X_{u}.
\label{eq:TOTALUnstableEigenvalueEstimate}
\end{alignat}
Calculation of these constants is discussed in 
Section \ref{sec:EstimatingEigenvalues}, and an explicit 
example is given in Section~\ref{sec:SH_Linear}. 

\begin{remark}
For both the prime and non-prime spatial indices we 
employ Einstein summation notation, writing
\[
	L_i^{j} \xx_{j} \equiv \sum_{j \in I} L_i^{j} \xx_{j},
	\qquad\text{and}\qquad
	L_i^{j'} \xx_{j'} \equiv \sum_{j' \in I'} L_i^{j'} \xx_{j'}.
\]	  
For other indices, for example sums over $\both{I}=I\cup I'$, we write the summation explicitly.  
\end{remark}

We now project the nonlinear terms into the subspaces just defined, and write  
$ \hat{\cN}_{\both{i}} := \pi_{\both{i}} \circ \hat{\cN}(\xx) $  for $\both{i} \in \both{I}$.  
Then $ \hat{\cN}_s(\xx) := \pi_s \circ \hat{\cN}(\xx)$ and 
$ \hat{\cN}_u(\xx) := \pi_u \circ \hat{\cN}(\xx)$. 
For $\bi \in \bI$ let 
\begin{align}
	 \cN_\bi(\xx_s,\xx_u) &:= 
L_\bi^j \xx_j  +L_\bi^{j'} \xx_{j'} 
	+ \hat{\cN}_\bi(\xx_s,\xx_u).
	\label{eq:N_no_hat_def}
\end{align}
We write 
\begin{align*}
	 \cN_s &:= \sum_{i \in I} \cN_i, & 
	  \cN_u &:= \sum_{i' \in I'} \cN_{i'} ,&
	  \cN &:= \cN_s + \cN_u.
\end{align*}
Equation \eqref{eq:OriginalODE} becomes 
\begin{align}
	\dot{\xx}_i &= \Lambda_i \xx_i + \cN_i ( \xx_s , \xx_u), 
		\label{eq:ProductODE_stab} \\
	\dot{\xx}_{i'} &= \Lambda_{i'} \xx_{i'} + \cN_{i'} ( \xx_s , \xx_u).
	\label{eq:ProductODE_unst}
\end{align}
 

We study functions defined on certain a certain products of 
balls containing the origin in the various subspaces.

\begin{definition} \label{def:Ball}
Fix positive vectors $ r_s \in \R^{m_s}$ and $ r_u \in \R^{m_u}$, and  
define  the closed balls  $B_s(r_s) \subseteq X_s$ and $B_u(r_u) \subseteq X_u$ 
given by 
\begin{align*}
		B_s(r_s) &:= 
	\left\{
	\xx_s\in X_s : 
	|  \xx_i | \leq r_i  \mbox{ for } i \in I  
	\right\} 
	\\
		B_u(r_u) &:= 
	\left\{
	\xx_u\in X_u : 
	|  \xx_{i'} | \leq r_{i'}  \mbox{ for } i' \in I'
	\right\} .
\end{align*}
\end{definition}

When the vectors $ r_s$, $r_u$ are understood, we abbreviate to 
$B_s \equiv B_s(r_s)$ and $ B_u \equiv B_u ( r_u) $. 
Below we define bounds on our nonlinearity $\cN$ over balls of fixed radius. 

\begin{definition} \label{def:derivatives}
	Suppose  $r_s \in \mathbb{R}^{m_s}$ and $r_u \in \mathbb{R}^{m_u}$.
	For $\xx_s \in B_s(r_s)$,  $\xx_u \in B_u (r_u)$ 
	and $\both{i},\both{j} , \both{k} \in \both{I}$ define 
	\begin{align*}
	\cN_{\both{j}}^{\both{i}}( \xx_s, \xx_u)  &:= 
	\frac{\partial}{\partial \xx_{\both{i}}} \cN_{\both{j}}( \xx_s, \xx_u), 
	& 
	\| \cN_{\both{j}}^{\both{i}} \|_{(r_s,r_u)} &:=
	\sup_{\xx_s \in B_s(r_s)} \sup_{\xx_u \in B_u(r_u)}
	\| \cN_\both{j}^\both{i}(   \xx_s, \xx_u) \| \\
	\cN_{\both{j}}^{\both{i}\both{k}}( \xx_s, \xx_u)  &:=
	\frac{\partial^2}{\partial \xx_\both{i} \partial \xx_\both{k}}
	\cN_\both{j}(  \xx_s, \xx_u),
	& 
	\| \cN_{\both{j}}^{\both{i}\both{k}} \|_{(r_s,r_u)} &:=
	\sup_{\xx_s \in B_s(r_s)} \sup_{\xx_u \in B_u(r_u)} \|\cN_\both{j}^{\both{i}\both{k}}(   \xx_s, \xx_u) \|.
	\end{align*}
\end{definition}


\begin{proposition}
\label{prop:Initial_Constant_Bounds}
	 Fix $r_s \in \mathbb{R}^{m_s}$, and $r_u \in \mathbb{R}^{m_u}$, 
	 and suppose  that $ | \tilde{h}_\bi| < \epsilon_\bi$. 
	 Assume that the constants $ \tilde{D}^\bi_\bj $  and $\tilde{C}_\bj^{\bi\bk}$ satisfy
\begin{align*}
\tilde{D}^\bi_\bj   &\geq 	\| \tilde{\cN}_\bj^\bi (0,0) \|, 
	& 
\tilde{C}^{\bi\bk}_\bj 	&\geq \| \tilde{\cN}_\bj^{\bi\bk} \|_{(r_s + \epsilon_s,r_u + \epsilon_u)} .
\end{align*}
  For $ \bi,\bj,\bk \in I \cup I'$   define constants $ \hat{ C }_\bj^\bi, D_\bj^\bi,C_\bj^\bi$, and $C_\bj^{\bi\bk}$ as below:
\begin{align*}
D_{\bj}^{\bi}  & := \tilde{D}_\bj^\bi + \tilde{C}_\bj^{\bi l} \epsilon_l +
\tilde{C}_\bj^{\bi l'} \epsilon_{l'}  ,
&
C_\bj^{\bi\bk} &:= \tilde{C}_\bj^{\bi\bk} \\
\hat{  C}_\bj^\bi &:=  \tilde{C}_\bj^{\bi l} r_l +  \tilde{C}_\bj^{\bi l' } r_{l'} 
&
C_\bj^\bi &:= \hat{  C}_\bj^\bi +  D_\bj^\bi .
\end{align*}
Then for $ L$ and $\hat{\cN} $ defined in \eqref{eq:ImplicitNonlinearity} and $ \cN$ defined in \eqref{eq:N_no_hat_def} we have the bounds 
\begin{subequations}
\label{e:boundsoncN}
	\begin{align}
D_{\bj}^{\bi} &\geq \| L_\bj^\bi\| 
&
C_{\bj}^{\bi\bk} &\geq \|  \cN_\bj^{\bi\bk}\|_{(r_s,r_u)} \\
\hat{C}_{\bj}^\bi &\geq \| \hat{\cN}_\bj^{\bi}\|_{(r_s,r_u)}
&
	C_{\bj}^\bi &\geq \| \cN_\bj^{\bi}\|_{(r_s,r_u)}  .
	\end{align}
	\end{subequations}
\end{proposition}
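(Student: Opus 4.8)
The plan is to establish the four bounds in \eqref{e:boundsoncN} by relating every quantity attached to $\cN$ (and its $\xx$-derivatives) back to the corresponding quantity for $\tilde{\cN}$ via the change of variables $\xx \mapsto \xx + \tilde{h}$, and then controlling the error in $\tilde{h}$ by the bounds $|\tilde{h}_\bi| < \epsilon_\bi$. First I would observe that, since $\hat{\cN}(\xx) = \tilde{\cN}(\tilde{h}+\xx) - \tilde{\cN}(\tilde{h}) - D\tilde{\cN}(\tilde{h})\xx$, the subtracted terms are constant or linear in $\xx$, so for any $\bi$ and $\bk$ we have the second-derivative identity $\hat{\cN}_\bj^{\bi\bk}(\xx_s,\xx_u) = \tilde{\cN}_\bj^{\bi\bk}(\tilde{h}+\xx)$, and by \eqref{eq:N_no_hat_def} also $\cN_\bj^{\bi\bk} = \hat{\cN}_\bj^{\bi\bk}$ (the added linear terms $L_\bj^\bullet \xx_\bullet$ drop out under two derivatives). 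Hence $\|\cN_\bj^{\bi\bk}\|_{(r_s,r_u)} = \sup_{\xx_s \in B_s(r_s),\,\xx_u \in B_u(r_u)} \|\tilde{\cN}_\bj^{\bi\bk}(\tilde{h}+\xx)\|$, and since $|\tilde{h}_\bi| < \epsilon_\bi$ the shifted argument $\tilde{h}+\xx$ ranges over a subset of $B_s(r_s+\epsilon_s) \times B_u(r_u+\epsilon_u)$; therefore this supremum is at most $\|\tilde{\cN}_\bj^{\bi\bk}\|_{(r_s+\epsilon_s,\,r_u+\epsilon_u)} \le \tilde{C}_\bj^{\bi\bk} = C_\bj^{\bi\bk}$, proving the second-derivative bound.

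Next I would handle $L = D\tilde{\cN}(\tilde{h})$. Writing $D\tilde{\cN}(\tilde{h}) - D\tilde{\cN}(0) = \int_0^1 D^2\tilde{\cN}(t\tilde{h})[\tilde{h},\,\cdot\,]\,dt$ and projecting, we get $L_\bj^\bi = \tilde{\cN}_\bj^\bi(0,0) + \int_0^1 \sum_{\bl \in \bI} \tilde{\cN}_\bj^{\bi\bl}(t\tilde{h})\,\tilde{h}_\bl \, dt$, so $\|L_\bj^\bi\| \le \tilde{D}_\bj^\bi + \sum_{\bl} \tilde{C}_\bj^{\bi\bl}\,\epsilon_\bl$, which is exactly $D_\bj^\bi$ in the stated Einstein-summation form $\tilde{D}_\bj^\bi + \tilde{C}_\bj^{\bi l}\epsilon_l + \tilde{C}_\bj^{\bi l'}\epsilon_{l'}$ (here one uses $t\tilde{h} \in B_s(\epsilon_s)\times B_u(\epsilon_u) \subseteq B_s(r_s+\epsilon_s)\times B_u(r_u+\epsilon_u)$ so that the constants $\tilde{C}_\bj^{\bi\bl}$ apply). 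For the first-derivative bound on $\hat{\cN}$, differentiating the defining formula once in $\xx_\bi$ gives $\hat{\cN}_\bj^\bi(\xx) = \tilde{\cN}_\bj^\bi(\tilde{h}+\xx) - \tilde{\cN}_\bj^\bi(\tilde{h})$, and applying the mean value / fundamental theorem of calculus argument again in the $\xx$ variable over $B_s(r_s)\times B_u(r_u)$ yields $\|\hat{\cN}_\bj^\bi(\xx)\| \le \sum_\bl \tilde{C}_\bj^{\bi\bl}\,|\xx_\bl| \le \tilde{C}_\bj^{\bi l} r_l + \tilde{C}_\bj^{\bi l'} r_{l'} = \hat{C}_\bj^\bi$, again because $\tilde{h}+\xx$ stays inside the larger ball where $\tilde{C}$ is valid. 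Finally, from \eqref{eq:N_no_hat_def} we have $\cN_\bj^\bi = L_\bj^\bi + \hat{\cN}_\bj^\bi$ (the linear terms contribute their coefficient under one derivative and the diagonal term recombines correctly), so $\|\cN_\bj^\bi\|_{(r_s,r_u)} \le D_\bj^\bi + \hat{C}_\bj^\bi = C_\bj^\bi$, completing the last bound.

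I do not expect a genuine obstacle here — the proposition is essentially bookkeeping — but the step requiring the most care is tracking exactly which composite ball each constant is valid on, so that every invocation of $\tilde{C}_\bj^{\bi\bk}$ or $\tilde{D}_\bj^\bi$ is justified: one must consistently note that $\tilde{h}$ lives in $B_s(\epsilon_s)\times B_u(\epsilon_u)$, that $\xx$ lives in $B_s(r_s)\times B_u(r_u)$, and hence $t\tilde{h}+\xx$ (for $t\in[0,1]$) lives in $B_s(r_s+\epsilon_s)\times B_u(r_u+\epsilon_u)$, which is precisely the ball on which $\tilde{C}$ was assumed to bound $\|\tilde{\cN}_\bj^{\bi\bk}\|$. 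A secondary subtlety is making sure the bookkeeping with Einstein summation over primed versus unprimed indices in the definitions of $D_\bj^\bi$ and $\hat{C}_\bj^\bi$ matches the full sum over $\bI = I \cup I'$ that appears when one expands $D\tilde{\cN}(\tilde{h})[\tilde{h},\,\cdot\,]$ or $D^2\tilde{\cN}[\xx,\,\cdot\,]$ in coordinates; once that is checked, the estimates collapse to the routine mean-value-theorem computations sketched above.
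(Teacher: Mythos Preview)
Your proposal is correct and is precisely the routine bookkeeping the paper has in mind: the paper itself states ``The proof is left to the reader'' immediately after Proposition~\ref{prop:Initial_Constant_Bounds}, so there is no alternative argument to compare against. The key observations you identify --- that $\hat{\cN}_\bj^{\bi\bk}(\xx)=\tilde{\cN}_\bj^{\bi\bk}(\tilde h+\xx)$, that $L_\bj^\bi=\tilde{\cN}_\bj^\bi(\tilde h)$, and that all relevant arguments stay inside $B_s(r_s+\epsilon_s)\times B_u(r_u+\epsilon_u)$ so the $\tilde C$ bounds apply --- are exactly what is needed.
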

\noindent The proof follows directly from the definitions.  
 
\subsection{Regularity of the candidate functions
}

Our goal is to find a chart $\alpha : B_s  \to X_u$ such that the graph
$\{ ( \xi, \alpha (\xi) ) : \xi \in B_s  \}$  
is a local stable manifold attached to 
the origin of the differential equation \eqref{eq:OriginalODE}.
The desired chart is formulated as a fixed point of the Lyapunov-Perron operator 
in Section \ref{sec:LP_Overview}. 
In preparation for that formulation we now specify 
the appropriate spaces of candidate functions.


%
%
%

\begin{remark}
In Section~\ref{sec:background_PDE} there is notational symmetry between the stable and unstable eigenspaces. For the \emph{stable} manifold the main parameter is the stable radius $r_s$, which determines the domain of the chart $\alpha$. On the other hand, the unstable radius $r_u$ in the codomain of $\alpha$ follows from a Lipschitz assumption on the chart. To highlight this distinction, in the contexts of the Lyapunov-Perron operators and the associated charts we denote the radius in the stable subspace by the parameter $\rho$.
\end{remark}

Let $\rho \in \mathbb{R}^{m_s}$ and $\alpha \in C^0(B_s(\rho), X_u)$.
Define the Lipschitz constants of  $\alpha$ relative to the subspaces $ X_i$ and $X_{i'}$ by
\begin{align*}
\Lip( \alpha  )_{i'}^{i} := \sup_{\xi \in B_s} \sup_{\substack{  0 \neq \zeta_i \in X_i \\ \xi+\zeta_i \in B_s }} 
\frac{| \alpha_{i'}(\xi+\zeta_i) - \alpha_{i'}(\xi) | }{|\zeta_i|}.
\end{align*}
Observe that if $\alpha$ is Fr\'{e}chet differentiable, then
$ \sup_{\xi \in B_s(\rho)} \| \alpha_{i'}^i(\xi) \| = \Lip( \alpha  )_{i'}^{i}$. 
Here we employ the notation of Definition \ref{def:derivatives}, so that
superscripts attached directly to $\alpha$ denote partial derivatives.
Let $C^{0,1}(B_s(\rho), X_u)$ denote the set of all Lipschitz continuous functions 
on $B_s(\rho)$, taking values in $X_u$. Similarly, let 
$C^{1,1}(B_s(\rho), X_u) \subset C^{0,1}(B_s(\rho), X_u)$ 
denote the set of all 
continuously differentiable functions whose derivative 
is Lipschitz continuous. 

\begin{definition} \label{def:Ball_of_Functions}
Fix positive tensors $\rho \in \R^{m_s}$,  $P \in \R^{m_s} \otimes \R^{m_u}$   and 
$\PP \in (\R^{m_s})^{\otimes 2} \otimes \R^{m_u}$, and 
define the function spaces 
	\begin{align*}
	\cB_{\rho,P}^{0,1} &:= \{ \alpha \in C^{0,1}(B_s(\rho),X_u) : \alpha(0) = 0, \; \Lip( \alpha  )_{i'}^{i} \leq P_{i'}^i \}  ,
	\\
	\cB_{\rho,P,\PP}^{1,1} &:= \{ \alpha \in C^{1,1}(B_s(\rho),X_u) :
	\alpha(0) = 0,
	\;
	\Lip( \alpha  )_{i'}^{i} \leq P_{i'}^i , 	\;
	\Lip( \partial_i \alpha  )_{i'}^{j} \leq \PP_{i'}^{ij} \}.
	\end{align*} 
\end{definition}

Note that for all  $ \alpha \in \cB_{\rho,P}^{0,1}$ and  $ \xi, \zeta \in B_s$ we have: 
$
| \alpha_{i'}(\xi) - \alpha_{i'}(\zeta)  |   \leq 
 P^i_{i'}
| \xi_i - \zeta_i  | $.  
For a positive vector~$\rho$ and positive tensor $P$, the range of the $ \alpha \in \cB_{\rho,P}^{0,1}$ lies in a ball $B_u(r_u)$ with $r_u$ given by $r_{i'} = P_{i'}^i \rho_i$.  

\begin{definition}
		\label{def:H}
	Let the vector $\rho$ and tensor $P$ be as in Definition~\ref{def:Ball_of_Functions}.  
	Define $r_u$ by $ r_{i'} := P_{i'}^i \rho_{i}$. 
	For constants $C_j^i$, $\hat{C}_j^i$ and $D^i_j$ such that the bounds~\eqref{e:boundsoncN} hold with $r_s=\rho$, 
	 define positive tensors 
	\begin{align*}
	H_{j}^i &:= C_{j}^i + C_j^{i'} P_{i'}^i , & 	
	H_{j'}^i &:= C_{j'}^i + C_{j'}^{i'} P_{i'}^i , &
	\hat{H}_j^i&:=
	\hat{C}_j^i +( \hat{C}_j^{i'} + D_j^{i'})P_{i'}^i ,
	\end{align*}
	and the positive scalar:
	\[
		\hat{\cH} \ \, := 
	\sup_{\alpha \in \cB_{\rho,P}^{0,1}}
	\sup_{\xx_s \in B_s(\rho)}
	\| 
	\tfrac{\partial}{\partial \xx_s}  L_s^u \alpha( \xx_s) +
	\tfrac{\partial}{\partial \xx_s}  
	\hat{\cN}_s(   \xx_s, \alpha(\xx_s))
	\| 
	.
	\]
\end{definition}

The tensor $H$ provides the following bound:  
fix $\rho$, $P$ and 
$\alpha \in \cB_{\rho,P}^{0,1}$, $\xi,\zeta \in B_s(\rho)$. Then for each $\bj \in \bI$ we have
\begin{align}
|\cN_\bj(\xi,\alpha(\xi)) - \cN_\bj(\zeta,\alpha(\zeta)) | \leq   H_\bj^i  | \xi_i - \zeta_i  |.
\label{eq:NHestimate}
\end{align}


\begin{proposition}
	\label{prop:ComputeHhat}
	Fix $ \rho $ and $ P$ as in Definition~\ref{def:H}. 
If the norm on $X$ has $ | \xx | = \sum_{\bi \in \bI} | \xx_\bi |$, then  $
\hat{\cH} \leq \max_{i\in I} \sum_{j \in I}   \hat{H}_j^i $.
\end{proposition}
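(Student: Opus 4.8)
The plan is to unpack the definition of $\hat{\cH}$ and estimate the operator norm of the displayed linear map directly, using the product structure of the norm on $X$. Fix $\alpha \in \cB_{\rho,P}^{0,1}$ and $\xx_s \in B_s(\rho)$, and write $T(\xx_s) := \tfrac{\partial}{\partial \xx_s} L_s^u \alpha(\xx_s) + \tfrac{\partial}{\partial \xx_s} \hat{\cN}_s(\xx_s, \alpha(\xx_s))$, a bounded linear map from $X_s$ to $X_s$. Since $|\xx| = \sum_{\bi \in \bI} |\xx_\bi|$, for any $\zeta_s \in X_s$ we have $|T(\xx_s)\zeta_s| = \sum_{j \in I} |\pi_j T(\xx_s)\zeta_s|$, and writing $\zeta_s = \sum_{i \in I} \zeta_i$ and using linearity and the triangle inequality, $|\pi_j T(\xx_s)\zeta_s| \le \sum_{i \in I} |\pi_j T(\xx_s)\pi_i \zeta_s|$. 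The quantity $\pi_j T(\xx_s) \pi_i$ is exactly the partial derivative block $\partial_i$ applied to the $j$-th component of $L_s^u \alpha + \hat{\cN}_s(\cdot, \alpha(\cdot))$, so the key step is to bound its operator norm by $\hat{H}_j^i$.

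For that bound I would differentiate componentwise: the $j$-th component of $L_s^u \alpha(\xx_s)$ is $L_j^{i'} \alpha_{i'}(\xx_s)$ (Einstein summation over $i' \in I'$), and differentiating in $\xx_i$ gives $L_j^{i'} \, \partial_i \alpha_{i'}(\xx_s)$, whose norm is at most $\|L_j^{i'}\| \, \Lip(\alpha)_{i'}^i \le D_j^{i'} P_{i'}^i$ by \eqref{e:boundsoncN} and the definition of $\cB_{\rho,P}^{0,1}$. For the nonlinear term, the chain rule gives $\partial_i \big(\hat{\cN}_j(\xx_s, \alpha(\xx_s))\big) = \hat{\cN}_j^i(\xx_s,\alpha(\xx_s)) + \hat{\cN}_j^{i'}(\xx_s,\alpha(\xx_s)) \, \partial_i \alpha_{i'}(\xx_s)$; since $\alpha \in \cB_{\rho,P}^{0,1}$ maps into $B_u(r_u)$ with $r_{i'} = P_{i'}^i \rho_i$, the arguments stay in $B_s(\rho) \times B_u(r_u)$, so the two terms are bounded by $\hat{C}_j^i$ and $\hat{C}_j^{i'} P_{i'}^i$ respectively, again via \eqref{e:boundsoncN} and Definition~\ref{def:Ball_of_Functions}. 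Summing, $\|\pi_j T(\xx_s)\pi_i\| \le \hat{C}_j^i + \hat{C}_j^{i'} P_{i'}^i + D_j^{i'} P_{i'}^i = \hat{H}_j^i$ by Definition~\ref{def:H}.

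Combining, $|T(\xx_s)\zeta_s| \le \sum_{j \in I} \sum_{i \in I} \hat{H}_j^i |\zeta_i| \le \big(\max_{i \in I} \sum_{j \in I} \hat{H}_j^i\big) \sum_{i \in I} |\zeta_i| = \big(\max_{i \in I} \sum_{j \in I} \hat{H}_j^i\big) |\zeta_s|$, so $\|T(\xx_s)\| \le \max_{i\in I}\sum_{j\in I}\hat{H}_j^i$. Taking the supremum over $\alpha$ and $\xx_s$ yields the claim. I do not expect a serious obstacle here; the only point requiring mild care is the bookkeeping of which variables the partial derivatives act on and confirming that the composed arguments $(\xx_s, \alpha(\xx_s))$ remain in the domain on which the constants $\hat{C}_j^i$, $\hat{C}_j^{i'}$ were defined — which is precisely arranged by the choice $r_{i'} = P_{i'}^i \rho_i$ in Definition~\ref{def:H}. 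One should also note the harmless reindexing issue that $\hat{H}_j^i$ absorbs the $D_j^{i'}P_{i'}^i$ term coming from the linear-in-$\xx_u$ part of $L_s^u$, which is why $\hat{H}$ rather than just the $\hat{C}$-sum appears.
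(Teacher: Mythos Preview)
Your proposal is correct and follows essentially the same route as the paper: both compute the block $\pi_j\,\partial_i\big(L_s^u\alpha+\hat{\cN}_s(\cdot,\alpha(\cdot))\big)$ and bound it by $\hat{H}_j^i = \hat{C}_j^i + (\hat{C}_j^{i'}+D_j^{i'})P_{i'}^i$, then use the $\ell^1$-type product norm to pass from the componentwise bounds to $\max_{i\in I}\sum_{j\in I}\hat{H}_j^i$. The paper phrases the last step as a supremum over unit vectors $u\in X_s$ rather than directly expanding $|T\zeta_s|=\sum_j|\pi_jT\zeta_s|$, but this is the same computation.
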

\begin{proof}
	Fix $ \alpha \in \cB_{\rho,P}^{0,1}$ and $ \xx_s \in B_s(\rho)$. 
Then
	\begin{align*}
\left\| 	\tfrac{\partial}{\partial \xx_i}  L_s^u \alpha( \xx_s)  \right\| 
&= \left\| \sum_{j\in I} 	\tfrac{\partial}{\partial \xx_i}  L_j^{n'} \alpha_{n'}^i ( \xx_s)  \right\| 
\leq \sum_{j\in I} D_j^{n'}  P_{n'}^i  , 
\\
\left\| \tfrac{\partial}{\partial \xx_i} 
\hat{\cN}_s \left( \xx_s,\alpha(\xx_s)\right)  \right\|
&\leq  \left\| 
\sum_{j\in I} 
	\hat{\cN}_j^i(   \xx_s, \alpha(\xx_s)) +
	\hat{\cN}_j^{n'}(   \xx_s, \alpha(\xx_s))\alpha_{n'}^i(\xx_s)
  \right\|  
\leq \sum_{j\in I}  \hat{C}_j^{i}  + \hat{C}_j^{n'}  P_{n'}^i .
\end{align*}
It now follows from the hypothesis on the norm of $X$ 
that $ \| \pi_\bi \| =1$ for all $ \bi \in \bI$. 
Then 
\begin{align*}
\left\| 	\tfrac{\partial}{\partial \xx_s}  L_s^u \alpha( \xx_s)  
+
\tfrac{\partial}{\partial \xx_s} 
\hat{\cN}_s \left( \xx_s,\alpha(\xx_s)\right) 
 \right\| 
 & = 
 \sup_{u \in X_s , |u| =1} 
 \left|  
 \sum_{i \in I}
 \left( 	\tfrac{\partial}{\partial \xx_i}  L_s^u \alpha( \xx_s)  
 +
 \tfrac{\partial}{\partial \xx_i} 
 \hat{\cN}_s \left( \xx_s,\alpha(\xx_s)\right) 
 \right) u_i
 \right|  
 \\
 &\leq 
  \sup_{u \in X_s , |u|=1 }  
 \sum_{i,j\in I}  \left( D_j^{n'}  P_{n'}^i  +   \hat{C}_j^{i}  + \hat{C}_j^{n'}  P_{n'}^i \right) |u_i|.
\end{align*}
In the righthand side of the previous inequality we recognize $ \hat{H}_j^i$. 
Hence
\begin{align}
\sum_{i,j \in I}
\hat{H}_j^i  |u_{i}|
= 
\sum_{i\in I}
\Big(\sum_{j \in I}
\hat{H}_j^i \Big) |u_{i}|
\leq
\sum_{i\in I }
\Big(
 \max_{n\in I} \sum_{j \in I}   \hat{H}_j^n \Big)
|u_{i}|
=
\Big( \max_{i\in I} \sum_{j \in I}   \hat{H}_j^i \Big) \, |u|.
\label{eq:GlobalAPrioriBound}
\end{align}
Taking the $\sup$ over $ u \in X_s, | u | = 1$ gives
\[
\left\| 	\tfrac{\partial}{\partial \xx_s}  L_s^u \alpha( \xx_s)  
+
\tfrac{\partial}{\partial \xx_s} 
\hat{\cN}_s \left( \xx_s,\alpha(\xx_s)\right) 
\right\|  \leq \max_{i\in I} \sum_{j \in I}   \hat{H}_j^i.
\qedhere \]
\end{proof}

\subsection{Overview of the Lyapunov-Perron Approach}
\label{sec:LP_Overview}
Having established the necessary notation, we are prepared to formalize the discussion. 
Namely, we transform the problem of studying the local stable manifold into the problem of 
finding a fixed point of the Lyapunov-Perron operator.  Excellent general 
references on the Lyapunov-Perron approach include books \cite{chicone2006ODE,henry1981geometric,sell2002dynamics}.

This operator is an endomorphism on charts $ \alpha \in \cB_{\rho,P}^{0,1}$.  
Given such an $\alpha$,  define $ x( t , \xi, \alpha)$ to be the solution of 
the projected differential equation
\begin{align}
\dot{\xx}_s &= \Lambda_s \xx_s + \cN_s(\xx_s,\alpha(\xx_s)) ,
\label{eq:ProjectedSystem} 
\end{align}
with initial condition $\xi \in B_s(\rho)$ at time $t=0$.  
In Section \ref{sec:ExponentialTracking} we 
show that if $\Lambda_s$ sufficiently dominates the nonlinearity $\cN_s$, then solutions of the 
projected system \eqref{eq:ProjectedSystem} do not blow up for any $\alpha \in \cB_{\rho,P}^{0,1}$.
In fact, solutions of the projected system approach~$0$ as $t \to \infty$.  

Assuming for the moment this is true, 
consider the pair $ ( x( t, \xi , \alpha),  \alpha ( x( t, \xi , \alpha) )) $.
 If equation  \eqref{eq:ProductODE_unst} is satisfied for all $ i' \in I'$, then by construction equation  \eqref{eq:ProductODE_stab} is satisfied for all $ i \in I$. 
Hence the pair $ ( x( t, \xi , \alpha),  \alpha ( x( t, \xi , \alpha) )) $ is a solution to the full system~\eqref{eq:OriginalODE},   
and moreover the map $ \xi \mapsto ( \xi , \alpha(\xi))$ is a 
chart for a local invariant manifold of the origin.


To find $\alpha$ solving equation  \eqref{eq:ProductODE_unst} 
for all $ i' \in I'$, we exploit the variation of constants formula and defining the Lyapunov-Perron operator.
\begin{definition}
	\label{def:LyapunovPerron}
	Fix a positive vector $\rho \in \R^{m_s}$  and  a positive tensor $P$. 
	The Lyapunov Perron operator 
	$\Psi: \cB_{\rho,P}^{0,1} \to \Lip(B_s(\rho),X_u)$ 
	is given by 
	\begin{equation}
	\Psi[\alpha](\xi) := - \int_0^\infty e^{-\Lambda_u t} \cN_{u}(x(t,\xi,\alpha), \alpha(x(t,\xi,\alpha))) dt,
	\quad \quad \quad \mbox{for all } \alpha  \in  \cB_{\rho,P}^{0,1}.
	\label{eq:LPO}
	\end{equation}
\end{definition}

%

\begin{remark}[Dynamics on the graph of $\alpha$] \label{rem:innerDynamics}
A fixed point of $\Psi$ is a coordinate chart for a local invariant manifold of the origin. 
Showing this is the stable manifold requires an additional argument. 
This is part of the power of the approach, as by modifying the assumptions one can 
study other attached invariant manifolds like center and center-stable manifolds.  
For an example involving computer assisted proofs see \cite{takayasu2019rigorous}.

Let $ \mathbb{E}_s, \mathbb{E}_u  \subseteq X$ denote the stable  and unstable eigenspaces 
of the operator $ \Lambda + L$. 
If either $\dim(X_s)=  \dim( \mathbb{E}_s)   <\infty$ or $\dim(X_u)= \dim( \mathbb{E}_u)  <\infty$,
 then $\alpha = \Psi[\alpha]$ is a chart for a local stable manifold of the origin. 
In practice this is established by correctly counting with multiplicity the finite number of 
stable/unstable eigenvalues of $\Lambda+ L$. 
We consider this case in Sections \ref{sec:SH_Linear} and \ref{sec:SH_Nonlinear}. 

If, on the other hand, both $ \dim( \mathbb{E}_s)  = \infty$ and $  \dim( \mathbb{E}_u)  = \infty$, 
then the desired result is obtained by showing that the family of operators $ \Lambda + s L$ 
does not have any eigenvalues crossing the imaginary axis for $s \in [0,1]$.
This is the approach taken in~\cite{BGLV} and it could be extended
to studying strongly indefinite problems 
 as typically appear in elliptic problems, see e.g.~\cite{cheng2020stable}. 
\end{remark}

In Section \ref{sec:LyapunovPerron} we show that, for an appropriate 
choice of constants, $ \Psi$ is simultaneously an endomorphism on the balls
$ \cB_{\rho,P}^{0,1}$ and $ \cB_{\rho,P,\PP}^{1,1}$.  
In Section \ref{sec:Contraction} we show that $\Psi$ is a contraction
in a $C^0$-like norm (see Definition \ref{def:SemiNorm}) and use the 
Banach Fixed Point Theorem to establish the existence of a unique fixed point.

\subsection{Good Coordinates: Parameterization of Slow Stable Manifolds and Attached Invariant Frame Bundles} \label{sec:goodCoordinates}
%
In this section we describe a method for high order computation
of slow stable manifolds, as well as some attached invariant frame bundles 
describing the stable and unstable directions normal to the slow stable manifold.
Our approach is based on the parameterization method of 
\cite{cabre2003parameterizationI,cabre2003parameterizationII,cabre2005parameterizationIII}, 
and especially on the notion of slow spectral submanifolds discussed in the references
just cited.  See also the works of \cite{MR3541499,MR3562433,MR3672647,MR3832468,MR3800257},
and the book  \cite{haro2016parameterization}.

The theorem below is extracted from the results of
 \cite{cabre2003parameterizationI,cabre2005parameterizationIII}.
The version we state assumes that the eigenvalues are real and have geometric multiplicity 
one.  These assumptions are not necessary, but simplify the presentation.  In 
the applications considered in Section \ref{sec:SH_Nonlinear},
these assumptions have to be checked.
In slight abuse of notation, to align with the existing literature we use $P$ to 
denote the parametrizaton of a slow stable manifold; this should not be confounded with the 
positive tensor denoted by the same symbol in previous subsection.

\begin{theorem}[Slow-stable manifold parameterization] \label{thm:slowParm}
Let $F \colon \mathbb{R}^d \to \mathbb{R}^d$ be a real analytic vector field, 
and $p_0 \in \mathbb{R}^d$
be a hyperbolic equilibrium point whose differential $DF(p_0)$ is diagonalizable.  Let 
$\lambda_1, \ldots, \lambda_d \in \mathbb{R}$ denote the eigenvalues of $DF(p_0)$ and 
suppose that $\lambda_1, \ldots, \lambda_{m_{\slow}}$ with $m_{\slow} < d$
are the slow stable eigenvalues.  Let $\xi_1, \ldots, \xi_{m_{\slow}} \in \mathbb{R}^d$ denote
the associated slow stable eigenvectors.     
Write 
\[
\Lambda_{\slow} = \left(
\begin{array}{ccc}
\lambda_1 & \ldots & 0 \\
\vdots & \ddots & \vdots \\
0 & \ldots & \lambda_{m_{\slow}}
\end{array}
\right),
\qquad\text{and}\quad
\Lambda = \left(
\begin{array}{ccc}
\lambda_1 & \ldots & 0 \\
\vdots & \ddots & \vdots \\
0 & \ldots & \lambda_d
\end{array}
\right),
\]
to denote respectively the $m_{\slow} \times m_{\slow}$
and $d \times d$ matrices of the slow stable eigenvalues and all the
eigenvalues of $DF(p_0)$. 
Suppose that $P \colon [-1, 1]^{m_{\slow}} \to \mathbb{R}^d$
is a smooth solution of the invariance equation 
\begin{equation} \label{eq:invEq}
F(P(\theta)) = DP(\theta) \Lambda_{\slow} \theta,  \quad \quad \quad \quad \quad  \theta \in [-1, 1]^{m_{\slow}},
\end{equation}
subject to the first order constraints 
$P(0) = p_0$  and $\partial_j P(0) = \xi_j$, 
$1 \leq j \leq m_{\slow}$.
Then $P$ parameterizes the $m_{\slow}$ dimensional smooth slow manifold attached
to $p_0$.
\end{theorem}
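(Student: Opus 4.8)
The plan is to show that the image $P([-1,1]^{m_{\slow}})$ is an invariant manifold through $p_0$ tangent to the slow stable eigenspace, and then to identify it with the slow spectral submanifold. First I would verify that a solution $P$ of \eqref{eq:invEq} with the stated first-order constraints actually gives an invariant set: differentiating the conjugacy $F(P(\theta)) = DP(\theta)\Lambda_{\slow}\theta$ along the linear flow $\theta(t) = e^{\Lambda_{\slow} t}\theta_0$ shows that $t \mapsto P(e^{\Lambda_{\slow} t}\theta_0)$ solves $\dot{y} = F(y)$; indeed $\frac{d}{dt}P(e^{\Lambda_{\slow} t}\theta_0) = DP(e^{\Lambda_{\slow} t}\theta_0)\Lambda_{\slow} e^{\Lambda_{\slow} t}\theta_0 = F(P(e^{\Lambda_{\slow} t}\theta_0))$. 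Hence the image is flow-invariant, and since $\Lambda_{\slow}$ has only negative eigenvalues, every such trajectory converges to $P(0) = p_0$, so the image lies in the stable manifold of $p_0$. The constraint $\partial_j P(0) = \xi_j$ forces the tangent space $T_{p_0}(\text{image})$ to equal $\operatorname{span}(\xi_1,\ldots,\xi_{m_{\slow}})$, the slow stable eigenspace; in particular $DP(0)$ has full rank $m_{\slow}$, so $P$ is an immersion near $0$ and the image is a genuine $m_{\slow}$-dimensional manifold near $p_0$.

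Next I would address existence and the "smooth manifold" claim, which is where the parameterization method proper enters. Expanding $P(\theta) = p_0 + \sum_{|\gamma| \geq 1} p_\gamma \theta^\gamma$ as a formal power series and matching coefficients in \eqref{eq:invEq} yields, at multi-index order $\gamma$, a homological equation of the form $\bigl(DF(p_0) - (\gamma \cdot \lambda_{\slow})\,\mathrm{Id}\bigr) p_\gamma = (\text{known lower-order data})$, where $\gamma \cdot \lambda_{\slow} = \sum_j \gamma_j \lambda_j$. The operator on the left is invertible precisely when $\gamma\cdot\lambda_{\slow}$ is not an eigenvalue of $DF(p_0)$; because $\lambda_1,\ldots,\lambda_{m_{\slow}}$ are the \emph{slow} stable eigenvalues, for $|\gamma|\geq 2$ the combination $\gamma\cdot\lambda_{\slow}$ is more negative than any $\lambda_j$ used, and under the stated non-resonance hypotheses these homological equations are uniquely solvable order by order — this is exactly the content invoked from \cite{cabre2003parameterizationI,cabre2005parameterizationIII}. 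Analyticity of $F$ then gives, via the majorant/contraction argument in those references, convergence of the series on a neighborhood of $0$, which after rescaling we may take to be $[-1,1]^{m_{\slow}}$; this simultaneously produces the smooth (indeed analytic) solution $P$ whose existence the theorem presupposes and certifies that the resulting image is a smooth embedded submanifold.

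Finally I would pin down that this manifold is \emph{the} slow manifold attached to $p_0$, i.e.\ uniqueness among invariant manifolds tangent to the slow eigenspace: two $C^k$ solutions of \eqref{eq:invEq} with the same first-order data must agree, since their difference satisfies the homological recursion with zero forcing and the operators $DF(p_0) - (\gamma\cdot\lambda_{\slow})\mathrm{Id}$ are invertible for $|\gamma|\geq 2$, so all higher Taylor coefficients coincide; combined with invariance and the spectral gap between slow and fast stable directions this gives the standard characterization of the slow spectral submanifold. The main obstacle I anticipate is the non-resonance/invertibility bookkeeping in the homological equations together with the convergence estimate: one must check that $\gamma\cdot\lambda_{\slow}$ avoids $\operatorname{spec}(DF(p_0))$ for all $|\gamma|\geq 2$ (and handle the finitely many borderline cases the theorem's hypotheses are designed to exclude), and then control the growth of the $p_\gamma$ via a majorant series — but since the excerpt explicitly cites \cite{cabre2003parameterizationI,cabre2005parameterizationIII} for these facts, I would quote them rather than reprove them, and concentrate the written proof on the invariance-and-tangency verification, which is short and self-contained.
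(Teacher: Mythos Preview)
Your proposal is correct and, in fact, considerably more detailed than what the paper itself offers: the paper does not prove Theorem~\ref{thm:slowParm} at all but simply records it as a result quoted from \cite{cabre2003parameterizationI,cabre2005parameterizationIII}, adding only the remark that the invariance equation admits an analytic solution under the non-resonance conditions $m_1\lambda_1+\cdots+m_{\slow}\lambda_{m_{\slow}}\neq\lambda_j$ for $|m|\geq 2$. Your sketch---invariance via the conjugacy, tangency from the first-order constraints, and the homological recursion with a majorant argument for convergence---is precisely the strategy of those cited references, so you are reconstructing the proof the paper chose to outsource.
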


It follows from the results of \cite{cabre2003parameterizationI} that 
Equation \eqref{eq:invEq} has analytic solution as long as 
for all $(m_1, \ldots, m_{\slow}) \in \mathbb{N}^{m_{\slow}}$ with 
$m_1 + \ldots + m_{\slow} \geq 2$, 
the non-resonance conditions
$m_1 \lambda_1 + \ldots + m_{\slow} \lambda_{m_{\slow}} \neq \lambda_j$ for
$1 \leq j \leq d$, 
are satisfied.  Observe that this reduces to a finite number of conditions.  
Moreover, the solution is unique up to the choice of the scalings of the eigenvectors
$\xi_1, \ldots, \xi_{m_{\slow}}$.

To control the fast dynamics we  exploit the 
``slow manifold Floquet theory'' developed in \cite{MR3541499}.
The idea is to study certain linearized invariance equations describing the 
stable/unstable bundles attached to the slow stable manifold. These invariant 
bundles describe the linear approximation of the full stable manifold near the slow stable manifold, and in addition 
they provide control over the normal and tangent directions.  
Combining the stable, unstable, and tangent bundles provides a frame bundle for the 
phase space in a tubular region surrounding the slow manifold -- the ``good coordinates''
exploited in Section \ref{sec:SH_Nonlinear}.
The idea is illustrated in Figure \ref{fig:slowBundles}.

Computation of the invariant frame bundles is facilitated by the following theorem, 
the main result of \cite{MR3541499}.  Note that we apply this theorem only in a
finite dimensional Galerkin projection of our PDE.  

\begin{theorem}[Slow-stable manifold Floquet normal form] \label{thm:slowFloquet}
Let $F \colon \mathbb{R}^d \to \mathbb{R}^d$, $p_0 \in \mathbb{R}^d$, $DF(p_0)$,
$\lambda_1, \ldots, \lambda_{d}$, $\xi_1, \ldots, \xi_{d}$, $m_{\slow} < d$,
$\Lambda_{\slow}$, $\Lambda$, and $P \colon [-1, 1]^{m_{\slow}} \to \mathbb{R}^d$
be as in Theorem \ref{thm:slowParm}.
Assume that  for $1 \leq j \leq d$
the functions $q_j \colon  [-1, 1]^{m_{\slow}} \to \mathbb{R}^d$
are smooth solutions of the equations
\begin{equation}\label{eq:bundleInvEq}
DF(P(\theta)) q_j(\theta) = \lambda_j q_j(\theta) + D q_j(\theta) \Lambda_{\slow} \theta, 
\end{equation}
for $\theta \in [-1, 1]^{m_{\slow}}$, subject to the constraints
$q_j(0) = \xi_j$.
Let $GL(\mathbb{R}^d)$ denote the collection of all non-singular $d \times d$ 
matrices with real entries.  Define $Q \colon [-1, 1]^{m_{\slow}} \to GL(\mathbb{R}^d)$
by 
\[
Q(\theta) = \left[ q_1(\theta) | \ldots | q_d(\theta)\right].
\] 
Then 
\begin{enumerate}
\item For all $\theta \in [-1, 1]^{m_{\slow}}$ the collection of vectors 
$q_1(\theta)$, $\ldots$, $q_d(\theta)$ span $\mathbb{R}^d$.  That is, $Q$ takes values in 
$GL(\mathbb{R}^d)$ and hence parameterizes a frame bundle.
\item For all $t \geq 0$ and for all $\theta \in [-1, 1]^{m_{\slow}}$, the derivative of the flow along 
the slow stable manifold factors as 
\begin{equation}\label{e:MQ}
M(t) = Q(e^{\Lambda_{\slow} t}  \theta) e^{\Lambda t} Q^{-1}(\theta),
\end{equation}    
where $M(t)$ is the solution of the equation of first variation for $F$ along $P(\theta)$:
\[
M'(t) = DF(P(\theta)) M(t), \qquad \text{for all } t \geq 0 ,
\]   
with $M(0)$ the identity matrix.
\end{enumerate}
\end{theorem}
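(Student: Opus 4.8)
The plan is to track the frame matrix $Q$ along the linear slow flow and compare it with the fundamental matrix of the first-variation equation; this is the slow-manifold analogue of the classical Floquet factorization for periodic orbits. Fix a base point $\theta \in [-1,1]^{m_{\slow}}$. Since the slow stable eigenvalues $\lambda_1,\dots,\lambda_{m_{\slow}}$ are negative, the curve $t \mapsto e^{\Lambda_{\slow}t}\theta$ stays in $[-1,1]^{m_{\slow}}$ for all $t \ge 0$ and converges to $0$ as $t \to \infty$; hence $\gamma(t) := P(e^{\Lambda_{\slow}t}\theta)$ is well defined on $[0,\infty)$, and so are $Q(e^{\Lambda_{\slow}t}\theta)$ and the solution $M(t)$ of the first-variation equation $\dot M = DF(\gamma(t))M$, $M(0)=I$, along the slow-manifold trajectory through $P(\theta)$. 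A preliminary computation using the invariance equation \eqref{eq:invEq} shows $\dot\gamma(t) = DP(e^{\Lambda_{\slow}t}\theta)\,\Lambda_{\slow}e^{\Lambda_{\slow}t}\theta = F(\gamma(t))$, so $\gamma$ is genuinely an orbit of $F$ and $M(t)$ is precisely the matrix appearing in item~(2).

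First I would differentiate $W(t) := Q(e^{\Lambda_{\slow}t}\theta)$ column by column: its $j$-th column is $q_j(e^{\Lambda_{\slow}t}\theta)$, with time derivative $Dq_j(e^{\Lambda_{\slow}t}\theta)\,\Lambda_{\slow}e^{\Lambda_{\slow}t}\theta$, and the bundle invariance equation \eqref{eq:bundleInvEq} evaluated at the argument $e^{\Lambda_{\slow}t}\theta$ rewrites this column as $DF(\gamma(t))q_j(e^{\Lambda_{\slow}t}\theta) - \lambda_j q_j(e^{\Lambda_{\slow}t}\theta)$. Reassembling the columns (and using $\Lambda = \mathrm{diag}(\lambda_1,\dots,\lambda_d)$) yields the linear matrix ODE
\[
\dot W(t) = DF(\gamma(t))\,W(t) - W(t)\,\Lambda, \qquad W(0) = Q(\theta) .
\]
Then I would check that $t \mapsto M(t)\,Q(\theta)\,e^{-\Lambda t}$ solves the same initial value problem: differentiating and using $\dot M = DF(\gamma)M$, $M(0)=I$, together with the fact that $\Lambda$ commutes with $e^{-\Lambda t}$, reproduces both the equation and the initial value. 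By uniqueness of solutions to linear ODEs we obtain the identity
\[
Q(e^{\Lambda_{\slow}t}\theta) = M(t)\,Q(\theta)\,e^{-\Lambda t}, \qquad t \ge 0 ,
\]
which I will refer to as $(\star)$.

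Both conclusions follow from $(\star)$. For item~(1), suppose $Q(\theta)$ were singular for some $\theta$; then the right-hand side of $(\star)$, and hence $Q(e^{\Lambda_{\slow}t}\theta)$, is singular for every $t \ge 0$, so $\det Q(e^{\Lambda_{\slow}t}\theta) = 0$ for all $t$. Letting $t \to \infty$ and using continuity of the $q_j$ gives $Q(e^{\Lambda_{\slow}t}\theta) \to [\,\xi_1 \mid \dots \mid \xi_d\,]$, whose determinant is nonzero because $DF(p_0)$ is diagonalizable and its eigenvectors therefore form a basis of $\mathbb{R}^d$; this contradiction shows $Q(\theta) \in GL(\mathbb{R}^d)$ for every $\theta \in [-1,1]^{m_{\slow}}$, so $Q$ parameterizes a frame bundle. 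For item~(2), now that $Q(\theta)$ is invertible, right-multiplying $(\star)$ by $e^{\Lambda t}Q^{-1}(\theta)$ gives $M(t) = Q(e^{\Lambda_{\slow}t}\theta)\,e^{\Lambda t}\,Q^{-1}(\theta)$, which is exactly \eqref{e:MQ} (and the case $t=0$ recovers $M(0)=I$).

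The column-wise differentiation and the verification of the explicit solution are routine; the step that needs a genuine idea is item~(1), where one exploits the negativity of the slow eigenvalues to flow the base point back to the origin and thereby reduce invertibility of $Q(\theta)$ for arbitrary $\theta$ to the obvious invertibility of the eigenvector matrix $[\,\xi_1 \mid \dots \mid \xi_d\,]$. The only bookkeeping requiring care is checking that every curve involved remains inside $[-1,1]^{m_{\slow}}$ — so that $P$, the $q_j$, and their derivatives are evaluated where they are defined — and that $M(t)$ exists on all of $[0,\infty)$; both are immediate from $\lambda_i < 0$ for $1 \le i \le m_{\slow}$.
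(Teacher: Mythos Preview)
Your proof is correct. The paper itself does not give a proof of this theorem; it cites it as the main result of \cite{MR3541499} and merely uses it. Your argument is a clean, self-contained derivation: the key identity $(\star)$, obtained by matching the linear matrix ODE satisfied by $W(t)=Q(e^{\Lambda_{\slow}t}\theta)$ with that satisfied by $M(t)Q(\theta)e^{-\Lambda t}$, immediately yields both conclusions, and your flow-to-the-origin argument for invertibility of $Q(\theta)$ is the natural one. One small remark: the displayed variational equation in the statement is written with $DF(P(\theta))$ rather than $DF\bigl(P(e^{\Lambda_{\slow}t}\theta)\bigr)$; you correctly read this as the first-variation equation along the orbit through $P(\theta)$, which is confirmed by the paper's subsequent column-wise identity $M(t)q_j(\theta)=e^{\lambda_j t}q_j(e^{\Lambda_{\slow}t}\theta)$.
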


\begin{figure}[t!]
	\centering
	\includegraphics[width=0.6\linewidth]{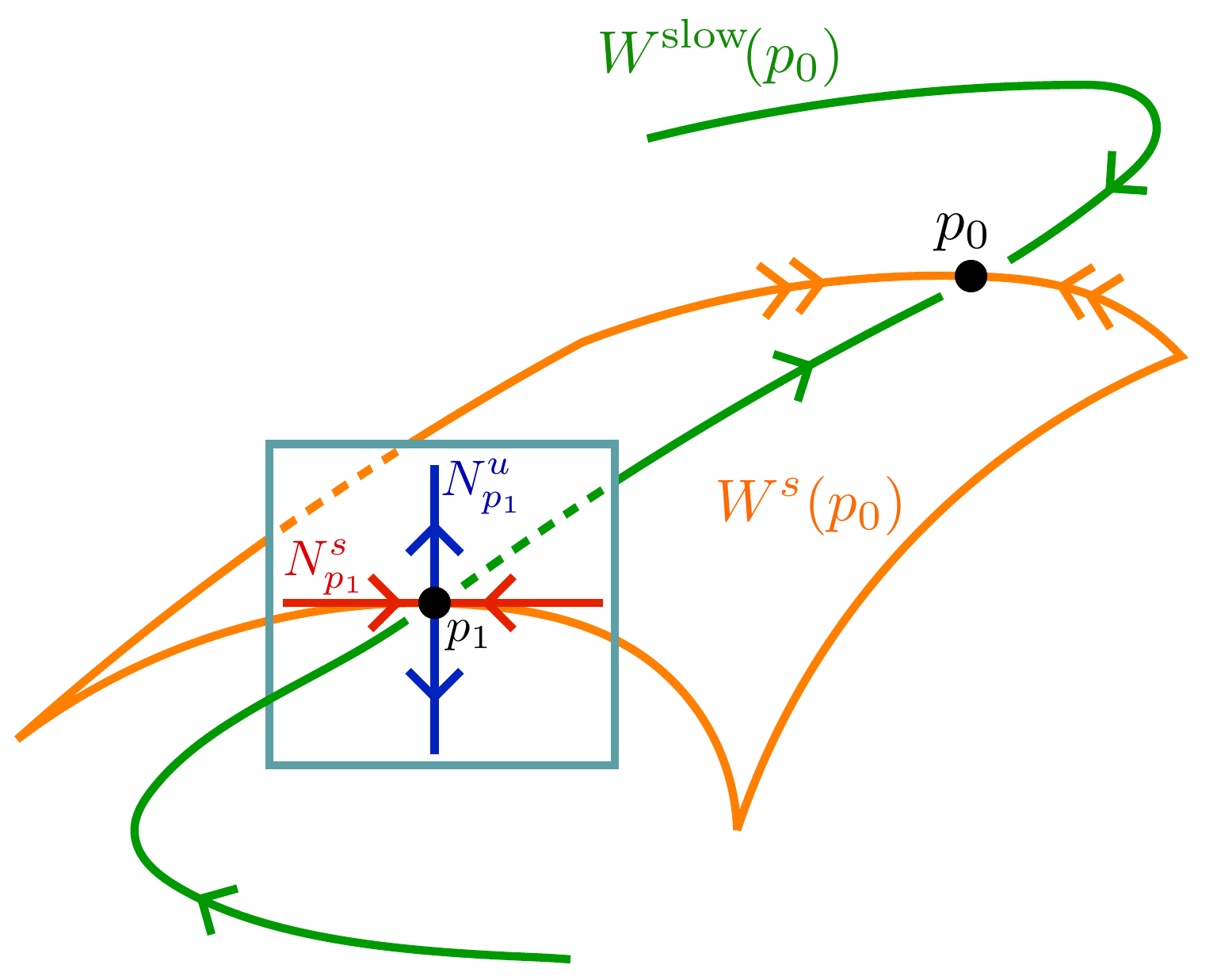}
	\caption{\textbf{Slow stable manifold and attached frame bundles:} the figure
	illustrates an equilibrium solution $p_0$ and its slow stable manifold in green.
	The orange surface illustrates the full stable manifold, of which the slow manifold
	is a submanifold.  At each point on the slow manifold there are invariant 
	stable/unstable normal bundles.  The stable normal bundle describes the 
	stable manifold of $p_0$ near $W^{\slow}$.  Taking the stable, unstable, 
	and tangent bundles gives a frame for the entire space.  
	Theorem \ref{thm:slowFloquet} provides an explicit method for computing these
	structures.  }
	\label{fig:slowBundles}
\end{figure}

Considering~\eqref{e:MQ} 
one column at a time gives that the frame 
bundles $q(\theta)_j$, $1 \leq j \leq d$ satisfy the invariance equation
\[
M(t) q_j(\theta) = e^{\lambda_j t} q_j \left(e^{\Lambda_{\slow} t} \theta \right), \qquad\text{for } \theta \in [-1, 1]^{m_{\slow}}.
\] 
This says that the flow along $P(\theta)$ leaves the direction of $q_j$ invariant (maps the bundle into 
itself) but expands vectors at an exponential rate of $\lambda_j$.  
It follows that if  $q_{m_{\slow}+1}(\theta), \ldots, q_{m_s}(\theta)$ are the parameterized vector 
bundles associated with the stable eigenvalues which have not been designated as slow (the so called
\textit{fast stable} directions), then for each $\theta \in  [-1, 1]^{m_{\slow}}$ these invariant bundles are 
the fastest contracting directions near $P(\theta)$, and hence they describe $W^s(p_0)$ near $P(\theta)$.

We now define a nonlinear change of coordinates which, to first order, 
diagonalizes the vector field $F$ near $P(\theta)$.
Let $d = m_{\slow} + m_{\fast} + m_{\unst}$.  
Define the coordinate change $K \colon [-1, 1]^{m_{\slow}} \times [- \epsilon_f, \epsilon_f]^{m_{\fast}}
\times [-\epsilon_u, \epsilon_u]^{m_{\unst}}
 \to \mathbb{R}^d$  
by 
\[
K(\theta, \phi_f, \phi_u) := P(\theta) + Q_f(\theta) \phi_f + Q_u(\theta) \phi_u,
\]
i.e.\ $K$ is a diffeomorphism with $K(0,0,0)= p_0$
and $DK(0,0,0)=Q(0)$, the matrix of eigenvectors.
Here $\theta$ is the coordinate in the slow stable manifold,
$Q_f$ and $\phi_f$ denote the fast stable directions, and $Q_u$ and $\phi_u$ denote the unstable directions.
Recall that the defining relations for $P$, $Q_f$ and $Q_u$ are
\begin{align}
	F(P(\theta)) &= DP(\theta) \Lambda_{\slow} \theta , \label{e:defrelP} \\
	DF(P(\theta))Q_f(\theta) &= DQ_f(\theta) \Lambda_{\slow} \theta + Q_f(\theta) \Lambda_{\fast} , \label{e:defrelQs} \\
	DF(P(\theta))Q_u(\theta) &= DQ_u(\theta) \Lambda_{\slow} \theta + Q_u(\theta) \Lambda_{\unst} .  \label{e:defrelQu} 
\end{align}

We use $K$ to pull back the vector field $F \colon \mathbb{R}^d \to \mathbb{R}^d$,
resulting in
\[
\left(
\begin{array}{c}
\theta' \\
\phi_f' \\
\phi_u'
\end{array}
\right)
=  DK^{-1}(\theta, \phi_f, \phi_u) \, F(K(\theta, \phi_f, \phi_u)) =  
\left(
\begin{array}{c}
\Lambda_{\slow} \theta + N_{\theta}(\theta, \phi_f, \phi_u) \\
\Lambda_{\fast} \phi_f + N_{\phi_f}(\theta, \phi_f, \phi_u) \\
\Lambda_{\unst} \phi_u + N_{\phi_u}(\theta, \phi_f, \phi_u)
\end{array}
\right),
\]
where each of the $N_k(\theta,\phi_f,\phi_u)$ is \emph{quadratic in $\phi_f$ and $\phi_u$},
for $k=\theta,\phi_f,\phi_u$.

To see this, and to obtain explicitly the form of $N_k$, expanding about $P(\theta)$ results in  
\begin{align}
F(K(\theta, \phi_f, \phi_u)) &= F(P(\theta) + Q_f(\theta) \phi_f + Q_u(\theta) \phi_u) \nonumber \\
&= F(P(\theta)) + DF(P(\theta)) \left[ Q_f(\theta) \phi_f + Q_u(\theta) \phi_u \right]
+ R(\theta,\phi_f,\phi_u) , \label{eq:fKexpanded}
\end{align}
where the remainder term $R$ is  quadratic in $\phi_f$ and $\phi_u$. 
For the first two terms in~\eqref{eq:fKexpanded} we use the defining 
relations for $P$, $Q_f$ and $Q_u$ as well as the definition of $K$ to rewrite
\begin{align*}
	F(P(\theta)) + DF(P(\theta)) \left[ Q_f(\theta) \phi_f + Q_u(\theta) \phi_u \right] 
&= DP(\theta) \Lambda_{\slow} \theta \\
& \qquad
+ DQ_f(\theta) (\Lambda_{\slow} \theta, \phi_f) + 
Q_f(\theta) \Lambda_{\fast} \phi_f \nonumber \\
& \qquad + DQ_u(\theta) (\Lambda_{\slow} \theta, \phi_u) +  
Q_u(\theta) \Lambda_{\unst} \phi_u  \label{eq:form1} \\
& = DK(\theta, \phi_f, \phi_u) 
\left(
\begin{array}{c}
\Lambda_{\slow} \theta  \\
\Lambda_{\fast} \phi_f \\
\Lambda_{\unst} \phi_u
\end{array}
\right). 
\end{align*} 
Then 
\[
DK^{-1}(\theta, \phi_f, \phi_u) \, F(K(\theta, \phi_f, \phi_u)) = 
\left(
\begin{array}{c}
\Lambda_{\slow} \theta  \\
\Lambda_{\fast} \phi_f \\
\Lambda_{\unst} \phi_u
\end{array}
\right)
+ DK^{-1}(\theta, \phi_f, \phi_u) \, R(\theta,\phi_f,\phi_u),
\] 
hence
\[
N(\theta, \phi_f, \phi_u) = 
DK(\theta, \phi_f, \phi_u)^{-1} R(\theta,\phi_f,\phi_u),
\]
As $R$ is quadratic in $\phi_f$ and $\phi_u$, so is $N$.  
Once again we refer to Figure \ref{fig:slowBundles} for the geometric 
interpretation of the coordinate change.

Note that the invariance equation \eqref{eq:invEq} and the 
invariant bundle equations \eqref{eq:bundleInvEq} do not have to be solved exactly.  
Given any approximate solutions, defects are defined by 
considering the invariance equations defining the objects. 
The numerical approximations exploit formal power series methods which have been 
discussed in many places.  In particular, we use the numerical schemes discussed in 
\cite{MR3541499} freely throughout Section \ref{sec:SH_Nonlinear}.

\section{Exponential Tracking}
\label{sec:ExponentialTracking}

\begin{remark}
Throughout this section,   $ \rho \in \R^{m_s}$ denotes a positive vector 
(the radius of the domain of the local stable manifold chart candidates)
and $ P \in \R^{m_s} \otimes \R^{m_u}$ denotes a positive tensor 
(bounding the subspace-Lipschitz constants of our charts). 
\end{remark}

To begin the analysis we first derive estimates on $x(t,\xi,\alpha)$, the solution of the projected system~\eqref{eq:ProjectedSystem}.   
\begin{proposition}
	\label{prop:InitialBound}
Let $ \xi,\zeta \in B_s (\rho)$.
	If $x(t,\xi,\alpha) $ and $ x(t,\zeta,\alpha) $ stay inside $B_s$ for all $ t \in [0,T]$,  then
	\[
	|  x(t,\xi,\alpha) - x(t,\zeta,\alpha)  | \leq  C_s |\xi -\zeta|  e^{(\lambda_{s} +C_s \hat{\cH}) t} 
	\qquad\text{for all }  t \in [ 0, T].
	\]
\end{proposition}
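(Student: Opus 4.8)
The plan is to apply the variation of constants (Duhamel) formula to the projected system \eqref{eq:ProjectedSystem} for the two initial conditions $\xi$ and $\zeta$, subtract, and then estimate using the linear semigroup bound \eqref{eq:TOTALstableEigenvalueEstimate} together with the Lipschitz bound on the composed nonlinearity. Concretely, writing $x(t) := x(t,\xi,\alpha)$ and $y(t) := x(t,\zeta,\alpha)$, both solve $\dot{\xx}_s = \Lambda_s \xx_s + \cN_s(\xx_s, \alpha(\xx_s))$, so
\[
x(t) - y(t) = e^{\Lambda_s t}(\xi - \zeta) + \int_0^t e^{\Lambda_s (t-\tau)} \bigl[ \cN_s(x(\tau),\alpha(x(\tau))) - \cN_s(y(\tau),\alpha(y(\tau))) \bigr]\, d\tau.
\]
Since $\Lambda_s$ is the generator here (not $\Lambda_s + L_s^s$), one should be slightly careful: the estimate \eqref{eq:TOTALstableEigenvalueEstimate} is for $e^{(\Lambda_s + L_s^s)t}$. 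However, $\cN_s$ as defined in \eqref{eq:N_no_hat_def} already \emph{contains} the linear term $L_s^s$; so the natural move is to rewrite the integrand's linear part back into the semigroup, i.e.\ use $\dot{\xx}_s = (\Lambda_s + L_s^s)\xx_s + [\text{remaining terms}]$, and apply Duhamel with the semigroup $e^{(\Lambda_s+L_s^s)t}$. That gives the prefactor $C_s$ and the exponential rate $\lambda_s$ from \eqref{eq:TOTALstableEigenvalueEstimate}.

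Next I would bound the nonlinear remainder. After peeling off $L_s^s$, the integrand is $[\hat{\cN}_s + L_s^u\alpha]$ evaluated along the graph, i.e.\ exactly the quantity whose derivative norm is controlled by $\hat{\cH}$ in Definition~\ref{def:H}. By the mean value inequality along the segment between $x(\tau)$ and $y(\tau)$ (which stays in $B_s$ by convexity of the box $B_s(\rho)$ and the hypothesis that both trajectories remain in $B_s$), the difference of these terms is bounded by $\hat{\cH}\, |x(\tau) - y(\tau)|$. Setting $g(t) := e^{-\lambda_s t} |x(t) - y(t)|$ and using $\|e^{(\Lambda_s+L_s^s)t}\| \le C_s e^{\lambda_s t}$ on $[0,t]$, one obtains
\[
g(t) \le C_s |\xi - \zeta| + C_s \hat{\cH} \int_0^t g(\tau)\, d\tau,
\]
and Gronwall's lemma yields $g(t) \le C_s |\xi - \zeta|\, e^{C_s \hat{\cH} t}$, i.e.\ $|x(t) - y(t)| \le C_s |\xi - \zeta|\, e^{(\lambda_s + C_s \hat{\cH})t}$, which is the claimed bound.

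The only genuinely delicate point is bookkeeping around which linear operator is absorbed into the semigroup: one must be consistent that $\cN_s$ includes $L_s^s$ and that $\hat{\cH}$ is precisely the Lipschitz-type constant for the \emph{remaining} nonlinearity $\hat{\cN}_s + L_s^u \alpha$ composed with the graph map $\xx_s \mapsto (\xx_s, \alpha(\xx_s))$ — this is why Definition~\ref{def:H} is phrased the way it is, and why the bound \eqref{eq:NHestimate} with the $H$-tensor is the component-wise analogue. The convexity of $B_s(\rho)$ (a product of intervals) is what licenses the mean value estimate along the connecting segment; everything else is the standard integral-inequality argument. I expect no real obstacle beyond this consistency check, since the hypothesis that both trajectories stay in $B_s$ on $[0,T]$ is exactly what makes all the constants applicable on that interval.
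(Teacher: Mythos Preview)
Your proposal is correct and follows essentially the same approach as the paper: apply variation of constants with the semigroup $e^{(\Lambda_s+L_s^s)t}$, bound the remaining integrand $L_s^u\alpha + \hat{\cN}_s$ via the mean value theorem and the constant $\hat{\cH}$, and close with Gronwall. Your observation about which linear part to absorb into the semigroup is exactly the right bookkeeping, and matches the paper's treatment.
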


\begin{proof}
	Recall from \eqref{eq:ProjectedSystem} that
	\[
	\dot{\xx}_s= \Lambda_s \xx_s + L_s^s \xx_s + L_s^u \alpha(\xx_s) + \hat{\cN}_s(\xx_s,\alpha(\xx_s))  .
	\]
	Define $ x(t) = x(t,\xi ,\alpha)$ and $ z(t) = x(t, \zeta,\alpha)$. 
	By variation of constants, we have that 
	\[
	x(t) = e^{(\Lambda_s +L_s^s) t} \xi + \int_0^t e^{(\Lambda_s +L_s^s)(t-\tau)} 
	\left( L_s^u \alpha(x(\tau)) +
	\hat{\cN}_s \left(x(\tau),\alpha(x(\tau))\right) \right) d\tau .
	\]	
	From  \eqref{eq:TOTALstableEigenvalueEstimate},
	 we have that $|e^{(\Lambda_s +L_s^s) t} \xi_s| \leq C_s |e^{\lambda_s t} \xi_s | $. 
         Let $ U(t) = |x(t) - z(t)|$, so that  
	\begin{align}
	e^{-\lambda_s t } U(t) &\leq C_s  |\xi -\zeta|  + \int_0^t C_s e^{- \lambda_s \tau} 
	\left|L_s^u ( \alpha(x(\tau)) -\alpha(z(\tau))) \right| d\tau \nonumber\\
	& \hspace{3cm} + \int_0^t C_s e^{- \lambda_s \tau} \left|
	\hat{\cN}_s \left(x(\tau),\alpha(x(\tau))\right) -
	\hat{\cN}_s \left(z(\tau),\alpha(z(\tau))\right)\right|d\tau .
	\label{e:gronwallproof}
	\end{align} 
	Recall from Definition~\ref{def:H} the definition of $ \hat{\cH}$.
	Applying the mean value theorem gives
	\begin{align*}
\left|L_s^u ( \alpha(x(\tau)) -\alpha(z(\tau))) \right| +    \left|
\hat{\cN}_s \left(x(\tau),\alpha(x(\tau))\right) -
\hat{\cN}_s \left(z(\tau),\alpha(z(\tau))\right)\right| 
&\leq \hat{\cH} | x(\tau) - z(\tau)|  .
	\end{align*}
%
%
Plugging this bound into~\eqref{e:gronwallproof} gives 
	\begin{equation*}
	e^{-\lambda_{s} t} U(t)
	\leq 
	C_s |\xi -\zeta| 
	+ \int_0^t   C_s
	\hat{\cH}  e^{-\lambda_{s} \tau}   U(\tau) d\tau.
	\end{equation*}
	By Gronwall's inequality, it follows that
	\(
	e^{-\lambda_{s} t} U(t) \leq C_s |\xi -\zeta|  \exp \{ C_s \hat{\cH} t \}
	\), 
	which we rewrite as 
	\[
	U(t) \leq C_s |\xi -\zeta|  e^{(\lambda_{s} +C_s \hat{\cH}) t} .
	\qedhere
	\]
\end{proof}

From the proof of Proposition \ref{prop:InitialBound},
it is clear that $ \lambda_s + C_s \hat{\cH} <0$ implies the
solution limits to zero. 
Taking $ \zeta =0$, this shows that points in 
$ B_s( \tfrac{1}{C_s} \rho )$ stay in $B_s(\rho)$ for all time.   
A sharper version of Proposition  \ref{prop:InitialBound} 
follows by taking into account the rates in the different subspaces of $ X_s$. 
Consider for example the decomposition $ X_s =   X_{\slow} \times X_{\fast} $ 
and the initial condition $ \xi = ( \xi_{\slow} , \xi_{\fast} ) \in   X_{\slow} \times X_{\fast}$. 
Solving the linear system, and exploiting the 
bound from \eqref{eq:stableEigenvalueEstimate}, gives that
$ | e^{\Lambda_{\slow} t} \xi_{\slow}| \leq  e^{\lambda_{\slow} t} |\xi_{\slow}| $, and that 
$ | e^{\Lambda_{\fast} t} \xi_{\fast}| \leq  e^{\lambda_{\fast} t} |\xi_{\fast}| $.  
If $0 >  \lambda_{\slow}   \gg  \lambda_{\fast}$, 
we expect that solutions of Equation \eqref{eq:ProjectedSystem} 
have a  component  $x_{\fast}(t,\xi,\alpha)$ that initially decreases very quickly. 

This intuition motivates the definition of 
the characteristic ``control'' rates,  arising from each subspace in the 
stable eigenspace,  by which solutions to \eqref{eq:ProjectedSystem} grow/shrink. 
The effect of coupling the various subspaces together
is controlled by the constant 
$ \gamma_0 = \lambda_s + C_s \hat{  \cH}$, the exponent derived in Proposition \ref{prop:InitialBound}. 

\begin{definition}
	\label{def:Gamma}
	For integers $0 \leq k \leq m_s $,
	define constants $ \gamma_k $  (control rates) as 
	\begin{equation*}
	\gamma_k := 
	\begin{cases}
	\lambda_s + C_s \hat{\cH}& \mbox{if } k=0 \\
	\lambda_{k} + H_{k}^{k}  & \mbox{otherwise.}
	\end{cases}
	\end{equation*}
 Assume the ordering $\gamma_k > \gamma_{k+1}$. 
\end{definition}
In practice the ordering of $\gamma_k$ is always satisfied by suitably (re)arranging the subspaces $X$.  
The strictness of the ordering indicates that on the balls chosen, the nonlinearities do not spoil the subspace splitting.
Using these exponential rates,  we estimate the components of $| x(t,\xi,\alpha) | $ using tensors $G_{j,k}^{n}$ defined as follows. 
\begin{condition}
	\label{cond:G_bounding_function} 
	A tensor $ G \in (\R^{m_s})^{\otimes 2}  \otimes \R^{m_s+1} $ satisfies Condition \ref{cond:G_bounding_function} on the interval $ [0,T]$ if:
\begin{equation}\label{e:condG}
|  x_j(t,\xi,\alpha)  - x_j(t,\zeta,\alpha)  |  
 \leq \sum_{\substack{n\in I \\ 0 \leq k \leq m_s}} e^{\gamma_k t} G_{j,k}^n | \xi_n - \zeta _n|  ,
\end{equation}
for all $ t \in [0,T]$, all $ \xi , \zeta \in B_s(\rho) $ and all $ \alpha \in \cB_{\rho,P}^{0,1}$. 
\end{condition}

\begin{remark}\label{remark:startingpoint}
Since $|x_j| \leq p_j  |x|$, with $p_j$ defined in~\eqref{eq:ProjectionBound}, by Proposition \ref{prop:InitialBound} the tensor 
\begin{equation*}
 \widehat{G}_{j,k}^n := \begin{cases} p_jC_s  & \text{for } k=0,\\
  0 & \text{for } k \neq 0,
  \end{cases}
\end{equation*}
satisfies Condition \ref{cond:G_bounding_function}. 
\end{remark}
Note that while  this tensor  $\widehat{G}$ is non-negative, a  generic tensor $G$ satisfying  Condition \ref{cond:G_bounding_function} can, and in practice will, have negative components.

Additionally, we remark that while this estimate is typically initially 
worse than the bound given by Proposition \ref{prop:InitialBound}, 
an explicit bootstrapping argument allows us to obtain tighter 
component-wise bounds on solutions of Equation \eqref{eq:ProjectedSystem}. 
The bootstrapping argument
applies variation of constants to  Equation \eqref{eq:ProjectedSystem} 
in each subspace, focusing on improving the bound one component at a time. 
To begin, we first prove the following proposition.  
\begin{proposition}
	\label{prop:InitialClaim}
	Let $\alpha \in \cB_{\rho,P}^{0,1}$ and $ \xi,\zeta \in B_s(\rho)$.
	Define $
	u_{i}(t) :=  |   x_i(t,\xi,\alpha)  - x_i(t,\zeta,\alpha)  |$ for $i\in I$. 
	If $x(t, \xi , \alpha) , x(t, \zeta , \alpha) \in B_s(\rho)$ for $ t \in [0,T]$, then for each $j \in I$ and all $ t \in [0,T]$ we have 
	\begin{equation}
	\label{eq:InitialInequality}
	e^{-\lambda_j t} u_j(t) \leq |\xi_j - \zeta_j| 
	+ \int_0^t e^{-\lambda_j \tau} \sum_{i\in I}  H_j^i   u_{i}(\tau) d\tau .
	\end{equation}
\end{proposition}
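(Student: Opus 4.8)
The plan is to apply the variation of constants formula to equation~\eqref{eq:ProductODE_stab}, component by component in the stable subspaces $X_i$, and then to use the linear decay estimate~\eqref{eq:stableEigenvalueEstimate} together with the Lipschitz bound~\eqref{eq:NHestimate} on the nonlinearity. Concretely, for each $j \in I$ the projected solution $x_j(t,\xi,\alpha)$ satisfies $\dot{x}_j = \Lambda_j x_j + \cN_j(x_s,\alpha(x_s))$, so Duhamel's formula gives
\[
x_j(t,\xi,\alpha) = e^{\Lambda_j t}\xi_j + \int_0^t e^{\Lambda_j(t-\tau)} \cN_j\bigl(x_s(\tau,\xi,\alpha),\alpha(x_s(\tau,\xi,\alpha))\bigr)\,d\tau,
\]
and the analogous formula holds for the initial condition $\zeta$. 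Subtracting the two, applying $|e^{\Lambda_j t}\cdot|\le e^{\lambda_j t}|\cdot|$ from~\eqref{eq:stableEigenvalueEstimate} (valid since $t-\tau\ge 0$ for $\tau\in[0,t]$), and using the triangle inequality yields
\[
u_j(t) \le e^{\lambda_j t}|\xi_j - \zeta_j| + \int_0^t e^{\lambda_j(t-\tau)} \bigl| \cN_j(x_s(\tau),\alpha(x_s(\tau))) - \cN_j(z_s(\tau),\alpha(z_s(\tau))) \bigr|\,d\tau,
\]
where I abbreviate $z_s(\tau) = x_s(\tau,\zeta,\alpha)$.

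Next I would invoke the key estimate~\eqref{eq:NHestimate}, which was established precisely for $\alpha\in\cB_{\rho,P}^{0,1}$ and arguments in $B_s(\rho)$: for each $\bj\in\bI$ and any $\xi,\zeta\in B_s(\rho)$ one has $|\cN_\bj(\xi,\alpha(\xi)) - \cN_\bj(\zeta,\alpha(\zeta))| \le H_\bj^i|\xi_i - \zeta_i|$ (Einstein summation over $i\in I$). Applying this pointwise in $\tau$ with $\xi$ replaced by $x_s(\tau,\xi,\alpha)$ and $\zeta$ replaced by $x_s(\tau,\zeta,\alpha)$ — both of which lie in $B_s(\rho)$ by the hypothesis that the trajectories stay in $B_s(\rho)$ on $[0,T]$ — gives
\[
\bigl| \cN_j(x_s(\tau),\alpha(x_s(\tau))) - \cN_j(z_s(\tau),\alpha(z_s(\tau))) \bigr| \le \sum_{i\in I} H_j^i\, |x_i(\tau,\xi,\alpha) - x_i(\tau,\zeta,\alpha)| = \sum_{i\in I} H_j^i\, u_i(\tau).
\]
Substituting this into the integral inequality above and then multiplying through by $e^{-\lambda_j t}$ produces exactly~\eqref{eq:InitialInequality}, since $e^{-\lambda_j t}e^{\lambda_j(t-\tau)} = e^{-\lambda_j\tau}$.

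Honestly, this proof is essentially bookkeeping: the only points requiring minor care are (i) confirming that the hypothesis ``$x(t,\xi,\alpha), x(t,\zeta,\alpha)\in B_s(\rho)$ for $t\in[0,T]$'' is exactly what licenses the use of~\eqref{eq:NHestimate} at every intermediate time $\tau\in[0,t]\subseteq[0,T]$, and (ii) making sure the sign conventions in the exponential factors are handled correctly (here $t-\tau\ge 0$ so the stable estimate applies, and the $\lambda_j<0$ sign does not matter for the validity of the bound — only the direction of time does). There is no genuine obstacle; the Gronwall-type conclusion that would extract a closed-form bound from~\eqref{eq:InitialInequality} is deliberately postponed, since the whole point of the subsequent bootstrapping argument (Algorithm~\ref{alg:BootStrap}) is to iterate this integral inequality rather than crudely apply Gronwall once as in Proposition~\ref{prop:InitialBound}. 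So I would present the derivation cleanly in the three steps above — Duhamel, linear decay estimate, nonlinearity Lipschitz bound — and stop at~\eqref{eq:InitialInequality}.
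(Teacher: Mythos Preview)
Your proposal is correct and follows essentially the same approach as the paper: variation of constants on each component $x_j$, the semigroup decay estimate~\eqref{eq:stableEigenvalueEstimate}, and the Lipschitz bound~\eqref{eq:NHestimate} on $\cN_j$, concluding with multiplication by $e^{-\lambda_j t}$. The paper's proof is slightly terser but the argument is identical.
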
 
\begin{proof} 
	By variation of constants
	\[
	x_j(t,\xi,\alpha) = e^{\Lambda_j t} \xi_j + \int_0^t e^{\Lambda_j (t-\tau)} \cN_j \left(x(\tau,\xi,\alpha),\alpha(x(\tau,\xi,\alpha))\right) d\tau .
	\]
	Then
	\begin{align*}
	|\cN_j(x(t,\xi,\alpha),\alpha(x(t,\xi,\alpha))) - \cN_j(x(t,\zeta,\alpha),\alpha(x(t,\zeta,\alpha))) | 
	&\leq  
	H_j^i u_{i}(t)
	\qquad\text{for all } t \geq 0. 	
	\end{align*}
	Together with the estimate $|e^{\Lambda_j t} \xi_j| \leq   e^{\lambda_j t} |\xi_j|$ for $t\geq 0$ 
	this gives
	\begin{align*}
	e^{- \lambda_j t}	u_{j}(t) 
	&\leq   
	| \xi_j - \zeta_j | +
	\int_0^t  e^{- \lambda_j \tau}  
	\sum_{i\in I} H_j^i  u_{i}(\tau)  d\tau. \qedhere
	\end{align*}	
\end{proof}

Given a tensor $G$ satisfying Condition \ref{cond:G_bounding_function}, 
we obtain sharper component-wise estimates by the following theorem.   
 
\begin{theorem}
	\label{thm:LipschitzTracking}
	Let $\alpha \in \cB_{\rho,P}^{0,1}$ and let $ \xi,\zeta \in B_s(\rho)$. 
Suppose  $G$ satisfies Condition \ref{cond:G_bounding_function}, and fix $j \in I$. 
If 	$ G^{n}_{i,j}=0$ for all $ n \in I$ and $i \in I - \{ j\}$,
	then 
	\begin{equation}
	|  x_j(t,\xi,\alpha)  - x_j(t,\zeta,\alpha)  |   \leq | \xi_j-\zeta_j| e^{\gamma_j t} + 
	\sum_{\substack{n, i \in I, i\neq j \\
			0 \leq m \leq m_s, m \neq j }} 
	\frac{e^{\gamma_m t} - e^{\gamma_j t} }{\gamma_m - \gamma_j} H_j^i G_{i,m}^n
	| \xi_n - \zeta_n| .
	\label{eq:UBootBound}
	\end{equation}
	That is,  for $ j \in I$  and $\cT_j : ( \R^{m_s})^{\otimes 2}  \otimes 
	\R^{m_s+1} \to    \R^{m_s} \otimes \R^{m_s+1}$ defined by 	
	\begin{align} \label{eq:Tbasic}
	\left[\cT_j(G ) \right]_{k}^n  &:= 
	\begin{dcases}
	\qquad  \qquad
	\sum_{	n, i\in I, i \neq j }
	(\gamma_k - \gamma_j)^{-1}H_j^i G_{i,k}^{n} 	& \mbox{if } k \neq j , \\
	\delta_{k}^{n} - \sum_{\substack{
			n,i\in I, i \neq j \\
			0\leq m \leq m_s, m \neq j }} 
	(\gamma_m-\gamma_j)^{-1} H_j^i G_{i,m}^n  & \mbox{if } k=j,
	\end{dcases}
	\end{align}
	 replacing $ G_{j,k}^n$ by  $\left[\cT_j(G ) \right]_{k}^n $
	 results in a new tensor $G$ satisfying Condition  \ref{cond:G_bounding_function}.
\end{theorem}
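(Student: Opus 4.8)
The plan is to bootstrap the single scalar inequality of Proposition~\ref{prop:InitialClaim} into the closed form \eqref{eq:UBootBound}, and then to observe that \eqref{eq:UBootBound} is, rearranged, precisely the assertion that the $j$-th rate-row $[\cT_j(G)]_\cdot^\cdot$ satisfies Condition~\ref{cond:G_bounding_function}. Throughout, I fix $\alpha\in\cB_{\rho,P}^{0,1}$ and $\xi,\zeta\in B_s(\rho)$, use the standing hypothesis that both orbits remain in $B_s(\rho)$ on $[0,T]$ (so that Proposition~\ref{prop:InitialClaim} and Condition~\ref{cond:G_bounding_function} apply), and abbreviate $u_i(t)=|x_i(t,\xi,\alpha)-x_i(t,\zeta,\alpha)|$ and $\Delta_n=|\xi_n-\zeta_n|$.

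First I would start from Proposition~\ref{prop:InitialClaim}, namely $e^{-\lambda_j t}u_j(t)\le\Delta_j+\int_0^t e^{-\lambda_j\tau}\sum_{i\in I}H_j^i u_i(\tau)\,d\tau$, and split the inner sum into the diagonal contribution $i=j$ and the off-diagonal contributions $i\neq j$. For $i\neq j$ I insert the bound from Condition~\ref{cond:G_bounding_function} applied to $G$; this is the point where the hypothesis $G_{i,j}^n=0$ for $i\neq j$ is used, since it deletes the $e^{\gamma_j\tau}$ terms and leaves $\sum_{i\neq j}H_j^i u_i(\tau)\le\sum_{n\in I}\sum_{i\neq j}\sum_{k\neq j}H_j^i G_{i,k}^n e^{\gamma_k\tau}\Delta_n=:g(\tau)$ (using $H_j^i\ge 0$). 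Writing $w(t):=e^{-\lambda_j t}u_j(t)$, so that the $i=j$ term becomes $H_j^j\int_0^t w(\tau)\,d\tau$, the inequality collapses to the scalar constant-coefficient form $w(t)\le b(t)+H_j^j\int_0^t w(\tau)\,d\tau$ with explicit forcing $b(t)=\Delta_j+\int_0^t e^{-\lambda_j\tau}g(\tau)\,d\tau$. Since $H_j^j\ge 0$ (it bounds an operator norm), Gronwall's inequality yields $w(t)\le b(t)+H_j^j\int_0^t e^{H_j^j(t-s)}b(s)\,ds$.

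The remaining work is to evaluate these nested integrals explicitly. Carrying out $\int_0^\cdot e^{(\gamma_k-\lambda_j)\tau}\,d\tau$ and then the convolution against $e^{H_j^j(t-s)}$, and using the defining identity $\gamma_j-\lambda_j=H_j^j$ (hence $\gamma_k-\lambda_j-H_j^j=\gamma_k-\gamma_j$ and $H_j^j+(\gamma_k-\gamma_j)=\gamma_k-\lambda_j$), one finds that the constant terms cancel outright and all the spurious $(\gamma_k-\lambda_j)$ denominators produced along the way cancel against the numerators, leaving $w(t)\le\Delta_j e^{H_j^j t}+\sum_{n}\sum_{i\neq j}\sum_{k\neq j}\frac{H_j^i G_{i,k}^n}{\gamma_k-\gamma_j}\bigl(e^{(\gamma_k-\lambda_j)t}-e^{H_j^j t}\bigr)\Delta_n$. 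Multiplying through by $e^{\lambda_j t}$ and using $\lambda_j+H_j^j=\gamma_j$ recovers \eqref{eq:UBootBound} verbatim. Finally, to obtain the $\cT_j$ statement I would rewrite the right-hand side of \eqref{eq:UBootBound} as $\sum_{n\in I}\bigl(e^{\gamma_j t}[\cT_j(G)]_j^n+\sum_{k\neq j}e^{\gamma_k t}[\cT_j(G)]_k^n\bigr)\Delta_n$, which is just a re-collection of the $e^{\gamma_j t}$ terms (producing the $k=j$ branch of \eqref{eq:Tbasic}, including the $\delta_j^n$ coming from $\Delta_j e^{\gamma_j t}$) and the $e^{\gamma_k t}$ terms with $k\neq j$ (producing the $k\neq j$ branch); since all rows $i\neq j$ of $G$ are untouched and already meet Condition~\ref{cond:G_bounding_function} on $[0,T]$, replacing only the $j$-row by $\cT_j(G)$ preserves the condition.

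I expect the main obstacle to be purely organizational: keeping the triple index sums $(n,i,k)$ straight through the two integrations and verifying the cancellations, together with the borderline case $\gamma_k=\lambda_j$ for some $k\neq j$, in which one intermediate antiderivative is $\tau$ rather than an exponential. This case is harmless because the final formula \eqref{eq:UBootBound} involves only the denominators $\gamma_m-\gamma_j$, which are nonzero by the strict ordering $\gamma_k>\gamma_{k+1}$; one may either pass to the (continuous) limit $\gamma_k\to\lambda_j$ in the generic computation, or sidestep the integration entirely by checking directly that the right-hand side of \eqref{eq:UBootBound} is a supersolution of the integral inequality of Proposition~\ref{prop:InitialClaim} and invoking a standard comparison argument. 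Either way, no ingredient is needed beyond Proposition~\ref{prop:InitialClaim}, Condition~\ref{cond:G_bounding_function}, and scalar Gronwall.
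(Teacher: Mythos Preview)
Your proposal is correct and follows essentially the same route as the paper: start from Proposition~\ref{prop:InitialClaim}, split off the diagonal term $i=j$, bound the off-diagonal terms via Condition~\ref{cond:G_bounding_function} (using $G_{i,j}^n=0$ for $i\neq j$ to kill the $e^{\gamma_j\tau}$ contributions), and then resolve the resulting scalar Gronwall-with-forcing inequality. The only cosmetic difference is that the paper packages the Gronwall step and the subsequent exponential integration into Lemma~\ref{prop:Fundamental} (applied with $c_0=\lambda_j$, $c_1=|\xi_j-\zeta_j|$, $c_2=H_j^j$, $\mu_0=\gamma_j$), which delivers~\eqref{eq:UBootBound} directly and, by doing the convolution before the $\tau$-integration, never produces the intermediate $(\gamma_k-\lambda_j)$ denominators you flag as a borderline case.
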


Two lemmas aid in the proof.

\begin{lemma}[see {\cite[p.4]{lakshmikantham1988stability}}]
	Let $u,V,h \in C^0\big([0,\infty), [0,\infty) \big)$
	and suppose that 
	\[
	u(t) \leq V(t) + \int_0^t h(s) u(s) d s .
	\]
	If $V$ is differentiable, then 
	\begin{equation*}
	u(t) \leq  V(0) \exp\left\{ \int_0^t h(s) ds \right\} + \int_0^t V'(s) \exp\left\{ \int_s^t h(\tau) d \tau \right\} ds.
	\end{equation*}
	\label{lem:Gronwall}
\end{lemma}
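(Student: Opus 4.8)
The plan is to convert the integral inequality into a linear differential inequality for the accumulated integral, solve that with an integrating factor, and then recognize the stated right-hand side via a single integration by parts. This is the classical ``Bihari/Lakshmikantham'' refinement of Gronwall, and the argument is self-contained, which is presumably why the paper records it with a citation rather than a proof.

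First I would set $w(t) := \int_0^t h(s) u(s)\, ds$. Since $h$ and $u$ are continuous, $w$ is $C^1$ with $w(0) = 0$ and $w'(t) = h(t) u(t)$. Substituting the hypothesis $u(t) \leq V(t) + w(t)$ and using $h \geq 0$ (this is where nonnegativity of $h$ is genuinely needed, to preserve the inequality direction) gives $w'(t) \leq h(t) V(t) + h(t) w(t)$, i.e. $w'(t) - h(t) w(t) \leq h(t) V(t)$ for all $t \geq 0$.

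Next I would multiply through by the integrating factor $\mu(t) := e^{-\int_0^t h(s)\, ds}$, which is $C^1$ with $\mu'(t) = -h(t)\mu(t)$, so that $\frac{d}{dt}\big(w(t)\mu(t)\big) = \mu(t)\big(w'(t) - h(t)w(t)\big) \leq \mu(t) h(t) V(t)$. Integrating over $[0,t]$ and using $w(0)=0$ yields $w(t)\mu(t) \leq \int_0^t \mu(\sigma) h(\sigma) V(\sigma)\, d\sigma$; dividing by $\mu(t) > 0$ and noting $\mu(\sigma)/\mu(t) = e^{\int_\sigma^t h(s)\, ds}$ gives $w(t) \leq \int_0^t h(\sigma) V(\sigma)\, e^{\int_\sigma^t h(s)\, ds}\, d\sigma$. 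Combined with $u(t) \leq V(t) + w(t)$ this is $u(t) \leq V(t) + \int_0^t h(\sigma) V(\sigma)\, e^{\int_\sigma^t h(s)\, ds}\, d\sigma$.

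Finally I would reshape the integral. Writing $g(\sigma) := e^{\int_\sigma^t h(s)\, ds}$ one has $g'(\sigma) = -h(\sigma) g(\sigma)$, so the integrand equals $-V(\sigma) g'(\sigma)$; since $V$ is differentiable, integration by parts gives $\int_0^t h(\sigma) V(\sigma) g(\sigma)\, d\sigma = -\big[V g\big]_0^t + \int_0^t V'(\sigma) g(\sigma)\, d\sigma = -V(t) + V(0)\, e^{\int_0^t h(s)\, ds} + \int_0^t V'(\sigma)\, e^{\int_\sigma^t h(s)\, ds}\, d\sigma$, using $g(t) = 1$ and $g(0) = e^{\int_0^t h}$. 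The $-V(t)$ cancels the leading $V(t)$, leaving exactly the asserted bound $u(t) \leq V(0)\, e^{\int_0^t h(s)\, ds} + \int_0^t V'(s)\, e^{\int_s^t h(\tau)\, d\tau}\, ds$. The only points requiring care are bookkeeping ones: that $w \in C^1$ so the integrating-factor step is legitimate, that $h \geq 0$ is used in passing to the differential inequality, and the correct sign of $\frac{d}{d\sigma} e^{\int_\sigma^t h}$ in the integration by parts; none of these is a real obstacle.
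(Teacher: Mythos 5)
Your proof is correct: the reduction to the differential inequality $w'(t)-h(t)w(t)\leq h(t)V(t)$ for $w(t)=\int_0^t h(s)u(s)\,ds$ (using $h\geq 0$), the integrating factor $e^{-\int_0^t h}$, and the integration by parts with $g(\sigma)=e^{\int_\sigma^t h}$ all check out, and the boundary terms $g(t)=1$, $g(0)=e^{\int_0^t h}$ produce exactly the stated bound with the $V(t)$ terms cancelling. The paper itself gives no proof of this lemma --- it simply cites Lakshmikantham's book --- so there is nothing to compare against; your argument is the standard derivation of this Gronwall variant and fully justifies the statement as used in the paper.
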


\begin{lemma}
	\label{prop:Fundamental}
	Fix constants $c_0,c_1,c_2 \in \R$ with $c_1,c_2 \geq 0$ and  define $ \mu_0 = c_0 + c_2$. 
	For constants $\mu_k,a_k$ with $ \mu_k \neq \mu_0$ for $k=1,\dots,K$, 
	we set
	\[
	v(s) =  \sum_{k=1}^K  e^{\mu_k s} a_k.
	\]	
	Suppose that $ v(t) \geq 0 $ for $ t \geq 0$, and assume
	\begin{equation*}
	e^{-c_0 t} u_0 (t) \leq \left( c_1 + \int_0^t e^{-c_0 s} v(s) ds \right) 
	+ \int_0^t c_2 e^{-c_0 s} u_0(s) ds.
	\end{equation*}
	Then 
	\begin{equation}
	u_0(t) \leq c_1 e^{\mu_0 t }  +  
	\sum_{k =1}^K \frac{a_k }{\mu_k -\mu_0} \left( e^{\mu_k t} - e^{\mu_0 t}\right).
	\label{eq:Fundamental}
	\end{equation}
	Furthermore, the sum in the righthand side is non-negative for all $t \geq 0$. 
\end{lemma}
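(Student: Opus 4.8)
The plan is to apply Lemma \ref{lem:Gronwall} with $u(t) = e^{-c_0 t} u_0(t)$, $h(s) \equiv c_2$, and $V(t) = c_1 + \int_0^t e^{-c_0 s} v(s)\, ds$. First I would check the hypotheses: $u$ is continuous and nonnegative; $h = c_2 \geq 0$ is continuous; and $V$ is differentiable with $V(0) = c_1 \geq 0$, $V'(s) = e^{-c_0 s} v(s) \geq 0$ for $s \geq 0$ by the assumption $v(t) \geq 0$ on $[0,\infty)$. Since $\int_s^t h(\tau)\,d\tau = c_2(t-s)$, Lemma \ref{lem:Gronwall} gives
\[
e^{-c_0 t} u_0(t) \leq c_1 e^{c_2 t} + \int_0^t e^{-c_0 s} v(s)\, e^{c_2(t-s)}\, ds .
\]
Multiplying by $e^{c_0 t}$ and recalling $\mu_0 = c_0 + c_2$ yields
\[
u_0(t) \leq c_1 e^{\mu_0 t} + e^{\mu_0 t} \int_0^t e^{-c_2 s}\, v(s)\, e^{-c_0 s}\, ds = c_1 e^{\mu_0 t} + e^{\mu_0 t} \int_0^t e^{-\mu_0 s} v(s)\, ds .
\]

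Next I would substitute $v(s) = \sum_{k=1}^K e^{\mu_k s} a_k$ and integrate term by term. For each $k$, since $\mu_k \neq \mu_0$,
\[
\int_0^t e^{-\mu_0 s} e^{\mu_k s}\, ds = \int_0^t e^{(\mu_k - \mu_0)s}\, ds = \frac{e^{(\mu_k-\mu_0)t} - 1}{\mu_k - \mu_0},
\]
so multiplying by $e^{\mu_0 t} a_k$ and summing gives exactly $\sum_{k=1}^K \frac{a_k}{\mu_k - \mu_0}\left(e^{\mu_k t} - e^{\mu_0 t}\right)$, which establishes \eqref{eq:Fundamental}.

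For the final assertion, that the sum on the right of \eqref{eq:Fundamental} is nonnegative for all $t \geq 0$: I would simply observe that this sum equals $e^{\mu_0 t} \int_0^t e^{-\mu_0 s} v(s)\, ds$, as computed above, and each factor is nonnegative since $v(s) \geq 0$ for $s \geq 0$. I do not expect any real obstacle here; the one point requiring a little care is making sure the term-by-term integration is legitimate (it is, since the sum defining $v$ is finite) and that the identity $\int_0^t e^{-\mu_0 s} v(s)\,ds = \sum_k \frac{a_k}{\mu_k-\mu_0}(e^{(\mu_k-\mu_0)t}-1)$ is recorded cleanly so that it can be reused for the nonnegativity claim rather than re-deriving it.
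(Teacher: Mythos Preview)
Your proposal is correct and follows essentially the same approach as the paper: apply Lemma~\ref{lem:Gronwall} with $V(t)=c_1+\int_0^t e^{-c_0 s}v(s)\,ds$ and $h\equiv c_2$, integrate the resulting exponential sum term by term using $\mu_k\neq\mu_0$, and read off the nonnegativity of the sum from its integral representation $e^{\mu_0 t}\int_0^t e^{-\mu_0 s}v(s)\,ds$. The only cosmetic difference is that the paper multiplies through by $e^{c_0 t}$ after performing the integration rather than before.
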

\begin{proof}
	Lemma \ref{lem:Gronwall} gives
	\begin{align}
	e^{-c_0 t} u_0(t) &\leq c_1 e^{c_2 t } + \int_0^t e^{-c_0 s} v(s) e^{c_2 ( t-s)} ds. \nonumber \\
	 &=  c_1 e^{c_2 t } + e^{c_2 t } \int_0^t  \sum_{k=1}^n a_k e^{(\mu_k -c_0 - c_2) s } ds 
	 \nonumber\\
	&=   c_1 e^{c_2 t } +  e^{c_2 t } \sum_{k=1}^n \frac{a_k}{\mu_k - \mu_0 } \left( e^{(\mu_k -\mu_0)t} -1 \right) .
		\label{eq:FundamentalFirst}
	\end{align}	
Multiplying each side by $e^{c_0 t}$ gives the desired inequality \eqref{eq:Fundamental}. 
	Since $v(t)$ is nonnegative, so is the integrand. Hence the sum
	in the righthand side of~\eqref{eq:FundamentalFirst} 
	is non-negative for all $t \geq 0$. 
\end{proof}

\begin{proof}[Proof of Theorem \ref{thm:LipschitzTracking}]
	Fix $j\in J$ and  rewrite \eqref{eq:InitialInequality} as
	\begin{equation}
	e^{-\lambda_j t} u_j(t) \leq | \xi_j - \zeta_j|
	+ \sum_{i \in I, i\neq j} \int_0^t  e^{-\lambda_j s} H_j^i   u_{i}(s) ds 
	+ \int_0^t   e^{-\lambda_j s}  H_j^j   u_{j}(s) ds .
	\label{e:track1}
	\end{equation}
	Since $G$ satisfies Condition \ref{cond:G_bounding_function} we have
	\begin{align}
	\sum_{i \in I, i\neq j}     H_j^i    u_{i}(t) 
	&\leq \sum_{i\in I, i\neq j}     H_j^i   \sum_{\substack{n\in I \\ 0 \leq m \leq m_s}}
	e^{\gamma_m t  } 
	G_{i,m}^{n} 
	\left|  \xi_n - \zeta_n  \right| \nonumber\\
	&= \sum_{0\leq m \leq m_s}  	e^{\gamma_m t  }  \sum_{n,i \in I, i\neq j}    H_j^i  
	G_{i,m}^{n} 
	\left|  \xi_n - \zeta_n  \right|  
	\nonumber\\
		&= \sum_{0\leq m \leq m_s, m \neq j}  	e^{\gamma_m t  }  \sum_{n,i \in I, i\neq j}    H_j^i  
		G_{i,m}^{n} 
		\left|  \xi_n - \zeta_n  \right|,
	 \label{e:track2}
	\end{align}
	where the final equality follows from the assumption that
	$ G^{n}_{i,j}=0$ whenever $i \neq j$.
	Defining 
	\begin{equation*}
	v(s) = \sum_{0\leq m\leq m_s, m\neq j} e^{\gamma_m s} a_m, 
	\qquad\text{with}\qquad a_m := \sum_{n,i \in I, i\neq j}     H_j^i  G_{i,m}^{n} 
	\left|  \xi_n - \zeta_n  \right| ,
	\end{equation*}
	and combining~\eqref{e:track1} with~\eqref{e:track2} leads to 
	\begin{align*}
	e^{-\lambda_j t} u_j(t) & \leq
	 | \xi_j - \zeta_j| +  \int_0^t   e^{-\lambda_j s} \sum_{0\leq m\leq m_s, m\neq j} e^{\gamma_m s} a_m ds + \int_0^t  e^{-\lambda_j s}  H_j^j   u_{j}(s) ds.
	\\
	&= | \xi_j - \zeta_j| +  \int_0^t   e^{-\lambda_j s}   v(s) ds + \int_0^t    H_j^j  e^{-\lambda_j s} u_{j}(s) ds.
	\end{align*} 
	Now apply Lemma \ref{prop:Fundamental} with 
	$u_0=u_j$, $c_0=\lambda_j$, $c_1=| \xi_j - \zeta_j|$, $c_2=H_j^j$. 
	Re-indexing $\{\mu_k\}_{1 \leq k \leq K} = \{\gamma_m\}_{0\leq m\leq m_s, m\neq j}$,
	we see that $\gamma_{m} \neq \lambda_j + H_j^j = \gamma_j$ for $m \neq j$
follows from the strict ordering assumption of Definition~\ref{def:Gamma}.
Then the assumption in Lemma \ref{prop:Fundamental} is satisfied. Applying Lemma \ref{prop:Fundamental} is justified, and leads to the result \eqref{eq:UBootBound}. 
\end{proof}

Theorem~\ref{thm:LipschitzTracking} lets us pick a $j \in I$, and replace a bound of the form~\eqref{e:condG} with the same bound, where 
$G_{j,k}^n$ is replaced by $\left[\cT_j(G ) \right]_{k}^n$, possibly producing a sharper bound. 
Note that in Theorem \ref{thm:LipschitzTracking},  we impose that for a fixed $ j \in I$
 we have $ G_{i,j}^n =0$ for all $ n \in I$ and $ i \in I - j$. 
Without this assumption, we would end up with terms of the form $t e^{\gamma_j t}$ in~\eqref{eq:UBootBound}.  We choose to avoid this, 
as we prefer to work with a finite set of exponentially decaying functions as the basis 
of our estimates. 

However, we  also need to deal with the case $ G_{i,j}^n \neq 0$ for some
$i\neq j$ and some $n\in I$. 
This problem is solved by modifying 
such an ``ill-conditioned'' $G$ 
before replacing it with $\cT_j(G )$. Namely,
if $G_{i,j}^n \neq 0$ then, depending on the sign of $G_{i,j}^n$
we estimate $(G_{i,j}^n)  e^{\gamma_j t}$ from above by either 
$G_{i,j}^n e^{\gamma_{j-1} t}$ or  $G_{i,j}^n  e^{\gamma_{j+1} t}$ for $t \geq 0$.
Here we use the ordering $\gamma_0 > \dots > \gamma_{m_s}$ 
asserted in Definition~\ref{def:Gamma}. 
%
%
%
%
To be precise, for any fixed $j \in I$, define the modified tensor 
\begin{align}
	[\cQ_j(G)]_{i,k}^n &:= 
	\begin{cases}
	0 		&	\mbox{if } k=j \\
	G_{i,k}^n + G_{i,j}^n &	\mbox{if } k=j-1, \mbox{and } G_{i,j}^n >0 \\
	G_{i,k}^n + G_{i,j}^n &	\mbox{if } k=j+1, \mbox{and } G_{i,j}^n <0 \\
	G_{i,k}^n & \mbox{otherwise.}
	\end{cases}		
\end{align}
Note that if  $ j=m_s$ and $G_{i,j}^n < 0$, then we are effectively 
employing the estimate $ G_{i,j}^n  e^{\gamma_{m_s}t} <0$. 

The following lemma summarizes the  preceding discussion. 
\begin{lemma}
Fix $ j \in I$. 
If $ G$ satisfies Condition \ref{cond:G_bounding_function}, then $\cQ_j(G)$ satisfies Condition \ref{cond:G_bounding_function}.
\end{lemma}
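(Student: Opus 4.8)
The statement only concerns whether the pointwise estimate in Condition~\ref{cond:G_bounding_function} survives when $\cQ_j(G)$ is substituted for $G$, and since $\cQ_j$ merely moves the coefficient sitting at the exponential rate $\gamma_j$ onto a neighbouring rate, the plan is to carry out a short monotonicity comparison one scalar coefficient at a time. First I would fix $t \ge 0$, points $\xi,\zeta \in B_s(\rho)$, a map $\alpha \in \cB_{\rho,P}^{0,1}$, and a component index $i \in I$, and start from the bound that Condition~\ref{cond:G_bounding_function} provides for $G$. Because each $|\xi_n - \zeta_n|$ is nonnegative, it is enough to establish, for every fixed $n \in I$, the scalar inequality
\[
\sum_{0 \le k \le m_s} e^{\gamma_k t}\, G_{i,k}^n \;\le\; \sum_{0 \le k \le m_s} e^{\gamma_k t}\, [\cQ_j(G)]_{i,k}^n ;
\]
multiplying through by $|\xi_n - \zeta_n| \ge 0$ and summing over $n \in I$ then reproduces Condition~\ref{cond:G_bounding_function} for $\cQ_j(G)$, with the quantifiers over $t$, $\xi$, $\zeta$, $\alpha$ untouched.

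To prove the scalar inequality I would argue by cases on the sign of $a := G_{i,j}^n$, observing that for $k \notin \{j-1,j,j+1\}$ the $k$-th terms on the two sides coincide, so only the block at rates $\gamma_{j-1},\gamma_j,\gamma_{j+1}$ matters. If $a>0$, the strict ordering $\gamma_{j-1}>\gamma_j$ from Definition~\ref{def:Gamma} gives $e^{\gamma_j t}\le e^{\gamma_{j-1}t}$ for $t\ge 0$, so transferring $a$ from rate $\gamma_j$ to rate $\gamma_{j-1}$ — exactly what $\cQ_j$ does in this branch — changes the right-hand side by $a(e^{\gamma_{j-1}t}-e^{\gamma_j t})\ge 0$. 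If $a<0$ and $j<m_s$, the ordering $\gamma_{j+1}<\gamma_j$ gives $e^{\gamma_{j+1}t}\le e^{\gamma_j t}$, which on multiplication by the negative number $a$ reverses to $e^{\gamma_j t}a\le e^{\gamma_{j+1}t}a$; moving $a$ onto rate $\gamma_{j+1}$, again as in the definition of $\cQ_j$, changes the right-hand side by $a(e^{\gamma_{j+1}t}-e^{\gamma_j t})\ge 0$. If $a<0$ and $j=m_s$, there is no rate $\gamma_{m_s+1}$ and $\cQ_{m_s}$ simply deletes the term; this is the case flagged in the remark preceding the lemma, and it is legitimate because $e^{\gamma_{m_s}t}a\le 0$, so discarding it only enlarges the right-hand side. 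If $a=0$ the two sides agree trivially. In every case the scalar inequality holds.

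I do not anticipate any genuine obstacle here: the proof is an elementary estimate and the only thing that needs care is the sign bookkeeping — a positive coefficient must be pushed onto the slower-decaying neighbour $\gamma_{j-1}$ and a negative one onto the faster-decaying neighbour $\gamma_{j+1}$, matching precisely the two nontrivial branches in the definition of $\cQ_j$ — together with the degenerate boundary case $j=m_s$ where the negative-coefficient branch collapses into outright deletion of the term. Since the computation is uniform in $t\ge 0$, in $\xi,\zeta\in B_s(\rho)$, and in $\alpha\in\cB_{\rho,P}^{0,1}$, it follows that $\cQ_j(G)$ satisfies Condition~\ref{cond:G_bounding_function}, which completes the argument.
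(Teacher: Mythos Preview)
Your proof is correct and takes essentially the same approach as the paper: the paper's proof is just the sentence ``The following lemma summarizes the preceding discussion,'' and that discussion is precisely the sign-based monotonicity argument you spell out, including the boundary case $j=m_s$ with a negative coefficient. Your version is simply a more detailed write-up of the same idea.
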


Thus, starting from an initial bound of the form~\eqref{e:condG} with tensor $\widehat{G}$ given in Remark~\ref{remark:startingpoint},
we iteratively  improve the bound using the following algorithm.
\begin{algorithm}
\label{alg:BootStrap}
Let $N_{\text{bootstrap}} \in \mathbb{N}$ be a computational parameter.
%
\begin{algorithmic}
	\STATE $G \gets \widehat{G}$
	\FOR {$1 \leq i \leq  N_{\text{bootstrap}}$} 
	\FOR {$1 \leq j \leq m_s $}
	\STATE $G_{j,k}^{n} 	\gets
	 \left[\cT_j \circ \cQ_j (G) \right]_{k}^{n}$
	\ENDFOR
	\ENDFOR 
	\RETURN G 
\end{algorithmic}
\end{algorithm}
In practice 
Algorithm \ref{alg:BootStrap} quickly converges to a fixed tensor $G$.
For example $N_{\text{bootstrap}} \leq 5$ is sufficient for the applications to folllow.

\begin{theorem}
	\label{prop:BootstrapBounds}
	Let $\alpha \in \cB_{\rho,P}^{0,1}$, and suppose that the
	coefficients $ G_{j,k}^n $ are output by Algorithm \ref{alg:BootStrap}.
	 Fix initial conditions $ \xi,\zeta \in B_s(\rho)$.
	If $x(\tau,\xi,\alpha)$ and $ x(\tau,\zeta,\alpha)$ stay inside 
	$ B_s(\rho)$ for all $ t \in [0,T]$, then
	\begin{align}
	| x_j(t,\xi,\alpha) - x_j(t,\zeta,\alpha) |
	&\leq  
	\sum_{\substack{n\in I \\ 0 \leq k \leq m_s}}
	e^{\gamma_k t  } \cdot
	G_{j,k}^{n} 
	\left|   \xi_n - \zeta_n  \right| 
	\qquad \text{for all }  t \in [0,T].
	\label{eq:AlgBound}
	\end{align}
	Furthermore, if $\alpha$ is differentiable then 
	$
	\left\| 
	\tfrac{\partial}{\partial \xi_n}
	x_j(t,\xi,\alpha)
	\right\| 
	\leq 
	\sum_{0 \leq k \leq m_s}
	e^{\gamma_k t  } 
	G_{j,k}^{n}
$ for all $ t \in [0,T]$. 
\end{theorem}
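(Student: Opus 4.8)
The first assertion is obtained by an induction over the steps performed by Algorithm~\ref{alg:BootStrap}, the key observation being that \eqref{eq:AlgBound} is literally the statement ``the output tensor $G$ satisfies Condition~\ref{cond:G_bounding_function} on $[0,T]$'' (under the standing hypothesis that the two trajectories stay in $B_s(\rho)$, exactly as in Proposition~\ref{prop:InitialClaim} and Theorem~\ref{thm:LipschitzTracking}). The base case is Remark~\ref{remark:startingpoint}: the initialization $G = \widehat{G}$ satisfies Condition~\ref{cond:G_bounding_function} as a consequence of Proposition~\ref{prop:InitialBound}. For the inductive step I would verify that a single update $G \mapsto \cT_j \circ \cQ_j(G)$ preserves Condition~\ref{cond:G_bounding_function} for each fixed $j \in I$. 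First, the lemma stated just before Algorithm~\ref{alg:BootStrap} guarantees that $\cQ_j(G)$ still satisfies Condition~\ref{cond:G_bounding_function}; moreover by definition $[\cQ_j(G)]_{i,j}^n = 0$ for every $i,n \in I$, hence in particular $[\cQ_j(G)]_{i,j}^n = 0$ for all $n \in I$ and all $i \in I \setminus \{j\}$, which is precisely the hypothesis needed to apply Theorem~\ref{thm:LipschitzTracking}. That theorem then says that replacing the $j$-th slice $[\cQ_j(G)]_{j,k}^n$ by $[\cT_j(\cQ_j(G))]_k^n$ --- which is exactly what the update does, since $\cT_j$ modifies only this slice and leaves $[\cQ_j(G)]_{i,k}^n$ for $i \neq j$ untouched --- produces a tensor that again satisfies Condition~\ref{cond:G_bounding_function}. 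Running through $j = 1, \dots, m_s$ and the $N_{\text{bootstrap}}$ outer iterations, the returned tensor satisfies Condition~\ref{cond:G_bounding_function}, i.e.\ \eqref{eq:AlgBound} holds.

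For the ``furthermore'' statement I would pass to the limit in a difference quotient. Fix $j,n \in I$ and $t \in [0,T]$, and let $w \in X_n$ be nonzero. Since $\alpha$ is differentiable, the right-hand side of the projected equation~\eqref{eq:ProjectedSystem} is differentiable and hence $\xi \mapsto x(t,\xi,\alpha)$ is differentiable in $\xi$, with
\[
\tfrac{\partial}{\partial \xi_n} x_j(t,\xi,\alpha)\,[w] = \lim_{h \to 0} \frac{x_j(t,\xi + hw,\alpha) - x_j(t,\xi,\alpha)}{h}.
\]
For $|h|$ small, $\xi + hw \in B_s(\rho)$ and, by continuous dependence of solutions on initial data, $x(\cdot,\xi + hw,\alpha)$ stays in $B_s(\rho)$ on $[0,T]$ provided $x(\cdot,\xi,\alpha)$ does (if the latter touches $\partial B_s(\rho)$, one first passes to a slightly smaller ball as in the remark following Proposition~\ref{prop:InitialBound}). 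The only nonzero component of $(\xi + hw) - \xi$ is the $n$-th one, so applying the first assertion with $\zeta := \xi + hw$ gives
\[
|x_j(t,\xi + hw,\alpha) - x_j(t,\xi,\alpha)| \leq |h|\,|w| \sum_{0 \leq k \leq m_s} e^{\gamma_k t} G_{j,k}^n .
\]
Dividing by $|h|$, letting $h \to 0$, and then dividing by $|w|$ and taking the supremum over nonzero $w \in X_n$ yields the stated bound on $\bigl\| \tfrac{\partial}{\partial \xi_n} x_j(t,\xi,\alpha) \bigr\|$. (Note that $\sum_k e^{\gamma_k t} G_{j,k}^n$ is automatically nonnegative, being an upper bound for a norm, even though individual entries $G_{j,k}^n$ may be negative.)

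The analytic substance of this theorem was already carried by Proposition~\ref{prop:InitialBound}, Theorem~\ref{thm:LipschitzTracking} and the lemma on $\cQ_j$; what remains is essentially bookkeeping. The two points that require attention are: (i) confirming that the zeroing of the $(i,j)$-entries carried out by $\cQ_j$ is exactly what makes Theorem~\ref{thm:LipschitzTracking} applicable, and that $\cT_j$ touches only the $j$-th slice, so that the induction genuinely closes and successive updates may be chained; and (ii) in the derivative bound, the mild technical point that perturbed trajectories remain in $B_s(\rho)$ so that the first assertion can be fed into the difference quotient --- this is the main (if minor) obstacle, resolved via continuous dependence together with the invariance of sufficiently small balls observed after Proposition~\ref{prop:InitialBound}.
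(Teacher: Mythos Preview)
Your proposal is correct and follows essentially the same approach as the paper, which merely sketches the proof in one sentence: induction on $N_{\text{bootstrap}}$, with Proposition~\ref{prop:InitialBound} (via Remark~\ref{remark:startingpoint}) as the base case and Theorem~\ref{thm:LipschitzTracking} (together with the lemma on $\cQ_j$) for the inductive step, the details being left to the reader. You have correctly supplied those details, including the observation that Condition~\ref{cond:G_bounding_function} decouples over $j$ so the unchanged slices inherit the bound from the previous $G$, and your difference-quotient argument for the derivative bound is the natural way to extract it from the Lipschitz estimate.
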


The proof of Theorem \ref{prop:BootstrapBounds} is by induction on $N_{\text{bootstrap}}$, with Proposition \ref{prop:InitialBound} taking care of the base case ($N_{\text{bootstrap}}=0$), and Theorem \ref{thm:LipschitzTracking}  taking care of the inductive step. 
We omit the details. 

Now, in Proposition \ref{prop:InitialBound}  the assumption that $ \gamma_0 <0$ gives 
 only that points $\xi \in B_s(C_s^{-1} \rho )$ have solutions to \eqref{eq:ProjectedSystem} 
staying in $B_s(\rho)$ for all $t \geq 0$. 
The following proposition gives conditions which extend the result to all points $ \xi \in B_s(\rho)$. 



\begin{proposition}
	\label{prop:StayInsideBall}
	Suppose that $\gamma_0 <0$  and that $G_{j,k}^n$  
	is the output of Algorithm~\ref{alg:BootStrap}. 
	If 
	\begin{align}
	\rho_j \geq \sum_{\substack{n\in I \\ 0 \leq k \leq m_s}} e^{\gamma_k t}G_{j,k}^n \rho_n ,
	\label{eq:StayInsideBall}
	\end{align}
	for all $ t\geq 0$, then for all $ \xi \in B_s(\rho)$ and $t \geq 0$ we have $ x( t, \xi, \alpha) \in B_s(\rho)$ for all $ \alpha \in \cB_{\rho,P}^{0,1}$.  
\end{proposition}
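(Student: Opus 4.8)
The plan is to argue by contradiction using a continuity (``first exit time'') argument, exploiting that the bound~\eqref{eq:AlgBound} from Theorem~\ref{prop:BootstrapBounds} holds on any interval $[0,T]$ on which the solution remains in $B_s(\rho)$. First I would fix $\alpha \in \cB_{\rho,P}^{0,1}$ and an initial condition $\xi \in B_s(\rho)$, and let $x(t) = x(t,\xi,\alpha)$ be the solution of the projected system~\eqref{eq:ProjectedSystem}. Since $\cN_s$ is locally Lipschitz (inheriting this from $\hat{\cN}$ and the linear terms, and since $\alpha$ is Lipschitz), the solution exists and is unique on a maximal interval $[0,T_{\max})$, and for small $t$ it stays in $B_s(\rho)$. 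Define
\[
T^* := \sup\{ T \in [0,T_{\max}) : x(t) \in B_s(\rho) \text{ for all } t \in [0,T] \}.
\]
The goal is to show $T^* = T_{\max} = \infty$.

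The key step is to apply Theorem~\ref{prop:BootstrapBounds} with $\zeta = 0$ on the interval $[0,T^*)$. Since $x(t,0,\alpha) \equiv 0$ (because $\cN_s(0,\alpha(0)) = \cN_s(0,0) = 0$, using $\hat{\cN}(0)=0$, $D\hat{\cN}(0)=0$ and $\alpha(0)=0$), the bound~\eqref{eq:AlgBound} reads, componentwise for each $j \in I$ and each $t \in [0,T^*)$,
\[
|x_j(t)| \leq \sum_{\substack{n\in I \\ 0 \leq k \leq m_s}} e^{\gamma_k t} G_{j,k}^n |\xi_n| \leq \sum_{\substack{n\in I \\ 0 \leq k \leq m_s}} e^{\gamma_k t} G_{j,k}^n \rho_n \leq \rho_j,
\]
where the middle inequality uses $|\xi_n| \leq \rho_n$ together with the sign structure of $G$ that guarantees the whole expression is monotone in the $\rho_n$ (more carefully: one should split $G_{j,k}^n$ into its positive and negative parts, but the final bound~\eqref{eq:AlgBound} is itself an upper bound, so replacing $|\xi_n|$ by the larger $\rho_n$ only increases a quantity that is already an upper bound for $|x_j(t)|$), and the last inequality is exactly the hypothesis~\eqref{eq:StayInsideBall}. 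Hence $x(t) \in B_s(\rho)$ for all $t \in [0,T^*)$, and by continuity also at $t = T^*$ if $T^* < T_{\max}$.

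Finally I would close the argument: if $T^* < T_{\max}$, then $x(T^*) \in B_s(\rho)$, and in fact the above estimate applied at $t = T^*$ still gives $|x_j(T^*)| \leq \rho_j$; by continuity of $x$ the solution remains in $B_s(\rho)$ on a slightly larger interval $[0, T^* + \delta)$, contradicting the maximality of $T^*$. Therefore $T^* = T_{\max}$, and since $B_s(\rho)$ is a fixed bounded set (all components bounded), the solution does not blow up, so standard ODE theory forces $T_{\max} = \infty$. This yields $x(t,\xi,\alpha) \in B_s(\rho)$ for all $t \geq 0$, as claimed. The main obstacle, and the only place requiring care, is the bookkeeping around the inequality chain involving the possibly-negative coefficients $G_{j,k}^n$: one must be clear that~\eqref{eq:AlgBound} is an \emph{upper} bound, so that enlarging $|\xi_n|$ to $\rho_n$ inside it is legitimate precisely because the left-hand side $|x_j(t)|$ is being bounded above — an issue of logical direction rather than of computation.
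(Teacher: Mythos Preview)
Your overall strategy matches the paper's, but there is a genuine gap at the contradiction step. You conclude that $|x_j(T^*)| \leq \rho_j$ for each $j$, i.e.\ $x(T^*) \in B_s(\rho)$, and then assert that ``by continuity of $x$ the solution remains in $B_s(\rho)$ on a slightly larger interval.'' This does not follow: $B_s(\rho)$ is a \emph{closed} ball, and if $x(T^*)$ lies on its boundary --- which your estimate does not rule out --- the solution may exit immediately afterwards. The same issue already bites at $t=0$: if $\xi \in \partial B_s(\rho)$ then your claim ``for small $t$ it stays in $B_s(\rho)$'' is unjustified, and it could happen that $T^* = 0$. The paper circumvents this by first restricting to $\xi \in B_s(\epsilon\rho)$ with $0 < \epsilon < 1$, so that the bound reads $|x_j(t)| \leq \epsilon\rho_j < \rho_j$; this forces $x(T)$ into the \emph{interior} of $B_s(\rho)$, and the contradiction goes through cleanly. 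The boundary case is then recovered by continuous dependence on initial data, letting $\epsilon \to 1$.

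A smaller point concerns your justification of the middle inequality
\[
\sum_{n,k} e^{\gamma_k t} G_{j,k}^n |\xi_n| \;\leq\; \sum_{n,k} e^{\gamma_k t} G_{j,k}^n \rho_n .
\]
Your reasoning (``the final bound is itself an upper bound, so enlarging $|\xi_n|$ to $\rho_n$ only increases it'') is circular: if some coefficient $c_n(t) := \sum_k e^{\gamma_k t} G_{j,k}^n$ were negative, enlarging $|\xi_n|$ would \emph{decrease} the sum. The step is nonetheless valid because in fact $c_n(t) \geq 0$ for every $n$ and $t$: apply Condition~\ref{cond:G_bounding_function} with $\zeta = 0$ and $\xi$ supported only in the $n$-th coordinate, and observe that the left-hand side of~\eqref{e:condG} is nonnegative. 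The paper uses this same monotonicity without comment.
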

\begin{proof}  
	Fix $\alpha \in \cB_{\rho,P}^{0,1}$,  $0 <  \epsilon <1$, and  $\xi \in B_s(\epsilon \rho)$. 
Define  $ T = \sup \{t \geq 0 : x(t,\xi,\alpha) \in B_s(\rho) \}$. 
Assume that $ T  < + \infty$. We show by contradiction that $T = + \infty$.  

Since $ x(0,\xi,\alpha) \in B_s(\epsilon \rho)$ and $ x(t,\xi,\alpha)$ is continuous in $t$, it follows that $ T>0$. 
 	By Proposition \ref{prop:BootstrapBounds} we have for all $ t \in [0,T)$ that 
	\begin{align*}
	| x_j(t,\xi,\alpha) |
	&\leq  
	\sum_{0 \leq k \leq m_s}
	e^{\gamma_k t  }  
	G_{j,k}^{n} 
	\left|   \xi_n    \right|  
	\leq \epsilon
	\sum_{0 \leq k \leq m_s}
	e^{\gamma_k t  }  
	G_{j,k}^{n} \rho_n 
	\leq \epsilon \, \rho_j .
	\end{align*}
	Hence $x(t,\xi,\alpha) \in B_s( \epsilon \rho)$ for all $ t \in [0,T)$, 
	and so by continuity   $x(T,\xi,\alpha) \in B_s(\epsilon  \rho)$. 
	Since $x(T,\xi,\alpha) $ is in the interior of $B_s(\rho)$,  the solution of
	\eqref{eq:ProjectedSystem} starting at $ x(T,\xi,\alpha) $ stays inside the 
	ball $B_s(\rho)$ for some positive amount of time.  
	But this contradicts the definition of $T$ 
	as the supremum of $ \{t \geq 0 : x(t,\xi,\alpha) \in B_s(\rho) \}$. 
	Hence, if $ 0 < \eps <1$ and  $\xi \in B_s(\epsilon \rho)$, then  
	$x(t,\xi,\alpha) \in B_s(  \rho)$ for all  $ t\geq0$.  
	
	By continuity of solutions, 
	this result extends to initial conditions on the boundary of $B_s(\rho)$. 
\end{proof}

\begin{remark}
In practice we verify the hypothesis of Proposition  \ref{prop:StayInsideBall} in three steps: 
	\begin{enumerate}
		\item  For  some $T_2 > 0 $, we check that $\rho_j  >  \sum_{n\in I, 0 \leq k \leq m_s} e^{\gamma_k T_2} |G_{j,k}^n| \rho_n $, and hence \eqref{eq:StayInsideBall} is satisfied for all $ t \geq T_2$. 
		 \item  For some $ 0 < T_1 < T_2$, we use interval arithmetic to verify 
		 the inequality \eqref{eq:StayInsideBall} for $ T_1 \leq t \leq T_2$. 
		\item 
		To prove inequality \eqref{eq:StayInsideBall} for $ t \in [0,T_1]$, we both prove that the inequality  holds at $t=0$ (explained below), and show using interval arithmetic that the derivative of the right-hand side of~\eqref{eq:StayInsideBall} is negative:
		\begin{align*}
		\sum_{\substack{n\in I \\0 \leq k \leq m_s}} \gamma_k  e^{\gamma_k t}G_{j,k}^n \rho_n < 0 
		 \qquad \text{for } t \in [0,T_1].
		\end{align*}
		To prove that inequality \eqref{eq:StayInsideBall} holds at $t=0$, we fix $ j \in I$. 
		If $G$ is the final output of Algorithm~\ref{alg:BootStrap}, then  there is a tensor $ \widetilde{G} \in (\R^{m_s})^{\otimes 2}  \otimes \R^{m_s+1} $    for which $G_{j,k}^n \leftarrow  	 \big[\cT_j \circ \cQ_j (\widetilde{G}) \big]_{k}^{n}  $.  It is assigned at  step $j$ of the inner for-loop of the algorithm, and at step $ N_{\text{bootstrap}}$ of the outer for-loop. 
Letting $\bar{G} := \cQ_j (\widetilde{G})$, it follows 
		%
		from the definition of $\cT_j$ in 
		\eqref{eq:Tbasic}
		that	
\begin{equation*}
\sum_{\substack{n\in I \\ 0 \leq k \leq m_s}}  	e^{\gamma_k t}   G_{j,k}^n | \xi_n  | 
=
| \xi_j| e^{\gamma_j t} + 
\sum_{\substack{n,i \in I, i\neq j \\0 \leq k \leq m_s, k \neq j}} 
\frac{e^{\gamma_k t} - e^{\gamma_j t} }{\gamma_k - \gamma_j} H_j^i \bar{G}_{i,k}^n
| \xi_n  | .
\end{equation*}
		Evaluating at $ t =0 $, we have 
		\[
		|  x_j(0,\xi,\alpha)   |  = | \xi_j| =
		\sum_{0 \leq k \leq m_s}     G_{j,k}^n
		| \xi_n  | .
		\] 
		Taking  $ |\xi_n |=\rho_n$ for all $n \in I$,  it follows that  
		$\rho_j =  \sum_{0 \leq k \leq m_s}  G_{j,k}^{n} \rho_n$. 
		Hence \eqref{eq:StayInsideBall} is satisfied at $ t = 0$ for   all $ j \in I$.

	\end{enumerate}	

\end{remark}
 
 \begin{remark}
 	
When inequality \eqref{eq:StayInsideBall} fails to be true,  we cannot be sure that all solutions of Equation~\eqref{eq:ProjectedSystem} stay inside the ball $B_s(\rho)$ for all time. 
 	There are two common reasons for why this happens:
first, the nonlinearity may be too large and solutions leave the ball never to return; second, solutions to  Equation \eqref{eq:ProjectedSystem} may  temporarily leave the ball,  reenter, and then converge to zero. 
 	
 	If inequality \eqref{eq:StayInsideBall} fails to be true because of the first  reason, then $\rho$ should be made smaller.
 	If inequality \eqref{eq:StayInsideBall} fails to be true because of the second reason, it is often because  $B_s(\rho)$ is too wide in one direction and too thin in another. 
 	If we suspect this to be true, 
 	then to better align the box with the flow,  we iteratively select a new value 
	of $\rho$ using the map   
	$ \rho_j \mapsto   \sup_{0 \leq t \leq T} \sum_{k} e^{\gamma_k t}G_{j,k}^n \rho_n $. 
	In practice, this heuristic is effective for finding a value of $\rho$ for which 
	\eqref{eq:StayInsideBall} is satisfied. 
  	%
	
 \end{remark}
 
Algorithm \ref{alg:BootStrap} can be applied in more general situations. 
The two conditions necessary to construct such an algorithm are Condition \ref{cond:G_bounding_function} and Proposition \ref{prop:InitialClaim}. 
These are all generalized  in Appendix \ref{sec:GeneralBootstrap} 
leading to an algorithm used in Section \ref{sec:SecondDerivative} 
to obtain bounds on $\frac{\partial}{\partial \xi_i} x(t,\xi,\alpha)$, 
and in Section \ref{sec:Contraction} to construct bounds on 
$|x(t,\xi,\alpha) - x(t,\xi,\beta) |$ for charts $ \alpha, \beta \in \cB_{\rho,P}^{0,1}$.   

%
%

\section{Lyapunov-Perron Operator} 
\label{sec:LyapunovPerron}

 
In this section we show that the Lyapunov-Perron operator $ \Psi$ is 
an endomorphism on balls $ \cB_{\rho,P}^{0,1}$ and $ \cB_{\rho,P,\bar{P}}^{1,1}$ for appropriately chosen constants. 

\begin{remark} \label{rem:FixedConstants1}
	Throughout this section, we fix a positive vector $ \rho \in \R^{m_s}$ and a
	 positive tensor $ P \in \R^{m_u} \otimes \R^{m_s}$, and fix $ G \in 
	 (\R^{m_s})^{\otimes 2}  \otimes \R^{m_s+1}$ as the output of Algorithm
	  \ref{alg:BootStrap} taken with $ N_{bootstrap} \geq 1$.  
	Furthermore, we assume that the hypotheses of Proposition 
	\ref{prop:StayInsideBall} are satisfied, and in particular that 
	inequality~\eqref{eq:StayInsideBall}
	holds for all $ t\geq 0$. 
	Hence $G$ satisfies Condition \ref{cond:G_bounding_function} on the 
	interval~$[0,\infty)$. 
\end{remark}

Throughout this section we adopt Einstein summation convention for 
indices of $I$ and $I'$.

%

\subsection{Endomorphism on $\cB_{\rho,P}^{0,1}$}


The next theorem provides a straightforward bound on 
$\Lip(\Psi[\alpha])$ for $\alpha \in \cB_{\rho,P}^{0,1}$.

\begin{theorem}
	\label{prop:Endomorphism}  
	 Define $\tilde{P} \in \R^{m_u} \otimes \R^{m_s}$ component-wise by:
	\begin{equation*}
	\tilde{P}^n_{i'} :=
	\sum_{0 \leq k \leq m_s} (\lambda_{i'}- \gamma_k )^{-1} H_{i'}^i G_{i,k}^{n} .
	\end{equation*}
	If  $\alpha \in \cB_{\rho,P}^{0,1}$, 
	then $\Lip(\Psi[\alpha])_{i'}^n \leq \tilde{P}^n_{i'}$.  
	If $\tilde{P}^j_{j'} \leq P_{j'}^j  $ 
	then $\Psi: \cB_{\rho,P}^{0,1} \to \cB_{\rho,P}^{0,1}$ is well defined. 
\end{theorem}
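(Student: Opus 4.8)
The plan is to estimate $\Psi[\alpha]$ directly from its defining integral~\eqref{eq:LPO}, using the unstable eigenvalue bounds~\eqref{eq:UnstableEigenvalueEstimate} for $e^{-\Lambda_{i'}t}$ together with the component-wise tracking bounds on $x(t,\xi,\alpha)$ from Theorem~\ref{prop:BootstrapBounds} and the Lipschitz estimate~\eqref{eq:NHestimate} for $\cN_{i'}$ composed with the graph $\xi \mapsto (\xi,\alpha(\xi))$. First I would observe that, under the standing assumptions collected in Remark~\ref{rem:FixedConstants1}, Proposition~\ref{prop:StayInsideBall} guarantees $x(t,\xi,\alpha) \in B_s(\rho)$ for all $t \geq 0$, so $\Psi[\alpha]$ is well defined, and $G$ satisfies Condition~\ref{cond:G_bounding_function} on $[0,\infty)$. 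Then $\Psi[\alpha](0) = 0$ since $x(t,0,\alpha) \equiv 0$ (as $\cN_s(0,\alpha(0)) = \cN_s(0,0) = 0$) forces the integrand to vanish.

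The core computation: fix $\xi,\zeta \in B_s(\rho)$ and $i' \in I'$. Projecting~\eqref{eq:LPO} onto $X_{i'}$ and subtracting,
\begin{align*}
|\Psi[\alpha]_{i'}(\xi) - \Psi[\alpha]_{i'}(\zeta)|
&\leq \int_0^\infty |e^{-\Lambda_{i'}t}|\; \big|\cN_{i'}(x(t,\xi,\alpha),\alpha(x(t,\xi,\alpha))) - \cN_{i'}(x(t,\zeta,\alpha),\alpha(x(t,\zeta,\alpha)))\big|\, dt \\
&\leq \int_0^\infty e^{-\lambda_{i'}t}\, H_{i'}^i\, |x_i(t,\xi,\alpha) - x_i(t,\zeta,\alpha)|\, dt \\
&\leq \int_0^\infty e^{-\lambda_{i'}t}\, H_{i'}^i \sum_{\substack{n\in I\\0\leq k\leq m_s}} e^{\gamma_k t}\, G_{i,k}^n\, |\xi_n - \zeta_n|\, dt,
\end{align*}
where the second line uses~\eqref{eq:NHestimate} (with $\bj = i'$) and the bound $|e^{-\Lambda_{i'}t}| \leq e^{-\lambda_{i'}t}$ for $t \geq 0$ coming from~\eqref{eq:UnstableEigenvalueEstimate}, and the third uses~\eqref{eq:AlgBound}. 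Since $\gamma_k < 0 < \lambda_{i'}$ (the $\gamma_k$ are negative once $\gamma_0 < 0$, which is part of the standing hypotheses via Proposition~\ref{prop:StayInsideBall}), each integral $\int_0^\infty e^{(\gamma_k - \lambda_{i'})t}\, dt = (\lambda_{i'} - \gamma_k)^{-1}$ converges, and interchanging sum and integral yields
\[
|\Psi[\alpha]_{i'}(\xi) - \Psi[\alpha]_{i'}(\zeta)| \leq \sum_{\substack{n\in I\\0\leq k\leq m_s}} (\lambda_{i'} - \gamma_k)^{-1}\, H_{i'}^i\, G_{i,k}^n\, |\xi_n - \zeta_n| = \tilde{P}_{i'}^n |\xi_n - \zeta_n|,
\]
so taking $\zeta = \xi + \zeta_n$ with $\zeta_n \in X_n$ gives $\Lip(\Psi[\alpha])_{i'}^n \leq \tilde{P}_{i'}^n$. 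Finally, if $\tilde{P}_{j'}^j \leq P_{j'}^j$ for all $j,j'$, then $\Psi[\alpha] \in \cB_{\rho,P}^{0,1}$, so $\Psi$ maps $\cB_{\rho,P}^{0,1}$ into itself; that $\Psi[\alpha]$ is genuinely Lipschitz (hence lands in $\Lip(B_s(\rho),X_u)$ as asserted in Definition~\ref{def:LyapunovPerron}) is already contained in the displayed estimate.

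The main obstacle is not any single step but the bookkeeping: one must be careful that the interchange of $\int_0^\infty$ with the finite sum over $(n,k)$ is legitimate (it is, since the sum is finite and each summand is absolutely integrable by $\gamma_k < 0 < \lambda_{i'}$), and that the relevant inputs — $x(t,\xi,\alpha)$ staying in $B_s(\rho)$, validity of Condition~\ref{cond:G_bounding_function} on all of $[0,\infty)$, and negativity of the $\gamma_k$ — are exactly the hypotheses bundled into Remark~\ref{rem:FixedConstants1} and Proposition~\ref{prop:StayInsideBall}. One subtlety worth making explicit is that~\eqref{eq:NHestimate} is applied along the two trajectories $x(t,\xi,\alpha)$ and $x(t,\zeta,\alpha)$, both of which lie in $B_s(\rho)$, so the estimate is valid for all $t \geq 0$; the factor $H_{i'}^i$ (rather than a cruder global constant) is precisely what makes $\tilde{P}$ sharp enough to hope to satisfy $\tilde{P}_{j'}^j \leq P_{j'}^j$ for a generously sized $\rho$.
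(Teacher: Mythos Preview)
Your proposal is correct and follows essentially the same approach as the paper: bound the $i'$-component of $\Psi[\alpha](\xi)-\Psi[\alpha](\zeta)$ using~\eqref{eq:UnstableEigenvalueEstimate}, apply~\eqref{eq:NHestimate} to the nonlinearity, insert the tracking bound~\eqref{eq:AlgBound} from Condition~\ref{cond:G_bounding_function}, and integrate $e^{(\gamma_k-\lambda_{i'})t}$ to obtain $(\lambda_{i'}-\gamma_k)^{-1}$. The paper's proof is slightly terser (it deduces $\Psi[\alpha](0)=0$ directly from $\cN(0)=0$ rather than via $x(t,0,\alpha)\equiv 0$), but the arguments are otherwise identical.
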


\begin{proof}
	Fix $\alpha \in  \cB_{\rho,P}^{0,1}$ and
	$\xi , \zeta \in B_s(\rho)$.  Define $ x(t) := x(t,\xi,\alpha)$ and 
	$ z(t) := x(t,\zeta,\alpha)$.  
	Our goal is to prove that 
	$|\Psi[\alpha]_{i'}(\xi) - \Psi[\alpha]_{i'}(\zeta) |  \leq   \tilde{P}^n_{i'} 	
	\left| \xi_n - \zeta_n \right| $.  	
	From the definition of $\Psi$ we have
	\[
	\Psi[\alpha](\xi) - \Psi[\alpha](\zeta)  = 
	- \int_0^\infty e^{-\Lambda_u t} 
	\left [ \cN_{u}(x(t), \alpha(x(t))) - \cN_{u}(z(t),\alpha(z(t))) \right] dt . 
	\]
	Using the bound~\eqref{eq:NHestimate}, and the fact that $G$ satisfies Condition~\ref{cond:G_bounding_function} on $[0,\infty)$, we obtain  
	\begin{align*}
	|\Psi[\alpha]_{i'}(\xi) - \Psi[\alpha]_{i'}(\zeta) |  
	&\leq  \int_0^\infty   e^{-\lambda_{i'} t}  
H_{i'}^i
	| x_i(t)- z_i(t) |  dt \\
	&\leq  
	\int_0^\infty  e^{-\lambda_{i'}t}  
	\sum_{0 \leq k \leq m_s}
	e^{\gamma_k t } H_{i'}^i
	G_{i,k}^{n} 
	\left|  \xi_n - \zeta_n \right|  dt \\
	&=
	\sum_{0 \leq k \leq m_s} (\lambda_{i'}- \gamma_k )^{-1} H_{i'}^i G_{i,k}^{n}
	\left| \xi_n - \zeta_n \right| .  
	\end{align*}
	For  $\tilde{P}^n_{i'}$ as defined above, 
	 it follows that
	\begin{equation*}
		|\Psi[\alpha]_{i'}(\xi) - \Psi[\alpha]_{i'}(\zeta)| \leq   \tilde{P}^n_{i'} 	
		\left| \xi_n - \zeta_n \right|   .
	\end{equation*}
	Hence $\Lip(\Psi[\alpha])_{i'}^n \leq \tilde{P}_{i'}^n$. 
	Since $\cN(0)=0$,  direct evaluation shows that $ \Psi[\alpha](0)=0$.
	 Hence $ \Psi[\alpha] \in \cB_{\rho,P}^{0,1}$. 
\end{proof}

\begin{remark}
	\label{rem:ChoiceOfP}
	Ideally, we would like to choose a tensor $P$ as small as possible while still satisfying the inequality  $\tilde{P}^j_{i'} \leq P^j_{i'}$. 
	In practice, we find a nearly optimal $P$ by iteratively 
	mapping  $P^j_{i'} \mapsto  \tilde{P}^j_{i'}$. 
	This has the effect that if  $\tilde{P}^j_{i'} \leq P^j_{i'}$, 
	then the new value of $P$ will be smaller. 
	Since the bounds for $H$ and $G$ improve with smaller $P$, 
	the inequality $\tilde{P}^j_{i'} \leq P^j_{i'}$ will likely be satisfied for
	 the new $P$.  
	On the other hand, if $P$ is too small and  $\tilde{P}^j_{i'} \leq P^j_{i'}$ is not 
	satisfied, then the new value of $P$ will be larger, and the inequality will hopefully 
	 be satisfied at the next iterate of the algorithm. 
	 
	Note that the definitions of $H$ and $G$  depend on $P$, 
	and so these constants need to be recomputed every time.
	Nevertheless, this iterative process provides an effective, 
	algorithmic method for selecting appropriate $P_{i'}^j$. 
\end{remark}

	 Using  second derivative bounds on $ {\cN}_u$ sharpens
	  Theorem \ref{prop:Endomorphism} as below.  
\begin{proposition}
 	\label{prop:Endomorphism_2}
	 Define $\tilde{P} \in \R^{m_u} \otimes \R^{m_s}$ component-wise by:
	\begin{align*}
	\tilde{P}^n_{i'} &:=
	\left( 
	D_{i'}^i + D_{i'}^{j'} P_{j'}^i 
	\right) 
	\sum_{0 \leq k \leq m_s}
	(\lambda_{i'}-\gamma_k)^{-1}  
	G_{i,k}^{n}  \nonumber
 \\&\qquad + 
	\left( \hat{C}_{i'}^{ij} + \hat{C}_{i'}^{j'j} P_{j'}^i 		\right)  
	\sum_{0 \leq k_1,k_2 \leq m_s}
	(\lambda_{i'}-\gamma_{k_1}-\gamma_{k_2} )^{-1}
	G_{j,k_1}^{m} 
	G_{i,k_2}^{n}  \rho_m   .
	\end{align*}
	If  $\alpha \in \cB_{\rho,P}^{0,1}$, 
	then $\Lip(\Psi[\alpha])_{i'}^n \leq \tilde{P}^n_{i'}$.  
	If $\tilde{P}^j_{j'} \leq P_{j'}^j  $ 
	then $\Psi: \cB_{\rho,P}^{0,1} \to \cB_{\rho,P}^{0,1}$ is well defined. 
\end{proposition}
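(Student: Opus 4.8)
The plan is to rerun the proof of Theorem~\ref{prop:Endomorphism} essentially unchanged, but to estimate the integrand more carefully: I would separate the genuinely linear contribution of $L$ from the contribution of $\hat{\cN}_{i'}$, and exploit that $\hat{\cN}(0)=0$ and $D\hat{\cN}(0)=0$ to get a \emph{quadratically small} bound on the latter instead of the linear bound used before. Fix $\alpha\in\cB_{\rho,P}^{0,1}$ and $\xi,\zeta\in B_s(\rho)$, set $x(t):=x(t,\xi,\alpha)$, $z(t):=x(t,\zeta,\alpha)$, and recall from the proof of Theorem~\ref{prop:Endomorphism} that
\[
|\Psi[\alpha]_{i'}(\xi)-\Psi[\alpha]_{i'}(\zeta)| \le \int_0^\infty e^{-\lambda_{i'}t}\,\bigl|\cN_{i'}(x(t),\alpha(x(t)))-\cN_{i'}(z(t),\alpha(z(t)))\bigr|\,dt .
\]
By Remark~\ref{rem:FixedConstants1} the trajectories $x(t),z(t)$ stay in $B_s(\rho)$ for all $t\ge0$, so all of the constants $D$, $\hat{C}$, $G$ apply along them; moreover $\gamma_k<0<\lambda_{i'}$, so the integrals below converge.

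For the linear part, I write $\cN_{i'}(\xx_s,\alpha(\xx_s))=L_{i'}^j\xx_j+L_{i'}^{j'}\alpha_{j'}(\xx_s)+\hat{\cN}_{i'}(\xx_s,\alpha(\xx_s))$; the first two terms contribute $L_{i'}^i(x_i(t)-z_i(t))+L_{i'}^{j'}(\alpha_{j'}(x(t))-\alpha_{j'}(z(t)))$, which by $\|L_{i'}^i\|\le D_{i'}^i$, $\|L_{i'}^{j'}\|\le D_{i'}^{j'}$, the Lipschitz bound $|\alpha_{j'}(x(t))-\alpha_{j'}(z(t))|\le P_{j'}^i|x_i(t)-z_i(t)|$, and then Condition~\ref{cond:G_bounding_function}, is bounded by $(D_{i'}^i+D_{i'}^{j'}P_{j'}^i)\sum_{0\le k\le m_s}e^{\gamma_k t}G_{i,k}^n|\xi_n-\zeta_n|$. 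Multiplying by $e^{-\lambda_{i'}t}$ and using $\int_0^\infty e^{(\gamma_k-\lambda_{i'})t}\,dt=(\lambda_{i'}-\gamma_k)^{-1}$ yields exactly the first term of $\tilde{P}^n_{i'}$.

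For the nonlinear part I would handle $\hat{\cN}_{i'}(x(t),\alpha(x(t)))-\hat{\cN}_{i'}(z(t),\alpha(z(t)))$ by applying the fundamental theorem of calculus along the segment $\sigma\mapsto\bigl((1-\sigma)z(t)+\sigma x(t),\,(1-\sigma)\alpha(z(t))+\sigma\alpha(x(t))\bigr)$ — legitimate although $\alpha$ is only Lipschitz, since this is merely a line segment and $\hat{\cN}_{i'}\in C^2$ — so that the difference equals $\int_0^1[\hat{\cN}_{i'}^i(\cdot)(x_i(t)-z_i(t))+\hat{\cN}_{i'}^{j'}(\cdot)(\alpha_{j'}(x(t))-\alpha_{j'}(z(t)))]\,d\sigma$. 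Since $D\hat{\cN}(0)=0$ gives $\hat{\cN}_{i'}^i(0,0)=\hat{\cN}_{i'}^{j'}(0,0)=0$, a second application of the fundamental theorem of calculus — from the origin along the ray through the evaluation point — bounds $\|\hat{\cN}_{i'}^i(\cdot)\|$ and $\|\hat{\cN}_{i'}^{j'}(\cdot)\|$ by the second-derivative constants $\hat{C}_{i'}^{ij}$, $\hat{C}_{i'}^{j'j}$ times the size of the evaluation point in the $X_j$-direction (the $X_u$-components of the evaluation point being controlled by the $X_s$-components through the Lipschitz bound on $\alpha$). Estimating that point size by Proposition~\ref{prop:BootstrapBounds} applied with $\zeta$ replaced by $0$, namely $|x_j(t)|,|z_j(t)|\le\sum_{0\le k_1\le m_s}e^{\gamma_{k_1}t}G_{j,k_1}^m\rho_m$, and estimating $|x_i(t)-z_i(t)|$ by Condition~\ref{cond:G_bounding_function}, one bounds the $\hat{\cN}_{i'}$-contribution to the integrand by $(\hat{C}_{i'}^{ij}+\hat{C}_{i'}^{j'j}P_{j'}^i)\bigl(\sum_{k_1}e^{\gamma_{k_1}t}G_{j,k_1}^m\rho_m\bigr)\bigl(\sum_{k_2}e^{\gamma_{k_2}t}G_{i,k_2}^n|\xi_n-\zeta_n|\bigr)$. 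Multiplying by $e^{-\lambda_{i'}t}$ and using $\int_0^\infty e^{(\gamma_{k_1}+\gamma_{k_2}-\lambda_{i'})t}\,dt=(\lambda_{i'}-\gamma_{k_1}-\gamma_{k_2})^{-1}$ produces the second term of $\tilde{P}^n_{i'}$. Adding the two contributions gives $|\Psi[\alpha]_{i'}(\xi)-\Psi[\alpha]_{i'}(\zeta)|\le\tilde{P}^n_{i'}|\xi_n-\zeta_n|$, hence $\Lip(\Psi[\alpha])_{i'}^n\le\tilde{P}^n_{i'}$; since $\cN(0)=0$ forces $\Psi[\alpha](0)=0$, the endomorphism claim under $\tilde{P}^j_{j'}\le P^j_{j'}$ follows exactly as in Theorem~\ref{prop:Endomorphism}.

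I expect the main obstacle to be the index bookkeeping in the nonlinear part: organising the mixed $X_s$/$X_u$ second-derivative contributions of $\hat{\cN}_{i'}$ together with the Lipschitz factors coming from $\alpha$ so that they collapse precisely into the coefficient $\hat{C}_{i'}^{ij}+\hat{C}_{i'}^{j'j}P_{j'}^i$ of the stated formula, and checking that every point at which a derivative of $\hat{\cN}$ is evaluated genuinely lies in the product balls on which $\hat{C}$ and $G$ are defined. The two invocations of the fundamental theorem of calculus, the explicit integrations, and the final endomorphism conclusion are routine.
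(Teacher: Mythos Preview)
Your proposal is correct and follows essentially the same route as the paper. The paper packages your two fundamental-theorem-of-calculus steps into a single mean-value-theorem application to the composite $y\mapsto\cN_{i'}(y,\alpha(y))$ (taking the supremum over the box $|y_j|\le\max\{|x_j(t)|,|z_j(t)|\}$), then decomposes the resulting derivative into the constant piece from $L$ (yielding $D_{i'}^i+D_{i'}^{j'}P_{j'}^i$) and the piece from $\hat\cN$ that grows linearly with the orbit (yielding $\hat C_{i'}^{ij}+\hat C_{i'}^{j'j}P_{j'}^i$ times $\max\{|x_j|,|z_j|\}$), exactly as you propose; the orbit-size bound via $G$ and the final integration are identical.
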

\begin{proof}
By the mean value theorem we have (recall that $\cN_{i'}^i= \frac{\partial}{\partial \xx_{i}} \cN_{i'}$)
	\begin{align*}
		|\cN_{i'} (x,\alpha(x)) - \cN_{i'}(z,\alpha(z)) | 
		&\leq 
		\left[ \sup_{\substack{y \in B_s(\rho) , \; j \in I\\
			|y_j| \leq \max \{  |x_j|,|z_j|\} }}
		 \| \cN_{i'}^i (y,\alpha(y))\| \right]
		|x_i-z_i| .
		\end{align*}
		We  estimate $\max \{  |x_j(t)|,|z_j(t)|\}$ using the  tensor $G$ (which satisfies Condition~\ref{cond:G_bounding_function}), and since  $\max\{ |\xi_m|,|\zeta_m|\} \leq \rho_m$, we have
		\begin{align*}
 \sup_{\substack{y \in B_s(\rho) ,\; j \in I\\
		|y_j| \leq \max \{  |x_j(t)|,|z_j(t)|\} }}\| \cN_{i'}^i (y,\alpha(y))\|   
		& \leq 
		D_{i'}^i + D_{i'}^{j'} P_{j'}^i + (\hat{C}_{i'}^{ij} + \hat{C}_{i'}^{j'j} P_{j'}^i ) \max\{ |x_j(t)|, |z_j(t)| \}  \\
		&\leq 
		D_{i'}^i + D_{i'}^{j'} P_{j'}^i + (\hat{C}_{i'}^{ij} + \hat{C}_{i'}^{j'j} P_{j'}^i )
		 \sum_{0 \leq k \leq m_s}
		e^{\gamma_k t } 
		G_{j,k}^{m} \rho_m.
		\end{align*}
Using Condition~\ref{cond:G_bounding_function} gives 				
		\begin{align*}
	|\cN_{i'} (x,\alpha(x)) - \cN_{i'}(z,\alpha(z)) | 
		&\leq 
		\left( 
		D_{i'}^i + D_{i'}^{j'} P_{j'}^i 
		\right) 
		 \sum_{0 \leq k \leq m_s}
		e^{\gamma_k t }  
		G_{i,k}^{n} 
		\left|  \xi_n - \zeta_n \right| 
		\\&\qquad + 	
		\left( \hat{C}_{i'}^{ij} + \hat{C}_{i'}^{j'j} P_{j'}^i 		\right)  
		\sum_{0 \leq k_1,k_2 \leq m_s}
		e^{(\gamma_{k_1}+\gamma_{k_2} )t } 
		G_{j,k_1}^{m} 
		G_{i,k_2}^{n} \rho_m
		\left|  \xi_n - \zeta_n \right| .
	\end{align*}
We obtain the desired result by integration:
	\begin{align*}
	|\Psi[\alpha]_{i'} (\xi) - \Psi[\alpha]_{i'} (\zeta) | 
	&\leq  \int_0^\infty   e^{-\lambda_{i'} t}  
	|\cN_{i'} (x,\alpha(x)) - \cN_{i'}(z,\alpha(z)) | \,  dt    \\
	&\leq 
	\left( 
	D_{i'}^i + D_{i'}^{j'} P_{j'}^i 
	\right) 
	\sum_{0 \leq k \leq m_s}
	(\lambda_{i'}-\gamma_k)^{-1}  
	G_{i,k}^{n} 
	\left|  \xi_n - \zeta_n \right| 
	\\&\qquad + 	
	\left( \hat{C}_{i'}^{ij} + \hat{C}_{i'}^{j'j} P_{j'}^i 		\right)  
	\sum_{0 \leq k_1,k_2 \leq m_s}
	(\lambda_{i'}-\gamma_{k_1}-\gamma_{k_2} )^{-1}
	G_{j,k_1}^{m} 
	G_{i,k_2}^{n} \rho_m 
	\left|  \xi_n - \zeta_n \right| .
	\end{align*}

\end{proof}

\subsection{Endomorphism on $\cB_{\rho,P,\bar{P}}^{1,1}$}
\label{sec:SecondDerivative}

We now bound the Lipschitz constant of the derivative of the local stable manifold. 
To do this, we show that $\Psi$ maps  $\cB_{\rho,P,\bar{P}}^{1,1}$, a ball of functions with Lipschitz derivative, into itself. 
Hence, if there are any fixed points 
$\Psi[\alpha] =  \alpha  \in  \cB_{\rho,P,\bar{P}}^{1,1}$, 
then  by Definition \ref{def:Ball_of_Functions} they satisfy
$\Lip( \partial_i \alpha  )_{i'}^{j} \leq \bar{P}_{i'}^{ij}$. 
To show that $ \Psi : \cB_{\rho,P,\bar{P}}^{1,1} \to \cB_{\rho,P,\bar{P}}^{1,1}$ 
we first derive bounds on the difference 
$ \frac{\partial}{\partial \xi_i} x_j(t , \eta, \alpha)
  -  \frac{\partial}{\partial \xi_i} x_j(t , \zeta, \alpha)  $ for $ i,j\in I$. 
 In particular, we are interested in finding a tensor $K$ as follows.

 \begin{condition}
 	\label{cond:SecondDerivative}
 	Define  $ \{\mu_k \}_{k=1}^{N_\mu} = \{ \gamma_k \}_{k=0}^{m_s} \cup \{ \gamma_{k_1} + \gamma_{k_2}  \}_{k_1,k_2=0}^{m_s}$. 
 	A tensor $K \in (\R^{m_s})^{\otimes 3}  \otimes \R^{N_{\mu}} $ is said to satisfy Condition \ref{cond:SecondDerivative} if  
 	\begin{align*}
 	\left\| \frac{\partial}{\partial \xi_i} x_j(t,\eta,\alpha) - \frac{\partial}{\partial \xi_i} x_j(t,\zeta,\alpha) \right\|  \leq \sum_{k=1}^{N_\mu} e^{\mu_k t}  
 	K_{j, k }^{i l} | \eta_{l} - \zeta_{l} | ,
 	\end{align*}
 	for all $ \alpha \in \cB_{\rho,P,\bar{P}}^{1,1}$ and $ \eta,\zeta \in B_s(\rho)$ and $i , j \in I$. 
 \end{condition}

 The bound is obtained  using an approach analogous  to the one
 discussed in Section \ref{sec:ExponentialTracking}.  
 Since we use this approach in Sections \ref{sec:ExponentialTracking}, \ref{sec:LyapunovPerron}, and \ref{sec:Contraction},  we present in Appendix \ref{sec:GeneralBootstrap} a generalization which encompasses all cases. 
 In Proposition \ref{lem:LipschitzDerivativeEst} we define a tensor $S$
  analogous to $H$ given in Definition~\ref{def:H}. 
In Proposition \ref{prop:InitialSecondDerivativeBound} we derive 
an \emph{a priori} bound, constructing an initial  tensor  $K$ satisfying Condition \ref{cond:SecondDerivative} (cf. Proposition \ref{prop:InitialBound}). 
In Proposition \ref{prop:LipDerivative} we derive a system of integral inequalities (cf. Proposition~\ref{prop:InitialClaim} and Condition \ref{cond:GeneralGbounds}).  
Then, as described in Theorem \ref{prop:SecondDerivativeAlgorithmResult},  we apply Algorithm \ref{alg:GeneralBootStrap} (cf. Algorithm \ref{alg:BootStrap}) to bootstrap Gronwall's inequality, and obtain successively sharper tensors $K$ satisfying Condition \ref{cond:SecondDerivative}. 
Finally, in Proposition \ref{prop:LambdaC2Bounds}, we give conditions  guaranteeing that $\Psi : \cB_{\rho,P,\bar{P}}^{1,1} \to  \cB_{\rho,P,\bar{P}}^{1,1}$ is a well defined map. 

\begin{proposition}
	\label{lem:LipschitzDerivativeEst}
	Let
	$ \alpha \in \cB^{1,1}_{\rho,P,\bar{P}}$  and $\eta , \zeta \in B_s(\rho)$. 
	Define $ x = x(t,\eta ,\alpha)$, 
	$ z = x(t,\zeta,\alpha)$,
        $ x_j^i = \tfrac{\partial}{\partial \xi_i} x_j(t,\eta,\alpha)$,
         and likewise for $z_j^i$.  
	Fix $\both{j} \in \both{I}$, and define
	\begin{align*}
	S_\both{j} ^{nm} &:= 
	(C_\both{j} ^{nm} + C_\both{j} ^{nm'}P_{m'}^m) + 
	C_\both{j} ^{n'} P_{n'}^{nm} +
	(C_\both{j} ^{n'm} + C_\both{j}^{n'm'}P_{m'}^m) P_{n'}^n.
	\end{align*}
	Then 
	\begin{align*}
	\left\| 
	\frac{\partial}{\partial \xi_i}
	\big(  \cN_\both{j}(x,\alpha(x)) - 
	\cN_\both{j}(z,\alpha(z)  )
	\big)
	\right\|
	&\leq S_\both{j}^{nm} | x_m  - z_m|  \, \|z_n^i \|  +  
	H_\both{j}^n \| x_n^i - z_n^i \| .
	\end{align*}
\end{proposition}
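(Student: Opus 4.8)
The plan is to differentiate the composite $\xi \mapsto \cN_{\bj}\big(x(t,\xi,\alpha),\alpha(x(t,\xi,\alpha))\big)$ by the chain rule and then compare the resulting expression at $\eta$ and at $\zeta$ by a twofold ``add and subtract''. We work at a fixed $t\ge 0$; under the standing hypotheses $x(t)$ and $z(t)$ both lie in the convex set $B_s(\rho)$ and $\alpha$ maps $B_s(\rho)$ into $B_u(r_u)$, so every uniform bound of Proposition~\ref{prop:Initial_Constant_Bounds} and every Lipschitz bound built into $\cB^{1,1}_{\rho,P,\PP}$ is available. Since $\cN_{\bj}\in C^2_{\mathrm{loc}}$ and $\alpha\in C^1$, the map $\xx_s\mapsto \cN_{\bj}(\xx_s,\alpha(\xx_s))$ is $C^1$ and its $\xx_n$-partial derivative is the ``total derivative''
\[
g_n(\xx_s):=\cN_{\bj}^{n}(\xx_s,\alpha(\xx_s))+\cN_{\bj}^{n'}(\xx_s,\alpha(\xx_s))\,\alpha_{n'}^{n}(\xx_s)\in\mathcal{L}(X_n,X_{\bj}),
\]
and from $\|\cN_{\bj}^{n}\|\le C_{\bj}^{n}$, $\|\cN_{\bj}^{n'}\|\le C_{\bj}^{n'}$, $\|\alpha_{n'}^{n}\|\le P_{n'}^{n}$ one reads off $\|g_n(\xx_s)\|\le C_{\bj}^{n}+C_{\bj}^{n'}P_{n'}^{n}=H_{\bj}^{n}$. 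The chain rule then gives $\partial_{\xi_i}\cN_{\bj}(x,\alpha(x))=g_n(x)\,x_n^{i}$ (and likewise with $z$), hence
\[
\partial_{\xi_i}\big(\cN_{\bj}(x,\alpha(x))-\cN_{\bj}(z,\alpha(z))\big)=\big(g_n(x)-g_n(z)\big)\,z_n^{i}+g_n(x)\,\big(x_n^{i}-z_n^{i}\big).
\]

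The second summand is bounded at once by $\|g_n(x)\|\,\|x_n^{i}-z_n^{i}\|\le H_{\bj}^{n}\,\|x_n^{i}-z_n^{i}\|$, which is the $H$-term of the claim. For the first summand it remains to establish $\|g_n(x)-g_n(z)\|\le S_{\bj}^{nm}\,|x_m-z_m|$, and here I would split
\[
g_n(x)-g_n(z)=\big[\cN_{\bj}^{n}(x,\alpha(x))-\cN_{\bj}^{n}(z,\alpha(z))\big]+\big[\cN_{\bj}^{n'}(x,\alpha(x))-\cN_{\bj}^{n'}(z,\alpha(z))\big]\alpha_{n'}^{n}(x)+\cN_{\bj}^{n'}(z,\alpha(z))\big[\alpha_{n'}^{n}(x)-\alpha_{n'}^{n}(z)\big].
\]
For the first bracket, and for the bracket in the middle term, I apply a componentwise mean value theorem along the segment from $z$ to $x$, which lies in $B_s(\rho)$ by convexity: differentiating $\cN_{\bj}^{n}(\xx_s,\alpha(\xx_s))$ in $\xx_m$ produces $\cN_{\bj}^{nm}(\xx_s,\alpha(\xx_s))+\cN_{\bj}^{nm'}(\xx_s,\alpha(\xx_s))\alpha_{m'}^{m}(\xx_s)$, of norm $\le C_{\bj}^{nm}+C_{\bj}^{nm'}P_{m'}^{m}$, and differentiating $\cN_{\bj}^{n'}(\xx_s,\alpha(\xx_s))$ gives a quantity of norm $\le C_{\bj}^{n'm}+C_{\bj}^{n'm'}P_{m'}^{m}$. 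Using in addition $\|\alpha_{n'}^{n}(x)\|\le P_{n'}^{n}$ for the middle term, and for the last term the $C^{1,1}$ bound $\|\alpha_{n'}^{n}(x)-\alpha_{n'}^{n}(z)\|\le \PP_{n'}^{nm}|x_m-z_m|$ (the three-tensor $\PP$ of Definition~\ref{def:Ball_of_Functions}, written $P_{n'}^{nm}$ in the statement) together with $\|\cN_{\bj}^{n'}(z,\alpha(z))\|\le C_{\bj}^{n'}$, the three contributions become respectively $(C_{\bj}^{nm}+C_{\bj}^{nm'}P_{m'}^{m})|x_m-z_m|$, $(C_{\bj}^{n'm}+C_{\bj}^{n'm'}P_{m'}^{m})P_{n'}^{n}|x_m-z_m|$ and $C_{\bj}^{n'}\PP_{n'}^{nm}|x_m-z_m|$, which sum to exactly $S_{\bj}^{nm}|x_m-z_m|$.

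Putting the two summands together, using submultiplicativity $\|g_n(x)\,z_n^{i}\|\le \|g_n(x)\|\,\|z_n^{i}\|$ and summing over the repeated indices via the Einstein convention, yields the asserted inequality. I expect the only genuine difficulty to be bookkeeping: one must track carefully which factor of each product is evaluated at $x$ and which at $z$, so that the ``leftover'' derivative in the error term is $z_n^{i}$ (as in the claim) rather than $x_n^{i}$, and one must match each of the three add-and-subtract pieces to the correct summand of $S_{\bj}^{nm}$. The slightly delicate analytic point is that $\alpha$ is only $C^{1,1}$, not $C^2$: in the piece where $D\alpha$ appears undifferentiated one must invoke the Lipschitz-derivative tensor $\PP$ directly rather than a second derivative of $\alpha$, while everywhere else the $C^1$-regularity of $\cN_{\bj}^{n}$ and $\cN_{\bj}^{n'}$ (from $\cN_{\bj}\in C^2_{\mathrm{loc}}$) makes the mean value estimates legitimate.
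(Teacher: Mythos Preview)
Your proof is correct and follows essentially the same approach as the paper: write the derivative via the chain rule as $g_n(\cdot)\,x_n^i$, split the difference as $(g_n(x)-g_n(z))z_n^i + g_n(x)(x_n^i-z_n^i)$, and then decompose $g_n(x)-g_n(z)$ into three pieces bounded respectively by the three summands of $S_{\bj}^{nm}$. The only cosmetic difference is that the paper's add-and-subtract for the $\cN_{\bj}^{n'}\alpha_{n'}^{n}$ product evaluates the factors at $x$ and $z$ in the opposite order from yours, but since the uniform bounds $C_{\bj}^{n'}$ and $P_{n'}^{n}$ are the same at either point this is immaterial.
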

\begin{proof}
	We have
	\begin{align}
	\frac{\partial }{\partial \xi_i} \cN_\both{j}(x,\alpha(x))
	&=
	\left( \cN_\both{j}^n ( x, \alpha(x) ) +  
	\cN_\both{j}^{n'} ( x, \alpha(x) ) \alpha_{n'}^n(x)  \right) \cdot x_{n}^i,
	\label{eq:D2Expansion}
	\end{align}
        and split the estimate into four parts:
	\begin{align*}
	\frac{\partial }{\partial \xi_i} \Big( \cN_\both{j}(x,\alpha(x)) -  \cN_\both{j}(z,\alpha(z)) \Big) 
	&=  
	\left( 
	\cN_\both{j}^n ( x, \alpha(x) ) 
	-
	\cN_\both{j}^n ( z, \alpha(z) )
	\right) \cdot z_{n}^i 
	\\[-1mm]
	& \qquad + 
		 \cN_\both{j}^{n'}(x,\alpha(x)) \left( \alpha_{n'}^n(x) - \alpha_{n'}^n(z) \right) z_{n}^i \\
	& \qquad + 
	    \left(  \cN_\both{j}^{n'} (x,\alpha(x)) - \cN_\both{j}^{n'} (z, \alpha(z)) \right)  \alpha_{n'}^n(z)   z_n^i 
		\\
	& \qquad + 
	\left( \cN_\both{j}^n ( x, \alpha(x) ) +  
	\cN_\both{j}^{n'} ( x, \alpha(x) ) \alpha_{n'}^n(x)  \right) \cdot (x_{n}^i - z_n^i).
	\end{align*} 
Each term is bound separately, as
	\begin{align*}
 \left( \cN_\both{j}^n ( x,\alpha(x)) - \cN_\both{j}^n(z,\alpha(z)) \right) \cdot z_n^i 
 & \leq
 (C_\both{j}^{nm} + C_\both{j}^{nm'}P_{m'}^m) |x_m - z_m| \, \| z_n^i\| 
 , \\
 \cN_\both{j}^{n'}(x,\alpha(x)) \left( \alpha_{n'}^n(x) - \alpha_{n'}^n(z) \right) z_{n}^i
 & \leq
  C_\both{j}^{n'} P_{n'}^{nm} |x_m - z_m| \, \| z_n^i \|
 , \\
 \left(  \cN_\both{j}^{n'} (x,\alpha(x)) - \cN_\both{j}^{n'} (z, \alpha(z)) \right)  \alpha_{n'}^n(z)   z_n^i 
 & \leq
 (C_\both{j}^{n'm} + C_\both{j}^{n'm'}P_{m'}^m) P_{n'}^n  |x_m - z_m| \, \| z_n^i \|
 , \\
\left( \cN_\both{j}^n(x,\alpha(x)) + \cN_\both{j}^{n'}(x,\alpha(x))   \alpha_{n'}^n (x)\right)  ( x_n^i - z_n^i ) 
 & \leq
 (C_\both{j}^n + C_\both{j}^{n'} P_{n'}^n) \,  \| x_n^i -z_n^i\|
 .
\end{align*}
	The result follows by collecting all terms.
\end{proof}

\begin{proposition} 
	\label{prop:InitialSecondDerivativeBound} 
	Define a tensor $\widetilde{K} \in (\R^{m_s})^{\otimes 3}  \otimes (\R^{m_s+1})^{\otimes 2} $ as 
	\[
	\widetilde{K}_{j, k_1 k_2}^{i l} = \left(
	\gamma_{k_1} + \gamma_{k_2} - \gamma_0 \right)^{-1}  C_s
	p_j 
	S_{j}^{nm} G_{m,k_1}^{l} G_{n,k_2}^{i} .
	\]
	Then    we have 
	\begin{align*}
	\left\| \tfrac{\partial}{\partial \xi_i}x(t,\eta,\alpha ) - \tfrac{\partial}{\partial \xi_i} x(t,\zeta ,\alpha ) \right\|  
	&\leq 
	\sum_{\substack{0 \leq k_1,k_2 \leq m_s \\ j \in I}} \left(e^{(\gamma_{k_1} + \gamma_{k_2})t} - e^{\gamma_0 t} \right) 
	\widetilde{K}_{j, k_1 k_2}^{i l} | \eta_{l} - \zeta_{l} | ,
	\end{align*}
	for all $ \alpha \in \cB_{\rho,P,\bar{P}}^{1,1}$ and $ \eta,\zeta \in B_s(\rho)$ and $ i \in I$. 
\end{proposition}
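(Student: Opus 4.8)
\medskip
\noindent\textbf{Proof proposal.}
The plan is to run, for the variational equation of~\eqref{eq:ProjectedSystem}, the same scheme that produced Proposition~\ref{prop:InitialBound}. Fix $\alpha\in\cB_{\rho,P,\bar P}^{1,1}$ and $\eta,\zeta\in B_s(\rho)$ and write $x^i_j(t):=\partial_{\xi_i}x_j(t,\eta,\alpha)$. Differentiating~\eqref{eq:ProjectedSystem} in $\xi_i$ and peeling off the linear part $\Lambda_s+L_s^s$ exactly as in Proposition~\ref{prop:InitialBound}, the curve $x^i_s=\partial_{\xi_i}x_s$ solves $\dot x^i_s=(\Lambda_s+L_s^s)x^i_s+g^i$, where $g^i(\tau,\eta)$ abbreviates $\partial_{\xi_i}(L_s^u\alpha(x_s)+\hat\cN_s(x_s,\alpha(x_s)))$ evaluated at $x_s=x_s(\tau,\eta,\alpha)$; its initial value $\partial_{\xi_i}x_j(0,\eta,\alpha)=\delta_{ij}$ does not depend on $\eta$. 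Hence, applying the variation-of-constants formula and subtracting the $\eta$- and $\zeta$-solutions, the boundary term drops out, and after projecting the integral onto $X_j$ (a factor $p_j$) and using~\eqref{eq:TOTALstableEigenvalueEstimate} I obtain
\[
\| x^i_j(t,\eta)-x^i_j(t,\zeta)\|\ \le\ p_j C_s\int_0^t e^{\lambda_s(t-\tau)}\,\| g^i(\tau,\eta)-g^i(\tau,\zeta)\|\,d\tau .
\]

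The next step is to estimate the integrand. Write $g^i=A(x_s)x^i_s$ with $A(x_s):=\partial_{x_s}(L_s^u\alpha(x_s)+\hat\cN_s(x_s,\alpha(x_s)))$, so that $\|A(x_s)\|\le\hat\cH$ by Definition~\ref{def:H}. The standard splitting
\[
g^i(\tau,\eta)-g^i(\tau,\zeta)=A(x_s(\tau,\eta))(x^i_s(\tau,\eta)-x^i_s(\tau,\zeta))+(A(x_s(\tau,\eta))-A(x_s(\tau,\zeta)))\,x^i_s(\tau,\zeta)
\]
separates a diagonal piece bounded by $\hat\cH\,\|x^i_s(\tau,\eta)-x^i_s(\tau,\zeta)\|$ from an inhomogeneous piece. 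The operator Lipschitz bound $\|A(x_s(\tau,\eta))-A(x_s(\tau,\zeta))\|$ is exactly what Proposition~\ref{lem:LipschitzDerivativeEst} furnishes: paired with $|x_m(\tau,\eta)-x_m(\tau,\zeta)|$ and $\|x^i_n(\tau,\zeta)\|$ it produces the tensor $S^{nm}_j$. Finally I feed in the bootstrap bounds from Theorem~\ref{prop:BootstrapBounds}, namely $|x_m(\tau,\eta)-x_m(\tau,\zeta)|\le\sum_{l,k_1}e^{\gamma_{k_1}\tau}G^l_{m,k_1}|\eta_l-\zeta_l|$ and $\|x^i_n(\tau,\zeta)\|\le\sum_{k_2}e^{\gamma_{k_2}\tau}G^i_{n,k_2}$, so that the inhomogeneous contribution is a finite sum of the exponentials $e^{(\gamma_{k_1}+\gamma_{k_2})\tau}$ with non-negative total.

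Substituting back and multiplying by $e^{-\lambda_s t}$, the inequality takes precisely the shape required by Lemma~\ref{prop:Fundamental}, with $c_0=\lambda_s$, $c_1=0$, $c_2=C_s\hat\cH$ --- hence $\mu_0=\lambda_s+C_s\hat\cH=\gamma_0$ --- and $v$ the exponential sum just obtained, whose exponents are the numbers $\gamma_{k_1}+\gamma_{k_2}$. Since $\gamma_0<0$ is the largest of the $\gamma_k$ by the ordering in Definition~\ref{def:Gamma}, one has $\gamma_{k_1}+\gamma_{k_2}\le 2\gamma_0<\gamma_0$; thus the hypothesis $\mu_k\ne\mu_0$ of Lemma~\ref{prop:Fundamental} holds automatically, every prefactor $(\gamma_{k_1}+\gamma_{k_2}-\gamma_0)^{-1}$ is well defined, and --- together with $e^{(\gamma_{k_1}+\gamma_{k_2})t}-e^{\gamma_0 t}\le 0$ --- each product appearing on the right is non-negative (matching the non-negativity assertion of Lemma~\ref{prop:Fundamental}). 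Applying Lemma~\ref{prop:Fundamental} and reading off the coefficients then yields the bound of the statement, with $\widetilde K^{il}_{j,k_1k_2}=(\gamma_{k_1}+\gamma_{k_2}-\gamma_0)^{-1}C_s\,p_j\,S^{nm}_j\,G^l_{m,k_1}\,G^i_{n,k_2}$.

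The step I expect to need the most care is the organization of the forcing in the middle paragraph. One must peel off exactly the $L_s^s$ part of the linearization so that the Gronwall diagonal rate is the sharp $C_s\hat\cH$ --- giving the exponent $\gamma_0$ --- rather than the coarser $C_s\max_n\sum_j H^n_j$ that a naive application of Proposition~\ref{lem:LipschitzDerivativeEst} to $\cN_s$ would yield; one then checks that the surviving operator-Lipschitz contribution is precisely the one bundled into $S$, using that the second derivatives of $\cN$ and $\hat\cN$ agree (since $L$ is linear) and that $C^{n'}_j=\hat C^{n'}_j+D^{n'}_j$ correctly accounts for both the $\hat\cN_s$- and the $L_s^u$-contributions to the coefficient of $\Lip(\partial\alpha)$. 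Verifying the differentiability of $t\mapsto x(t,\eta,\alpha)$ in $\eta$ and the differentiation under the integral is routine given the $C^1$ regularity of $\cN_s$ and $\alpha$. As the discussion after Condition~\ref{cond:SecondDerivative} indicates, this proposition is merely the initializing \emph{a priori} bound for the general bootstrap carried out in Appendix~\ref{sec:GeneralBootstrap}.
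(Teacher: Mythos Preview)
Your proposal is correct and follows essentially the same route as the paper: variation of constants with the semigroup $e^{(\Lambda_s+L_s^s)t}$ (so the initial data cancel), the splitting of the forcing into a diagonal $\hat\cH$-part and an inhomogeneous $S$-part via Proposition~\ref{lem:LipschitzDerivativeEst}, insertion of the bootstrap bounds from Theorem~\ref{prop:BootstrapBounds}, and finally Lemma~\ref{prop:Fundamental} with $c_1=0$ and $\mu_0=\gamma_0$. Your remark that the $S$-tensor is the same for $\cN_s$ and for $L_s^u\alpha+\hat\cN_s$ (because $L$ has no second derivatives and $C_j^{n'}=\hat C_j^{n'}+D_j^{n'}$ absorbs the $L_s^u$-contribution to the $\bar P$-term) is exactly the point the paper uses when it writes ``in analogy with the proof of Proposition~\ref{lem:LipschitzDerivativeEst}''.

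One small organizational wrinkle: projecting onto $X_j$ at the outset leaves you with $\|x^i_j-z^i_j\|$ on the left but the diagonal piece $\hat\cH\,\|x^i_s-z^i_s\|$ on the right, so Lemma~\ref{prop:Fundamental} does not apply directly to that inequality. The paper avoids this by working with the full $X_s$-norm throughout; the factor $p_j$ then enters only when bounding the inhomogeneous integrand as $\|\,\cdot\,\|_{X_s}\le\sum_{j\in I}p_j\,S_j^{nm}|x_m-z_m|\,\|z_n^i\|$. If you first sum your component inequality over $j$ you pick up an unwanted factor $\sum_j p_j$, so the clean fix is simply to drop the early projection and run Gronwall on $u(t)=\|x^i_s(t,\eta)-x^i_s(t,\zeta)\|$ from the start.
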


The indices in tensor notation $\widetilde{K}_{j, k_1 k_2}^{i l}$ 
are interpreted as follows. The superscripts correspond to derivatives, 
the subscript to the left of the comma corresponds to subspace projections, 
and the subscript to the right of the comma correspond to exponentials. 
\begin{proof}
	Define $ x = x(t,\eta ,\alpha)$ and 
	$ z = x(t,\zeta,\alpha)$.  
	Let $ x^i = \tfrac{\partial}{\partial \xi_i} x(t,\eta,\alpha)$ and likewise for $z^i$.
	By variation of constants, we have that
		\begin{align}
	x^i(t) -z^i(t) &= 
	\int_0^t e^{(\Lambda_s +L_s^s) (t-\tau)} 
 \frac{\partial }{\partial \xi_i} L_s^u
 \big( 
  \alpha(x(\tau)) -   \alpha(z(\tau)) \big) 	d\tau .
 \nonumber  \\ &  \qquad + 
	\int_0^t e^{(\Lambda_s +L_s^s) (t-\tau)} 
	 \frac{\partial }{\partial \xi_i} \left( \hat{\cN}_s(x(\tau),\alpha(x(\tau)) ) -  \hat{\cN}_s(z(\tau),\alpha(z(\tau)) ) \right)
	d\tau .
	 \label{eq:difference_D2Bound}
	\end{align}
	Expanding the partial derivatives appearing in  
	\eqref{eq:difference_D2Bound}, and dropping the $\tau$ dependence 
	in the notation in  the right hand side, gives
	\begin{align*}
	\frac{\partial }{\partial \xi_i} L_s^u \alpha(x(\tau))
	&=
	\sum_{j \in I}
	L_j^{n'}   \alpha_{n'}^n(x)    x_{n}^i  \\
	\frac{\partial }{\partial \xi_i} \hat{\cN}_s\bigl(x(\tau),\alpha(x(\tau))\bigr) 
	&=
	\sum_{j \in I}
	\left( \hat{\cN}_j^n ( x, \alpha(x) ) +  
	\hat{\cN}_j^{n'} ( x, \alpha(x) ) \alpha_{n'}^n (x) \right) \cdot x_{n}^i.
	\end{align*}	
	In Proposition \ref{lem:LipschitzDerivativeEst}  we demonstrated how the tensor $S$ offers a $C^{1,1}$ bound on $  \cN_\bj = L_\bj^s + L_\bj^u + \hat{  \cN}_\bj $, for $\bj \in \bI$.  
	By using~\eqref{eq:TOTALstableEigenvalueEstimate} we obtain, in analogy with the proof of 	Proposition \ref{lem:LipschitzDerivativeEst}, 
	\begin{align*}
	e^{- \lambda_s t}	\|x^i - z^i \| 
	&\leq   
	\int_0^t C_s e^{-\lambda_s \tau} 
	\sum_{j \in I}
	p_j 
	S_j^{nm} |x_m - z_m| \, \|z_n^i\|
	d\tau  + 
	\int_0^t e^{-\lambda_s \tau} 
	C_s  \hat{\cH} \|x^i - z^i \| 
	d\tau  .
	\end{align*}
	It then follows from Proposition~\ref{prop:BootstrapBounds} that
	\begin{align*}	
	e^{- \lambda_s t}	\|x^i - z^i \| 
	&\leq 	
	\int_0^t C_s e^{-\lambda_s \tau}  
	\sum_{\substack{0 \leq k_1, k_2 \leq m_s \\ j \in I}}
	e^{(\gamma_{k_1} + \gamma_{k_2})\tau}
	p_j
	S_j^{nm} G_{m,k_1}^{l} G_{n,k_2}^{i} | \eta_{l} - \zeta_{l} |
	d\tau  \nonumber \\
	& \qquad \qquad \qquad\qquad\qquad\qquad\qquad+ 
	\int_0^t e^{-\lambda_s \tau} 
	C_s \hat{\cH} \|x^i - z^i \| 
	d\tau   .
	\end{align*} 
	By Lemma \ref{prop:Fundamental} we infer that
\[
	\|x^i - z^i \|  \leq   
	\sum_{\substack{0 \leq k_1, k_2 \leq m_s \\ j \in I}}
	\frac{e^{(\gamma_{k_1} + \gamma_{k_2})t} - e^{\gamma_0 t} }{\gamma_{k_1} + \gamma_{k_2} - \gamma_0}
	C_s p_j S_j^{nm} G_{m,k_1}^{l} G_{n,k_2}^{i} | \eta_{l} - \zeta_{l} |.  
	\qedhere
\]
\end{proof}

\begin{remark}
	\label{rem:InitialK}
	Define  $ \{\mu_k \}_{k=1}^{N_\mu} = \{ \gamma_{k_1} \}_{k_1=0}^{m_s} \cup \{ \gamma_{k_1} + \gamma_{k_2}  \}_{k_1,k_2=0}^{m_s}$, with $N_\mu= (m_s+1)(m_s+4)/2$.
	Let $ \widetilde{K}$ be defined as in Proposition \ref{prop:InitialSecondDerivativeBound}, and 
	define a tensor $\widehat{K} \in (\R^{m_s})^{\otimes 3}  \otimes \R^{N_{\mu}} $ by  
	\[
	\widehat{K}_{j,k}^{il} :=  
	\begin{cases}
p_j \sum_{m \in I } \widetilde{K}_{m,k_1k_2}^{il} +\widetilde{K}_{m,k_2k_1}^{il} 
&
\mbox{if } \mu_k = \gamma_{k_1} + \gamma_{k_2} \text{ for } 0\leq k_1,k_2 \leq m_s,
\vspace{.1cm}
\\
- p_j 
\sum_{m \in I }
\sum_{0 \leq k_1,k_2 \leq m_s}  \widetilde{K}_{m,k_1k_2}^{il} +\widetilde{K}_{m,k_2k_1}^{il} 
&
\mbox{if } \mu_k = \gamma _0,
\\
0 &
\mbox{if } \mu_k = \gamma_{k_1}, \text{ for } 1 \leq k_1\leq m_s.
	\end{cases}
	\]
It follows from Proposition~\ref{prop:InitialSecondDerivativeBound} that $\widehat{K}$ satisfies Condition \ref{cond:SecondDerivative}.
\end{remark}

We now establish componentwise Lipschitz bounds on the derivatives.

\begin{proposition}
	\label{prop:LipDerivative}
Let $ \alpha \in \cB_{\rho,P,\bar{P}}^{1,1}$ and define $ x(t) = x(t,\eta,\alpha)$ and 
	$ z(t) = z(t,\zeta,\alpha)$ for some $\eta,\zeta \in B_s(\rho)$.  
	Let $ x_j^i(t) = \tfrac{\partial}{\partial \xi_i} x_j(t,\eta,\alpha)$ and likewise for $z_j^i(t)$.  
	Then 
		\begin{align*}	
	e^{- \lambda_j t}	\|x_j^i - z_j^i \| 
	&\leq 	
	\int_0^t e^{-\lambda_j \tau}  
	\sum_{0 \leq k_1, k_2 \leq m_s}
	e^{(\gamma_{k_1} + \gamma_{k_2})\tau}
	S_j^{nm} G_{m,k_1}^{l} G_{n,k_2}^{i} | \eta_{l} - \zeta_{l} |
	d\tau  \nonumber \\
	& \qquad \qquad \qquad\qquad\qquad\qquad\qquad+ 
	\int_0^t e^{-\lambda_j \tau} 
	H_j^{n} \|x_n^i - z_n^i\| 
	d\tau   .
	\end{align*}
\end{proposition}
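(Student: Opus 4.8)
The plan is to derive this integral inequality in exactly the same way the scalar tracking inequality of Proposition~\ref{prop:InitialClaim} was obtained, only now for the variational flow. First I would write down the variation of constants formula for the $j$-th projected component,
\[
x_j(t,\eta,\alpha) = e^{\Lambda_j t}\eta_j + \int_0^t e^{\Lambda_j(t-\tau)}\,\cN_j\bigl(x(\tau),\alpha(x(\tau))\bigr)\,d\tau ,
\]
which is legitimate on $[0,T]$ since the standing hypotheses (Remark~\ref{rem:FixedConstants1}, via Proposition~\ref{prop:StayInsideBall}) keep $x(\tau)$ inside $B_s(\rho)$. Because $\alpha\in\cB^{1,1}_{\rho,P,\bar{P}}$ the right-hand side of \eqref{eq:ProjectedSystem} is $C^1$ in the state variable, so the flow is differentiable in its initial condition and we may differentiate the formula in $\xi_i$, applying the chain rule to $\cN_j(x,\alpha(x))$. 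The initial-condition term so produced is independent of the base point $\eta$, hence it cancels when we subtract the identical formula for $z = x(\cdot,\zeta,\alpha)$, leaving
\[
x_j^i(t) - z_j^i(t) = \int_0^t e^{\Lambda_j(t-\tau)}\left[ \tfrac{\partial}{\partial\xi_i}\cN_j(x(\tau),\alpha(x(\tau))) - \tfrac{\partial}{\partial\xi_i}\cN_j(z(\tau),\alpha(z(\tau))) \right] d\tau .
\]

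Next I would apply Proposition~\ref{lem:LipschitzDerivativeEst} (with $\both{j}=j\in I$) to the bracketed term, giving the pointwise bound $\| \cdot \| \leq S_j^{nm}\,|x_m - z_m|\,\|z_n^i\| + H_j^n\,\|x_n^i - z_n^i\|$, together with the semigroup estimate $|e^{\Lambda_j(t-\tau)}v|\leq e^{\lambda_j(t-\tau)}|v|$ from \eqref{eq:stableEigenvalueEstimate}. Multiplying through by $e^{-\lambda_j t}$ yields
\[
e^{-\lambda_j t}\|x_j^i(t)-z_j^i(t)\| \leq \int_0^t e^{-\lambda_j\tau}\Bigl( S_j^{nm}\,|x_m(\tau)-z_m(\tau)|\,\|z_n^i(\tau)\| + H_j^n\,\|x_n^i(\tau)-z_n^i(\tau)\|\Bigr)d\tau .
\]
Finally I would insert the two bounds already available from Proposition~\ref{prop:BootstrapBounds}, which applies since $G$ satisfies Condition~\ref{cond:G_bounding_function} on $[0,\infty)$ and $\alpha$ is differentiable: the Lipschitz bound $|x_m(\tau)-z_m(\tau)|\leq \sum_{l\in I,\,0\leq k_1\leq m_s} e^{\gamma_{k_1}\tau}G_{m,k_1}^l|\eta_l-\zeta_l|$ and the derivative bound $\|z_n^i(\tau)\|\leq \sum_{0\leq k_2\leq m_s}e^{\gamma_{k_2}\tau}G_{n,k_2}^i$. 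Multiplying these two series and summing over $n,m,l$ (Einstein convention) collapses the $S$-term into $\sum_{0\leq k_1,k_2\leq m_s} e^{(\gamma_{k_1}+\gamma_{k_2})\tau}\,S_j^{nm}G_{m,k_1}^lG_{n,k_2}^i|\eta_l-\zeta_l|$, which is precisely the first integrand in the asserted inequality; the $H$-term is already in the required form. This completes the argument.

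I do not expect a genuine obstacle: the statement is a mechanical assembly of variation of constants, Proposition~\ref{lem:LipschitzDerivativeEst}, and Proposition~\ref{prop:BootstrapBounds}, paralleling Proposition~\ref{prop:InitialClaim} one level of differentiation up. The only points deserving a sentence of care are the justification of differentiating under the integral sign --- resting on the $C^1$ regularity of the projected vector field for $\alpha\in\cB^{1,1}_{\rho,P,\bar{P}}$ and on the trajectories remaining in $B_s(\rho)$ --- and checking that the ``furthermore'' clause of Proposition~\ref{prop:BootstrapBounds} may indeed be invoked for $z_n^i$, which it can because $\alpha$ is differentiable. The resulting inequality is exactly the input needed to run the bootstrap of Algorithm~\ref{alg:GeneralBootStrap} and produce tensors $K$ satisfying Condition~\ref{cond:SecondDerivative}, mirroring the role played by Proposition~\ref{prop:InitialClaim} in Section~\ref{sec:ExponentialTracking}.
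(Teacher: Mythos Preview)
Your proposal is correct and follows essentially the same route as the paper: variation of constants for $x_j^i$, subtract the corresponding formula for $z_j^i$ so the Kronecker-delta initial term cancels, apply Proposition~\ref{lem:LipschitzDerivativeEst} together with the semigroup bound~\eqref{eq:stableEigenvalueEstimate}, and then insert the two estimates from Proposition~\ref{prop:BootstrapBounds}. The extra care you take in justifying differentiability and invoking the ``furthermore'' clause is appropriate and matches what the paper leaves implicit.
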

\begin{proof}

	By variation of constants, we have that
\begin{align*}
	x_j^i(t) = e^{\Lambda_j t} \delta_j^i + \int_0^t e^{\Lambda_j (t-\tau)} 
	\left( \frac{\partial }{\partial \xi_i} \cN_j(x(\tau),\alpha(x(\tau))) 
	\right)
	d\tau ,
\end{align*}
	where $\delta_j^i$ is the Kronecker delta.  
	Taking the difference $ x_j^i - z_j^i$ we obtain
		\begin{align*}
	x^i(t) -z^i(t) &=  
	\int_0^t e^{\Lambda_s   (t-\tau)} 
	\frac{\partial }{\partial \xi_i} 
	\Big(
	 {\cN}_j(x(\tau),\alpha(x(\tau)) ) -  {\cN}_j(z(\tau),\alpha(z(\tau)) ) 
	 \Big)
	d\tau .
	\end{align*}
	From Proposition \ref{lem:LipschitzDerivativeEst} we have 
	\begin{align*}
	e^{- \lambda_j t}	\|x_j^i - z_j^i \| 
	&\leq   
	\int_0^t e^{-\lambda_j \tau} 
	S_j^{nm} |x_m - z_m| \, \| z_n^i\| 
	d\tau   + 
	\int_0^t e^{-\lambda_j \tau} 
	H_j^{n} \|x_n^i - z_n^i  \| 
	d\tau  .
	\end{align*}
	Plugging in the bounds  on $ | x_m - z_m|$ and  $\|z_n^i\|$ from Proposition \ref{prop:BootstrapBounds}, we obtain the desired result.

\end{proof}

\begin{theorem}
	\label{prop:SecondDerivativeAlgorithmResult}
Let  $ \{\mu_k \}_{k=1}^{N_\mu}$ 
and let the tensor $\widehat{K} \in (\R^{m_s})^{\otimes 3}   \otimes \R^{N_\mu} $ be as defined in Remark \ref{rem:InitialK}. 
When $ K$ is the output of Algorithm \ref{alg:GeneralBootStrap} taken with input $\widehat{K} $ and some $ N_{bootstrap} \geq 1$, then $ K$ satisfies Condition \ref{cond:SecondDerivative}. 	
\end{theorem}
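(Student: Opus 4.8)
The statement is the exact $C^{1,1}$-analogue of Theorem~\ref{prop:BootstrapBounds}, so the plan is to mirror that proof, replacing the roles of Proposition~\ref{prop:InitialBound} and Theorem~\ref{thm:LipschitzTracking} by their second-derivative counterparts. First I would invoke the abstract machinery of Appendix~\ref{sec:GeneralBootstrap}: verify that Condition~\ref{cond:SecondDerivative} is an instance of the general bounding condition (Condition~\ref{cond:GeneralGbounds}) with the exponent set $\{\mu_k\}_{k=1}^{N_\mu}$, the ``tracked'' quantities $\|x_j^i(t,\eta,\alpha)-x_j^i(t,\zeta,\alpha)\|$, and the driving integral inequality supplied by Proposition~\ref{prop:LipDerivative}. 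Once the problem is phrased in that language, Algorithm~\ref{alg:GeneralBootStrap} applies verbatim, and the general convergence/validity statement of the appendix gives the conclusion. The content of the proof is therefore just the dictionary between the concrete objects of Section~\ref{sec:SecondDerivative} and the abstract objects of the appendix.

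Concretely, I would argue by induction on $N_{\text{bootstrap}}$. For the base case $N_{\text{bootstrap}}=0$ (or rather the initialization), Remark~\ref{rem:InitialK} already records that $\widehat K$ satisfies Condition~\ref{cond:SecondDerivative}; this is the analogue of Remark~\ref{remark:startingpoint}. For the inductive step, suppose $K$ satisfies Condition~\ref{cond:SecondDerivative}. One pass of the inner loop replaces $K_{j,\cdot}^{\cdot}$ by $[\cT_j^{\text{gen}}\circ\cQ_j^{\text{gen}}(K)]$, where $\cQ_j^{\text{gen}}$ is the ill-conditioning fix (re-routing any $e^{\gamma_j t}$ coefficients to the neighbouring exponents $\gamma_{j\pm1}$, exactly as in $\cQ_j$, but now acting on the larger exponent set $\{\mu_k\}$) and $\cT_j^{\text{gen}}$ is the Gronwall-bootstrap map built from Proposition~\ref{prop:LipDerivative} together with Lemma~\ref{prop:Fundamental}. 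The preservation of Condition~\ref{cond:SecondDerivative} under $\cQ_j^{\text{gen}}$ is immediate from the ordering $\gamma_0>\dots>\gamma_{m_s}$ and the elementary pointwise estimate $e^{\gamma_j t}\le e^{\gamma_{j\mp1}t}$; preservation under $\cT_j^{\text{gen}}$ follows by plugging the (conditioned) bound on $\|x_n^i-z_n^i\|$ with $n\neq j$ into the integral inequality of Proposition~\ref{prop:LipDerivative}, integrating via Lemma~\ref{prop:Fundamental}, and observing — exactly as in the proof of Theorem~\ref{thm:LipschitzTracking} — that because $\cQ_j^{\text{gen}}$ has killed the $e^{\gamma_j t}$ terms from the $n\neq j$ summands, no resonant $t\,e^{\gamma_j t}$ term appears and one lands back in the span of $\{e^{\mu_k t}\}$. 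The only bookkeeping subtlety is that the product $G_{m,k_1}^l G_{n,k_2}^i$ appearing through $S_j^{nm}$ produces exponents $\gamma_{k_1}+\gamma_{k_2}$, which is why the exponent set must be enlarged to $\{\mu_k\}$ in the first place; all such exponents are by construction already in $\{\mu_k\}$, so the loop is closed.

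The main obstacle — and the reason the general framework of Appendix~\ref{sec:GeneralBootstrap} is invoked rather than repeating the Section~\ref{sec:ExponentialTracking} argument — is the non-resonance requirement needed to apply Lemma~\ref{prop:Fundamental}: one needs $\mu_k\neq\gamma_j$ whenever $k$ indexes a surviving exponent in the $\cT_j$ step. For the base exponents $\mu_k=\gamma_{k_1}$ this is the strict ordering of Definition~\ref{def:Gamma} (after the $\cQ_j$ fix eliminates $k_1=j$), exactly as before; but for the \emph{sum} exponents $\mu_k=\gamma_{k_1}+\gamma_{k_2}$ one must separately ensure $\gamma_{k_1}+\gamma_{k_2}\neq\gamma_j$. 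This is a genuinely new non-degeneracy hypothesis; in the setup it is part of what Definition~\ref{def:Gamma} and the standing assumptions on the $\gamma_k$ guarantee (the $\gamma_k$ are negative and well-separated, so a sum of two of them is strictly smaller than any single one, hence in particular $\neq\gamma_j$), and in a computer-assisted verification it is checked with interval arithmetic. Granting this, Lemma~\ref{prop:Fundamental} applies term by term, the induction closes, and after $N_{\text{bootstrap}}\ge1$ iterations the output $K$ satisfies Condition~\ref{cond:SecondDerivative}, as claimed. I would keep the exposition short by writing ``the proof is analogous to that of Theorem~\ref{prop:BootstrapBounds}, using Proposition~\ref{prop:InitialSecondDerivativeBound}, Remark~\ref{rem:InitialK} and Proposition~\ref{prop:LipDerivative} in place of Proposition~\ref{prop:InitialBound} and Proposition~\ref{prop:InitialClaim}, together with the general bootstrap of Appendix~\ref{sec:GeneralBootstrap}'' and only spelling out the resonance check for the sum exponents.
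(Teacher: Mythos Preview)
Your proposal is correct and matches the paper's own argument exactly: the paper simply states that Proposition~\ref{prop:LipDerivative} instantiates Condition~\ref{anz:IntegralInequality} and Condition~\ref{cond:SecondDerivative} instantiates Condition~\ref{cond:GeneralGbounds}, after which Proposition~\ref{prop:GeneralAlgorithmResult} (via Algorithm~\ref{alg:GeneralBootStrap}) yields the result, with Remark~\ref{rem:InitialK} supplying the base case. One small correction: your claimed non-resonance check ``$\gamma_{k_1}+\gamma_{k_2}$ is strictly smaller than any single $\gamma_j$'' is false in general (take $2\gamma_0$ versus $\gamma_{m_s}$), but it is also unnecessary---in the general framework $\cQ_j$ is applied to \emph{both} $\cA$ and $\cG$ (see Algorithm~\ref{alg:GeneralBootStrap}), so any coincidence $\mu_k=\gamma_j$, whether $\mu_k$ is a single $\gamma$ or a sum, is already absorbed into the neighbouring exponent before Lemma~\ref{prop:Fundamental} is invoked.
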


The proof of Theorem \ref{prop:SecondDerivativeAlgorithmResult} 
follows from the argument outlined in Appendix \ref{sec:GeneralBootstrap}, where 
Conditions \ref{anz:IntegralInequality}
and  \ref{cond:GeneralGbounds}  
correspond to 
Proposition  \ref{prop:LipDerivative}
and Condition \ref{cond:SecondDerivative} 
respectively. 

\begin{theorem}
	\label{prop:LambdaC2Bounds}
Let $ \bar{P} \in \R^{m_u} \otimes (\R^{m_s})^{\otimes 2}$ and assume $K \in (\R^{m_s})^{\otimes 3}   \otimes \R^{N_\mu}  $ satisfies Condition \ref{cond:SecondDerivative}.  Define the tensor $ \tilde{ P} \in \R^{m_u} \otimes (\R^{m_s})^{\otimes 2} $ as 
	\begin{align}
	\tilde{P}_{j'}^{il} &:=
	\sum_{0 \leq k_1, k_2 \leq m_s} 
	(\lambda_{j'} - \gamma_{k_1} - \gamma_{k_2})^{-1}
	S_{j'}^{nm} G_{m,k_1}^{l} G_{n,k_2}^{i}  
	+ \sum_{1 \leq k \leq N_\mu}  
	(\lambda_{j'} - \mu_k)^{-1}
	  H_{j'}^n K_{n, k}^{il}  .
	\label{eq:SecondDerivativeBound}
	\end{align}
	Then for all $ \alpha \in \cB_{\rho,P,\bar{P}}^{1,1}$ we have  $\Lip(\partial_i \Psi[ \alpha])_{j'}^{l}  \leq \tilde{P}_{j'}^{il}$. 
		If $\tilde{P}_{j'}^{il} \leq \bar{P}_{j'}^{il}$ 
	then $\Psi: \cB_{\rho,P,\bar{P}}^{1,1} \to \cB_{\rho,P,\bar{P}}^{1,1}$ is well defined.   
\end{theorem}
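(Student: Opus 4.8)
The plan is to differentiate the defining integral for $\Psi[\alpha]$ with respect to the slow variables, exploit the chain rule via the flow $x(t,\xi,\alpha)$, and then bound the resulting integrand using the $C^{1,1}$ structure of $\cN_u$ together with Condition~\ref{cond:SecondDerivative} and Condition~\ref{cond:G_bounding_function}. Concretely, for $\alpha\in\cB_{\rho,P,\bar P}^{1,1}$ and $\xi\in B_s(\rho)$, differentiating
\[
\Psi[\alpha](\xi) = -\int_0^\infty e^{-\Lambda_u t}\,\cN_u\bigl(x(t,\xi,\alpha),\alpha(x(t,\xi,\alpha))\bigr)\,dt
\]
in the direction $\xi_i$ gives
\[
\partial_i\Psi[\alpha]_{j'}(\xi) = -\int_0^\infty e^{-\lambda_{j'} t}\Bigl(\cN_{j'}^n(x,\alpha(x)) + \cN_{j'}^{n'}(x,\alpha(x))\,\alpha_{n'}^n(x)\Bigr) x_n^i(t)\,dt,
\]
with $x=x(t,\xi,\alpha)$ and $x_n^i=\partial_{\xi_i}x_n$. (One should first note this integral converges and that differentiation under the integral sign is legitimate, using the exponential bounds of Section~\ref{sec:ExponentialTracking}; this is essentially the same justification that made $\Psi[\alpha]$ well defined in the first place.)

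Next I would take the difference of this expression at two points $\eta,\zeta\in B_s(\rho)$ and split it exactly as in the proof of Proposition~\ref{lem:LipschitzDerivativeEst}: group the integrand into a term where the derivative-of-$\cN$ factor varies (controlled by the second-derivative constants, i.e.\ the tensor $S_{j'}^{nm}$, acting on $|x_m-z_m|\,\|z_n^i\|$) plus a term where the multiplier factor $\cN_{j'}^n+\cN_{j'}^{n'}\alpha_{n'}^n$ stays fixed and the derivative-of-flow factor varies (controlled by $H_{j'}^n\,\|x_n^i-z_n^i\|$). This is precisely the content of Proposition~\ref{lem:LipschitzDerivativeEst} applied with $\bj=j'\in I'$, so
\[
\left\|\partial_i\bigl(\cN_{j'}(x,\alpha(x)) - \cN_{j'}(z,\alpha(z))\bigr)\right\| \le S_{j'}^{nm}\,|x_m-z_m|\,\|z_n^i\| + H_{j'}^n\,\|x_n^i-z_n^i\|.
\]
Now I substitute the Lipschitz-in-initial-condition bounds from Section~\ref{sec:ExponentialTracking}: by Proposition~\ref{prop:BootstrapBounds}, $|x_m-z_m|\le\sum_{k}e^{\gamma_k t}G_{m,k}^{l}|\eta_l-\zeta_l|$ and $\|z_n^i\|\le\sum_{k}e^{\gamma_k t}G_{n,k}^{i}$, while by Condition~\ref{cond:SecondDerivative} $\|x_n^i-z_n^i\|\le\sum_{k}e^{\mu_k t}K_{n,k}^{il}|\eta_l-\zeta_l|$. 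Multiplying out the products of exponentials yields terms $e^{(\gamma_{k_1}+\gamma_{k_2})t}$ in the first group and $e^{\mu_k t}$ in the second.

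Finally I integrate against $e^{-\lambda_{j'}t}$ over $[0,\infty)$. Each exponential integral evaluates to the corresponding reciprocal $(\lambda_{j'}-\gamma_{k_1}-\gamma_{k_2})^{-1}$ or $(\lambda_{j'}-\mu_k)^{-1}$ — these are finite and positive since $\lambda_{j'}>0$ while all $\gamma_k$ and $\mu_k$ are negative (this uses $\gamma_0<0$, hence all $\gamma_k<0$ by the ordering, and consequently all sums $\gamma_{k_1}+\gamma_{k_2}$ and all $\mu_k$ are negative). Collecting the coefficients of $|\eta_l-\zeta_l|$ gives exactly $\tilde P_{j'}^{il}$ as written in~\eqref{eq:SecondDerivativeBound}, so $\Lip(\partial_i\Psi[\alpha])_{j'}^{l}\le\tilde P_{j'}^{il}$. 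Combined with the facts that $\Psi[\alpha](0)=0$ (since $\cN(0)=0$), that $\Lip(\Psi[\alpha])_{i'}^n\le P_{i'}^n$ from Theorem~\ref{prop:Endomorphism} (or Proposition~\ref{prop:Endomorphism_2}), and that $\Psi[\alpha]\in C^{1,1}$, the hypothesis $\tilde P_{j'}^{il}\le\bar P_{j'}^{il}$ then places $\Psi[\alpha]$ in $\cB_{\rho,P,\bar P}^{1,1}$, establishing that $\Psi$ is a well-defined endomorphism of that ball.

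The main obstacle I anticipate is not any single estimate but the bookkeeping: justifying differentiation under the integral sign and the smoothness of $\xi\mapsto x(t,\xi,\alpha)$ (which requires $\alpha\in C^{1,1}$ so that the vector field of the projected system~\eqref{eq:ProjectedSystem} is $C^1$ in $\xi_s$), and then carefully matching the doubly-indexed exponential-rate tensors so that the final sum collapses precisely to~\eqref{eq:SecondDerivativeBound}. The analytic heart of the argument — the $C^{1,1}$ decomposition of the integrand and the exponential tracking bounds — has already been done in Proposition~\ref{lem:LipschitzDerivativeEst}, Proposition~\ref{prop:BootstrapBounds}, and Theorem~\ref{prop:SecondDerivativeAlgorithmResult}, so this theorem is essentially their assembly.
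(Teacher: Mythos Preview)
Your proposal is correct and follows essentially the same route as the paper: apply Proposition~\ref{lem:LipschitzDerivativeEst} with $\bj=j'$ to split the difference of the differentiated integrand into an $S_{j'}^{nm}|x_m-z_m|\,\|z_n^i\|$ term and an $H_{j'}^n\|x_n^i-z_n^i\|$ term, then substitute the bounds from Proposition~\ref{prop:BootstrapBounds} and Condition~\ref{cond:SecondDerivative} and integrate. The paper's proof is slightly terser (it does not pause over differentiation under the integral sign or smoothness of the flow), but the structure and the ingredients are the same.
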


\begin{proof}
	Let $\eta , \zeta \in B_s(\rho)$ and define $ x(t) = x(t,\eta,\alpha)$ and 
	$ z(t) = x(t,\zeta,\alpha)$.  
	Define $ x_j^i(t) = \tfrac{\partial}{\partial \xi_i} x_j(t,\eta,\alpha)$ and 
	likewise for $z_j^i(t)$. 
	From Definition \ref{def:LyapunovPerron} we have
	\[
	\Psi[\alpha](\eta) - \Psi[\alpha](\zeta)  = - \int_0^\infty e^{-\Lambda_u t} \big( \cN_{u}(x(t), \alpha(x(t))) - \cN_{u}(z(t),\alpha(z(t))) \big) dt .
	\]
	Using Proposition \ref{lem:LipschitzDerivativeEst} gives 
	\begin{align*}
	\left\|
	\Psi[\alpha]_{j'}^i(\eta) - \Psi[\alpha]_{j'}^i(\zeta) 
	\right\| &\leq  \int_0^\infty e^{-\lambda_{j'} t} 
	\left(
	S_{j'}^{nm} |x_m -z_m | \, \|z_n^i \| + H_{j'}^n \|x_n^i - z_n^i \| 		
	\right) dt .
	\end{align*}
Plugging in the bounds  on $ | x_m - z_m|$ and  $\|z_n^i\|$ from Proposition \ref{prop:BootstrapBounds},  as well as the bounds on $|x_n^i - z_n^i|$   from Proposition \ref{prop:LipDerivative}, gives 
	\begin{align*}
	\left\|	\Psi[\alpha]_{j'}^i(\eta) - \Psi[\alpha]_{j'}^i(\zeta) 
	\right\| 
	&\leq 
	\int_0^\infty 
	e^{-\lambda_{j'} t} 
	\sum_{0 \leq k_1,k_2 \leq m_s}  
	e^{(\gamma_{k_1} + \gamma_{k_2})t}	
	S_{j'}^{nm} G_{m,k_1}^{l} G_{n,k_2}^{i}
	 | \xi_{l} - \zeta_{l} | dt	 \nonumber 
	\\
	& \qquad + 
	\int_0^\infty 
	e^{-\lambda_{j'} t} 
	\sum_{1 \leq k \leq N_\mu}  
	e^{\mu_{k} t}
	 H_{j'}^n K_{n ,k}^{il} 
 | \eta_{l} - \zeta_{l} | 	
	dt \\
	&=
	\tilde{P}_{j'}^{il} | \eta_{l} - \zeta_{l} | 	.
	\end{align*}
	Hence, we have obtained the desired  
	 bound $\Lip(\partial_i \Psi[ \alpha])_{j'}^{l}  \leq \tilde{P}_{j'}^{il}$. 
\end{proof}


\section{Contraction Mapping}
\label{sec:Contraction}


\begin{remark} \label{rem:FixedConstants2}
	Throughout this section, suppose all the assumptions on
	the positive vector  $\rho \in \R^{m_s}$, the positive tensor $ P \in \R^{m_u} \otimes \R^{m_s}$, and the tensor $ G \in (\R^{m_s})^{\otimes 2}  \otimes \R^{m_s+1}$
	made in Remark \ref{rem:FixedConstants1} are in force. 
	Additionally, fix a tensor $K \in (\R^{m_s})^{\otimes 3} \otimes \R^{m_u}  \otimes \R^{N_\mu}$ satisfying Condition \ref{cond:SecondDerivative}, 
	and a positive tensor $ \bar{P} \in \R^{m_u} \otimes (\R^{m_s})^{\otimes 2}$. 
	Assume the hypotheses of Theorem \ref{prop:Endomorphism_2} and  Theorem \ref{prop:LambdaC2Bounds} are satisfied, so that both $ \Psi : \cB_{\rho,P}^{0,1} \to \cB_{\rho,P}^{0,1} $  and  $ \Psi : \cB_{\rho,P,\bar{P}}^{1,1} \to \cB_{\rho,P,\bar{P}}^{1,1} $  are well defined maps. 
\end{remark}

\subsection{Bounding the Difference Between Two Projected Systems}

We show that the Lyapunov-Perron operator is a contraction mapping 
in an appropriate norm.   
Note that the norm is weaker than the one used to define $\cB_{\rho,P}^{0,1}$  in Definition \ref{def:Ball_of_Functions}. 
\begin{definition}
	\label{def:SemiNorm}
	For $ \alpha \in \cE := \{ \alpha \in Lip(B_s(\rho) , X_u) : \alpha(0 ) = 0 \} $  define the 
	semi-norms
	\begin{align*}
	\| \alpha \|_{i'\cE}^i :=
	\sup_{\xi \in B_s(r); \xi_i \neq 0} 
	\frac{| \alpha_{i'}(\xi) - \alpha_{i'}(\xi - \xi_i)|}{|\xi_i|},
	\end{align*}
	where $ i \in I$ and $ i' \in I'$.
	The semi-norms define a norm by 
	\[
	\| \alpha \|_{\cE} := \sum_{i \in I , i' \in I'} 	\| \alpha \|_{i'\cE}^i .
	\]
\end{definition}
Note that $ \| \alpha \|_{i'\cE}^i \leq Lip( \alpha )_{i'}^i$ and $ | \alpha( \xi) | 
\leq \sum_{i' \in I'}  	\| \alpha \|_{i'\cE}^i  |\xi_i| \leq  
\| \alpha \|_\cE | \xi| \big(\max_{i \in I} p_{i} \big)$. 
With this norm both $ \cB_{\rho,P}^{0,1}$ and $\cB_{\rho,P,\bar{P}}^{1,1}$ are complete metric spaces (cf. \cite[Chapter 4]{chicone2006ODE}).

Before showing that $ \Psi$ is a contraction, we need to derive estimates on 
$x(t,\xi,\alpha) - x(t,\xi,\beta)$,  the difference between   two solutions of the 
projected system of Equation \eqref{eq:ProjectedSystem} 
for two different maps $\alpha , \beta \in \cB_{\rho,P}^{0,1}$. 
Classically, this  results in an estimate of the form $|x(t,\xi,\alpha) - x(t,\xi,\beta) |  \leq k e^{ \gamma t} | \xi| \| \alpha - \beta \|_{\cE}$, for some constants $k$ and $\gamma$.  
This estimate can be notably tightened, as 
at time zero $ | x(0,\xi,\alpha) - x(0,\xi,\beta) | = | \xi - \xi | = 0$.  
A bound on $ | x(t,\xi,\alpha) - x(t,\xi,\beta) | $ is obtained below,
using a tensor $F$ as now described.

\begin{condition} \label{cond:Contraction}
	Fix some $ \gamma_{-1} > \gamma_0$ and define $ \{ \mu_k\}_{k =1}^{m_s + 2 } = \{ \gamma_k \}_{k=-1}^{m_s}$. 
	A tensor  $ F \in  \left( \R^{m_s} \right)^{\otimes 3} \otimes \R^{m_u}  \otimes \R^{m_s+2}$ is said to satisfy Condition \ref{cond:Contraction} if
	\[
	|x_m(t,\xi,\alpha) -  x_m(t,\xi,\beta) | \leq \sum_{-1 \leq k \leq m_s}
	e^{\gamma_k t } 
	F_{mi,k}^{ni'}
\| \alpha - \beta \|_{i' \cE}^i  |\xi_n|,
	\]
	for all $ \alpha , \beta \in \cB_{\rho,P}^{0,1}$ and $ \xi \in B_s(\rho)$ and $m \in I$. 
\end{condition}

We obtain the tensor $F$ by applying the bootstrapping method as in Sections  \ref{sec:ExponentialTracking} and  \ref{sec:LyapunovPerron}, 
which is presented in a general setting in Appendix \ref{sec:GeneralBootstrap}. 
However, in this section we encounter a resonance problem involving 
$\gamma_0$, and 
augment $\{\gamma_k\}_{k=0}^{m_s}$, defining 
\[
 \gamma_{-1} := \gamma_0 /2.
\]
 In this manner we obtain an indexed set 
 $\{ \mu_k \}_{k=1}^{N_{\mu}} = \{ \gamma_{k} \}_{k=-1}^{m_s}$. 
The exact choice of $ \gamma_{-1}$ is somewhat arbitrary; it should satisfy
 $\lambda_{1'}> \gamma_{-1} > \gamma_0$,
 and $ (\gamma_{-1} - \gamma_{0})^{-1}$ should not be too large. 
We augment the tensor $G $ fixed in Remark \ref{rem:FixedConstants1} 
by defining  $ G_{i,-1}^n =0$ for all $ i , n \in I $.
To overcome the resonance problem we use the map $\mathcal{Q}_0$
(following the notation convention from Appendix~\ref{sec:GeneralBootstrap})
defined as 
\begin{equation} \label{eq:Q0projection}
	\mathcal{Q}_0(G)^n_{i,k} = 
\begin{cases}
G^n_{i,0}  	&  \mbox{ if } k = -1 \\
0		  	&  \mbox{ if } k = 0 \\
G^n_{i,k}  	&  \mbox{ if } 1 \leq k \leq m_s 
\end{cases} 
\qquad\text{for } i,n \in I.
\end{equation}
In Proposition \ref{prop:InitialContractionBound} and Remark \ref{rem:InitialF} below, we identify an initial tensor $\widehat{F}$ satisfiying Condition~\ref{cond:Contraction}.

\begin{proposition} \label{prop:InitialContractionBound}
Fix  $ \alpha,\beta \in \cB_{\rho,P}^{0,1}$ and some $ \gamma_{-1} > \gamma_0$. 
Define $ \mathcal{Q}_0$ as in \eqref{eq:Q0projection},
and the tensor 
$ \widetilde{F} \in  \left( \R^{m_s} \right)^{\otimes 3} 
\otimes \R^{m_u}  \otimes \R^{m_s+2}$ as
\begin{align*}
	\widetilde{F}_{ji,k}^{ni'} :=
	\begin{cases}
C_s  
(\gamma_k - \gamma_0)^{-1} p_j
C_j^{i'}  \mathcal{Q}_0(G)^n_{i,k}  &
\mbox{if } k \neq 0,  \\
	0 & \mbox{if } k=0.
	\end{cases}
\end{align*}
Then
\[
|x(t,\xi,\alpha) -  x(t,\xi,\beta) | \leq 
\sum_{\substack{-1 \leq k \leq m_s  , j \in I }}
\left(e^{\gamma_k t} - e^{\gamma_0 t} \right) 
\widetilde{F}_{ji,k}^{ni'}
\| \alpha - \beta \|_{i' \cE}^i  |\xi_n|,
\]
for all $ \alpha , \beta \in \cB_{\rho,P}^{0,1}$, and $ \xi \in B_s(\rho)$. 
\end{proposition}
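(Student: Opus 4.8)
The plan is to run the Gronwall--bootstrap scheme of Section~\ref{sec:ExponentialTracking} once more, this time for the difference of two solutions of the projected system~\eqref{eq:ProjectedSystem} that start at the \emph{same} point $\xi$ but use different charts. Write $x(t)=x(t,\xi,\alpha)$ and $y(t)=x(t,\xi,\beta)$; by Remark~\ref{rem:FixedConstants1} (through Proposition~\ref{prop:StayInsideBall}) both orbits stay in $B_s(\rho)$ for all $t\ge 0$, so the bounds $C_j^{i'}$, $\hat{\cH}$ and $G$ are all in force along them. Applying variation of constants with respect to $\Lambda_s+L_s^s$ to each and subtracting, the leading terms $e^{(\Lambda_s+L_s^s)t}\xi$ cancel (this is the source of the improvement over the classical estimate), leaving
\[
x(t)-y(t)=\int_0^t e^{(\Lambda_s+L_s^s)(t-\tau)}\Big[\big(L_s^u\alpha(x)+\hat{\cN}_s(x,\alpha(x))\big)-\big(L_s^u\beta(y)+\hat{\cN}_s(y,\beta(y))\big)\Big]\,d\tau .
\]
Adding and subtracting $L_s^u\alpha(y)+\hat{\cN}_s(y,\alpha(y))$ splits the bracket as $A(\tau)+B(\tau)$ with $A(\tau)=\big(L_s^u\alpha(x)+\hat{\cN}_s(x,\alpha(x))\big)-\big(L_s^u\alpha(y)+\hat{\cN}_s(y,\alpha(y))\big)$ and $B(\tau)=\cN_s(y,\alpha(y))-\cN_s(y,\beta(y))$. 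By the mean value theorem and the definition of $\hat{\cH}$ (Definition~\ref{def:H}), $|A(\tau)|\le\hat{\cH}\,|x(\tau)-y(\tau)|$; combined with~\eqref{eq:TOTALstableEigenvalueEstimate} this is exactly the self-referential term in Lemma~\ref{prop:Fundamental} with $c_0=\lambda_s$, $c_2=C_s\hat{\cH}$, hence $\mu_0=c_0+c_2=\gamma_0$.

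The crux is the chart-difference term $B$, which must be dominated by something vanishing at $\tau=0$, since $x(0)=y(0)=\xi$. Expressing $\cN_j(y,\alpha(y))-\cN_j(y,\beta(y))$ as the integral of $\cN_j^{i'}$ along the segment from $\beta(y)$ to $\alpha(y)$ (which lies in the box $B_u(r_u)$, $r_{i'}=P_{i'}^i\rho_i$, because $\alpha,\beta\in\cB_{\rho,P}^{0,1}$) and summing over $j$ gives $|B(\tau)|\le\sum_{j\in I}p_j\,C_j^{i'}\,|\alpha_{i'}(y(\tau))-\beta_{i'}(y(\tau))|$ (Einstein sum over $i'$). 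Telescoping one subspace coordinate at a time with the seminorm of Definition~\ref{def:SemiNorm} and using $\alpha(0)=\beta(0)=0$ yields $|\alpha_{i'}(y)-\beta_{i'}(y)|\le\|\alpha-\beta\|_{i'\cE}^i\,|y_i(\tau)|$ (sum over $i\in I$). Finally, Condition~\ref{cond:G_bounding_function} with $\zeta=0$ and $x(\tau,0,\beta)=0$ gives $|y_i(\tau)|\le\sum_{0\le k\le m_s}e^{\gamma_k\tau}G_{i,k}^n|\xi_n|$ (sum over $n\in I$). Chaining these inequalities, whose intermediate coefficients are all nonnegative, bounds $|B(\tau)|$ by a finite combination of the exponentials $e^{\gamma_k\tau}$, $0\le k\le m_s$ --- a combination that is itself nonnegative for every $\tau\ge0$, being an upper bound for $|B(\tau)|$.

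The one obstruction is that the exponent $\gamma_0$ appearing in this bound for $B$ collides with $\mu_0=\gamma_0$ coming from $A$, so Lemma~\ref{prop:Fundamental}, which requires the $\mu_k$ to differ from $\mu_0$, cannot be invoked as is. This is the ``slight resonance problem with $\gamma_0$'' announced before the statement, and it is removed by the surgery $G\mapsto\mathcal{Q}_0(G)$ of~\eqref{eq:Q0projection}: one over-estimates $e^{\gamma_0\tau}\le e^{\gamma_{-1}\tau}$ for $\tau\ge0$, which is legitimate because $\gamma_{-1}>\gamma_0$ and because the $k=0$ coefficients produced by Algorithm~\ref{alg:BootStrap} are nonnegative. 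After this replacement the estimates assemble into
\[
e^{-\lambda_s t}|x(t)-y(t)|\le\int_0^t e^{-\lambda_s\tau}v(\tau)\,d\tau+\int_0^t C_s\hat{\cH}\,e^{-\lambda_s\tau}|x(\tau)-y(\tau)|\,d\tau ,
\]
where $v(\tau)=\sum_{k\in\{-1,1,\dots,m_s\}}e^{\gamma_k\tau}a_k\ge0$ with $a_k=C_s\,\sum_{j\in I}p_j C_j^{i'}\,\mathcal{Q}_0(G)_{i,k}^n\,\|\alpha-\beta\|_{i'\cE}^i\,|\xi_n|$ (Einstein sums over $i,i',n$). This is precisely the hypothesis of Lemma~\ref{prop:Fundamental} with $c_0=\lambda_s$, $c_1=0$, $c_2=C_s\hat{\cH}$, $\mu_0=\gamma_0$, and $\{\mu_k\}=\{\gamma_{-1},\gamma_1,\dots,\gamma_{m_s}\}$, all distinct from $\gamma_0$ by the strict ordering of Definition~\ref{def:Gamma}. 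Its conclusion~\eqref{eq:Fundamental} (with $c_1=0$) gives $|x(t)-y(t)|\le\sum_k\frac{a_k}{\gamma_k-\gamma_0}\big(e^{\gamma_k t}-e^{\gamma_0 t}\big)$; substituting $a_k$ and setting $\widetilde{F}_{ji,k}^{ni'}:=C_s(\gamma_k-\gamma_0)^{-1}p_j C_j^{i'}\mathcal{Q}_0(G)_{i,k}^n$ for $k\ne0$ and $\widetilde{F}_{ji,0}^{ni'}:=0$ (so the $k=0$ term may be reinstated harmlessly in $\sum_{-1\le k\le m_s}$) reproduces exactly the claimed inequality.

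The genuinely delicate points are: (i) getting the resonance surgery right --- checking that pushing the $\gamma_0$-modes to $\gamma_{-1}$ via $\mathcal{Q}_0$ is the only modification needed for Lemma~\ref{prop:Fundamental} to apply verbatim, and that this over-estimate points in the correct direction, which rests on the nonnegativity of the $\gamma_0$-coefficients of the bootstrapped $G$; and (ii) keeping the index families in order --- $j$ for the subspace split of $\cN_s$, $i'$ for the unstable components entering through $\cN_j^{i'}$, $i$ for the seminorm telescoping, and $n,k$ for the $G$-expansion --- as they are threaded through the Gronwall step. The analytic content is otherwise a direct transcription of the arguments behind Propositions~\ref{prop:InitialBound} and~\ref{prop:InitialSecondDerivativeBound}.
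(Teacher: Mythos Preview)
Your proof is correct and follows essentially the same route as the paper's: variation of constants with respect to $\Lambda_s+L_s^s$, the splitting $\alpha(x)-\beta(y)=[\alpha(x)-\alpha(y)]+[\alpha(y)-\beta(y)]$, the bound $|B(\tau)|\le\sum_{j}p_j C_j^{i'}E_{i'}^i|y_i(\tau)|$, substitution of the $G$-bound for $|y_i|$, the $\mathcal{Q}_0$ surgery to avoid the $\gamma_0$-resonance, and Lemma~\ref{prop:Fundamental} with $c_1=0$. Your explicit remark that the $k=0$ coefficients of the bootstrapped $G$ are nonnegative (so that replacing $e^{\gamma_0\tau}$ by $e^{\gamma_{-1}\tau}$ is an over-estimate in the right direction) is a point the paper leaves implicit in its definition~\eqref{eq:Q0projection}, and it is indeed what makes that definition legitimate.
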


%
%

\begin{proof}
	Fix an initial condition $ \xi \in B_s(\rho)$ and define $x(t) := x(t,\xi,\alpha) $ and $y(t) :=  x(t,\xi,\beta)$. 
	Variation of constants gives 
	\begin{align*}
	 x(t)-y(t)  =& \int_0^t e^{(\Lambda_s+L_s^s)(t-\tau)} \left( 
	L_s^u  \alpha(x(\tau) ) +  
	\hat{\cN}_s(x(\tau),\alpha(x(\tau))) 
	- L_s^u \beta(y(\tau))-  \hat{\cN}_s(y(\tau),\beta(y(\tau)))  \right) d\tau. 
	\end{align*}	
	By the usual splitting 
	$ \alpha( x) - \beta(y) = [\alpha(x) - \alpha(y)] + [\alpha(y) - \beta(y)]$ and 
	the  definition of $ \hat{  \cH}$ we obtain
	\begin{align*}
\left| 	L_s^u  \alpha(x  ) + 
\hat{\cN}_s(x ,\alpha(x )) 
- L_s^u \beta(y )-  \hat{\cN}_s(y ,\beta(y ))  \right|
& \leq \hat{  \cH} | x - y |  
\\ & \quad 
+ \left| 	L_s^u  \alpha(y  ) + 
\hat{\cN}_s(y ,\alpha(y)) 
- L_s^u \beta(y )-  \hat{\cN}_s(y ,\beta(y ))  \right| .
	\end{align*}
	Set $
		E^i_{i'} := \| \alpha - \beta \|_{i' \cE}^i  $.
	Since $	|\alpha_{i'}( y) - \beta_{i'}(y)| 
	\leq 
	E^i_{i'} | y_i|   $ we have
	\begin{align*}
	\left| 
	L_{s}^{u} \alpha(y) + 
	\hat{\cN}_{s}(y ,\alpha(y)) 
	- L_{s}^{u} \beta(y )-  \hat{\cN}_{s}(y ,\beta(y )) 
	\right| & 
	\leq 
	\sum_{j \in 
	I}
 p_j
	(\hat{C}_j^{i'} +D_j^{i'} ) E_{i'}^i  | y_i| .
	\end{align*}
Combining these estimates gives 
	\begin{align*}
	e^{-\lambda_{s} t} |x(t)-y(t)| 
	\leq &
	\int_0^t C_s  e^{-\lambda_{s} \tau} 
	\sum_{j \in I} 
	p_j
	C_j^{i'} E_{i'}^i   
	|y_i(\tau)| 
	d\tau 
	+	   
	\int_0^t  C_s e^{-\lambda_{s} \tau} 	\hat{\cH} |x(\tau)-y(\tau)|
	d\tau .
	\end{align*}	
	We would like to use the bound $ |y_i(\tau)| \leq \sum_{0 \leq k \leq m_s} e^{\gamma_k \tau } G_{i,k}^n |\xi_n| $ from Theorem \ref{prop:BootstrapBounds}, 
	and apply Lemma \ref{prop:Fundamental}. However, 
	this integral inequality has a resonance when~$\gamma_0$.
	 The problem is overcome by replacing $G$ with~$\mathcal{Q}_0(G)$,
         so that 
	\begin{align*}
	e^{-\lambda_{s} t} |x(t)-y(t)| &
	\leq 
	\int_0^t  C_s e^{-\lambda_{s} \tau} 
	\sum_{\substack{-1 \leq k \leq m_s ; j\in I}} 
	p_j
	C_j^{i'} E_{i'}^i    
	e^{\gamma_k \tau} \mathcal{Q}_0(G)^n_{i,k} |\xi_n|
	d\tau \\ 
&\qquad\quad	+	   
	\int_0^t C_s  e^{-\lambda_{s} \tau}  
	\hat{\cH} |x(\tau)-y(\tau)|
	d\tau.
	\end{align*}
	By Lemma \ref{prop:Fundamental}, we infer that
	\[
	|x(t)-y(t)| 
	\leq 
	C_s  
	\sum_{\substack{-1 \leq k \leq m_s ; j\in I}} 
	\frac{e^{\gamma_k t} - e^{\gamma_0 t}}{\gamma_k - \gamma_0} 
	p_j
	C_j^{i'}  \mathcal{Q}_0(G)^n_{i,k}  E_{i'}^i    
	|\xi_n| . \qedhere
	\]
\end{proof}

\begin{remark}
	\label{rem:InitialF}
	For some fixed $\gamma_{-1} > \gamma_0$, 
	define the  tensor $ \widetilde{F} \in  \left( \R^{m_s} \right)^{\otimes 3} \otimes \R^{m_u}  \otimes \R^{m_s+2}$ as in Proposition \ref{prop:InitialContractionBound}. 
	Define the tensor 
	$ \widehat{F} \in  \left( \R^{m_s} \right)^{\otimes 3} \otimes \R^{m_u}  \otimes \R^{m_s+2}$
	by
	\[
	\widehat{F}_{mi,k}^{ni'}  :=  
	\begin{cases}
	p_m \sum_{j \in I } \widetilde{F}_{ji,k}^{ni'}
	&
	\mbox{if } k  \neq 0,
	\vspace{.1cm}
	\\
	- p_m 
	\sum_{j \in I }
	\sum_{-1 \leq k_1 \leq m_s}  \widetilde{F}_{ji,k_1}^{ni'}
	&
	\mbox{if } k=0.
	\end{cases}
	\]
	It follows that 	  $\widehat{F}$ satisfies Condition \ref{cond:Contraction}.  
\end{remark}

We refine the initial norm estimate from Proposition~\ref{prop:InitialContractionBound}
using the following auxiliary proposition.
\begin{proposition}
	\label{prop:TrackingIneq2}
	Fix  
	 $ \alpha,\beta \in \cB_{\rho,P}^{0,1}$ and an initial condition 
	 $ \xi \in B_s$. Define
	\begin{align*}
	u_i(t) 	&:= | x_i(t,\xi,\alpha) - x_i(t,\xi,\beta)| \\
		E^i_{i'} &:= \| \alpha - \beta \|_{i' \cE}^i \\
	V_j(t) 		&:=  
	\int_0^t   e^{-\lambda_{j}\tau} 	
	\sum_{0 \leq k \leq m_s} 
	e^{\gamma_k \tau  }
	E^i_{i'}  C_j^{i'} 
	G_{i,k}^{n} 
	\left|  \xi_n  \right|  
	d\tau  .
	\end{align*}
	Then
	\begin{equation}
	\label{eq:IntegralInequalityTwoMaps}
	e^{-\lambda_j t} u_j(t) \leq V_j(t) +   \int_0^t e^{-\lambda_j \tau}  H_j^i   u_{i}(\tau) d\tau \, .
	\end{equation}
\end{proposition}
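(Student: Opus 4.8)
The plan is to follow exactly the same pattern as the proof of Proposition~\ref{prop:InitialClaim}, now tracking the effect of using two different charts $\alpha,\beta$ rather than two different initial conditions. First I would write down the variation of constants formula for the $j$-th component of the projected system~\eqref{eq:ProjectedSystem}, applied to $x(t) := x(t,\xi,\alpha)$ and to $y(t) := x(t,\xi,\beta)$, noting that both solutions have the same initial condition $\xi$, so the linear terms $e^{\Lambda_j t}\xi_j$ cancel when we subtract. This gives
\[
x_j(t) - y_j(t) = \int_0^t e^{\Lambda_j(t-\tau)} \Big( \cN_j(x(\tau),\alpha(x(\tau))) - \cN_j(y(\tau),\beta(y(\tau))) \Big) d\tau .
\]

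The main step is to estimate the integrand. I would use the now-standard splitting
\[
\cN_j(x,\alpha(x)) - \cN_j(y,\beta(y)) = \big[\cN_j(x,\alpha(x)) - \cN_j(y,\alpha(y))\big] + \big[\cN_j(y,\alpha(y)) - \cN_j(y,\beta(y))\big].
\]
For the first bracket, the bound~\eqref{eq:NHestimate} (valid for a fixed chart, here $\alpha$, applied along the graph) gives $\leq H_j^i u_i(\tau)$ where $u_i = |x_i - y_i|$. For the second bracket, only the argument in $X_u$ changes, namely $\alpha(y)$ versus $\beta(y)$; expanding $\cN_j$ via~\eqref{eq:N_no_hat_def} and using the mean value theorem together with the $C_j^{i'}$ bounds from Proposition~\ref{prop:Initial_Constant_Bounds}, the second bracket is bounded by $C_j^{i'} |\alpha_{i'}(y) - \beta_{i'}(y)|$. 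Then I invoke the semi-norm estimate following Definition~\ref{def:SemiNorm}, $|\alpha_{i'}(y) - \beta_{i'}(y)| \le E^i_{i'} |y_i|$, and substitute the bootstrap bound $|y_i(\tau)| \le \sum_{0\le k\le m_s} e^{\gamma_k\tau} G_{i,k}^n |\xi_n|$ from Theorem~\ref{prop:BootstrapBounds}. Using the estimate $|e^{\Lambda_j t}\xi_j| \le e^{\lambda_j t}|\xi_j|$ from~\eqref{eq:stableEigenvalueEstimate} on the factor $e^{\Lambda_j(t-\tau)}$ and multiplying through by $e^{-\lambda_j t}$, the second-bracket contribution assembles precisely into $V_j(t)$ as defined in the statement, and the first-bracket contribution becomes $\int_0^t e^{-\lambda_j\tau} H_j^i u_i(\tau)\, d\tau$, yielding~\eqref{eq:IntegralInequalityTwoMaps}.

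The only point requiring a little care — and the natural place for the argument to go wrong — is the bookkeeping in the second bracket: one must be careful that the coefficient appearing is $C_j^{i'}$ (the bound on $\cN_j^{i'} = \partial_{\xx_{i'}}\cN_j$ which includes both the linear part $L_j^{i'}$ and the nonlinear part $\hat{\cN}_j^{i'}$), and that $x(\tau,\xi,\beta) = y(\tau)$ stays in $B_s(\rho)$ so that the constants from Proposition~\ref{prop:Initial_Constant_Bounds} and the bootstrap bounds of Theorem~\ref{prop:BootstrapBounds} are applicable; this is guaranteed by Proposition~\ref{prop:StayInsideBall} under the standing hypotheses of Remark~\ref{rem:FixedConstants2}. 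Everything else is a direct translation of the Proposition~\ref{prop:InitialClaim} computation, so I would leave the routine manipulations terse and not belabor the integral estimates.
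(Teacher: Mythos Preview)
Your proposal is correct and follows essentially the same route as the paper's proof: variation of constants in the $j$-th component, the splitting $\cN_j(x,\alpha(x))-\cN_j(y,\beta(y))$ into a ``same chart, different point'' piece bounded via $H_j^i u_i$ and a ``same point, different chart'' piece bounded via $C_j^{i'}E_{i'}^i|y_i|$, then substitution of the bootstrap bound on $|y_i|$ from Theorem~\ref{prop:BootstrapBounds}. The paper phrases the splitting at the level of $\alpha_{i'}(x)-\beta_{i'}(y)$ rather than $\cN_j$ itself, but this is an immaterial difference and the resulting inequality~\eqref{e:NxNy} is exactly the one you derive.
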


\begin{proof}
	Let $x(t) := x(t,\xi,\alpha) $ and $y(t) :=  x(t,\xi,\beta)$. 
	By variation of constants we have 
	\begin{align*}
 	x_j(t)-y_j(t) &= \int_0^t e^{\Lambda_j(t-\tau)} \big(\cN_j(x(\tau),\alpha(x(\tau))) - \cN_j(y(\tau),\beta(y(\tau))) \big) d\tau,
	\end{align*}	
	and the triangle inequality gives
	\begin{align*}
	|\alpha_{i'}( x) - \beta_{i'}(y)| &\leq 
	| \alpha_{i'}(y) - \beta_{i'}(y)|  + 
	|\alpha_{i'}( x) - \alpha_{i'}(y)| \\
	&\leq 
	 \|\alpha - \beta\|_{i' \cE}^i | y_i| 
	 +
	 P_{i'}^i |x_i - y_i| ,
	\end{align*}
	hence
	\begin{align}\label{e:NxNy}
	\left| \cN_{j}(x,\alpha(x)) - \cN_{j}(y, \beta( y)) 
	\right|&   
	\leq 
	C_j^{i'} E_{i'}^i  | y_i|
	+
	  H^i_j | x_i - y_i |.
	\end{align}
	Applying the bound from Theorem \ref{prop:BootstrapBounds} gives
	\begin{align*}
	e^{-\lambda_{j} t} |x_j-y_j|
	\leq &
	\int_0^t   e^{-\lambda_{j} \tau}
	\left( 
		C_j^{i'} E_{i'}^i  | y_i|
	+
	H^i_j | x_i - y_i |  
	\right) d\tau \\
	= &
	\int_0^t   e^{-\lambda_{j} \tau} 
	C_j^{i'} E_{i'}^i   
	|y_i|  
	d\tau 
	+	   
	\int_0^t   e^{-\lambda_{j} \tau} 	H_j^i |u_i|
	d\tau \\
	\leq&
\int_0^t   e^{-\lambda_{j} \tau} 
\sum_{\substack{0 \leq k \leq m_s }} 
C_j^{i'} E_{i'}^i    
e^{\gamma_k \tau} G_{i,k}^n |\xi_n|
ds 
+	   
\int_0^t   e^{-\lambda_{j} \tau}  
H_j^i u_i(\tau)
d\tau .
	\end{align*}	
Recalling the definition of $V_j(t) $,  the above inequality is of the form stated in \eqref{eq:IntegralInequalityTwoMaps}. 	
\end{proof}

\begin{theorem}
	\label{prop:ContractionAlgorithmResult}
	Define $N_\lambda = m_s$ and  $ \{\mu_k \}_{k=1}^{N_\mu} = \{ \gamma_k \}_{k=-1}^{m_s} $. 
	Let  	$ \widehat{F} \in  \left( \R^{m_s} \right)^{\otimes 3} \otimes \R^{m_u}  \otimes \R^{m_s+2}$ denote the tensor defined in Remark  \ref{rem:InitialF}. 
	When $F$ is the output of Algorithm \ref{alg:GeneralBootStrap} taken with input $\widehat{F} $ and some $ N_{bootstrap} \geq 1$, then $F $ satisfies  Condition \ref{cond:Contraction}. 	
\end{theorem}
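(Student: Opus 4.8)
The plan is to recognize Theorem~\ref{prop:ContractionAlgorithmResult} as one more instance of the abstract bootstrapping scheme of Appendix~\ref{sec:GeneralBootstrap}, in complete parallel with the proofs of Theorem~\ref{prop:BootstrapBounds} and Theorem~\ref{prop:SecondDerivativeAlgorithmResult}. First I would fix the dictionary between the general framework and the present situation: the quantities to be bounded are $u_m(t) = |x_m(t,\xi,\alpha) - x_m(t,\xi,\beta)|$ for $m \in I$, the exponents carried in the estimate are $\{\mu_k\}_{k=1}^{N_\mu} = \{\gamma_k\}_{k=-1}^{m_s}$ (so $N_\mu = m_s+2$), with the auxiliary rate $\gamma_{-1} = \gamma_0/2$ prepended, and the passive ``label'' indices accompanying each exponential term are the triple $(n,i,i')$, arising from the factor $\|\alpha-\beta\|_{i'\cE}^i\,|\xi_n|$ appearing in Condition~\ref{cond:Contraction}. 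Under this dictionary, Condition~\ref{cond:GeneralGbounds} of the appendix is precisely Condition~\ref{cond:Contraction}, and the template integral inequality of Condition~\ref{anz:IntegralInequality} is supplied by Proposition~\ref{prop:TrackingIneq2}: once a tensor $F$ satisfying Condition~\ref{cond:Contraction} is substituted for the $u_i$ on the right of~\eqref{eq:IntegralInequalityTwoMaps}, the forcing term $V_j$ plus the off-diagonal contributions becomes a finite linear combination of exponentials $e^{\mu_k t}$ which is pointwise nonnegative on $[0,\infty)$, exactly the form demanded by Lemma~\ref{prop:Fundamental}.

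Having set up the dictionary, the proof has the usual two parts. For the base case, Remark~\ref{rem:InitialF} produces the tensor $\widehat{F}$, which by Proposition~\ref{prop:InitialContractionBound} satisfies Condition~\ref{cond:Contraction} on $[0,\infty)$; this is the input passed to Algorithm~\ref{alg:GeneralBootStrap}. For the inductive step, each pass of the inner loop fixes $j \in I$, first applies the conditioning map (the $\cQ_j$-analogue from the appendix, which uses the ordering $\gamma_{-1} > \gamma_0 > \dots > \gamma_{m_s}$ to move any coefficient sitting on the exponent $e^{\gamma_j t}$ onto a neighbouring exponent), and then applies the integration map built from Lemma~\ref{prop:Fundamental} with $u_0 = u_j$, $c_0 = \lambda_j$, $c_2 = H_j^j$, and $\mu_0 = \lambda_j + H_j^j = \gamma_j$. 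After the conditioning step the forcing no longer contains the exponent $\gamma_j$, so the hypothesis of Lemma~\ref{prop:Fundamental} is met and replacing the slice $F_{ji,\cdot}^{ni'}$ by the resulting output again yields a tensor satisfying Condition~\ref{cond:Contraction}. Iterating over $j \in I$ and then over $N_{\text{bootstrap}} \geq 1$ passes of the outer loop proves the claim; the induction on $N_{\text{bootstrap}}$ is organised exactly as in the sketch following Theorem~\ref{prop:BootstrapBounds}, and the routine verifications may be left to the reader.

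The one point that genuinely requires care beyond invoking Appendix~\ref{sec:GeneralBootstrap} is the resonance bookkeeping, and I expect this to be the main obstacle. Two distinct resonances occur. The forcing coefficient $C_j^{i'}\|\alpha-\beta\|_{i'\cE}^i\,G_{i,k}^n |\xi_n|$ in $V_j$ is supported on the rates $\{\gamma_k\}_{0\le k\le m_s}$, so its $k=0$ contribution resonates with the homogeneous rate $\gamma_0 = \lambda_s + C_s\hat{\cH}$ coming out of the variation-of-constants formula at the level of $x_s$, while its $k=j$ contribution resonates with the diagonal rate $\gamma_j$ in the $j$-th component equation. The second resonance is dissolved by the standard in-loop $\cQ_j$-conditioning, exactly as in Algorithm~\ref{alg:BootStrap}. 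The first is dissolved by prepending $\gamma_{-1} = \gamma_0/2$, replacing $G$ by $\mathcal{Q}_0(G)$ inside Proposition~\ref{prop:InitialContractionBound} and Proposition~\ref{prop:TrackingIneq2}, and observing that $\lambda_{1'} > \gamma_{-1} > \gamma_0$ together with $\gamma_{-1} \neq \gamma_k$ for $0 \le k \le m_s$ guarantees that every denominator $(\gamma_k - \gamma_0)^{-1}$ appearing in $\widetilde{F}$ and every denominator $(\mu_k - \gamma_j)^{-1}$ produced by the integration map is finite. Once this is checked, Lemma~\ref{prop:Fundamental} is legitimate at every stage, the algorithm preserves Condition~\ref{cond:Contraction}, and the theorem follows.
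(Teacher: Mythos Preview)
Your proposal is correct and follows the same approach as the paper's proof, which simply observes that $\widehat{F}$ satisfies Condition~\ref{cond:Contraction} by Proposition~\ref{prop:InitialContractionBound}, that Proposition~\ref{prop:TrackingIneq2} instantiates Condition~\ref{anz:IntegralInequality}, that Condition~\ref{cond:Contraction} instantiates Condition~\ref{cond:GeneralGbounds}, and then invokes Proposition~\ref{prop:GeneralAlgorithmResult}. Your write-up is more expansive on the resonance bookkeeping with $\gamma_{-1}$ and the $\cQ_j$-conditioning, but this is elaboration of the same argument rather than a different route.
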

\begin{proof} 
	By Proposition \ref{prop:InitialContractionBound} the initial tensor $F$ satisfies Condition \ref{cond:Contraction}.  
	We note that Proposition \ref{prop:TrackingIneq2} is a special case of  Condition \ref{anz:IntegralInequality} and Condition \ref{cond:Contraction} is a special case of Condition \ref{cond:GeneralGbounds}. 
	Hence Proposition  \ref{prop:GeneralAlgorithmResult} applies, yielding the result.  
\end{proof}

%

\subsection{Contraction Mapping}

The tensor $J$ below, which takes $m_s \times m_u$ matrices to  $m_s \times m_u$ matrices, provides a bound on $ \| \Psi[\alpha ]- \Psi[\beta]\|_{i' \cE }^i $. 

\begin{definition} \label{def:J_Contraction}
	Define the tensor $J \in   \left(  \R^{m_s} \otimes \R^{m_u} \right)^{\otimes 2} $  by 
	\begin{align}
	\label{eq:ContractionMappingTensor}
	J_{j'i}^{i'n} := \sum_{-1 \leq k \leq m_s}
	(\lambda_{j'} - \gamma_k)^{-1}
	\left(
	C_{j'}^{i'}  G_{i,k}^n 
	+
	H_{j'}^m F_{mi,k}^{ni'} 
	\right) .
	\end{align}	
\end{definition}

\begin{theorem}
		\label{prop:Contraction} 
		If the tensor $ F \in  \left( \R^{m_s} \right)^{\otimes 3} \otimes \R^{m_u}  \otimes \R^{m_s+2}$ satisfies Condition \ref{cond:Contraction}, 
		then  $ 	\|\Psi[\alpha] - \Psi[\beta]\|_{j'\cE}^n   
	\leq 
	J_{j'i}^{i'n} \|\alpha - \beta\|_{i'\cE}^i  
	$ for all 
	$ \alpha  , \beta \in  \cB_{\rho,P}^{0,1}$.
\end{theorem}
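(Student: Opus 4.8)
The plan is to rerun the chain of estimates used in the proofs of Theorem~\ref{prop:Endomorphism} and Proposition~\ref{prop:Endomorphism_2}, now feeding in the componentwise bound supplied by Condition~\ref{cond:Contraction}. Fix $\alpha,\beta\in\cB_{\rho,P}^{0,1}$. To bound the semi-norm $\|\Psi[\alpha]-\Psi[\beta]\|_{j'\cE}^{n}$, fix $\xi\in B_s(\rho)$ with $\xi_n\neq 0$ and set $\zeta:=\xi-\xi_n$, which differs from $\xi$ only in its $n$-th component. Writing $x(t):=x(t,\xi,\alpha)$, $y(t):=x(t,\xi,\beta)$ and $x',y'$ for the corresponding trajectories started at $\zeta$, Definition~\ref{def:LyapunovPerron} together with \eqref{eq:UnstableEigenvalueEstimate} (applied with $t\mapsto-t$, so $|e^{-\Lambda_{j'}t}v|\leq e^{-\lambda_{j'}t}|v|$ for $t\geq0$) gives
\[
\bigl|\Psi[\alpha]_{j'}(\xi)-\Psi[\beta]_{j'}(\xi)\bigr|\leq\int_0^\infty e^{-\lambda_{j'}t}\bigl|\cN_{j'}(x(t),\alpha(x(t)))-\cN_{j'}(y(t),\beta(y(t)))\bigr|\,dt ,
\]
and likewise at $\zeta$.

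The pointwise input is the bound \eqref{e:NxNy}, whose derivation in the proof of Proposition~\ref{prop:TrackingIneq2} uses only the splitting $\alpha(x)-\beta(y)=[\alpha(x)-\alpha(y)]+[\alpha(y)-\beta(y)]$, the estimates $\Lip(\alpha)_{i'}^{i}\leq P_{i'}^{i}$, $|\alpha_{i'}(y)-\beta_{i'}(y)|\leq\|\alpha-\beta\|_{i'\cE}^{i}|y_i|$, and the mean value theorem, and therefore applies verbatim to a primed index:
\[
\bigl|\cN_{j'}(x,\alpha(x))-\cN_{j'}(y,\beta(y))\bigr|\leq C_{j'}^{i'}\|\alpha-\beta\|_{i'\cE}^{i}\,|y_i|+H_{j'}^{i}\,|x_i-y_i| .
\]
I would then substitute the bootstrap bounds $|y_i(t)|\leq\sum_{0\leq k\leq m_s}e^{\gamma_k t}G_{i,k}^{q}|\xi_q|$ (Theorem~\ref{prop:BootstrapBounds} with second initial point $0$) and $|x_m(t)-y_m(t)|\leq\sum_{-1\leq k\leq m_s}e^{\gamma_k t}F_{mi,k}^{qi'}\|\alpha-\beta\|_{i'\cE}^{i}|\xi_q|$ (Condition~\ref{cond:Contraction}), and integrate term by term; each $\int_0^\infty e^{(\gamma_k-\lambda_{j'})t}\,dt=(\lambda_{j'}-\gamma_k)^{-1}$ is finite since $\gamma_k<\lambda_{j'}$, and the resulting coefficient of $\|\alpha-\beta\|_{i'\cE}^{i}|\xi_q|$ is precisely $J_{j'i}^{i'q}$ as in \eqref{eq:ContractionMappingTensor}.

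To upgrade this to the semi-norm statement I would run the same computation at $\zeta=\xi-\xi_n$ and subtract. Because the initial difference $\xi-\zeta=\xi_n$ lies in $X_n$ alone, Theorem~\ref{prop:BootstrapBounds} gives $|x_i(t,\xi,\cdot)-x_i(t,\zeta,\cdot)|\leq\sum_k e^{\gamma_k t}G_{i,k}^{n}|\xi_n|$ with only the $n$-th source term. Expanding $\cN_{j'}$ at the four points $(x,\alpha(x))$, $(y,\beta(y))$, $(x',\alpha(x'))$, $(y',\beta(y'))$ via the mean value theorem and the constants $C$, $H$, one checks that every surviving term is controlled either by such a single-component solution difference or by Condition~\ref{cond:Contraction}, so the contributions of the components $q\neq n$ cancel and one obtains $\bigl|(\Psi[\alpha]-\Psi[\beta])_{j'}(\xi)-(\Psi[\alpha]-\Psi[\beta])_{j'}(\zeta)\bigr|\leq J_{j'i}^{i'n}\|\alpha-\beta\|_{i'\cE}^{i}|\xi_n|$; dividing by $|\xi_n|$ and taking the supremum over $\xi$ completes the proof.

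The step I expect to be the main obstacle is exactly this last reorganization: verifying that the four-point expansion collapses so that the bound depends only on $|\xi_n|$ rather than on the whole vector $(|\xi_q|)_{q\in I}$, and that it does so with the constant $J_{j'i}^{i'n}$ of \eqref{eq:ContractionMappingTensor} and nothing larger. The remaining ingredients (the integral representation, the primed analogue of \eqref{e:NxNy}, and the termwise integration) are routine once the tensors $G$ and $F$ of Sections~\ref{sec:ExponentialTracking} and~\ref{sec:Contraction} are available.
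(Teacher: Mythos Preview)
Your argument coincides with the paper's through the integration step: express $\Psi[\alpha](\xi)-\Psi[\beta](\xi)$ as an integral, bound the integrand by the primed analogue of~\eqref{e:NxNy}, insert the estimates for $|y_i(t)|$ and $|x_m(t)-y_m(t)|$ supplied by Conditions~\ref{cond:G_bounding_function} and~\ref{cond:Contraction}, and integrate term by term. The outcome is the pointwise bound
\[
\bigl|\Psi[\alpha]_{j'}(\xi)-\Psi[\beta]_{j'}(\xi)\bigr|\;\leq\;J_{j'i}^{i'n}\,\|\alpha-\beta\|_{i'\cE}^{i}\,|\xi_n|,
\]
and this is exactly where the paper stops: from this inequality (valid for every $\xi\in B_s(\rho)$) it reads off $\|\Psi[\alpha]-\Psi[\beta]\|_{j'\cE}^{n}\leq J_{j'i}^{i'n}\|\alpha-\beta\|_{i'\cE}^{i}$ in one line, with no further work.

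The paper never introduces the second base point $\zeta=\xi-\xi_n$, never tracks the four trajectories $x,y,x',y'$, and never performs the subtraction you describe. The step you single out as the ``main obstacle'' --- making a four-point expansion collapse onto the single component $|\xi_n|$ --- is simply absent from the paper's argument; what you perceive as a difficulty the paper treats as an immediate consequence of the displayed pointwise estimate. If you were to pursue your four-point route you would need to control a second-order difference of $\cN_{j'}$ together with the quantity $(x-y)-(x'-y')$, neither of which the paper supplies or requires for this theorem.
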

\begin{proof}

	Fix charts $ \alpha, \beta \in \cB_{\rho,P}^{0,1}$ and
	choose $\xi \in B_s(\rho)$. Define 
	$x := x(t,\xi,\alpha)$, and 
	$y := x(t,\xi,\beta)$.  
     By the definition of the Lyapunov-Perron operator, we have 
	\[
	\Psi[\alpha](\xi) - \Psi[\beta](\xi) = - \int_0^\infty e^{-\Lambda_{u} t} \left[ \cN_{u}(x,\alpha(x)) - \cN_{u}(y, \beta( y)) \right] dt .
	\]
Using~\eqref{e:NxNy} with the estimates provided in 
Conditions~\ref{cond:G_bounding_function} and~\ref{cond:Contraction}, we
 obtain
	\begin{align*}
	|\Psi[\alpha]_{j'} (\xi) - \Psi[\beta]_{j'} (\xi)| 
	\leq &
	 \int_0^\infty  e^{-\lambda_{j'}t} 
	\left(
	C_{j'}^{i'} E_{i'}^i  | y_i|
	+
	H_{j'}^i | x_i - y_i | 
	\right)
	dt
	\\
\leq &
\int_0^\infty 
 e^{-\lambda_{j'}t} 
 \sum_{-1 \leq k \leq m_s} e^{\gamma_k t} 
 E_{i'}^i 
\left(
C_{j'}^{i'}  G_{i,k}^n 
+
H_{j'}^m F_{mi,k}^{ni'} 
\right) 
|\xi_n|
dt .
	\end{align*}
Integrating gives
	\begin{align*}
 			|\Psi[\alpha]_{j'}(\xi) - \Psi[\beta]_{j'}(\xi)|   
 			&\leq 
 			  E_{i'}^i  
 			  J_{j'i}^{i'n}
 			  | \xi_n | ,
	\end{align*}
	where  the coefficients  $J_{j'i}^{i'n}$ are defined as in \eqref{eq:ContractionMappingTensor}.
	 It follows that
	  $
	\|\Psi[\alpha] - \Psi[\beta]\|_{j'\cE}^n   
	\leq 
	E_{i'}^i  
	J_{j'i}^{i'n}
	$. 
\end{proof}

\begin{remark} \label{rem:JContraction}
The tensor $J$ is a linear operator which maps $m_s \times m_u$ 
matrices to $m_s \times m_u$ matrices. 
If we represent an $m_s \times m_u$ matrix $E$ as an $m_s\cdot m_u$~dimensional  vector 
$\tilde{E}$  with components $ \tilde{E}_{(i'-1)m_s+i} = E_{i'}^i$, then the action of $J$ can be represented as a $m_s  m_u \times m_s  m_u$ matrix $\tilde{J}$ with 
components $\tilde{J}_{(j'-1)m_s+n}^{(i'-1)m_s+i} \equiv  J_{j'i}^{i'n}$. 
	
	We are principally interested in whether the Lyapunov-Perron operator 
	$\Psi$ has a unique fixed point. 
	By Theorem \ref{prop:Contraction}, this will be true if an iterative application of $J$ to any $m_s \times m_u$ matrix $E $ limits to zero, that is
	\[
	\lim_{k \to \infty}  	\underbrace{J \circ \dots \circ J}_{k} \cdot E =0 .
	\]
	This limits to zero  if and only if the spectral radius of $J$, denoted 
	by $\rho(J)$, is less than $1$.  Since $J$ is finite dimensional,  $\rho(J)$ is equal 
	to the absolute value of the eigenvalue with largest magnitude. 
	This is bounded as $\rho(J) \leq \| J^k \|^{1/k}$ for any positive integer $k \geq 1$,
	 and any matrix norm  $ \| \cdot \|$. 
\end{remark}

	The theorem below collects the major results thus far.

\begin{theorem}
\label{prop:UniqueStableManifold} 
Take the assumptions made in Remarks \ref{rem:FixedConstants1} and \ref{rem:FixedConstants2}.  
Suppose the tensor $ F \in  \left( \R^{m_s} \right)^{\otimes 3} \otimes \R^{m_u}  \otimes \R^{m_s+2}$ satisfies Condition \ref{cond:Contraction} and define $J \in \left(  \R^{m_s} \otimes \R^{m_u} \right)^{\otimes 2} $ as in Definition~\ref{def:J_Contraction}. 
If the spectral radius of $J$ is less than $1$, then there exists a unique fixed point $ \alpha \in  \cB_{\rho,P,\bar{P}}^{1,1}$ for which  $\Psi[\alpha ] = \alpha$. 
 Furthermore, the graph
\[
M_{\loc} :=  
\{
( \xx_s, \alpha( \xx_s ) ) \in X_s \times X_u : \xx_s \in B_s(\rho)
\} 
\]
is an invariant manifold under the flow  \eqref{eq:OriginalODE}, and points 
in $M_{\loc}$ 
converge asymptotically to $ 0$.

In addition, suppose that  $ \tilde{h}$ is an equilibrium solution to \eqref{eq:PreOriginalODE} satisfying  $ | \tilde{h}_\bi| < \epsilon_\bi$ for $ \bi \in \bI$, and that $\epsilon_i < \rho_i$ for $i \in I$.
Define $ \tilde{\alpha}(\xx_s) := {\alpha}(\xx_s - \tilde{h}_s)+\tilde{h}_u$. 
The graph 
\[
\widetilde{M}_{\loc} := 
\{
( \xx_s, \tilde{\alpha}( \xx_s ) ) \in X_s \times X_u : \xx_s \in B_s(\rho - \eps_s)
\}
\]
is an invariant manifold under the flow  \eqref{eq:PreOriginalODE}, and points 
in $\widetilde{M}_{\loc}$ 
converge asymptotically to $\tilde{ h }$. 
Moreover, we have the estimates
\begin{align*}
| \tilde{\alpha}_{i'}(\xx_s)  | &\leq P_{i'}^i ( | \xx_i| + \epsilon_i) + \epsilon_{i'}
&\|  \tilde{\alpha}_{i'}^i  (\xx_s)\| &\leq P_{i'}^i 
& \Lip(\partial_i   \tilde{\alpha})_{i'}^{j}  \leq \bar{P}_{i'}^{ij},
\end{align*}
for all $ \xx_s \in B_s(\rho - \eps_s)$ and $ i ,j \in I$ and $ i' \in I'$. 

\end{theorem}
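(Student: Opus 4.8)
The plan is to bolt together the endomorphism results (Theorems~\ref{prop:Endomorphism_2} and~\ref{prop:LambdaC2Bounds}), the contraction estimate (Theorem~\ref{prop:Contraction}), and the exponential tracking bounds of Section~\ref{sec:ExponentialTracking}, and then to push the resulting local stable manifold back through the affine coordinate change~\eqref{eq:ImplicitNonlinearity}. \emph{Step 1: the fixed point.} Under the standing assumptions of Remarks~\ref{rem:FixedConstants1} and~\ref{rem:FixedConstants2}, Theorem~\ref{prop:Endomorphism_2} gives $\Psi\colon\cB_{\rho,P}^{0,1}\to\cB_{\rho,P}^{0,1}$ and Theorem~\ref{prop:LambdaC2Bounds} gives $\Psi\colon\cB_{\rho,P,\bar{P}}^{1,1}\to\cB_{\rho,P,\bar{P}}^{1,1}$; both are complete metric spaces for the norm $\|\cdot\|_\cE$ of Definition~\ref{def:SemiNorm}, and the second is (a complete, hence closed) subset of the first. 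By Theorem~\ref{prop:Contraction} the vector of seminorms of $\Psi[\alpha]-\Psi[\beta]$ is bounded entrywise by the matrix of $J$ (notation of Remark~\ref{rem:JContraction}) applied to the vector of seminorms of $\alpha-\beta$, so iterating, the seminorms of $\Psi^k[\alpha]-\Psi^k[\beta]$ are controlled by $k$-fold applications of $J$. Since $\rho(J)<1$, I would fix $k$ with the relevant induced norm of the $k$-fold iterate of $J$ strictly less than $1$, so that $\Psi^k$ is a genuine $\|\cdot\|_\cE$-contraction on $\cB_{\rho,P}^{0,1}$, and invoke the Banach fixed point theorem to obtain a unique fixed point $\alpha$ of $\Psi^k$; this $\alpha$ is the unique fixed point of $\Psi$ as well, since $\Psi^k(\Psi[\alpha])=\Psi[\alpha]$ forces $\Psi[\alpha]=\alpha$. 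Finally, $\Psi^k$ also restricts to a contraction of the complete metric space $\cB_{\rho,P,\bar{P}}^{1,1}$, so its fixed point there coincides with $\alpha$, whence $\alpha\in\cB_{\rho,P,\bar{P}}^{1,1}$; in particular $\alpha(0)=0$, $\Lip(\alpha)_{i'}^i\le P_{i'}^i$ and $\Lip(\partial_i\alpha)_{i'}^j\le\bar{P}_{i'}^{ij}$.

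\emph{Step 2: invariance and asymptotics of $M_{\loc}$.} Following the outline of Section~\ref{sec:LP_Overview}, I would use the semigroup identity $x(s,x(t,\xi,\alpha),\alpha)=x(t+s,\xi,\alpha)$ together with the substitution $\tau=t+s$ in~\eqref{eq:LPO} evaluated at $x(t,\xi,\alpha)$ and the fixed point equation $\alpha=\Psi[\alpha]$ to show that $y(t):=\alpha(x(t,\xi,\alpha))$ satisfies $y(t)=e^{\Lambda_u t}\bigl(-\int_t^\infty e^{-\Lambda_u\tau}\cN_u(x(\tau,\xi,\alpha),y(\tau))\,d\tau\bigr)$, hence $\dot y=\Lambda_u y+\cN_u(x,y)$, i.e.\ \eqref{eq:ProductODE_unst} holds for every $i'\in I'$. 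Since $x(t,\xi,\alpha)$ solves the projected system~\eqref{eq:ProjectedSystem}, which is~\eqref{eq:ProductODE_stab} along $\xx_u=\alpha(\xx_s)$, the pair $(x(t,\xi,\alpha),\alpha(x(t,\xi,\alpha)))$ solves~\eqref{eq:OriginalODE}, so $M_{\loc}$ is invariant. Because $\gamma_0<0$ is part of the standing hypotheses (through Proposition~\ref{prop:StayInsideBall}), all $\gamma_k<0$ by the ordering in Definition~\ref{def:Gamma}, so Proposition~\ref{prop:BootstrapBounds} forces $x(t,\xi,\alpha)\to0$ and hence $\alpha(x(t,\xi,\alpha))\to\alpha(0)=0$; thus every point of $M_{\loc}$ converges to the origin.

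\emph{Step 3: transfer to~\eqref{eq:PreOriginalODE} and the estimates.} By~\eqref{eq:ImplicitNonlinearity}, equation~\eqref{eq:OriginalODE} is the conjugate of~\eqref{eq:PreOriginalODE} under $\xx\mapsto\xx+\tilde h$, so $t\mapsto\xx(t)+\tilde h$ carries solutions of~\eqref{eq:OriginalODE} to solutions of~\eqref{eq:PreOriginalODE}; it therefore maps $M_{\loc}$ onto the graph of $\xx_s\mapsto\alpha(\xx_s-\tilde h_s)+\tilde h_u=\tilde\alpha(\xx_s)$ over $\tilde h_s+B_s(\rho)$. Since $|\tilde h_i|<\epsilon_i$ and $\rho$ is the radius called $r_s$ in Section~\ref{sec:background_PDE}, we have $B_s(r_s-\eps_s)\subseteq\tilde h_s+B_s(\rho)$, so $\widetilde M_{loc}$ is the restriction of that graph, hence invariant under~\eqref{eq:PreOriginalODE} with all points converging to $\tilde h$. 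For the three estimates on $\xx_s\in B_s(r_s-\eps_s)$: from $\alpha(0)=0$, $\Lip(\alpha)_{i'}^i\le P_{i'}^i$ and $|\xx_i-\tilde h_i|\le|\xx_i|+\epsilon_i$ I get $|\tilde\alpha_{i'}(\xx_s)|\le|\alpha_{i'}(\xx_s-\tilde h_s)|+|\tilde h_{i'}|\le P_{i'}^i(|\xx_i|+\epsilon_i)+\epsilon_{i'}$; translation invariance of the derivative gives $\|\tilde\alpha_{i'}^i(\xx_s)\|=\|\alpha_{i'}^i(\xx_s-\tilde h_s)\|\le P_{i'}^i$; and translation invariance of the Lipschitz seminorm gives $\Lip(\partial_i\tilde\alpha)_{i'}^j=\Lip(\partial_i\alpha)_{i'}^j\le\bar{P}_{i'}^{ij}$.

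The hard part will be Step~1: essentially all the analytic difficulty has already been discharged in Sections~\ref{sec:ExponentialTracking}--\ref{sec:Contraction}, so this is largely an assembly argument, but the delicate point is converting the spectral bound $\rho(J)<1$ into an actual $\|\cdot\|_\cE$-contraction for some iterate $\Psi^k$ — being careful that the entrywise seminorm bound of Theorem~\ref{prop:Contraction} propagates through iteration — and then checking that the fixed point produced by the Banach theorem in the larger space $\cB_{\rho,P}^{0,1}$ indeed lands in $\cB_{\rho,P,\bar{P}}^{1,1}$, so that the claimed $C^{1,1}$ bounds apply.
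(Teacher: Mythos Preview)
Your proposal is correct and follows essentially the same approach as the paper's proof, which is itself very brief and simply cites the endomorphism theorems, Remark~\ref{rem:JContraction} for the contraction (your iterate-to-$\Psi^k$ trick is exactly what that remark suggests), Section~\ref{sec:LP_Overview} for invariance, and the proof of Proposition~\ref{prop:StayInsideBall} for asymptotic convergence. You have filled in more of the details---particularly the passage from $\rho(J)<1$ to a contraction for some iterate, the invariance computation for $y(t)=\alpha(x(t,\xi,\alpha))$, and the derivation of the three estimates on $\tilde\alpha$---but the architecture is identical.
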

\begin{proof} 
	We infer from the assumptions made 
	in Remarks \ref{rem:FixedConstants1} and \ref{rem:FixedConstants2}, all of which can be verified \emph{a posteriori}, that the map 	$ \Psi : \cB_{\rho,P,\bar{P}}^{1,1} \to \cB_{\rho,P,\bar{P}}^{1,1} $  is a well defined endomorphism.   
	Since the spectral radius of $J$ is less than $1$, there exists a unique fixed point $ \alpha \in  \cB_{\rho,P,\bar{P}}^{1,1}$ for which  $\Psi[\alpha ] = \alpha$, see Remark \ref{rem:JContraction}.
	As discussed in  Section~\ref{sec:LP_Overview}, the fixed point of the Lyapunov-Perron operator  provides us with a chart for a local invariant manifold  for the differential equation defined in \eqref{eq:OriginalODE}. 
	By construction $\alpha(0)=0$, hence the origin is contained in the manifold.  
	It follows from the proof of Proposition \ref{prop:StayInsideBall} that points in $M_{\loc}$ converge asymptotically to the origin. 
	
	As \eqref{eq:OriginalODE} is conjugate to \eqref{eq:PreOriginalODE} via the change of variables $\xx \to \xx + \tilde{h}$, it follows that $\tilde{\alpha}(\xx_s) $ is 
	a graph for a local invariant manifold (having a slightly smaller domain)  for the differential equation defined in \eqref{eq:PreOriginalODE}. 
	Furthermore this manifold contains the equilibrium $\tilde{h}$, 
	a point to which trajectories in 
	$\widetilde{M}_{\loc}$ are asymptotically attracted.  
	The error estimates follow by virtue of $\alpha \in  \cB_{\rho,P,\bar{P}}^{1,1}$.  
\end{proof}

As discussed at the end of in Section~\ref{sec:LP_Overview}, the fixed point of the Lyapunov-Perron operator provides us with a chart for the local stable manifold provided we have captured all stable eigenvalues.


\section{Application I: Linear Change of Variables}
\label{sec:SH_Linear}

\subsection{The Swift-Hohenberg Equation}
\label{sec:CoV_Linear_SH}
Consider the Swift-Hohenberg Equation \eqref{eq:SwiftHohenberg}
of Section \ref{sec:SH_eq}.
Since the boundary conditions are Neumann, 
we will expand the spatial variable using Fourier cosine series. 
Proceeding formally (we do not yet specify the norms)
define the space of one-sided sequence of real numbers, denoted
$Y = \R^{\N}$. 
Given a one parameter curve $ a \in C(\R ,Y)$, define a
path of Fourier cosine series by 
\[
u(t,x)= a_0(t) + 2 \sum_{k=1}^\infty a_k(t) \cos(kx). 
\]
Taking the expansion above as an \textit{ansatz},
and plugging it into Equation \eqref{eq:SwiftHohenberg} 
leads to the system of infinitely many coupled
scalar ordinary differential equations 
\begin{align} \label{eq:SH_Fourier}
\dot{a}_k = (- \beta_1 k^4 -\beta_2 k^2 +1) a_k - (a*a*a)_k.
\end{align}
Here, the discrete convolution $*$ for $ a,b\in Y$ is defined by 
\[
(a * b)_k = \sum_{\substack{k_1 + k_2 = k \\ k_1,k_2 \in \Z}} a_{|k_1|} b_{|k_2|} .
\]

We endow $Y$ with the ``analytic'' norm 
corresponding to cosine series with geometrically decaying coefficients.   
So, for $a \in Y$ let 
\[
| a |_{\ell_\nu^1} := 
\sum_{k=0}^\infty | a_k | \omega_k(\nu) ,
\] 
where 
\[
\omega_k(\nu) = \omega_k := 
\begin{cases}
1 & k = 0 \\
2 \nu^k & k \geq 1.
\end{cases} 
\]
With $\nu > 1$ define  
\[
\ell_\nu^1 = \left\{ a \in Y \, : \, |a|_{\ell_\nu^1} < \infty \right\},
\]
and note that $\ell_\nu^1$ is a commutative Banach algebra, in the sense that 
\[
\| a * b \|_{\nu}^1 \leq \| a \|_{\nu}^1 \, \| b \|_\nu^1, 
\qquad\text{for all } a, b \in \ell_\nu^1.
\]
We rewrite \eqref{eq:SH_Fourier} as a (densely defined) 
vector field $F \colon \ell_\nu^1 \to \ell_\nu^1$
given by 
\begin{align}
F(a) := \mathfrak{L} a - a * a * a, \label{eq:SH_Fourier_Condensed}
\end{align}
where $\mathfrak{L}$ is the diagonal linear operator 
\begin{equation}\label{e:defmathfrakL}
\mathfrak{L}(a)_k := (- \beta_1 k^4 - \beta_2 k^2 + 1) a_k,
\qquad\text{for all } k \geq 0.
\end{equation}
Fix some $N \in \N$ and  define a Galerkin projection $ \pi_N : \ell_\nu^1 \to \R^{N+1} \subseteq \ell_\nu^1$ by 
\begin{align}
 \pi_N (a) := ( a_0 , a_1 \dots a_{N-1},a_N , 0,0,0, \dots). \label{eq:GalerkinProjection}
\end{align}
We define the Galerkin projection of $F$ by $F_N := \pi_N\circ F \circ \pi_N$. 

\begin{remark}[Normal form] \label{rem:normalForm}
To enter into the notational framework established in Section \ref{sec:Notation} 
we define a change of variables conjugating the differential 
Equation   \eqref{eq:SH_Fourier} to one of the type given in
Equation \eqref{eq:PreOriginalODE}. 
Note that  \eqref{eq:SH_Fourier_Condensed}  has the desired form 
at the homogeneous equilibrium solution $ 0 \in \ell^1_\nu$, but that 
a change of variables is required when $a$ is non-trivial.
After performing the change of variables,  we will bound 
the constants needed to satisfy the hypotheses of 
Theorem \ref{prop:UniqueStableManifold}. 
\end{remark}


\begin{remark}[First order data]\label{r:morseindex} 
We exploit the  extensive literature on computer assisted 
 proofs for equilibrium solutions to partial differential equations, 
 and provide computer assisted proofs for the existence, 
 local uniqueness, and bounds on the accuracy of the numerical approximation. 
Such techniques rely on  solving the finite dimensional problem $F_N(\bar{a})=0$, 
and use an implicit function type argument to show that there is a 
point $ \tilde{a} \in \ell_\nu^1$ close to $\bar{a}$ for which $F( \tilde{a})=0$. 
We use the techniques described in \cite{HLM,berg2017introduction}.
Similar ideas are used to solve the linearized equations at $\tilde{a}$,
providing enclosures of the necessary eigendata.
The Morse index of the stationary point $\tilde{a}$,
denoted $n_u$, is 
established rigorously using a straightforward implementation 
based on the ideas and
techniques from~\cite{JBJF3D,BGLV}.  
\end{remark}

In a more theoretical setting we would use the sectorial nature 
of $\mathcal{L}$
to decompose $\ell_\nu^1$ as a Cartesian product of eigenspaces of 
$DF(\tilde{a})$. In the more constructive setting of the present work 
we do not have direct access to this data. 
Instead, we numerically compute approximate eigenspaces 
associated with the Galerkin projection.  
Suppose then that $A_N^\dagger \in \Mat(\R^{N+1},\R^{N+1})$ 
is a matrix of real numbers having that 
$A_N^\dagger \approx D F_N ( \bar{a})$.  

Assume for the moment (this assumption will have to be checked in 
practice) that $ A_N^\dagger$ has $n_u$ unstable eigenvalues
(i.e.\ it captures the correct Morse index, see Remark~\ref{r:morseindex}).
Let $\{ \mu_{k'}\}_{k'=1'}^{n_u'}$ denote positive numbers
approximating the unstable eigenvalues of $A_N^\dagger$, and 
$\{ \mu_{k}\}_{k=1}^{n_f}$ with $n_f=N+1-n_u$ denote negative numbers 
approximating the stable eigenvalues. 
Without loss of generality,  suppose that these numbers
are ordered as
\[
\mu_{n_u'} \geq \dots \geq \mu_{1'} > 0 >  \mu_1 \geq \dots \geq   \mu_{n_f}.
\]

\begin{remark}[Gradient structure] \label{rem:grad}
The Swift-Hohenberg PDE is a gradient system, hence $A_N^\dagger$ 
has real eigenvalues with $N+1$ linearly independent eigenvectors.  
Indeed, this is most easily established by working 
with the slightly adapted $\widetilde{F}$ rather than $F$ directly, where
\[
  \widetilde{F}(a)_k = \begin{cases}
  F(a)_0 / 2 & \text{for } k=0,\\
  F(a)_k & \text{for } k\geq 1,
  \end{cases}
\]
so that $D\widetilde{F}_N(\bar{a})$ is symmetric with respect to the standard inner product on $\R^{N+1}$. However, this is a minor technical point. 
\end{remark}



Consider now the 
Swift-Hohenberg equation 
at parameter values such that $m_u = 1$, and 
choose a decomposition of the stable eigenspace
having $m_s =2$. 
We decompose $X$ into subspaces
\begin{align*}
X_{1'} &:= \R^{n_u'} 
&
 X_{1} &:= \R^{n_f} 
 &
 X_{2} &:= \{a \in \ell_\nu^1 : a_k =0 \mbox{ for } k \leq N \},
\end{align*} 
and have  that
$X_u := X_{1'}$ and $X_s := X_1 \times X_2$ and $ X = X_u \times X_s$. 
We sometimes employ 
the notational shorthand $X_f := X_1$ and $X_\infty:= X_2$.  

Note that the map $ \pi_N$ defined in \eqref{eq:GalerkinProjection}, is
the projection $ \pi_N: X \to X_N \subseteq X$ where 
$ X_N := X_{1'} \times X_1 \cong \R^{N+1}$.
Define $\pi_\infty : X \to X_\infty$ by $ \pi_\infty x := x - \pi_N x$.  
A Schauder basis $ \{ \hat{e}_n  \}_{n \in \N}$ for $ X $ is given by
\begin{align*}
X_{1'} &:= \mbox{span} 
\{\hat{e}_{0} , \dots , \hat{e}_{n_u-1} \}
&
X_{1} &:= \mbox{span} 
\{\hat{e}_{n_u} , \dots , \hat{e}_{N} \}
&
X_{2} &:= \overline{\mbox{span} 
	\{\hat{e}_{N+1} ,\hat{e}_{N+2},  \dots  \} },
\end{align*}
so that every $\phi \in X$ has a unique representation 
 $\phi =  \sum_{n=0}^\infty \phi_n \hat{e}_{n}$.

We are now ready to construct 
a linear change of variables from $X$ to $ \ell_\nu^1$. 
Fix $Q_u \in \Mat(\R ^{n_u}, \R ^{N+1})$ and $Q_f \in \Mat(\R ^{n_f}, \R ^{N+1})$ 
as matrices whose columns are 
numerical approximations of unstable/stable eigenvectors of $A_N^\dagger$.
For $ \phi = (\phi_u,\phi_f , \phi_\infty ) \in X_u \times X_f \times X_\infty$, 
define the linear map $Q: X \to \ell_\nu^1$ by 
\begin{align} \label{eq:Linear_Qdef}
Q(\phi) &=  Q_u \phi_u + Q_f \phi_f + \phi_\infty.
\end{align}
We endow $X$ with a Banach space structure as follows. 
Let $\phi^N=\pi_N \phi$ and let $Q^N$ be the  $(N+1) \times (N+1)$ 
invertible matrix given by $Q^N = [Q_u, Q_f]$.  
Define the transformation $Q \colon X \to \ell_\nu^1$ by
\[
[Q \phi]_n = \begin{cases}
[Q^N \phi^N]_n & 0 \leq n \leq N, \\
\phi_n & n > N+1,
\end{cases}
\]
for $\phi \in X$.
Denote the columns of $Q$ by $q_n$, $n \in \mathbb{N}$.  Note that $q_n = e_n$ 
when $n \geq N+1$ and that $q_n = Q^N_n$, the $n$-the column of $Q^N$, 
for $0 \leq n \leq N$.  
Define the norm on $X$ by  
\begin{align} \label{eq:X_norm} 
| \phi |_X & := 
\sum_{n = 0}^N \left|\phi_n Q \hat{e}_{n} \right|_{\ell_{\nu}^1} \\
&= \sum_{n = 0}^N |\phi_n| | q_n |_{\ell_\nu^1} 
+ \sum_{n = N+1}^\infty |\phi_n|  \omega_n   
\nonumber \\
& = \sum_{n = 0}^N |\phi_n| | q_n |_{\ell_\nu^1}
 + | \phi_\infty |_{\ell_\nu^1}. \nonumber 
\end{align} 
Note that $  | \phi |_X = \sum_{\bi \in \bI} | \phi_\bi |$ for $\phi  \in X$, so 
that with this norm, $X$ satisfies the hypotheses of 
Proposition  \ref{prop:ComputeHhat}.

We also require explicit formulas for 
the induced norms on several collections of 
operators in $ \cL(X,X)$, $\cL(X,\ell^1_\nu)$ and $\cL(\ell_\nu^1,X)$.
Suppose that $M^N$ is a $(N+1) \times (N+1)$ matrix and define 
the linear operator $M \colon X \to X$ by 
\[
\left[M \phi\right]_n = \begin{cases}
[M^N \phi^N]_n & 0 \leq n \leq N , \\
0 & n \geq N+1 .
\end{cases}
\] 
A standard calculation shows that  
\begin{equation} \label{eq:Norm_X_X}
\| M \|_{\cL(X, X)} = \sup_{| \phi |_X = 1} \left| M \phi \right|_{X} 
\leq  \max_{0 \leq k \leq N} 
\frac{| M^N_k|_{X}}{| q_k |_{\ell_\nu^1}},
\end{equation}
where $M^N_k$ denotes the $k$-th column of $M^N$. 
Similarly, for $\Omega^N$ an $(N+1) \times (N+1)$ matrix 
define the linear operator $\Omega \colon X \to \ell_{\nu}^1$ by   
\[
\left[\Omega \phi\right]_n = \begin{cases}
[\Omega^N \phi^N]_n & 0 \leq n \leq N , \\
\phi_n & n \geq N+1 .
\end{cases}
\] 
Again, a standard calculation shows that
\begin{equation} \label{eq:Norm_X_ell}
\| \Omega \|_{\cL(X, \ell_\nu^1)} 
 = \sup_{| \phi |_X = 1} \left| \Omega \phi \right|_{\ell_\nu^1} 
\leq \max\left(\max_{0 \leq k \leq N} 
 \frac{| \Omega^N_k|_{\ell_\nu^1}}{| q_k |_{\ell_\nu^1}} 
\, , 1 \right)  ,
\end{equation}
where $\Omega^N_k$ denotes the $k$-th column of $\Omega^N$.
From this it follows that $\| Q \|_{\cL(X, \ell_\nu^1)} = 1$. 

To compute the norm of $Q^{-1} :\ell_\nu^1 \to X$, let 
$B^N$ denote the matrix inverse of $Q^N$.
The action of $Q^{-1} $  is expressed as
\[
\left[Q^{-1} a \right]_n = \begin{cases}
\left[B^N a^N \right]_n & 0 \leq n \leq N ,\\
a_n & n \geq N+1 .
\end{cases}
\]
Then
\begin{equation} \label{eq:Norm_ell_X}
\| Q^{-1} \|_{\cL(\ell_\nu^1, X)} 
= \sup_{| a |_{\ell_\nu^1} = 1} | Q^{-1} a |_{X} 
\leq
\max \left(
\max_{0 \leq k \leq N}   \frac{| B_k^N |_X}{\omega_k} \, ,1 
\right) .
\end{equation}
Now, 
for any $ \bi \in \both{I}$, we define projection maps $ \pi_{\both{i}} : X \to X_{\both{i}}$. 
Again,  $ \pi_\infty$ coincides with its usual definition. 
By our choice of norm on $X$, we have $ \| \pi_\bi \|_{\cL(X,X_\bi)} = 1$.  
Recalling the definitions of $p_u, p_s, p_\bi$ in Equation
\eqref{eq:ProjectionBound}, we have that $ p_u = p_s = p_\bi =1 $.  
Lastly, we define $\Lambda$ by 
\begin{align*}
		\Lambda_{1'} &:= \mbox{diag}\{\mu_{n_u} ,\dots , \mu_{1'} \} ,&
	\Lambda_{1} &:= \mbox{diag}\{\mu_{1} , \dots , \mu_{n_f} \} ,&
	\Lambda_{2} &:=  \fL \circ  \pi_\infty    .
\end{align*}

We show that the norm on $X$, 
as defined above, is well aligned with the semigroup $ e^{\Lambda t}$. 
Fix a point $  \phi =(\phi_u,\phi_f , \phi_\infty ) \in X$ and write $\phi_u = ( \phi_{0} , \dots , \phi_{n_u-1} ) $ and $ \phi_f = ( \phi_{n_u} , \dots , \phi_{N})$ and $ \phi_\infty  = ( \phi_{N+1}, \phi_{N+2} , \dots )$.   
Then for $ t \in \R$ we have 
\begin{alignat*}{1}
e^{\Lambda_{1'}t} \phi_u &= 
\sum_{1 \leq k \leq n_u}  e^{\mu_{k'}t}  \phi_{k-1} \hat{e}_{k-1}, 
\\
e^{\Lambda_{1}t} \phi_f &= 
	\sum_{1 \leq k \leq n_f} 
	e^{\mu_{k}t}  \phi_{k+n_u-1}  \hat{e}_{k+n_u-1},
\\
	e^{\Lambda_{2}t} \phi_\infty &= 
	\sum_{k=N+1}^\infty 
	e^{ (- \beta_1 k^4 - \beta_2 k^2 + 1)t}  \phi_{k}   \hat{e}_{k}.
\end{alignat*}
Define $ \lambda_{1'}   $, $\lambda_1$, and 
$ \lambda_2   $  as 
\begin{align}
\lambda_{1'} & := \mbox{Re } \mu_{1'} ,
&
\lambda_1 &:=  \mbox{Re } \mu_1,
&
\lambda_2 &:=  - \beta_1 (N+1)^4 - \beta_2 (N+1)^2 + 1 .
\end{align}
It follows that $ \lambda_{1'} \leq \mbox{Re } \mu_{k'}$ for $ 1' \leq k' \leq n_{u}'$, and 
$ \lambda_1 \geq \mbox{Re } \mu_{k}$ for $ 1 \leq k \leq n_{f}$, and 
$\lambda_2 \geq (- \beta_1 k^4 -\beta_2 k^2 +1) $ for $ k \geq N+1$.
Choose $N$ sufficiently large so that $-\beta_1 k^4 -\beta_2 k^2 +1$ is negative and decreasing for $k\geq N+1$.  
Then 
\begin{align*}
| e^{\Lambda_{1'}t} \phi_u |_X 
&\leq \sum_{0 \leq k \leq n_u-1}  e^{\lambda_{1'} t}  | Q \phi_{k} |_{\ell_\nu^1} , 
& \mbox { for } t \leq 0 , \\
| e^{\Lambda_{1}t} \phi_f  |_X
&\leq \;
\sum_{n_u \leq k \leq N} \; e^{\lambda_{1} t}  | Q \phi_{k} |_{\ell_\nu^1} , 
& \mbox { for } t \geq 0 , \\
| e^{\Lambda_{2}t} \phi_\infty |_X 
&\leq \;
\sum_{k=N+1}^\infty \;
e^{ \lambda_2 t} | Q \phi_{k} |_{\ell_\nu^1}
& \mbox { for } t \geq 0 .
\end{align*}
From Equation~\eqref{eq:X_norm}, we have that 
\eqref{eq:stableEigenvalueEstimate}  and 
\eqref{eq:UnstableEigenvalueEstimate} 
are satisfied.

\subsection{Bounds for the Linear Change of Coordinates }
\label{sec:Linear_COV_Estimates}
The estimates necessary for completing the argument are obtained following 
 the instructions outlined below, which summarizes the discussion of the 
 previous sections.
	\begin{enumerate}
		\item  \label{estimate:ChangeOfVariables} 
		For $U \subset X$, 
		define a change of variables $ K: U \to \ell_\nu^1$ 
		such that $K(0) = \bar{a}$. \\
		For the equilibrium $\tilde{ h} = K^{-1}(\tilde{ a})$, 
		obtain bounds $ | \pi_\bi \tilde{ h}| \leq \eps_{\bi}$ for $ \bi \in \bI$.
		\item \label{estimate:ConjugateSystem}
		Pull back the vector field from $\ell_{\nu}^1$ to $U$, creating the 
		conjugate differential equation 
		\[
		\dot{\xx} = DK(\xx)^{-1} F(K(\xx)).
		\]
		Define   $\tilde{\cN} \in C_{loc}^2 (U,X)$ 
		as  $\tilde{\cN}(\xx) :=DK(\xx)^{-1} F(K(\xx)) -\Lambda \xx $.
		\item \label{estimate:SecondDerivative}
		Obtain constants $ \tilde{C}_\both{j}^{\both{ik}}(r_s,r_u) $ which  
		bound $\| \tilde{\cN}_\both{j}^{\both{ik}}\|_{(r_s + \eps_s ,r_u+\eps_u)}$ 
		for $ \both{i},\both{j},\both{k} \in \both{I}$. 
		\item \label{estimate:Z1}
		Obtain constants $ \tilde{D}_\both{j}^\both{i} $ 
		 which bound $\| \tilde{\cN}_{\both{j}}^{\both{i}}(0) \|$ for  
		 $ \both{i},\both{j} \in \both{I}$. 
		\item  \label{estimate:Semigroup}
		Obtain constants $C_s,\lambda_s$ which satisfy equation
		\eqref{eq:TOTALstableEigenvalueEstimate} to bound
		 $e^{(\Lambda_s + L_s^s )t}$.   
	\end{enumerate}

In the remainder of this section we explain how to follow the outline 
above, arriving at a linear change of coordinates $K$. 
The results of the a calculation  are presented in Section \ref{sec:Linear_Results}.

\subsubsection{Estimate \ref{estimate:ChangeOfVariables} -- Defining a Change of Variables} \label{sec:defChangeVar}

Define the affine change of coordinates $K: X \to \ell_\nu^1$ by
\begin{align}
K(\phi) &:= \bar{a} + Q \phi.  \label{eq:LinearCoV}
\end{align} 
Let $| \bar{a} - \tilde{a} |_{\ell_\nu^1} \leq \epsilon$ be 
a bound on the distance between 
the approximate solution and true equilibrium solutions,
and define $\eps_\both{i} := \eps  \| \pi_{\bi} Q^{-1}  \|_{\cL(\ell_\nu^1, X_\bi)}$ 
for $ \both{i} \in \both{I}$ as needed in Proposition 
\ref{prop:Initial_Constant_Bounds}.

\subsubsection{Estimate \ref{estimate:ConjugateSystem} -- 
Defining the Conjugate Differential Equation}

Applying the change of coordinates defined in \eqref{eq:LinearCoV}
to the Swift-Hohenberg equation 
leads to 
\begin{equation}
\dot{\phi} =  \Lambda \phi + \tilde{\cN}(\phi) 
\qquad
\text{with}\qquad \tilde{\cN}(\phi) := DK(\phi)^{-1} F( K(\phi))  - \Lambda \phi .
\label{eq:def_G_ConjugateSystem}
\end{equation}
We note that the form of $ \tilde{\cN}$ as given is not easy to work with,
and expand $ \tilde{\cN}$ into an affine part and a purely nonlinear part. 
Define functions $E,R: X \to \ell_\nu^1$ as  
\begin{align*}
E(\phi) &:= F(\bar{a}) + DF(\bar{a}) Q \phi - Q  \Lambda \phi ,
& 
R(\phi) &:= 	-3 \bar{a} * (Q \phi)^{*2} - (Q\phi)^{*3} .
\end{align*} 
Then $E + R = F \circ K - DK  \cdot  \Lambda$, where
$DK(\phi) = Q$ for all $\phi \in X$. 
It follows that $\tilde{\cN}(\phi) = Q^{-1}  \left( 
E(\phi) + R(\phi) 
\right)$.

\subsubsection{Estimate \ref{estimate:SecondDerivative} --
Bounding $\tilde{\cN}_\both{k}^{\both{ij}} $}

All second derivatives of $E$ are zero. 
Hence $ \partial_\bi \partial_\bj \pi_\bk \tilde{\cN} = \tilde{\cN}_\both{k}^{\both{ij}}    
= ( Q^{-1} R)_\both{k}^{\both{ij}}  $ for $\both{i},\both{j},\both{k} \in \both{I}$. 
 For $ \phi \in X$, define 
\begin{align}  
\kQ 		:= Q \phi = Q_f  \phi_f + Q_u \phi_u + \phi_\infty,
\end{align} 
and note that each term in $R$ itself contains a term of the form 
$\kQ *\kQ$.  Set 
\[
\kQ^2 := \kQ * \kQ \qquad\text{and}\qquad \kQ^3 := \kQ * \kQ * \kQ.
\]
Then $R(\phi) =  -3 \bar{a} * \kQ^2- \kQ^3$.

The derivatives of $\kQ$ are 
\begin{align*}
\partial_{f} \kQ \cdot h_f	&= Q_f h_f, &
\partial_{u}  \kQ	\cdot h_u &= Q_u h_u, &
\partial_{\infty}  \kQ \cdot h_\infty&= h_\infty ,
\end{align*}
where $h_f \in X_f$, $h_u \in X_u$ and $h_\infty \in X_\infty$. 
Since $\|Q\|_{\cL(X,\ell_\nu^1)} =1$, we have
$\| \partial_{\both{i}} \kQ \|_{\cL(X,\ell_\nu^1)}  = 1 $ 
for $  \both{i} \in \both{I}$. 
As $\partial_\bi \kQ$ is a linear operator, the second derivatives
$\partial_{\both{ij}}\kQ$ vanish for all $ \both{i},\both{j} \in \both{I}$.

The derivatives of $\kQ^2$ and $\kQ^3$ are given by
\[
\partial_{\both{i}\both{j}} \kQ^2 = 2 \partial_{\both{i}} \kQ * \partial_{\both{j}} \kQ 
\qquad\text{and}\qquad
\partial_{\both{i}\both{j}} \kQ^3 = 6 \kQ* \partial_{\both{i}} \kQ * 
\partial_{\both{j}} \kQ,
\]
so that 
\[
\partial_{\both{i}\both{j}} R = 
-6 (\bar{a} + \kQ) * \partial_{i} \kQ * \partial_{j} \kQ.
\]
Recall that  $\| \partial_{\both{i}} \kQ \|_{\cL(X,\ell_\nu^1)}  = 1 $ 
for all $  \both{i} \in \both{I}$.  
Fixing $ \phi = (\phi_u,\phi_s) \in B_u(r_u) \times B_s(r_s)$ 
with $r_s =( r_f , r_\infty)$ gives $|Q \phi |  \leq  r_u + r_f + r_\infty $.
Define  
\begin{align}
C_\both{k}^{\both{ij}} &
:= 
6 \| \pi_\bk Q^{-1} \|_{\cL(\ell_\nu^1 , X)} 
  \left( | \bar{a}| + r_u + r_f + r_\infty + \epsilon_u + \epsilon_f + \epsilon_\infty \right).
\end{align}
Then $ \| \tilde{\cN}_\both{k}^{\both{ij}}  \|_{(r_s + \epsilon_s ,r_u + \epsilon_u)} 
\leq {C}_\both{k}^{\both{ij}} $ for $\both{i},\both{j},\both{k} \in \both{I}$.   

\subsubsection{Estimate \ref{estimate:Z1} --  Bounding $\tilde{\cN}_\both{j}^{\both{i}} (0) $ } 
\label{sec:LinearLbound}
Since $\partial_{\both{i}} R(0)  = 0$ and 
$ \partial_\phi DK(\phi)^{-1} E(\phi)= Q^{-1} DF(\bar{a})Q  - \Lambda $,  we have
\begin{align*}
\tilde{\cN}_\both{j}^{\both{i}} (0)  &=   \pi_{\both{j}} 
\left( 
Q^{-1} DF(\bar{a})Q  - \Lambda 
\right) \pi_{\both{i}} .
\end{align*}
Approximate $DF(\bar{a})$  by the 
operator $ A^{\dagger} : \ell_\nu^1 \to \ell_\nu^1$  defined by
\begin{align*}
(A^\dagger v)_k &:= 
\begin{cases}
(A^\dagger_N v)_k  & k \leq N \\
( \mathfrak{L} v)_k & k > N ,
\end{cases}
\end{align*}
for $v \in \ell_\nu^1$.
We bound $\tilde{\cN}_\both{j}^{\both{i}} (0)$  
by adding and subtracting $Q^{-1} A^{\dagger}Q$ to obtain
\begin{align} \label{eq:Dtildebound_initial_triangle}
\left\|
\tilde{\cN}_\both{j}^{\both{i}} (0)   \right\|_{\cL(X,X)}  
&\leq
\left\|  \pi_{\both{j}} 
Q^{-1} \left(   DF(\bar{a})  -  A^{\dagger} 
\right) Q   \pi_{\both{i}} 
 \right\|_{\cL(X,X)}  
+
\left\|
 \pi_{\both{j}} 
\left( 
 Q^{-1}A^{\dagger} Q  - \Lambda 
 \right) \pi_{\both{i}} 
 \right\|_{\cL(X,X)}   .
\end{align}
To bound the right summand in \eqref{eq:Dtildebound_initial_triangle}, note that 
$\pi_{\both{j}} 
\left( 
Q^{-1}A^{\dagger} Q  - \Lambda 
\right) \pi_{\both{i}}$
vanishes when either $\both{i}=\infty$ or $\both{j}=\infty$, hence
the right-summand in~\eqref{eq:Dtildebound_initial_triangle} 
is computed directly using \eqref{eq:Norm_X_X}.
The left summand in~\eqref{eq:Dtildebound_initial_triangle} 
is bounded by considering four cases, depending on whether $\bi$ or $\bj$ 
equals $\infty$. 
Each of these terms involves 
\begin{align}
\left( DF ( \bar{a} ) h - A^\dagger h \right)_k &= 
\begin{cases}
- 3 ( \bar{a} * \bar{a} * \pi_\infty h )_k  
+
((DF_N(\bar{a})- A_N^{\dagger}) \pi_N h)_k
& 0 \leq k \leq N \\
-3 ( \bar{a } * \bar{a} * h)_k & k \geq N+1.
\end{cases}
\label{eq:LinearCoV_Z1}
\end{align}

For the case $\bi = \infty$ and $ \bj = \infty$,  
since $ \ell_\nu^1$ is a Banach algebra and $ \pi_\infty$ projects onto 
the modes $ k \geq N+1$, we use
 \eqref{eq:LinearCoV_Z1}  and obtain
\[
\left| \pi_\infty 	\left( DF ( \bar{a} ) - A^\dagger \right) h \right| \leq 3 | \bar{a} * \bar{a} |_{\ell_{\nu}^1} | h|_{\ell_\nu^1} .
\]
Hence $ \left\|  \pi_{\infty}   \left(  DF(\bar{a})  -  A^{\dagger} 
\right)  
\right\|_{\cL(\ell_\nu^1,\ell_\nu^1)} \leq 3 | \bar{a} * \bar{a} |_{\ell_{\nu}^1} $.
Define
\begin{align} \label{eq:Dtilde_Case1}
\tilde{D}_\infty^\bi := 3 | \bar{a} * \bar{a} |_{\ell_{\nu}^1},
\end{align}
so that $\|\tilde{\cN}_\infty^{\both{i}} (0)\|_{\cL(X,X)} \leq \tilde{D}_\infty^\bi $ for all $ \bi \in \bI$.

For the case  $ \bi \neq \infty$ and $ \bj \neq \infty$, we note that
the operator  $ \pi_{\both{j}} 
 \left( Q^{-1}  DF(\bar{a}) Q -  \Lambda  
\right)    \pi_{\both{i}}   $ is 
represented by an $(N+1) \times (N+1)$ matrix and explicitly bound the norm.
Define 
\begin{align} \label{eq:Dtilde_Case2}
\tilde{D}_\bj^\bi &:= 
\left\|   \pi_{\both{j}} 
 \left(  Q^{-1} DF(\bar{a}) Q  -  \Lambda 
\right)   \pi_\bi
\right\|_{\cL(X,X)}.
\end{align}
It follows that $\|\tilde{\cN}_\both{j}^{\both{i}} (0)\| \leq \tilde{D}_\bj^\bi $ 
for all $ \bi ,\bj\in \bI - \{ \infty \}$. 
 
 For the case  $ \bi = \infty$ and $ \bj \neq \infty$,
it follows from~\eqref{eq:LinearCoV_Z1} that
\[
   \pi_{\bj}  [DF ( \bar{a} ) - A^\dagger ]_k =0\
   \qquad\text{for } k > 3N,
\]
where we recall that the subscript $k$ denotes the $k$-th column.
Since $Q \pi_\infty= \pi_\infty$, 
using the appropriate analogue of 
\eqref{eq:Norm_X_X} for a matrix of a larger size, we set  
\begin{equation} \label{eq:Dtilde_Case3}
  \tilde{D}_\bj^\infty :=  
  \max_{N+1 \leq k \leq 3N} \frac{| \pi_{\bj} Q^{-1} [DF ( \bar{a} ) - A^\dagger ]_k |_X}{\omega_k}.
\end{equation} 
It follows that 
$\|\tilde{\cN}_\both{j}^{\infty} (0)\| \leq \tilde{D}^\infty_\bj $ 
for all $ \bj \in \bI - \{ \infty  \}$. 

For the case $\bi \neq \infty$ and $ \bj = \infty$, 
we note that since $ \pi_\infty Q^{-1} = \pi_\infty$ 
and $ \pi_\infty A^{\dagger} \pi_N =0$, we have 
\[
 \pi_{\both{j}} 
Q^{-1} \left(   DF(\bar{a})  -  A^{\dagger} 
\right) Q   \pi_{\both{i}}  
= 
 \pi_{\infty} 
  DF(\bar{a})  Q   \pi_{\both{i}}.
\]
Recalling the formula in \eqref{eq:Norm_X_X}, we set 
\begin{equation} \label{eq:Dtilde_Case4}
\tilde{D}_\infty^\bi :=  
\max_{0 \leq k \leq N} 
\frac{| [  \pi_{\infty} 
	DF(\bar{a})  Q   \pi_{\both{i}}  ]_k |_X}
{ | q_k|_{\ell_\nu^1} }.
\end{equation} 
It follows that $\|\tilde{\cN}_\both{j}^{\infty} (0)\| \leq \tilde{D}^\infty_\bj $ 
for all $ \bj \in \bI - \{ \infty  \}$. 
With $\tilde{D}_\both{j}^\both{i}$ as in Equations \eqref{eq:Dtilde_Case1}, \eqref{eq:Dtilde_Case2},  \eqref{eq:Dtilde_Case3} and \eqref{eq:Dtilde_Case4},
we have bounds on 
$\|\tilde{\cN}_\both{j}^{\both{i}} (0)\|_{\cL( X,X)} $ for all $ \bi , \bj \in \bI$. 

\subsubsection{Estimate \ref{estimate:Semigroup} --  Semigroup Bounds}
\label{sec:ImplementSemiGroupBound_Linear}

To find  $ C_s$ and $ \lambda_s$ as needed in 
\eqref{eq:TOTALstableEigenvalueEstimate}, 
we use Proposition \ref{prop:FastSlowExponentialEstimate} and Remark \ref{rem:SemiGroupEll1}. 
Define $ D_{\bj}^{\bi}   := \tilde{D}_\bj^\bi + \tilde{C}_\bj^{\bi l} \epsilon_l +
\tilde{C}_\bj^{\bi l'} \epsilon_{l'}  $ for $ \bi , \bj \in \bI$ as in  Proposition \ref{prop:Initial_Constant_Bounds}, and 
let 
\begin{align*}
\mu_1 &:= \lambda_1  
&
\delta_a &:=  D_f^f 
&
\delta_b &:=  D_f^\infty  
&
& \\
\mu_\infty &:= \lambda_2  = \lambda_\infty 
&
\delta_c &:=  D_\infty^f  
&
\delta_d &:=  D_\infty^\infty  
&
\varepsilon &:= \sum_{ \tilde{\mu}_k \in \sigma(\Lambda_1) }  \frac{ |\mu_\infty|^{-1}    }{1 - |\mu_\infty|^{-1}   ( \delta_d   + | \tilde{\mu}_k| )}. 
\end{align*}
Note that $\| \Lambda_\infty ^{-1} \| = |\mu_\infty|^{-1}$.   
Assume that the spectral gap conditions 
\begin{align}
1 &> |\mu_\infty|^{-1}  \Big( \delta_d   +  \sup_{\tilde{\mu}_k \in \sigma(\Lambda_1)}  | \tilde{\mu}_k| \Big),
&
\mu_1   &> \mu_\infty + \delta_d + \varepsilon \delta_b \delta_c (1 + \varepsilon^2 \delta_b \delta_c)  ,
\end{align}
are satisfied.  (These must be checked in explicit examples).
It then follows from 
Proposition \ref{prop:FastSlowExponentialEstimate} 
and Remark \ref{rem:SemiGroupEll1} that 
\[
\| e^{(\Lambda_s + L_s^s) t} \| \leq C_s e^{\lambda_s t},
\]
where 
\begin{align*}
C_s &:=  (1+\varepsilon \delta_b)^2(1+\varepsilon \delta_c)^2  \\
\lambda_s &:=  \mu_1 + \delta_a C_s + \Delta\\
\Delta &:= \varepsilon \delta_b \delta_c   \max \left\{ 1 + \varepsilon \delta_c(1 + \varepsilon \delta_b ) , \varepsilon \delta_b ( 2 + \varepsilon^2 \delta_b \delta_c)  \right\}.
\end{align*}	


\subsection{Numerical Results} 
\label{sec:Linear_Results}
	 

Following the steps given in Section \ref{sec:Linear_COV_Estimates} 
allows us to prove a variety of stable manifold theorems.
In Theorem \ref{prop:CAP_Linear} below 
we present one such result, for the equilibrium displayed in 
Figure \ref{fig:SlowManifoldEndPoints}.
Here we choose $\rho_f$, 
the radius of the domain $B_s(\rho) \subseteq X_f \times X_\infty$
projected into the finite dimensional subspace $X_f$ , as large as possible. 
A number of additional results are presented 
in Section \ref{sec:ConclusionAndNumerics}.
	
\begin{theorem} \label{prop:CAP_Linear} 
Consider the Swift-Hohenberg Equation \eqref{eq:SwiftHohenberg}
with parameters $ \beta_1 = 0.05$, and  $\beta_2 = - 0.35$. 
Let $\nu = 1.001$ and
suppose that $\bar{a} \in \ell_\nu^1$ is an approximate equilibrium solution,
$\eps = 1.61 \times 10^{-14}$ close in the $\ell_\nu^1$
norm  to a true equilibrium solution. 
Fixing the Galerkin projection dimension at
$N=30$,  and following the instructions described 
in Section \ref{sec:defChangeVar}, 
we bound $\epsilon_s \leq  10^{-14} \cdot  ( 4.97, 1.61 ) $.
Let  $ \rho = 
\left( 2.2 \times 
10^{-2}  
 ,    10^{-5} 
\right)$, and 
define $B_s(\rho-\epsilon_s)$ as in Definition \ref{def:Ball},
and $I$,$I'$, and $\mathbf{I} = I \cup I'$  as in Remark \ref{rem:primes}.
Let 
\begin{equation*}
P = \left(
\begin{matrix}
0.153,  & 1.38 \times 10^{-5}  
\end{matrix}
\right) 
\quad \quad \mbox{and} \quad \quad 
\bar{P}  = \left(
\begin{matrix}
16.9 \times 10^{-0}   &	1.37 \times 10^{-3}   \\ 
1.37 \times 10^{-3}   &	2.14 \times 10^{-4}  
\end{matrix}
\right),
\end{equation*}
be tensors as in Definition \ref{def:Ball_of_Functions}.

Then, there exists a unique $\tilde{\alpha} \in C^{1,1}(B_s(\rho - \eps_s), X_u)$, 
such that the local stable manifold of 
$\tilde{a} \in \ell_\nu^1$ is given by 
\[
\xx_s \mapsto K\left(  \xx_s   , \tilde{\alpha}(\xx_s ) \right),
\] 
for $K$ as given in \eqref{eq:LinearCoV}.
Moreover, $\tilde{\alpha}$  has 
\begin{align*}
| \tilde{\alpha}_{i'}(\xi)  | &\leq 
3.36 \times 10^{-3}
&\|  \tilde{\alpha}_{i'}^i  (\xi)\| &\leq P_{i'}^i 
& \Lip(\partial_i   \tilde{\alpha})_{i'}^{j}  \leq \bar{P}_{i'}^{ij} ,
\end{align*}
for all $ \xi \in B_s(\rho - \eps_s)$, $ i ,j \in I$, $ i' \in I'$ and $ \bi \in \bI$.

\end{theorem}

\begin{proof} 
	In script \texttt{main.m} we calculate all of the constants and verify all of the hypotheses in Theorem \ref{prop:UniqueStableManifold}. 
	In particular we have a contraction constant  $\| J \|  < 0.356 $. 
	The entire computation took about 4 seconds and was run on MATLAB 2019a with INTLAB on a i7-8750H processor. 
\end{proof}


\section{Application II: Nonlinear Change of Variables}
\label{sec:SH_Nonlinear}

In this section we improve the approximation of the stable manifolds 
in certain directions, by making the nonlinear change of coordinates 
discussed in Section~\ref{sec:goodCoordinates}.  Again, 
we consider the example of the Swift-Hohenberg Equation \eqref{eq:SwiftHohenberg}.
We employ the notation established in Section \ref{sec:CoV_Linear_SH},
with some minor adjustments. 
In particular, we use $m_{u} =1$ and $ m_{s} =3$. 
Recalling the notation of~Section~\ref{sec:goodCoordinates},
set $n_u=m_{\unst}$, $n_\theta := m_{\slow}$, $n_f= m_{\fast}+m_{\slow}$,
and $N=n_u+n_f-1$,
and define
\begin{align*}
X_{1'} &:= \R^{n_u} 
&
X_{1} &:= \R^{n_\theta} 
&
X_{2} &:= \R^{n_f-n_\theta} 
&
X_{3} &:= \{a \in \ell_\nu^1 : a_k =0 \mbox{ for } k \leq N \} .
\end{align*} 
We write $X_u := X_{1'}$ and $X_s := X_1 \times X_2 \times X_3$ 
and $ X = X_u \times X_s$, and use the notational shorthand 
$X_\theta := X_1$ (slow stable), $X_f := X_2$ (fast but finite stable) 
and $X_\infty:= X_3$ (stable tail).   
The map $ \pi_N$, as defined in \eqref{eq:GalerkinProjection}, 
is a projection operator $ \pi_N: X \to X_N\subseteq X$,
 where we define $X_N := X_{1'} \times X_1 \times X_2 \cong \R^{N+1}$. 
Define $\pi_\infty : X \to X_\infty$ by $ \pi_\infty x := x - \pi_N x$, and 
$\Lambda$ as  
\begin{align*}
\Lambda_{1'} &:= \mbox{diag}\{\mu_{n_u'},\dots,\mu_{1'} \} ,&
\Lambda_{1} &:= \mbox{diag}\{\mu_{1},\dots,\mu_{n_\theta} \} ,&
\Lambda_{2} &:= \mbox{diag}\{\mu_{n_\theta+1},\dots,\mu_{n_f} \}, &
\Lambda_{3} &:=  \fL \circ  \pi_\infty,
\end{align*}
with $\mu$ defined in Section~\ref{sec:CoV_Linear_SH}, and
$\fL$ defined in~\eqref{e:defmathfrakL}.
Define $ \lambda_{\bi}$ for $\bi \in \bI$ by 
\begin{align}
\lambda_{1'} & := \mu_{1'}
&
\lambda_1 &:= \mu_1,
&
\lambda_2 &:= \mu_{n_{\theta }+1},
&
\lambda_3 &:=  - \beta_1 (N+1)^4 - \beta_2 (N+1)^2 + 1 .
\end{align} 
Repeating the argument given at the end of 
Section~\ref{sec:CoV_Linear_SH} in this context gives that 
the inequalities of Equations \eqref{eq:stableEigenvalueEstimate}  and 
\eqref{eq:UnstableEigenvalueEstimate} 
are satisfied. 
We now follow the scheme for stable manifold validation outlined in
Section \ref{sec:Linear_COV_Estimates}.

\subsection{Estimate \ref{estimate:ChangeOfVariables} -- Defining a Change of Variables}

Using the parameterization method, 
and the good coordinates discussed in Section \ref{sec:goodCoordinates}, 
we approximate a slow stable manifold and finite dimensional 
invariant normal bundles  
\begin{align*}
P & \colon [-1,1]^{n_{\theta}} \to X_N, \\
Q_f(\theta)  & \colon [-1,1]^{n_{\theta}} \to \Mat( \R^{n_f-n_\theta}, X_N) \\
Q_u(\theta) & \colon [-1,1]^{n_{\theta}} \to \Mat( \R^{n_u},X_N).
\end{align*}
These are chosen to approximately solve~\eqref{e:defrelP}--\eqref{e:defrelQs}. 
The error terms  
\begin{subequations}
\label{eq:Approximate_Conjugacy1}
\begin{align}
 E_{\theta} &: [-1,1]^{n_{\theta}} \to \ell^1_\nu  &
 E_{f} &: [-1,1]^{n_{\theta}} \to \cL(X_{f},\ell^1_\nu) \\
 E_{u} &: [-1,1]^{n_{\theta}} \to \cL(X_{u},\ell^1_\nu) &
 E_{\infty} &: [-1,1]^{n_{\theta}} \to \cL(X_{\infty},\ell^1_\nu),
\end{align}
\end{subequations}
are defined by 
\begin{subequations}
\label{eq:Approximate_Conjugacy2}
\begin{align} 
 E_{\theta}(\theta)&:=F(P(\theta)) - DP(\theta) \Lambda_{\theta} \theta    \\
 E_{f}(\theta)  &:=DF(P(\theta))Q_f(\theta) - DQ_f(\theta) \Lambda_{\theta} \theta - Q_f(\theta) \Lambda_{f}\\
 E_{u}(\theta) &:=DF(P(\theta))Q_u(\theta) - DQ_u(\theta) \Lambda_{\theta} \theta - Q_u(\theta) \Lambda_{u}     \\
 E_{\infty}(\theta)   &:= DF(P(\theta)) \pi_\infty-   \Lambda_{\infty}    .
\end{align}
\end{subequations}

Define $U := B(r_s + \eps_s , r_u + \eps_u) \subseteq  X_u \times [-1,1]^{n_{\theta}}  \times X_f  \times X_\infty$,
a normal frame bundle $Q: [-1,1]^{n_{\theta}} \to \cL(X/X_1,\ell_\nu^1)$, and a local  diffeomorphism $K : U \subseteq  X \to \ell^1_{\nu}$ by 
\begin{align}
Q(\theta)\phi &:= Q_f(\theta) \phi_f + Q_u(\theta) \phi_u + \phi_\infty \\
K(\theta,\phi) &:= P(\theta) + Q(\theta)\phi .  \label{eq:K_def}
\end{align}  	
	We define the norm $| \cdot |_X $ as in  \eqref{eq:X_norm} relative to the linear map $Q_0 :X \to \ell_\nu^1$  defined by 
\begin{equation} \label{eq:Q0_Nonlinear_Norm}
	Q_0 \cdot (h_\theta, h_\phi) :=
	DK(0,0) \cdot (h_\theta, h_\phi) =
	\partial_\theta P(0) h_\theta +Q(0) h_\phi, 
\end{equation}
where $ h_\theta \in X_\theta $ and $h_\phi \in X_u   \times X_f  \times X_\infty$.
	
While we do not have an  explicit expression for the inverse function $K^{-1}$, 
we can bound the norm of $\tilde{ h} = K^{-1}(\tilde{ a})$ as follows.
Note that $ K^{-1}(a) = Q_0^{-1} ( a - \bar{a}) + \cO(|a-\bar{a}|^2)$. 	
If   $| \bar{a} - \tilde{a} |_{\ell_\nu^1} \leq \epsilon$ bounds the distance between the approximate and true solutions, 
we apply standard techniques from rigorous numerics (cf Remark \ref{r:morseindex})
to bound $ |\pi_\bi \tilde{h}| \leq \eps_\both{i}$ 
for $ \both{i} \in \both{I}$ as needed in Proposition \ref{prop:Initial_Constant_Bounds},
in terms of $\eps$,  $\| \pi_{\bi} Q_0^{-1} \|$, and the polynomial coefficients of 
$K(\theta,\phi)$.

\subsection{Estimate \ref{estimate:ConjugateSystem} -- Defining the Conjugate Differential Equation}

Applying the coordinate change of Equation \eqref{eq:K_def}
to the Swift-Hohenberg equation leads to
\begin{align}
 \dot{\xx}  &=  \Lambda \xx + \tilde{\cN}(\xx) 
\label{eq:def_G_ConjugateSystem2},
&
\tilde{\cN}(\xx)  &:= DK (\xx)^{-1}  F( K(\xx) )  - \Lambda \xx ,
\end{align}
for $ \xx \in U$.
We now perform a Taylor expansion of  $ F(K(\xx))$ in $ \xx \in U$. 
To simplify the  notation, for $\xx = ( \theta , \phi)$ where 
$\theta \in [-1,1]^{n_{\theta}}$ and $ \phi \in X_{u} \times X_{f} \times X_\infty$, 
define
\begin{align} \label{eq:PQshorthand}
\kP  &:=  P(\theta) & \kQ &:=  Q(\theta)\phi . 
\end{align}
Starting from  \eqref{eq:SH_Fourier_Condensed}, expand $F(K(\theta,\phi))$ as
\begin{align*}
F(K(\theta,\phi)) &= \fL[ \kP  + \kQ ] - (\kP +\kQ )^{3}   \\
& = \big( \fL \kP - \kP^{3} \big)+  \big( \fL \kQ   -3 \kP^{2} *\kQ \big) - 3\kP*\kQ^{2}- \kQ^{3}  ,
\end{align*} 
where the powers denote products of convolutions.
Note that for $a , h \in \ell_\nu^1$,  the derivative of $F$ is given by
\[
DF(a) \cdot h = \fL h - 3 ( a * a* h ),
\]
so that
\begin{align*}
F( \kP ) &= \fL \kP  - \kP ^{3}  ,
&
DF(\kP) \cdot \kQ &= 
\fL  \kQ - 3 ( \kP^{2}  * \kQ).
\end{align*}
Defining a remainder term $R : U \subseteq X \to \ell^1_{\nu}$ by
\begin{align}
\kR =
R(\theta,\phi) &:= -3 P(\theta) * (Q(\theta)\phi )* (Q(\theta)\phi )  - (Q(\theta)\phi )* (Q(\theta)\phi )* (Q(\theta)\phi ) = -3\kP * \kQ^2 - \kQ^3, 
\label{eq:Remainder_Definition}
\end{align}
simplifies $F(K(\theta,\phi))$ as  
\begin{align}\label{eq:F_phi_taylor_expansion}
F(K(\theta,\phi))  &= F( \kP) + DF(\kP) \cdot \kQ + \kR.
\end{align}

The (approximate) conjugacy relations in \eqref{eq:Approximate_Conjugacy2}  
(approximately) linearize the non-remainder components in \eqref{eq:F_phi_taylor_expansion}. 
More precisely, we have that 
\begin{align*}
F(P(\theta)) + DF(P(\theta)) \left[ Q_f(\theta) \phi_f + Q_u(\theta) \phi_u +\phi_\infty \right] 
&=		  E_{\theta}(\theta) + DP(\theta) \Lambda_{\theta} \theta \\
& \qquad +E_{f}(\theta)\phi_f + DQ_f(\theta) (\Lambda_{\theta} \theta, \phi_f) + 
	Q_f(\theta) \Lambda_{f} \phi_f \nonumber \\
& \qquad +E_{u}(\theta)\phi_u+ DQ_u(\theta) (\Lambda_{\theta} \theta, \phi_u) +  
	Q_u(\theta) \Lambda_{u} \phi_u    \\
& \qquad +E_{\infty}(\theta)\phi_{\infty} +  \Lambda_{\infty} \phi_\infty \\
& = E(\theta,\phi)  +
 DK(\theta, \phi_f, \phi_u, \phi_\infty) 
\left(
\begin{array}{c}
\Lambda_{\theta} \theta  \\
\Lambda_{f} \phi_{f} \\
\Lambda_{u} \phi_{u} \\
\Lambda_{\infty} \phi_{\infty} \\
\end{array}
\right), 	 
\end{align*} 
where $E: U \to \ell_\nu^1$ is defined by
\begin{align}
 E(\theta,\phi) := E_{\theta}(\theta) + E_{f}(\theta)\phi_f + E_{u}(\theta)\phi_u + E_\infty(\theta) \phi_\infty. 
 \label{eq:NonlinearEDef}
\end{align}
It follows that for $ \xx \in U$, we have
\begin{align*}
DK   (\xx)^{-1}  F(K(\xx) )
	&= DK(\xx)^{-1} \big( E(\xx)  +
	DK(\xx) \Lambda  \xx+ R(\xx) \big) \\
	&=  \Lambda \xx + DK(\xx)^{-1} \left( E(\xx)  +  R(\xx) \right). 	
\end{align*}
Thus,  the differential equation is decomposed
 into a diagonalized part and nonlinear error terms. 
It follows that 
\begin{align}
	\tilde{\cN}(\theta,\phi)  = DK(\theta,\phi)^{-1} 
	\left(
	E(\theta,\phi) + R(\theta,\phi)
	\right).
	\label{e:tildecN}
\end{align}

\subsection{Estimate \ref{estimate:SecondDerivative} -- Bounding $\tilde{\cN}_\both{k}^{\both{ij}} $}
\label{sec:NCOVSecondDerivitive}


Throughout this section, consider 
points in the ball $ (\theta , \phi) \in U = B(r_s+\epsilon_s ,r_u + \epsilon_u) $,
and assume that $ | \phi_u | \leq r_u + \eps_u$,  $ | \phi_f | \leq r_f + \eps_f$,  and  $ | \phi_\infty | \leq r_\infty + \eps_\infty$. 
Additionally, choosing $\delta_\theta \in (0,1] $ such that if $| \theta |_X \leq r_\theta + \eps_\theta$, we have that 
$ (\theta)_k  \leq  \delta_\theta$ for all components $ 1 \leq k \leq n_\theta$, 
whereby $ U = B(r_s+\epsilon_s ,r_u + \epsilon_u)  \subseteq   X_u \times [-\delta_\theta,\delta_\theta]^{n_{\theta}}  \times X_f  \times X_\infty$. 

\subsubsection{Bounding the Derivatives of $DK$ and its Inverse}
\label{sec:BoundingDK}
Fix  $h = ( h_\theta, h_f, h_u , h_\infty) \in X_\theta \times X_f \times X_u \times X_\infty$. 
We have that 
\begin{align}
DK(\theta,\phi)  \cdot h  &= \big( \partial_\theta P(\theta) +\partial_\theta Q_f(\theta) \phi_f +  \partial_\theta Q_u(\theta) \phi_u \big) h_\theta  + 
Q_f(\theta) h_f +
Q_u(\theta) h_u +
h_\infty.
\end{align}
Define the maps
\begin{align*}
A_0(\theta) \cdot h  &:= \partial_{\theta} P(\theta) h_\theta + Q_f (\theta) h_f + Q_u(\theta) h_u + h_\infty ,
\\
A_1(\theta,\phi) \cdot h &:=  \partial_{\theta} Q_f(\theta) \phi_f h_\theta  +  \partial_{\theta} Q_u(\theta) \phi_u  h_\theta.
\end{align*}
Then $ DK = A_0 + A_1$. 

The norm of $ A_1$ is controlled by taking $ |\phi|$ small. 
Assume $A_0(\theta)$ is invertible for all 
$ \theta \in [-\delta_\theta , \delta_\theta]^{n_{\theta}}$ with inverse $ B(\theta) := A_0(\theta)^{-1}$.  
Indeed, the action of the  operator 
$A_0(\theta):X_N \times X_\infty \to \ell_\nu^1 \cong X_N \times X_\infty$ 
leaves both subspaces  $X_N$ and $ X_\infty$ invariant. 
The action of the operator $A_0(\theta)$ in the finite dimensional component is
represented by a  polynomial in $\theta$ with $(N+1) \times (N+1)$ 
matrix coefficients.  Its action in the infinite dimensional component is 
precisely the identity map. 
Hence the operator $B(\theta)=A_0(\theta)^{-1}$ is an infinite power series 
in $\theta$, with Taylor coefficients defined recursively by power matching. 
We compute finitely many of these coefficients by solving the recursion 
relations.
 
The inverse $DK^{-1}: \ell_\nu^1 \to X$ now has 
\begin{align*}
DK(\theta,\phi)^{-1} 
&= B(\theta) \big(  I +  A_1(\theta, \phi) B(\theta) \big) ^{-1}.
\end{align*} 
Bounds on the derivatives of $DK(\theta,\phi)^{-1} $ are obtained by 
the product rule. We first compute finitely many terms in the power series 
expansion of $B(\theta)$, and bound the Taylor remainder and its derivatives
using a Neumann series argument similar to the one given below
to bound $\big(  I +  A_1(\theta, \phi) B(\theta) \big) ^{-1}$. 
Indeed, for $\phi$ sufficiently small the Neumann series provides the bound
\begin{align*}
\|  \big(  I +  A_1(\theta, \phi) B(\theta) \big) ^{-1} \| 
&\leq \frac{ 1 }{1 - \|   A_1(\theta, \phi) B(\theta) \|_{\cL(\ell_\nu^1 , \ell_\nu^1)} } \\
&\leq \left[
1 -  
( |\phi_f| + |\phi_u| )
\| \partial_\theta Q(\theta) \|_{\cL(X_{\theta} \otimes X,\ell_\nu^1)} 
  \|  B(\theta) \|_{\cL(\ell_\nu^1 , X)} 
\right]^{-1}.
\end{align*} 

Derivatives of $ \big(  I +  A_1(\theta, \phi) B(\theta) \big) ^{-1}$ 
are bound using the fact that for any smooth path of invertible matrices,
it holds that
\[
\frac{\partial Y^{-1}}{\partial t} = - Y^{-1} \frac{\partial Y}{\partial t} Y^{-1}.
\] 
Applying the product rule gives
\begin{align*}
\frac{\partial^2 Y^{-1}}{\partial t \partial s}
&=   Y^{-1} \left( 
\frac{\partial Y}{\partial s} Y^{-1}
\frac{\partial Y}{\partial t} 
-   \frac{\partial^2 Y}{\partial t \partial s}  
+     \frac{\partial Y}{\partial t}		
Y^{-1} \frac{\partial Y}{\partial s}
\right) Y^{-1}.
\end{align*}
Hence, to bound the derivatives of 
$ \big(  I +  A_1(\theta, \phi) B(\theta) \big)^{-1}$, 
it suffices to bound the inverse and the derivatives of 
$I +  A_1(\theta, \phi) B(\theta)$. 

For fixed $(\theta , \phi ) \in U$ and $\bi \in \bI$, we see that the 
nontrivial first derivatives 
$\partial_\bi A_1 (\theta,\phi) : X \otimes X_{\bi} \to \ell_\nu^1$ are given by 
\begin{align*}
\partial_{\theta} A_1(\theta, \phi) &=  \partial_{\theta\theta} Q_f(\theta) \phi_f +  \partial_{\theta\theta} Q_u(\theta) \phi_u , &
\partial_{\star} A_1(\theta, \phi)   &=  \partial_{\theta}Q_\star(\theta)    \quad\text{for } \star \in \{f,u\}. 
\end{align*}
For fixed $ ( \theta , \phi ) \in U$, and $ \bi,\bj \in \bI$,
compute the nontrivial second derivatives 
$\partial_{\bi} \partial_{\bj} A_1 (\theta,\phi): X \otimes X_{\bi} \otimes X_{\bj} \to \ell_\nu^1$,
 by 
\begin{align*}
\partial_{\theta\theta} A_1(\theta, \phi) 
 &= \partial_{\theta\theta\theta}Q_f(\theta) \phi_f +  \partial_{\theta\theta\theta} Q_u(\theta) \phi_u  , & 
\partial_{\theta \star} A_1(\theta, \phi)  &=  \partial_{\theta\theta} Q_\star(\theta)    
\quad\text{for } \star \in \{f,u\}. 
\end{align*}
Note  that $\partial_\infty DK^{-1}=0$.
Furthermore, $\pi_{\infty}DK^{-1} = \pi_{\infty}$, so that 
$\pi_\infty \partial_{\bi} (DK^{-1}) =0$ for all $\bi \in \bI $. 
Then bounds on $DK^{-1}$  and its derivatives follow from bounds on 
\begin{align} \label{eq:finalDKiBounds}
  \| \pi_{\circ} B(\theta) \|_{\cL( \ell_\nu^1 , X)} && 
\left\| \pi_{\circ}\tfrac{\partial^k}{\partial \theta^k} B(\theta) \right\|_{\cL\left( X_{\theta}^{\otimes k} \otimes \ell_\nu^1 ,X \right)} &&
\left\| \pi_{\circ} \tfrac{\partial^k}{\partial \theta^k} Q(\theta) \right\|_{\cL\left( X \otimes X_{\theta}^{\otimes k},\ell_\nu^1\right)}  ,
\end{align}
where  $\pi_{\circ} \in \{ \pi_N , \pi_\infty \}$ and  $k=1,2,3$. 
Since we have either explicit expressions (we may take a supremum 
over $ \theta \in [-\delta_\theta,\delta_\theta]^{n_{\theta}}$ using interval arithmetic) 
or explicit bounds for each of these, we obtain the necessary explicit  
bounds on  $ DK^{-1}$  and its derivatives.
Note that bounds on 
$\pi_\bk DK(\theta,\phi)^{-1} =  
\pi_\bk  B(\theta) \big(  I +  A_1(\theta, \phi) B(\theta) \big) ^{-1}$
are improved by bounding $\| \pi_\bk B(\theta) \|_{\cL( \ell_\nu^1 , X)}$ for $ \bk \in \bI$, 
and likewise for the derivatives.

\subsubsection{Bounding $E$}
\label{sec:NL_E_bounds}
To bound $E: U \to \ell_\nu^1$ defined in \eqref{eq:NonlinearEDef},
see also~\eqref{eq:Approximate_Conjugacy1} and~\eqref{eq:Approximate_Conjugacy2},
we note first that these bounds  are
 calculated in the $ |\cdot |_{\ell_\nu^1}$ norm, 
whereas   bound on $ E_{f}, E_{u},E_\infty$  are calculated in the $ \| \cdot  \|_{\cL( X,\ell_\nu^1)}$ norm.
We have that 
\begin{align*}
	\partial_\theta E(\theta,\phi)  \cdot h &= 
	\Big(
	\partial_\theta E_{\theta} (\theta)+ 
	\partial_\theta E_{f}(\theta)\phi_f + 
	\partial_\theta E_{u}(\theta)\phi_u + 
	\partial_\theta E_{\infty}(\theta) \phi_\infty 
	\Big)\cdot h_\theta.
\end{align*}
The other first derivatives of $E$ are
\begin{align*}
\partial_{\star}  E(\theta,\phi)  \cdot h &=E_{\star} (\theta)  \cdot h_f,
\qquad\text{for }  \star \in \{f,u,\infty\}.
\end{align*} 
The nontrivial second derivatives of $E$ are
\begin{align*}
\partial_{\theta \theta} E(\theta,\phi) \cdot (h^1,h^2) &= 
\left(
\partial_{\theta \theta} E_{\theta} + 
\partial_{\theta \theta} E_{f}\phi_f + 
\partial_{\theta \theta} E_{u}\phi_u + 
\partial_{\theta \theta} E_{\infty}\phi_\infty
\right) \cdot ( h^1_\theta , h^2_\theta) ,
\\
\partial_{\theta \star} E(\theta,\phi) \cdot (h^1,h^2) &= 
\partial_\theta E_{\star}(\theta) \cdot (h^1_\theta,h^2_\star) , 
\qquad\text{for } \star \in \{f,u,\infty\}.
\end{align*}
Recall that we have an explicit finite dimensional polynomial representation 
for the functions $E_{\theta} $, $E_{f}$ and $ E_{u}$.  
For $E_\infty$ and its derivatives we have
\begin{align*}
E_\infty(\theta) \cdot \phi_\infty 
&=  -3 P(\theta) * P(\theta) *\phi_\infty\\
\partial_{\theta} E_\infty(\theta)  \cdot (\phi_\infty,h_\theta)
&=  -6 \left( \partial_{\theta} P(\theta) h_\theta \right)* P(\theta) *\phi_\infty\\
\partial_{\theta \theta }E_\infty(\theta) \cdot (\phi_\infty,h_\theta^1,h_\theta^2)
&= -6 (\partial_{\theta \theta } P(\theta) \cdot (h_\theta^1,h_\theta^2)) * P(\theta) *\phi_\infty
-6 (\partial_{\theta} P(\theta)  h_\theta^1 )  *(\partial_{\theta} P(\theta)  h_\theta^2 )*\phi_\infty .
\end{align*}

Using the bounds on $|\phi|$, the explicit expressions for the polynomials $P$, $Q$,  and the expressions above, we obtain bounds on $E$ over all of $U \subseteq X$.  
In summary, we have bounds on $E$ and its derivatives, and bound 
\begin{align} \label{eq:LfinalEBounds}
\left\| \pi_{\circ} \tfrac{\partial^k}{\partial \theta^k} E_{\theta} (\theta) \right\|_{\cL\left( X_{\theta}^{\otimes k},\ell_\nu^1\right)} &&
\left\| \pi_{\circ}  \tfrac{\partial^k}{\partial \theta^k} E_{\star} (\theta)  \right\|_{\cL\left( X \otimes X_{\theta}^{\otimes k},\ell_\nu^1\right)},
\end{align}
where $\pi_{\circ} \in \{ \pi_N , \pi_\infty \}$, 
$ \star \in \{ u,f,\infty \}$, and the supremum is taken
over $ \theta \in [-\delta_\theta,\delta_\theta]^{n_{\theta}}$.  
Here for $k=0,1,2$,  $X^{\otimes k}$ is the $k$-fold tensor product of 
$ X$, and $X^{\otimes 0}$ is the trivial vector space.

\subsubsection{Bounding $R$}
Recalling \eqref{eq:PQshorthand} and \eqref{eq:Remainder_Definition},
we have 
\begin{align*} 
\kP  &:=  P(\theta) , 
& 
\kQ &:=  Q_f(\theta) \phi_f + Q_u(\theta) \phi_u + \phi_\infty ,
&
\kR &:= -3 \kP * \kQ^{2} - \kQ^{3}.
\end{align*}
To calculate bounds on $R(\theta,\phi)=\kR$ and its derivatives, 
we start by calculating  the  derivatives of~$\kQ$.  These are 
\begin{align*}
\partial_{\theta} \kQ \cdot h&= \left(\partial_\theta Q_f  \phi_f 
+\partial_\theta Q_u  \phi_u \right) \cdot h_\theta,
& 
\partial_{\star} \kQ\cdot h	&= Q_\star \cdot h_\star
\quad\text{for } \star \in \{f,u\},
&
\partial_{\infty} \kQ\cdot h &= h_\infty.
\end{align*} 
The nonvanishing second derivatives of $ \kQ$ are given by
\begin{align*}
\partial_{\theta \theta} \kQ \cdot (h^1,h^2)	&= \left(\partial_{\theta \theta} Q_f \phi_f + \partial_{\theta \theta } Q_u \phi_u  \right)\cdot (h_{\theta}^1 ,  h_{\theta}^2 ), &
\partial_{\star\theta} \kQ \cdot (h^1,h^2) &= \partial_{\theta} Q_\star \cdot( h_\theta^1 ,h_\star^2 ) \quad\text{for } \star \in \{f,u\}.
\end{align*} 
The only nonvanishing derivatives of $\kP$ are with respect to $\theta$. 
Then, bounds on $ \cC$, $\kQ^3$,$ \kP*\kQ^2$, and their partial derivatives are
obtained using the product rule.

Using that $ \kR = -3 \kP * \cC- \kQ * \cC$, 
we have expressions for all of the first and second derivatives of $R$. 
Hence, to bound $R$  and its derivatives, it suffices to bound
\begin{align} \label{eq:finalRBounds}
\left\| \pi_{\circ} \tfrac{\partial^k}{\partial \theta^k} P (\theta) \right\|_{\cL\left( X_{\theta}^{\otimes k},\ell_\nu^1\right)} &&
\left\| \pi_{\circ}  \tfrac{\partial^k}{\partial \theta^k} Q_\star (\theta)  \right\|_{\cL\left( X \otimes X_{\theta}^{\otimes k},\ell_\nu^1\right)} ,
\end{align}
where we take $\pi_{\circ} \in \{ \pi_N , \pi_\infty \}$, $ \star \in \{ u,f \}$, $k=0,1,2$,
and the supremum over $ \theta \in [-\delta_\theta,\delta_\theta]^{n_{\theta}}$. 
 The rest of the bounds follow by applying the product rule (as detailed above),
  the Banach algebra property of~$ \ell_\nu^1$, and the 
  bounds on $| \phi |$ which result from restricting 
  to the ball $B(r_s+\epsilon_s,r_u + \epsilon_u )$.

\subsubsection{Bounding $\tilde{\cN}$}
The derivatives of $ \tilde{\cN}  = DK^{-1} (E +R)$ are calculated using the product rule. 
Exploiting the formulas derived in Section \ref{sec:NCOVSecondDerivitive} 
facilitates implementation of the constants $ \tilde{C}^{\bi\bk}_\bj$
bounding $\| \tilde{\cN}_\bj^{\bi\bk} \|_{(r_s + \epsilon_s,r_u + \epsilon_u)}$,
for $\bi,\bj,\bk \in \bI$ needed to apply Proposition \ref{prop:Initial_Constant_Bounds}.

\subsection{Estimate \ref{estimate:Z1} --  Bounding    $\tilde{\cN}_\both{j}^{\both{i}} (0) $ } 
 
 We now compute a tensor $\tilde{D}$ 
 bounding $\|\tilde{\cN}(0) \|$, as needed in 
 Proposition \ref{prop:Initial_Constant_Bounds}.
We infer from the computations in Section~\ref{sec:NCOVSecondDerivitive}
that $\cC(\theta,0)=0$, $D \cC(\theta,0)=0$, 
$ D(\kQ*\cC) =0$,  and $ D(\kP*\cC) =0$ when $ \phi =0$.
Hence $DR(\theta,0) =0$. 
Since $R(\theta,0) =0$ as well,
we infer that
\begin{align} \label{eq:Nonlinear_Z1}
 \partial_\bi \tilde{\cN}(0)  &=    DK(0)^{-1}  \partial_\bi E(0,0)
 +
 (\partial_\bi DK(0)^{-1}) E(0,0)
 \qquad\text{for } \bi \in \bI.
\end{align}

The first summand in \eqref{eq:Nonlinear_Z1} is 
similar to the term studied in  Section \ref{sec:LinearLbound}. 
To see this, starting from \eqref{eq:NonlinearEDef},
compute the  first derivatives of $E$ at $(\theta,\phi)=(0,0)$
to obtain
\begin{align*}
\partial_\theta E(0,0) \cdot h&= \partial_\theta E_{\theta}(0) \cdot h_\theta,  &
\partial_\star  E (0,0) \cdot h&=E_{\star}(0) \cdot h_\star 
\quad\text{for } \star \in \{f,u,\infty\}.
\end{align*}
We deduce from the definition of $E$ in  \eqref{eq:Approximate_Conjugacy2}
and the substitution $P(0) = \bar{a}$, that 
\begin{align*}
\partial_\theta E_{\theta}(0) \pi_\theta&=  \left( DF(\bar{a}) \partial_\theta P(0) - \partial_\theta  P(0) \Lambda_\theta \right)  \pi_\theta , \\
E_{\star}(0) \pi_\star&=  \left( DF(\bar{a}) Q_\star (0) - Q_\star  \Lambda_\star \right)  \pi_\star 
\quad\qquad\qquad \text{for } \star \in \{f,u,\infty\}.
\end{align*}
Using $Q_0$ as defined in \eqref{eq:Q0_Nonlinear_Norm},
we obtain the  simplification  
\[
\partial_\bi E(0,0)h 
= \left( DF(\bar{a}) Q_0 - Q_0 \Lambda \right) \pi_\bi
\qquad \text{for } \bi \in \bI.
\]
Finally, the first summand in \eqref{eq:Nonlinear_Z1} simplifies to 
\[
DK(0,0)^{-1} \partial_\bi E(0,0) = \left(Q_0^{-1} DF( \bar{a}) Q_0 - \Lambda \right) \pi_\bi 
\qquad \text{for } \bi \in \bI.
\]
We then bound $\| \pi_\bj \left(Q_0^{-1} DF( \bar{a}) Q_0 - \Lambda \right) \pi_{\bi} 
 \|_{\cL(X,X)}$  as in  Section \ref{sec:LinearLbound}, with the trivial addition 
 that the projection map $ \pi_\theta$ must also be considered. 

To bound the second summand in \eqref{eq:Nonlinear_Z1},
note that  $ E(0,0) = E_{\theta}(0)$, for which we have an explicit expression. 
From a calculation in the same vein as in Section \ref{sec:BoundingDK}, we obtain
\begin{align*}
\left( \partial_\bi DK(0)^{-1}  \right) E(0,0) &= 
- Q_0^{-1} \left(
\partial_{\both{i}} 
DK(0) 
\right)
Q_0^{-1} 
E_\theta(0) .
\end{align*}
Then 
\begin{align*}
\partial_\theta DK(0) &= \partial_\theta A_0(0) ,  &
\partial_\star DK(0)  &= \partial_\theta Q_{\star}(0)  
\quad\text{for } \star \in \{f,u,\infty\}.
\end{align*}
The norm  $| E_{\theta}(0) |_{\ell_\nu^1}$ is quite small in practice, 
and it suffices to obtain a rough bound on the norm of $ \partial_\bi DK(0)^{-1}$. 
Thus, for $ \bi,\bj \in \bI$ we bound the components of~\eqref{eq:Nonlinear_Z1}
as
\begin{align*}
\tilde{ D}_{\bj}^{\bi} &:= 
\| 
\pi_{\bj} \left(Q_0^{-1} DF( \bar{a}) Q_0 - \Lambda \right) \pi_\bi \|_{\cL(X,X)} 
+
\left\| 
\pi_\bj Q_0^{-1} 
\right\|_{\cL(\ell_\nu^1,X)}
\left\|
\partial_{\both{i}} 
DK(0)  
\right\|_{\cL(X_\bi \otimes X,\ell_\nu^1)}
\left| 
\pi_N 
Q_0^{-1} 
E_\theta(0)
\right|_X .
\end{align*}
There are some additional cancellations, as $\pi_\bj Q_0^{-1} 
\left(
\partial_{\both{i}} 
DK(0) 
\right) =0$ when $ \bi = \infty$ or $\bj = \infty$.

\subsection{Estimate \ref{estimate:Semigroup} --  Semigroup Bounds}

The constants $ C_s$ and $ \lambda_s$ are obtained by applying Theorem \ref{prop:FastSlowExponentialEstimate} as in Section \ref{sec:ImplementSemiGroupBound_Linear}. 
The only difference is that $X_s$ is decomposed into 3 subspaces in Section \ref{sec:SH_Nonlinear} (as opposed to 2 subspaces in the linear case). 
We argue as follows.
Define $D_{\bj}^{\bi}   := \tilde{D}_\bj^\bi + \tilde{C}_\bj^{\bi l} \epsilon_l +
\tilde{C}_\bj^{\bi l'} \epsilon_{l'}  $ as in  Proposition \ref{prop:Initial_Constant_Bounds},
and
	\begin{align*}
	\mu_1 &:= \lambda_1 
	&
	\delta_a &:= \max_{1 \leq i \leq m_s-1} \sum_{1 \leq j \leq m_s -1} D_j^i
	&
	\delta_b &:=   \sum_{1 \leq j \leq m_s -1} D_j^{m_s} ,
	\\
		\mu_\infty &:= \lambda_3 = \lambda_\infty 
	&
	\delta_c &:= \max_{1 \leq i \leq m_s-1}   D_{m_s}^i 
	&
    \delta_d &:= D_{m_s}^{m_s}.
	\end{align*}
The rest of the computation for $ C_s$ and $ \lambda_s$ are exactly
 as described in Section \ref{sec:ImplementSemiGroupBound_Linear}.

\subsection{Conclusion and Numerical Results} \label{sec:ConclusionAndNumerics}

We recall that the parameter $ \rho = (\rho_\theta, \rho_f, \rho_\infty)$ determines the size of the domain
\[
B_s(\rho) = \left\{ 
(\xx_\theta,\xx_f,\xx_\infty) \in X_s : |\xx_\theta| \leq \rho_\theta, |\xx_f| \leq \rho_f,|\xx_\infty| \leq \rho_\infty 
\right\},
\]
for the candidate charts $\alpha \in \cB_{\rho,P,\bar{P}}$,
 where $X_s$ is decomposed in terms of the eigenspaces 
$X_\theta$, $X_f$, and $X_\infty$ of $\Lambda_s$ corresponding to the slow stable eigenvalues, the fast-but-finite stable eigenvalues, and the remaining infinite stable eigenvalues respectively.  
This parameter $\rho$ has a significant impact on nearly every aspect of our analysis.

For a given application it may be advantageous to choose certain components of 
$\rho = (\rho_\theta, \rho_f , \rho_\infty)$ large and others small.  
For example, we generically expect connecting orbits to have a larger projection 
into the slow-stable subspace $X_\theta$ and a smaller projection into the other 
stable subspaces. 
In Theorem \ref{prop:NonlinearComputerAssistedProof}, we present one such 
result, taking $\rho_\theta$
as large as possible. 
The parameters are the same as the ones used to produce Figure \ref{fig:SlowManifoldEndPoints}. 
This nonlinear approximation of the stable manifold produces significantly better error estimates than a linear approximation:
the $C^0$ error bounds in Theorem \ref{prop:NonlinearComputerAssistedProof} are of size $7.43 \times 10^{-12}$, 
whereas the approximate manifold in Theorem \ref{prop:CAP_Linear}
 has $C^0$ error bounds of $3.36 \times 10^{-3}$. 


\begin{theorem} \label{prop:NonlinearComputerAssistedProof} 
	Consider the Swift-Hohenberg Equation \eqref{eq:SwiftHohenberg}
with parameters $ \beta_1 = 0.05$, and  $\beta_2 = - 0.35$. 
Let $\nu = 1.001$ and
suppose that $\bar{a} \in \ell_\nu^1$ is an approximate equilibrium solution,
$\eps = 1.61 \times 10^{-14}$ close in the $\ell_\nu^1$
norm  to a true equilibrium solution. 	
	Using the techniques discussed in Section \ref{sec:goodCoordinates}, we compute a slow stable manifold and finite dimensional (un)stable bundles, represented by Taylor polynomials of degree 20.
	Fixing the Galerkin projection dimension at
$N=30$,  and following the instructions described 
in Section \ref{sec:defChangeVar}, 
we bound
$ \epsilon_s \leq  10^{-14} \cdot ( 1.85 , 4.51, 1.61 ) $.
Let
\begin{align*}
\rho &= 
\left(
\begin{matrix}
3.18 \times 10^{-2}   &    10^{-6}    &   10^{-10} 
\end{matrix}
\right),
\end{align*}
and 
\begin{align*}
P &= \left(
\begin{matrix}
9.43 \times 10^{-11}  \\ 
4.41 \times 10^{-6}  \\  
3.31 \times 10^{-6}
\end{matrix}
\right)
&
\bar{P} &= \left(
\begin{matrix}
1.30 \times 10^{-9} &   5.60\times 10^{-5}  	& 1.04\times 10^{-4} \\
5.60\times 10^{-5} 	&   2.72\times 10^{-0}  & 8.20\times 10^{-4} \\
1.04\times 10^{-4} 	&   8.20\times 10^{-4}  	& 1.41\times 10^{-4} \\
\end{matrix}
\right),
\end{align*}
be tensors as in Definition \ref{def:Ball_of_Functions}.
Define 
$B_s(\rho-\epsilon_s)$ as in Definition \ref{def:Ball},
$I$,$I'$, and $\mathbf{I} = I \cup I'$  as in Remark \ref{rem:primes}.

Then, there exists a unique
$\tilde{\alpha} \in C^{1,1}(B_s(\rho - \eps_s), X_u)$ 
so that the local stable manifold of $ \tilde{a} \in \ell_\nu^1$
 is given by 
 \[
 \xx_s \mapsto K\left(  \xx_s   , \tilde{\alpha}(\xx_s  )  \right),
 \] 
 for $K$ as in \eqref{eq:K_def}. Moreover, $\tilde{\alpha}$  has 
 \begin{align*}
 | \tilde{\alpha}_{i'}(\xi)  | &\leq  7.43 \times 10^{-12}
 &\|  \tilde{\alpha}_{i'}^i  (\xi)\| &\leq P_{i'}^i 
 & \Lip (\partial_i   \tilde{\alpha})_{i'}^{j}  \leq \bar{P}_{i'}^{ij} ,
 \end{align*}
 for all $ \xi \in B_s(\rho - \eps_s)$ and $ i ,j \in I$, $ i' \in I'$ and $ \bi \in \bI$.
\end{theorem}

\begin{proof}
In script \texttt{main\_NL.m} we calculate all of the constants and verify all of the hypotheses in Theorem  \ref{prop:UniqueStableManifold}. 
In particular we have a contraction constant  $\| J \| < 5.86 \times 10^{-6}  $.
It takes approximately 11 seconds to construct the slow-stable manifold and normal bundles,  
23 seconds to compute the bounds detailed in Section \ref{sec:SH_Nonlinear}, 
and 12 seconds to compute all the bounds in Sections  \ref{sec:ExponentialTracking}-\ref{sec:Contraction} needed to validate the stable manifold. 
These we run on MATLAB 2019a with INTLAB on a i7-8750H processor. 

\end{proof}
 

 The nonlinear approximation in Theorem \ref{prop:NonlinearComputerAssistedProof} 
 is optimized to  produce a larger validated part of the manifold in the direction of the slow stable eigenvector, as this is where we would generically expect to find connecting orbits. 
Note that in Theorem \ref{prop:NonlinearComputerAssistedProof}  the gap between 
eigenvalues of $ \Lambda_{1'}$, $\Lambda_1$ and $\Lambda_2$ is not very large:
\begin{align*}
\lambda_{1'} &=    1.01,
&
 \lambda_1 &= -1.41,
 &
  \lambda_2 &= -1.99,
  &
  \lambda_3 &=  -4.58  \times  10^{4} .
\end{align*}
We took the slow-stable eigenspace to be one dimensional. If a particular application required a stable manifold which was wider along the second slowest stable eigendirection, we could increase $\rho_f$ at a cost of also increasing $P$, $\bar{P}$, etc. 
These error estimates could be improved somewhat by splitting $X_f$ into two subspaces. 
Moreover,  we could significantly increase the radius of our approximation along the second slowest stable eigendirection by using a higher dimensional slow stable manifold.     

From the classical theory \cite{chicone2006ODE}, we expect our derivative 
bound $ P \geq \|D\alpha\| $ to be at least as large as the ratio between the derivative of the nonlinearity and the spectral gap, roughly 
\[
|P| 
\gtrsim
\frac{\| D\cN \|}{\lambda_u - \lambda_s}
\gtrsim 
\frac{\|L \| + \|D^2 \cN \|   \rho }{\lambda_u - \lambda_s}.
\]
We expect that 
this bound should increase linearly with $\rho$,
 and be bounded below by $\|L\|$, 
 the error from not perfectly splitting $X_u \times X_s$ into eigenspaces. 
This scaling is observed in Figure  \ref{fig:ErrorScaling}, where  we display
the error bounds in Theorem \ref{prop:CAP_Linear} and Theorem \ref{prop:NonlinearComputerAssistedProof} as functions of $\rho$. 
The nonlinear approximation maintains small error bounds,
despite taking $\rho_\theta$ large.  This is because the change of 
variables prepares the nonlinearity so
that $\| \partial_\theta D \cN\|$  is small. 
 Note that one should be mindful in comparing the two graphs in 
 Figure~\ref{fig:ErrorScaling}, as in Theorem \ref{prop:CAP_Linear} 
 we split $X_s = X_f \times X_\infty$ with $ \dim(X_f) = N$, and in Theorem \ref{prop:NonlinearComputerAssistedProof} we split $X_s = X_\theta \times X_f \times X_\infty$ with $ \dim(X_\theta)=1$ and $ \dim(X_f) = N-1$. 
 

 \begin{figure}
 	\centering
 	\includegraphics[width=.4\linewidth]{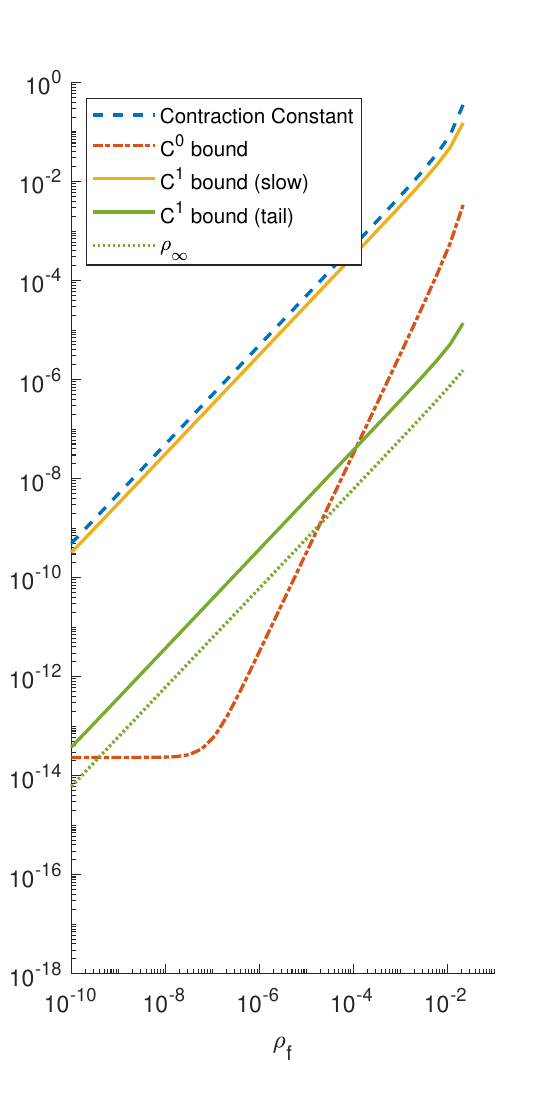} ~
 	\includegraphics[width=.4\linewidth]{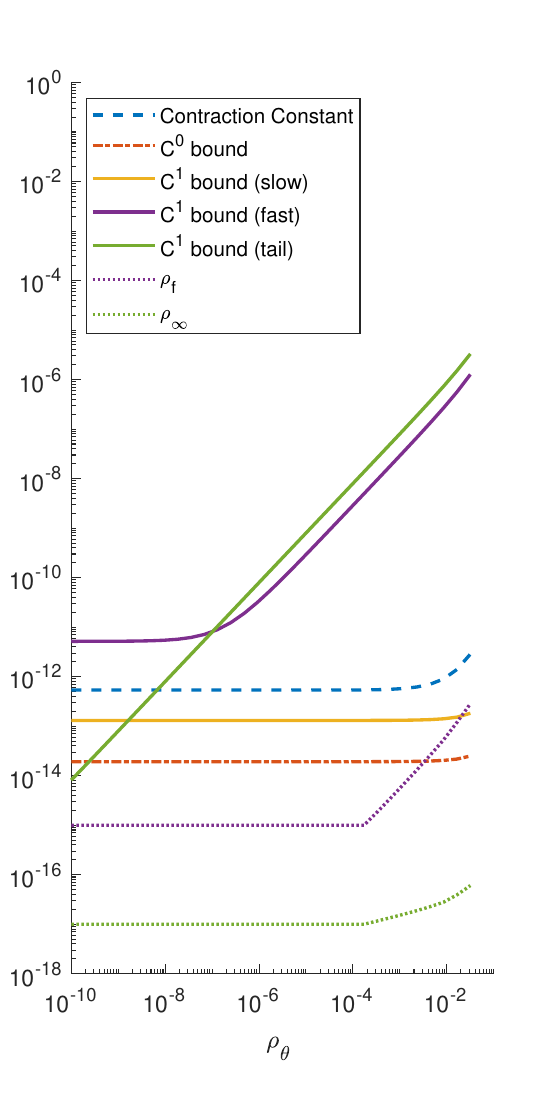}
 	\caption{ (Left) Using the estimates from Section \ref{sec:SH_Linear}, the bounds produced by a computer assisted proof for a range of radii $\rho_f \in [10^{-10},0.022]$, with $ \rho_\infty$ chosen to be as small as possible. 
 		(Right) Using the estimates from Section \ref{sec:SH_Nonlinear}, the bounds produced for a range of radii $\rho_\theta \in [10^{-10},0.0318]$, with $\rho_f $  and  $ \rho_\infty$ chosen to be as small as possible.  
 		Note that the nonlinear approximation yields smaller $C^0$ error bounds (red dash-dotted lines).
 	}
 	\label{fig:ErrorScaling}
 \end{figure}

When using the linear approximation we see that  
 for a large range of $\rho_f$, the contraction constant, the tensor~$P$,
  and the minimal choice of $ \rho_\infty$, all scale linearly with $\rho_f$. 
 The $C^0$ error of the manifold, given by 
 $|\tilde{\alpha}_{i'}|\leq  P_{i'}^i ( \rho_i + \epsilon_i) + \eps_{i'}$ in Theorem \ref{prop:UniqueStableManifold},  
 is dominated by the error in validating the equilibrium until $\rho_f \approx 10^{-7}$, where it 
 begins to scale quadratically with $ \rho_f$. 
 The $C^{1,1}$ error bounds on the norm of the components of $\bar{P}$ do not improve much for $ \rho < 10^{-3}$, and increase quite rapidly for $ \rho_f > 10^{-2}$. 
 
For the nonlinear approximation, the error in validating the equilibrium dominates 
the $C^0$ bound until $ \rho_\theta \approx 10^{-2}$, 
the point after which $P_u^\theta$  increases marginally. 
The contraction constant scales similarly, begining to increase 
around $ \rho_\theta \approx 10^{-3}$. 
 The $C^1$ bounds in the $X_f$ and $X_\infty$ subspaces
 are bounded below by the accuracy of the decomposition into
 eigenspaces of $DF(\bar{a})$, and increase linearly with $ \rho_\theta$.  
 For the whole range of admissible $\rho_\theta$, both $ \rho_f$ and $ \rho_\infty$ 
 can be taken exceedingly small, without contributing significantly
 to the overall error.

We do not expect to validate a global stable manifold with the Lyapunov-Perron approach;  
if $\rho$ is too large, the various hypotheses of Theorem \ref{prop:UniqueStableManifold} 
may no longer be satisfied.  
 For example, we may be unable to prove the image of $\Psi$ is contained within  
 $\cB_{\rho,P}^{0,1} $ or $\cB_{\rho,P,\bar{P}}^{1,1}$, as detailed in Theorems  \ref{prop:Endomorphism} or Theorem \ref{prop:Endomorphism_2}. 
 Other causes for failure 
 would be if 
 $\|J\|>1$ whereby $\Psi$ is not a contraction mapping, or 
 if we are unable to prove solutions  $x( t, \xi, \alpha)$ are contained within $B_s(\rho)$ for all $ t\geq 0$ as required by Proposition \ref{prop:StayInsideBall}. 
When using a linear approximation, many of these hypotheses all 
simultaneously 
 fail for larger values of $\rho$. 
In contrast, for the nonlinear approximation in Section \ref{sec:SH_Nonlinear}, the dominant limiting factor 
is the condition  $ \gamma_0 = \lambda_s + C_s \hat{  \cH} <0$ as required in Proposition \ref{prop:StayInsideBall}. 
Overall, the framework developed in Sections \ref{sec:Notation} - \ref{sec:Contraction} allow us to leverage our estimates on our approximate stable manifold made in Sections \ref{sec:SH_Linear}-\ref{sec:SH_Nonlinear}.  
%
%


%
%
%
%
%
%
%
%

\bibliographystyle{amsplain}
\bibliography{BIBInvariantManifold}

\appendix

\section{General Strategy for Bootstrapping Gronwall's Inequality }
\label{sec:GeneralBootstrap}

We generalize the bootstrapping argument used in Section \ref{sec:ExponentialTracking} so that it can be applied in Section \ref{sec:LyapunovPerron} and Section \ref{sec:Contraction}. 
To unify the class of functions we wish to bound, and the set of assumptions we make on these functions, we define Condition \ref{anz:IntegralInequality} below. 
In a slight abuse of notation, here we define $ \cB$ to be a tensor, distinct from its previous usage as a ball of functions in Definition \ref{def:Ball_of_Functions}. 


\begin{condition}
	\label{anz:IntegralInequality}
	Fix $ \lambda_1,  \dots, \lambda_{N_{\lambda}} \in \R $, fix $H \in \R^{N_{\lambda}} \otimes \R^{N_{\lambda}} $ and define $ \gamma_k := \lambda_k + H_k^k$ for $ 1 \leq k \leq N_\lambda$. 
	For $ N_\mu \in \N$, fix some $ \mu_k \in \R$ for $1 \leq k \leq N_\mu$.   
	Assume that $ \{ \gamma_j \}_{j=1}^{N_\lambda} \subseteq \{ \mu_k \}_{k=1}^{N_\mu}$, and suppose that both $ \gamma_{k} >  \gamma_{k+1}$ and $\mu_{k} > \mu_{k+1}$.  Assume further that $ \mu_1 > \gamma_1$. 
	
	For $M \in \N$, and  $N_i \in \N $ for $ 1 \leq i \leq M$ and basis elements 
	$ e_{n_i} \in \R^{N_i}$ where $ 1 \leq n_i \leq N_i$, we fix tensors 
	\begin{align*}
		\cA &\in  
		\big(
		\bigotimes_{i=1}^M \R^{N_i} 
		\big) \otimes  \R^{N_\lambda} \otimes \R^{N_\mu} 
,
		&
		\cB &\in \big( 
		\bigotimes_{i=1}^M \R^{N_i} \big)
		\otimes  \R^{N_\lambda}
	\end{align*}
	component-wise by  
	\begin{align*}
	\cA_{j,k} &:=  A_{j,k}^{n_1 \dots n_M} \cdot  e_{n_1} \otimes \dots \otimes e_{n_M}  ,
	&
	\cB_j &:= B_j^{n_1 \dots  n_M} \cdot   e_{n_1} \otimes \dots \otimes e_{n_M} .
	\end{align*}

	For this arrangement of constants, we say that a pair $(u,\omega)$ satisfies
	Condition~\ref{anz:IntegralInequality} on a time interval $[0,T]$
	if the functions $u=(u_j)_{j=1}^{N_\lambda}$ and the positive tensor $\omega  \in \bigotimes_{i=1}^M \R^{N_i} $ satisfy the inequalities
	\begin{align} \label{eq:GeneralIntegralInequality}
	e^{ - \lambda_j t} u_j(t) &\leq 
	\cB_j \omega  + \int_0^t e^{-\lambda_j \tau} 
	\sum_{0 \leq k \leq N_{\mu}}  e^{\mu_k \tau} \cA_{j,k} \omega  \, d \tau 
	+ \int_0^t e^{-\lambda_j \tau} H_j^i u_i(\tau) \, d\tau  
	\qquad \text{for all } t \in [0,T].
	\end{align}
\end{condition}

In all cases where we consider constants satisfying Condition \ref{anz:IntegralInequality}, we take $N_\lambda = m_s$,  and  $ \lambda_1 , \dots , \lambda_{N_\lambda}$ as in \eqref{eq:stableEigenvalueEstimate}, and $H_j^i$ as in Definition \ref{def:H}. 
Hence, the definition of $ \gamma_k$ here coincides with that given in Definition \ref{def:Gamma}. 
For the other variables, we take them in the various sections according to the following table. 

\begin{table}[h!]
	\begin{center}
		\label{tab:table1}
		\begin{tabular}{c|c|c|c} 
			 & Section \ref{sec:ExponentialTracking} & Section \ref{sec:LyapunovPerron} & Section \ref{sec:Contraction} \\
			\hline 
			&&& \\
			$u_j$ &  
			$ | x_j(t,\xi,\alpha) - x_j(t,\zeta,\alpha)|$ &
			$ \| \partial_{i} x_j(t,\eta,\alpha) - \partial_{i}  x_j(t,\zeta,\alpha)\|$   &
			$ | x_j(t , \xi , \alpha) - x_j(t, \xi , \beta)|$ \\[1mm]
			$\omega$ &
			$ | \xi_n - \zeta_n|$   &
			 $ | \eta_l - \zeta_l|$  &
			 $|\xi_{n_1}| \otimes \| \alpha - \beta \|_{n_2', \cE}^{n_3}$  \\
			$\cA_{j,k}$ & 
			0  & 
			$S_j^{nm} G_{m,k_1}^{l} G_{n,k_2}^i$ &
			$C_j^{n_2'} G_{n_3, k}^{n_1}$ \\[1mm]
			$\cB_j$ & $\delta_j^n$ & 0 & 0\\[1mm]
			$\{ \mu_k \}$ & 
			 $\{ \gamma_k\}_{k=0}^{m_s}$ & 
			  $\{ \gamma_k \}_{k=0}^{m_s} \cup \{ \gamma_{k_1} + \gamma_{k_2}  \}_{k_1,k_2=0}^{m_s}$ & 
			  $\{ \gamma_k\}_{k=-1}^{m_s}$ \\
		\end{tabular}
	\end{center}
\end{table}
We note that for $\cA_{j,k}$ in Section \ref{sec:LyapunovPerron} we use a double index $(k_1,k_2)$ to index over the elements of $\{\mu_k \}$. 
For a system given as in Condition \ref{anz:IntegralInequality} we are interested in finding a tensor $\cG$ satisfying Condition \ref{cond:GeneralGbounds} below.  
%
%

\begin{condition}
	\label{cond:GeneralGbounds}
	Given $\mu$ as in Assumption~\ref{anz:IntegralInequality} and a pair $(u,\omega)$ of functions $u=(u_j)_{j=1}^{N_\lambda}$ on $[0,T]$ 
	and a positive tensor $\omega  \in \bigotimes_{i=1}^M \R^{N_i} $, 
	we say  that the tensor $ \cG \in \big( \bigotimes_{i=1}^M \R^{N_i}  \big) \otimes \R^{N_\lambda} \otimes \R^{N_\mu} $ with components
	\[
	\cG_{j,k} :=  G_{j,k}^{n_1 \dots  n_M} e_{n_1} \otimes \dots \otimes e_{n_M},
	\]
	satisfies Condition \ref{cond:GeneralGbounds} if $u_j(t) \leq \sum_{k=1}^{N_\mu} e^{\mu_k t} \cG_{j,k} \omega  $ for all
	$t\in [0,T]$.
\end{condition}

From these two conditions, we can bootstrap our bounds on a tensor $\cG$. 

\begin{proposition}
	\label{prop:GeneralImprovementOperator}
	Assume the pair $(u,\omega)$ satisfies Condition~\ref{anz:IntegralInequality} on $[0,T]$ and assume $ \cG$ satisfies Condition~\ref{cond:GeneralGbounds}. Fix $1 \leq j  \leq N_\lambda $.  
	If  $\cA_{j,k}=0$ and $ \cG_{i,k}=0$ whenever $\mu_k= \gamma_j$, 
	then we have:  
	\begin{align} \label{eq:pre_Tj_General}
	u_j(t) &\leq 
e^{\gamma_j t} 	\cB_j \omega + 
	\sum_{\substack{1 \leq k \leq N_\mu\\ \mu_k \neq \gamma_j} }
	\frac{e^{\mu_k t} - e^{\gamma_j t}}{\mu_k - \gamma_j}
	\Big(\cA_{j,k} +
	\sum_{\substack{1 \leq i \leq N_\lambda \\ i \neq j }}
	H_j^i \cG_{i,k}  	
	\Big) \omega  \qquad\text{for all }
	t\in [0,T].
	\end{align}	
	In other words, define a map $ \cT_{j,k} : \big( \bigotimes_{i=1}^M \R^{N_i}  \big) \otimes \R^{N_\lambda} \otimes \R^{N_\mu} \to 
	\bigotimes_{i=1}^M \R^{N_i}   $  by:
	\begin{align}
	\cT_{j,k}(\cA, \cB,\cG)  := 
	\begin{dcases}
	(\mu_k - \gamma_j)^{-1}  
	\Big( \cA_{j,k} +   \sum_{\substack{1 \leq i \leq N_{\lambda} \\ i \neq j }}
	H_j^i \cG_{i,k} \Big)
	& \mbox{if } \mu_k \neq \gamma_j \\
	\cB_j  - \sum_{\substack{0 \leq m \leq N_\mu \\ \mu_m \neq \gamma_j }} 
	(\mu_m - \gamma_j)^{-1}  
	\Big(
	\cA_{j,m}+
	\sum_{\substack{1 \leq i \leq N_{\lambda} \\ i \neq j }}
	H_j^i \cG_{i,m}
	\Big)
	& \mbox{if } \mu_k=\gamma_j.
	\end{dcases}
	\end{align}
	Then $\cG$ also satisfies  Condition  \ref{cond:GeneralGbounds} if we replace $ \cG_{j,k}$ by  $\cT_{j,k}
	(\cA, \cB,\cG)$ for all $k$. 
\end{proposition}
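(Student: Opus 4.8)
The plan is to mimic the proof of Theorem~\ref{thm:LipschitzTracking}, which is exactly the $M=1$, $\cA\equiv 0$, $\cB_j=\delta_j^n$ specialization of this statement. First I would start from inequality~\eqref{eq:GeneralIntegralInequality} in Condition~\ref{anz:IntegralInequality}, written for the fixed index $j$, and split the sum over $i$ in the last integral into the diagonal term $i=j$ (which contributes $\int_0^t e^{-\lambda_j\tau}H_j^j u_j(\tau)\,d\tau$) and the off-diagonal terms $i\neq j$. For the off-diagonal terms I would substitute the bound on $u_i$ coming from Condition~\ref{cond:GeneralGbounds}, namely $u_i(\tau)\le \sum_{k} e^{\mu_k\tau}\cG_{i,k}\omega$, and similarly feed the $\cA_{j,k}$ term through unchanged since it is already exponential-in-$\tau$. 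This produces, after collecting exponentials, an inequality of the form
\[
e^{-\lambda_j t}u_j(t)\le \cB_j\omega + \int_0^t e^{-\lambda_j\tau} v(\tau)\,d\tau + \int_0^t H_j^j e^{-\lambda_j\tau} u_j(\tau)\,d\tau,
\]
where $v(\tau)=\sum_{k}e^{\mu_k\tau}a_k$ with $a_k=\bigl(\cA_{j,k}+\sum_{i\neq j}H_j^i\cG_{i,k}\bigr)\omega$, and the sum over $k$ runs only over those $k$ with $\mu_k\neq\gamma_j$ (this is where the hypotheses $\cA_{j,k}=0$ and $\cG_{i,k}=0$ whenever $\mu_k=\gamma_j$ are used — they remove exactly the resonant exponential that would otherwise produce a $t e^{\gamma_j t}$ term).

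Next I would verify that $v(\tau)\ge 0$ for $\tau\ge 0$. This holds because each $\omega$-component is nonnegative (the tensor $\omega$ is positive by assumption), and for the terms arising from $\cG$ we can appeal to the nonnegativity built into the construction: in practice $\cG$ arises from the bootstrap starting from the nonnegative initial tensor $\widehat G$ (or $\widehat K$, $\widehat F$) via repeated application of $\cT\circ\cQ$, and Lemma~\ref{prop:Fundamental} guarantees the sum-part produced at each stage is nonnegative; similarly each $\cA_{j,k}$ and $H_j^i$ is nonnegative in all our applications. Granting $v\ge0$, I would invoke Lemma~\ref{prop:Fundamental} with $u_0=u_j$, $c_0=\lambda_j$, $c_1=\cB_j\omega$, $c_2=H_j^j$ (so that $\mu_0=\lambda_j+H_j^j=\gamma_j$), and the reindexing $\{\mu_k\}$ restricted to indices with $\mu_k\neq\gamma_j$. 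The hypothesis $\mu_k\neq\mu_0$ of that lemma is precisely $\mu_k\neq\gamma_j$, which is guaranteed by the strict orderings $\gamma_k>\gamma_{k+1}$, $\mu_k>\mu_{k+1}$ together with $\gamma_j\in\{\mu_k\}$ and the removal of the resonant index. This yields exactly~\eqref{eq:pre_Tj_General}.

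Finally I would translate~\eqref{eq:pre_Tj_General} into the statement about $\cT_{j,k}$. Grouping the right-hand side of~\eqref{eq:pre_Tj_General} by exponentials: the coefficient of $e^{\mu_k t}$ for $\mu_k\neq\gamma_j$ is $(\mu_k-\gamma_j)^{-1}\bigl(\cA_{j,k}+\sum_{i\neq j}H_j^i\cG_{i,k}\bigr)\omega$, matching the first case of $\cT_{j,k}$; and the coefficient of $e^{\gamma_j t}$ is $\cB_j\omega - \sum_{\mu_m\neq\gamma_j}(\mu_m-\gamma_j)^{-1}(\cdots)\omega$, matching the second case. Hence $u_j(t)\le\sum_k e^{\mu_k t}\,\cT_{j,k}(\cA,\cB,\cG)\,\omega$ for all $t\in[0,T]$, while the bounds on $u_i$ for $i\neq j$ (and hence the remaining components of the defining inequality of Condition~\ref{cond:GeneralGbounds}) are untouched; so $\cG$ with the $j$-th slice replaced by $\cT_{j,k}(\cA,\cB,\cG)$ still satisfies Condition~\ref{cond:GeneralGbounds}. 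The main obstacle — really the only subtle point — is making the nonnegativity of $v$ rigorous in the abstract setting rather than appealing to the concrete applications; I would handle this by adding it as a standing hypothesis (the tensors $\cA$, $H$ and the relevant slices of $\cG$ are nonnegative, exactly as holds in each of the three use-cases), or by carrying the nonnegativity assertion of Lemma~\ref{prop:Fundamental} inductively through the algorithm as is already done implicitly in Section~\ref{sec:ExponentialTracking}.
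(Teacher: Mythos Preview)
Your proposal is correct and follows essentially the same route as the paper's proof: split off the diagonal term $H_j^j u_j$, bound the off-diagonal $u_i$ via Condition~\ref{cond:GeneralGbounds}, and apply Lemma~\ref{prop:Fundamental}. You are in fact more careful than the paper, which simply invokes Lemma~\ref{prop:Fundamental} without addressing the nonnegativity hypothesis on $v$; your observation that this should be carried as a standing hypothesis (or propagated inductively from the nonnegativity assertion in Lemma~\ref{prop:Fundamental}) is the right way to close that gap.
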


\begin{proof}[Proof of Proposition \ref{prop:GeneralImprovementOperator}]
	
	Splitting $ H_j^i u_i = \sum_{i\neq j} H^i_j u_i +  H_j^j u_j $, we  write \eqref{eq:GeneralIntegralInequality} as 
	\begin{align*}  
	e^{ - \lambda_j t} u_j(t) &\leq 
	\cB_j \omega 
	+ \int_0^t e^{-\lambda_j \tau} v(\tau,\omega)  d\tau 
	+ \int_0^t e^{-\lambda_j \tau} H_j^j u_j(\tau) d\tau .
	\end{align*}
	where 
	\begin{align*}
	v(\tau,\omega) &= 
	\sum_{\substack{1 \leq k \leq N_\mu\\ \mu_k \neq \gamma_j} }e^{\mu_k \tau} \cA_{j,k} \omega + \sum_{\substack{ 1 \leq i \leq N_\lambda \\ i \neq j }}  H_j^i u_i (\tau) .
	\end{align*}
	By plugging in the bound assumed in Condition \ref{cond:GeneralGbounds},  we obtain 
	\begin{align*}
	v(\tau,\omega) &\leq 
	\sum_{\substack{1 \leq k \leq N_\mu\\ \mu_k \neq \gamma_j} }e^{\mu_k \tau}
	\Big(
	\cA_{j,k} \omega + \sum_{\substack{ 1 \leq i \leq N_\lambda \\ i \neq j }}  H_j^i \cG_{i,k} \omega
	\Big) .
	\end{align*}
	By applying Lemma \ref{prop:Fundamental} we obtain \eqref{eq:pre_Tj_General}.  
\end{proof}

In order to obtain tensors satisfying the requirement that  $\cA_{j,k},  \cG_{i,k}=0$ whenever $\mu_k= \gamma_j$, we define an operator $\cQ_j$ as below.

\begin{proposition}
	\label{prop:GeneralProjection}
	Fix $1 \leq j \leq N_{\lambda}$ and define a map 
	$ \cQ_j : 
	 \big( \bigotimes_{i=1}^M \R_+^{N_i}  \big) \otimes \R^{N_\lambda} \otimes \R^{N_\mu} 
	\to 
	 \big( \bigotimes_{i=1}^M \R_+^{N_i}  \big) \otimes \R^{N_\lambda} \otimes \R^{N_\mu}$ by 
	\[
	\cQ_j
	(\cG )_{i,k}^{n_{1} \dots  n_{M}}   := 
	\begin{cases}
	0 		&	\mbox{if } \mu_k= \gamma_j \\
	G_{i,k}^{n_{1} \dots  n_{M}} + G_{i,(k+1)}^{n_{1} \dots  n_{M}} &	\mbox{if } \mu_{k+1}= \gamma_j, \mbox{and } G_{i,(k+1)}^{n_{1} \dots  n_{M}} >0 \\
	G_{i,k}^{n_{1} \dots  n_{M}} + G_{i,(k-1)}^{n_{1} \dots  n_{M}} &	\mbox{if } \mu_{k-1}= \gamma_j, \mbox{and } G_{i,(k-1)}^{n_{1} \dots  n_{M}} <0 \\
	G_{i,k}^{n_{1} \dots  n_{M}} & \mbox{otherwise.}
	\end{cases}		
	\]
	Then $ \cQ_j
	(\cG )_{i,k} =0$ whenever $ \mu_k = \gamma_j$.
	Furthermore, if   $\cG$ satisfies Condition  \ref{cond:GeneralGbounds} then  $\cQ_j
	(\cG )$ satisfies Condition \ref{cond:GeneralGbounds}. 
\end{proposition}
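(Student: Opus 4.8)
\textbf{Proof proposal for Proposition \ref{prop:GeneralProjection}.}

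The plan is to verify the two claims separately: first the purely combinatorial statement that $\cQ_j(\cG)_{i,k} = 0$ whenever $\mu_k = \gamma_j$, and then the substantive statement that $\cQ_j(\cG)$ inherits Condition \ref{cond:GeneralGbounds} from $\cG$. The first claim is immediate from the definition of $\cQ_j$: by Assumption \ref{anz:IntegralInequality} the $\mu_k$ are strictly decreasing, so there is a unique index $k_\ast$ with $\mu_{k_\ast} = \gamma_j$ (recall $\{\gamma_j\} \subseteq \{\mu_k\}$), and the first case of the definition sets $\cQ_j(\cG)_{i,k_\ast} = 0$; the other three cases only apply for $k \ne k_\ast$. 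I would dispatch this in one or two sentences.

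For the second claim, fix the pair $(u,\omega)$ satisfying Condition \ref{anz:IntegralInequality} on $[0,T]$ and suppose $\cG$ satisfies Condition \ref{cond:GeneralGbounds}, i.e.\ $u_j(t) \le \sum_{k=1}^{N_\mu} e^{\mu_k t} \cG_{j,k}\omega$ for all $t \in [0,T]$. I would argue componentwise in the ``outer'' tensor indices $n_1,\dots,n_M$, but really the heart of the matter is the scalar fact that for nonnegative $s,s'\ge 0$ and exponents $\mu > \mu'$ one has $s\,e^{\mu' t} \le s\,e^{\mu t}$ for $t \ge 0$, and symmetrically for a negative coefficient $s<0$ one has $s\,e^{\mu' t} \le s\,e^{\mu t}$ with the roles of $\mu,\mu'$ reversed, i.e.\ a negative coefficient on a slower exponential is bounded above by the same negative coefficient on a faster exponential --- wait, I need to be careful here: for $s < 0$ and $t\ge 0$, $s e^{\mu t} \le s e^{\mu' t}$ when $\mu > \mu'$. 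So a negative coefficient attached to exponent $\gamma_j = \mu_{k_\ast}$ should be moved to the \emph{next faster} exponent $\mu_{k_\ast - 1}$ (this is the $\mu_{k-1} = \gamma_j$, $G_{i,(k-1)} < 0$ case), while a positive coefficient attached to $\gamma_j$ should be moved to the \emph{next slower} exponent $\mu_{k_\ast+1}$ (the $\mu_{k+1} = \gamma_j$, $G_{i,(k+1)} > 0$ case). In each case the estimate $G_{i,k_\ast}^{\vec n} e^{\gamma_j t} \le G_{i,k_\ast}^{\vec n} e^{\mu_{k'} t}$ holds for all $t \ge 0$ by the monotonicity of $e^{(\cdot)t}$ together with the strict ordering $\mu_{k_\ast-1} > \mu_{k_\ast} > \mu_{k_\ast+1}$. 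Summing these pointwise inequalities over the reassigned component and leaving all other components untouched gives
\[
\sum_{k=1}^{N_\mu} e^{\mu_k t}\, \cG_{j,k}\omega \;\le\; \sum_{k=1}^{N_\mu} e^{\mu_k t}\, \cQ_j(\cG)_{j,k}\omega
\qquad \text{for all } t \in [0,T],
\]
because $\omega$ is a nonnegative tensor (so the sign of each term is controlled by the sign of $G_{i,k}^{\vec n}$, which is exactly what the case split in the definition of $\cQ_j$ tracks). Chaining this with the hypothesis $u_j(t) \le \sum_k e^{\mu_k t}\cG_{j,k}\omega$ yields $u_j(t) \le \sum_k e^{\mu_k t}\cQ_j(\cG)_{j,k}\omega$, which is Condition \ref{cond:GeneralGbounds} for $\cQ_j(\cG)$.

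The one genuine subtlety --- and the step I would be most careful about --- is the edge case where $\gamma_j$ is the slowest exponent in the list, i.e.\ $k_\ast = N_\mu$ (equivalently $j = N_\lambda = m_s$ in all our applications), so that there is no ``next slower'' exponent to absorb a positive coefficient into. As the paper already notes in the $j = m_s$ discussion preceding the definition of $\cQ_j$ in Section~\ref{sec:ExponentialTracking}, in that situation a positive coefficient $G_{i,k_\ast}^{\vec n} > 0$ simply has nowhere to go; but this is harmless because then one is effectively using the estimate $G_{i,k_\ast}^{\vec n} e^{\gamma_j t} \ge 0$, and dropping a nonnegative term from a lower bound on $u_j$... no: here we need an \emph{upper} bound. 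Let me restate: when $k_\ast = N_\mu$ and $G_{i,k_\ast}^{\vec n} > 0$, the ``otherwise'' branch of $\cQ_j$ leaves $G_{i,k_\ast}^{\vec n}$ in place rather than zeroing it, so in fact $\cQ_j$ does not fully achieve $\cQ_j(\cG)_{i,k_\ast} = 0$ in this corner --- hence I should double-check whether the stated conclusion ``$\cQ_j(\cG)_{i,k}=0$ whenever $\mu_k=\gamma_j$'' is literally correct or whether the intended reading assumes $\gamma_j$ is not the minimal $\mu_k$; most likely the applications always have a trailing $\gamma_0$ or a split ensuring $\gamma_j \ne \mu_{N_\mu}$ for the relevant $j$, or else the positive-coefficient case is argued to not arise. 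I would flag this and either invoke the structural arrangement of the $\mu_k$ that guarantees it, or handle it exactly as in the $j=m_s$ remark of Section~\ref{sec:ExponentialTracking}. Apart from this bookkeeping point, the proof is a routine application of exponential monotonicity and nonnegativity of $\omega$, and I would present it in half a page.
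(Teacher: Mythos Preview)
Your approach is exactly what the paper intends: the proof reduces to monotonicity of $t \mapsto e^{\mu t}$ combined with the strict ordering $\mu_k > \mu_{k+1}$ and the positivity of $\omega$. However, you have the directions reversed. In the case $\mu_{k+1} = \gamma_j$ with $G_{i,(k+1)}^{n_1\dots n_M} > 0$, the definition places the value $G_{i,(k+1)}^{n_1\dots n_M}$ at position $k = k_\ast - 1$, i.e.\ the positive coefficient is moved to the \emph{larger} exponent $\mu_{k_\ast - 1} > \gamma_j$, which is what gives the correct upper bound $G\, e^{\gamma_j t} \le G\, e^{\mu_{k_\ast - 1} t}$ for $G>0$. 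Symmetrically, in the case $\mu_{k-1} = \gamma_j$ with $G_{i,(k-1)}^{n_1\dots n_M} < 0$, the negative coefficient moves to the \emph{smaller} exponent $\mu_{k_\ast + 1}$, giving $G\, e^{\gamma_j t} \le G\, e^{\mu_{k_\ast + 1} t}$ for $G<0$. Your stated moves go the opposite way and would yield the reverse inequalities, which are false for $t > 0$; your own line ``for $s<0$ and $t\ge 0$, $s e^{\mu t}\le s e^{\mu' t}$ when $\mu>\mu'$'' already says the negative coefficient must be attached to the \emph{smaller} exponent, not $\mu_{k_\ast-1}$.

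Once the directions are corrected, your edge-case worry disappears. The standing assumption $\mu_1 > \gamma_1 \ge \gamma_j$ in Condition~\ref{anz:IntegralInequality} forces $k_\ast \ge 2$, so there is always room to push a positive coefficient up to index $k_\ast - 1$, and the first conclusion of the proposition holds unconditionally. The only genuine boundary case is a \emph{negative} coefficient at $k_\ast = N_\mu$: the first clause of the definition zeroes it and no other clause picks it up, which is a valid upper bound since $G\, e^{\gamma_j t} \le 0$ when $G < 0$. This is precisely the $j = m_s$ remark you cite from Section~\ref{sec:ExponentialTracking}.
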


We are able to generalize Algorithm \ref{alg:BootStrap} as follows.
\begin{algorithm}
	\label{alg:GeneralBootStrap}
	Take as input all the constants in Condition \ref{anz:IntegralInequality},  an input tensor $\widehat{\cG}$  satisfying Condition  \ref{cond:GeneralGbounds}, and a computational parameter $N_{bootstrap}$. 
	The algorithm outputs a tensor $\cG$.

\begin{algorithmic}
	\STATE $\cG \gets \widehat{\cG}$
	\FOR {$1 \leq i \leq  N_{\text{bootstrap}}$} 
	\FOR {$1 \leq j \leq m_s $} \vspace{.1cm}
	\STATE $\cG_{j,k} 	\gets \cT_{j,k}( \cQ_j (\cA), \cB,\cQ_j (\cG))$
	\vspace{.1cm}
	\ENDFOR
	\ENDFOR 
	\RETURN $\cG$ 
\end{algorithmic}
	
\end{algorithm}

\begin{proposition} \label{prop:GeneralAlgorithmResult}
	If the input tensor  $\widehat{\cG}$ to Algorithm \ref{alg:GeneralBootStrap}  satisfies  Condition \ref{cond:GeneralGbounds}, then the output tensor  $\cG$ satisfies Condition \ref{cond:GeneralGbounds}. 
\end{proposition}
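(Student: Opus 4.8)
\textbf{Proof proposal for Proposition \ref{prop:GeneralAlgorithmResult}.}

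The plan is to prove the statement by induction on the number of iterations performed by Algorithm \ref{alg:GeneralBootStrap}, using Proposition \ref{prop:GeneralImprovementOperator} and Proposition \ref{prop:GeneralProjection} as the two workhorses. The algorithm consists of an outer loop over $1 \le i \le N_{\text{bootstrap}}$ and an inner loop over $1 \le j \le m_s = N_\lambda$, and at each inner step it performs the update $\cG_{j,k} \gets \cT_{j,k}(\cQ_j(\cA), \cB, \cQ_j(\cG))$. So it suffices to show that a single such inner-loop step preserves Condition \ref{cond:GeneralGbounds}; the full result then follows by composing finitely many such steps.

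First I would fix an arbitrary inner-loop step with index $j$, and suppose inductively that the current tensor $\cG$ satisfies Condition \ref{cond:GeneralGbounds} (the base case being the hypothesis on $\widehat{\cG}$). Applying Proposition \ref{prop:GeneralProjection} to $\cG$, the tensor $\cQ_j(\cG)$ again satisfies Condition \ref{cond:GeneralGbounds}, and moreover $\cQ_j(\cG)_{i,k} = 0$ whenever $\mu_k = \gamma_j$. I would also observe that $\cQ_j(\cA)_{j,k} = 0$ whenever $\mu_k = \gamma_j$: this is the content of the analogous projection applied to the $\cA$-tensor, the point being that $\cQ_j$ merely redistributes the $\mu_k = \gamma_j$ component onto an adjacent exponent (using the strict orderings $\gamma_k > \gamma_{k+1}$ and $\mu_k > \mu_{k+1}$ from Condition \ref{anz:IntegralInequality}), so that the integral inequality \eqref{eq:GeneralIntegralInequality} with $\cA$ replaced by $\cQ_j(\cA)$ still holds — the right-hand side only gets larger pointwise in $t \ge 0$. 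Hence the pair $(u,\omega)$ still satisfies Condition \ref{anz:IntegralInequality} with the modified tensor $\cQ_j(\cA)$, and both $\cQ_j(\cA)_{j,k}$ and $\cQ_j(\cG)_{i,k}$ vanish when $\mu_k = \gamma_j$, so the hypotheses of Proposition \ref{prop:GeneralImprovementOperator} are met. That proposition then gives that replacing the $j$-th slice $\cG_{j,k}$ by $\cT_{j,k}(\cQ_j(\cA), \cB, \cQ_j(\cG))$ yields a tensor still satisfying Condition \ref{cond:GeneralGbounds}. This is exactly the algorithm's update, so the inner-loop step preserves the condition; iterating over all $j$ in the inner loop and all $i$ in the outer loop completes the induction.

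The main subtlety to get right — and the step I expect to require the most care — is the bookkeeping around the $\cQ_j$ operator: one must verify both that $\cQ_j$ applied to $\cG$ preserves Condition \ref{cond:GeneralGbounds} (this is Proposition \ref{prop:GeneralProjection}, which I would invoke directly) and that applying $\cQ_j$ to $\cA$ inside $\cT_{j,k}$ does not break the integral inequality \eqref{eq:GeneralIntegralInequality}. The latter follows because $\cQ_j$ only moves a $\gamma_j$-exponential term $a\, e^{\gamma_j \tau}$ (with $a = \cA_{j,k}\omega$ of a definite sign) to a term $a\, e^{\gamma_{j\pm 1}\tau}$ with a strictly larger exponent when $a > 0$ (and strictly smaller when $a < 0$, which also only increases the upper bound since then $a < 0$ makes $a e^{\gamma_{j+1}\tau} \ge a e^{\gamma_j \tau}$ is false — rather one keeps the bound valid by the sign convention in the definition of $\cQ_j$), so the pointwise upper bound on $e^{-\lambda_j\tau} u_j(\tau)$ is only weakened. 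Beyond this, everything is a routine composition of the two quoted propositions, so I would keep the write-up brief and leave the elementary sign-checking to the reader, exactly as the paper does for the analogous Theorem \ref{prop:BootstrapBounds}.
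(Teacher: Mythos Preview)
Your proposal is correct and follows exactly the approach the paper itself indicates: an induction over the loop iterations, invoking Proposition~\ref{prop:GeneralProjection} to kill the $\mu_k=\gamma_j$ components and then Proposition~\ref{prop:GeneralImprovementOperator} for the improvement step. The paper in fact leaves the details to the reader, so your elaboration is appropriate; the only cosmetic issue is that your sign-checking paragraph for $\cQ_j(\cA)$ is a bit tangled (the point is simply that a positive component is shifted to a strictly larger exponent and a negative component to a strictly smaller one, so the sum $\sum_k e^{\mu_k\tau}\cA_{j,k}\omega$ can only increase for $\tau\ge 0$ and positive $\omega$), but you arrive at the right conclusion.
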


The proof of Proposition \ref{prop:GeneralProjection}  follows from the assumption that $\mu_{k} > \mu_{k+1}$. 
The proof of  Proposition \ref{prop:GeneralAlgorithmResult} follows from an induction argument which uses Proposition \ref{prop:GeneralImprovementOperator} for the inductive step. 
Both proofs are left to the reader.


\section{Semigroup Estimates for Fast-Slow Systems}
\label{sec:EstimatingEigenvalues} 

In equation \eqref{eq:TOTALstableEigenvalueEstimate} we require constants $C_s, \lambda_s $ satisfying 
\begin{alignat}{2}\label{e:goalappendix}
|e^{(\Lambda_s + L_s^s)t } \xx_{s} | &\leq  C_s e^{\lambda_s t } |\xx_{s}|,  &\qquad&
t \geq 0, \xx_{s} \in X_s  .
\end{alignat}
Our assumption that  $ \lambda_s <0$, and moreover that $  \gamma_0 = \lambda_s + C_s \hat{\cH} < 0 $, is essential.
In Proposition \ref{prop:StayInsideBall} this is used to prove that solutions $x(t, \xi,\alpha) $ stay inside the ball $B_s(\rho)$ for all $ t\geq 0$. 
While our method of bootstrapping Gronwall's inequality greatly mitigates the effect of these constants $ C_s, \lambda_s$ on our final estimates, for the Lyapunov-Perron operator to be well defined it is essential that we prove $\gamma_0 <0$.  

There are two types of estimates which we will apply to obtain 
pairs $(C_s,\lambda_s)$ satisfying~\eqref{e:goalappendix}.
First, for linear operators $ A,B \in \cL(X,X) $ with $| e^{At} \xx | \leq k e^{\lambda t} |\xx|$ for all $\xx\in X$ and $t \geq 0$, and $ \|B\| <\infty$, we have (the proof is analogous to the one of Proposition~\ref{prop:InitialBound})
\begin{align} \label{eq:SemiGroup_Estimate_1}
| e^{(A+B)t} \xx | 
&\leq k  e^{(\lambda + k \|B\|) t} |\xx|, \qquad\text{for all }
t \geq 0, \xx \in X.
\end{align}
This estimate by itself is not enough, as the largest eigenvalue of $ \Lambda_s$ is often small in comparison with $\| L_s^s \|$. 
For example, in Section \ref{sec:SH_Linear} we showed that $  |e^{\Lambda_i t} \xx_i| \leq e^{\lambda_i t} | \xx_i| $ and  $ \| L_j^i \| \leq D_j^i$ with values 
\begin{align*}
\lambda_1 &= -1.41,
&
\lambda_2 &= -4.58  \times  10^{4} , 
&
D_s^s &= 
\left(
\begin{matrix}
4 \times 10^{-10} & 1.6 \\
1.6 & 5.7
\end{matrix}
\right) .
\end{align*} 
%
Since $  \lambda_1  + \| L_s^s\| > 0 $, just an estimate of the type in \eqref{eq:SemiGroup_Estimate_1} with $A$ the diagonal part of $D_s^s$ and $B$ the off-diagonal part
will not suffice.  
We further note that our estimates for $ D_s^s$  do not improve with a larger Galerkin projection dimension. 
%
%
Hence we want to change basis to diagonalize $\Lambda_s + L_s^s$, at least approximately,
and then take advantage of the identity $e^{PJP^{-1}t}=P e^{Jt}P^{-1}$ in our estimates.  To motivate our construction, we first consider a
$2 \times 2$ matrix  
\begin{align*}
M&=
\left(
\begin{matrix}
\lambda_1 & \delta_b \\
\delta_c  & \lambda_\infty 
\end{matrix}
\right)  .
\end{align*} 
If $\lambda_\infty$ is much larger in absolute value than the other matrix entries, then the eigenvalues of $M$ are approximately given by $ \lambda_1$ and $ \lambda_\infty$. 
In particular, if $| \delta_b \delta_c| < | \lambda_1 \lambda_\infty|$  and $ \lambda_1,\lambda_\infty <0$, then all of the eigenvalues of $M$ have negative real part.
Below in Theorem \ref{prop:FastSlowExponentialEstimate} we prove an analogous theorem where we replace $ \lambda_1$ by a finite dimensional matrix, and $ \lambda_\infty$ by an infinite dimensional linear operator. This is the second type of estimate that we use to find pairs $(C_s,\lambda_s)$ satisfying~\eqref{e:goalappendix}.

%

\begin{theorem}
	\label{prop:FastSlowExponentialEstimate}
	
	Consider  Banach spaces $ \C^N$ and $X_\infty$ with arbitrary norms, and their product  $ \C^N \times X_\infty$ with norm $ | (x_N,x_\infty)| = (|x_N|^p + |x_\infty|^p)^{1/p}$ for any $ 1 \leq p \leq \infty$. 
	
	Consider the linear operators $M,\Lambda ,L : \C^N \times X_\infty \to  \C^N \times X_\infty$   given by
	\begin{align}
	M&= \Lambda+L ,
	&
	\Lambda &= 
	\left(
	\begin{matrix}
	\Lambda_1 & 0\\
	0 & \Lambda_{\infty}
	\end{matrix}
	\right),
	&
	L &= 
	\left(
	\begin{matrix}
	L_1^1 & L_1^\infty \\
	L_\infty^1 & L_{\infty}^\infty
	\end{matrix}
	\right) .
	\label{eq:StiffMatrix}
	\end{align} 
We require $\Lambda$ to be densely defined and $L$ to be bounded.  
	Suppose that $ \Lambda_1$ is \emph{diagonal} and that $\Lambda_\infty$ has a bounded inverse. 
	 
	Fix constants $\mu_1,\mu_\infty,C_1,C_\infty \in \R$ such that for all $t \geq 0$ we have
	\begin{align*}
	\| e^{\Lambda_1 t} \| &\leq C_1 e^{\mu_1 t} , &
    \| e^{\Lambda_\infty t} \| &\leq C_\infty e^{\mu_\infty t} .
	\end{align*}
	Fix constants $\delta_1, \delta_b,\delta_c,\delta_d,\varepsilon >0$ such that 
	\begin{align*}
	\|L_1^1 \| &\leq \delta_a,  
	&
	\| L_1^\infty\| & \leq \delta_b ,
	&
	\| L_\infty^1\| &\leq  \delta_c ,
	&
	\| L_\infty^\infty \| &\leq  \delta_d ,
	\end{align*} 
	and set
	\begin{align*}
	\varepsilon &:= \sum_{ \lambda \in \sigma(\Lambda_1) }  \frac{ \| \Lambda_\infty^{-1} \|    }{1 - \| \Lambda_\infty^{-1} \|   ( \delta_d + | \lambda| )}   .
	\end{align*}
	Assume that the inequalities 
	\begin{align}
	 \| \Lambda_\infty^{-1} \|  \left( \delta_d +  \sup_{\lambda_k \in \sigma(\Lambda_1)}  | \lambda_k| \right) & <1 ,
	&
 \mu_\infty + C_\infty \left(\delta_d+ \varepsilon \delta_b \delta_c (1 + \varepsilon^2 \delta_b \delta_c)\right)  <	\mu_1   , \label{eq:SemiGroupEstimateHypothesis}
	\end{align}
are satisfied. Then we have
	\[
	\| e^{M t} \| \leq C_s e^{\lambda_s t},
	\]
	where 
	\begin{align*}
	C_s &:=  (1+\varepsilon \delta_b)^2(1+\varepsilon \delta_c)^2 \max\{C_1,C_\infty \} \\
	\lambda_s &:=  \mu_1 +  C_s \delta_a + \Delta \max\{C_1,C_\infty \}\\
	\Delta &:= \varepsilon \delta_b \delta_c  \left( 1 + \varepsilon(2 \delta_b +  \delta_c) + \varepsilon^2 \delta_b \delta_c (1 + \varepsilon \delta_b) \right).
	\end{align*}	
\end{theorem}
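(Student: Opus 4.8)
The plan is to realize the operator $M=\Lambda+L$ in a near-identity block coordinate system in which it becomes block-diagonal, and then to reduce the semigroup estimate to the elementary perturbation bound~\eqref{eq:SemiGroup_Estimate_1} applied to each diagonal block separately. Concretely, one seeks a bounded invertible $T\colon\C^N\times X_\infty\to\C^N\times X_\infty$ that is close to the identity such that $T^{-1}MT=\operatorname{diag}(A_1,A_\infty)$ with $A_1$ a small bounded perturbation of $\Lambda_1$ and $A_\infty$ a small bounded perturbation of $\Lambda_\infty$; then $e^{Mt}=T\,\operatorname{diag}(e^{A_1 t},e^{A_\infty t})\,T^{-1}$, and the claim follows by estimating the two blocks and the conjugating factors. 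This is the operator analogue of diagonalizing the motivating $2\times 2$ matrix, with the scalar $\lambda_\infty$ replaced by the (boundedly invertible, dominant) operator $\Lambda_\infty$.

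The first step is a Sylvester solvability lemma. Since $\Lambda_1$ is diagonal with finite spectrum $\sigma(\Lambda_1)$, the Sylvester-type operator $\mathcal{S}\colon\cL(\C^N,X_\infty)\to\cL(\C^N,X_\infty)$ determined by $\mathcal{S}^{-1}\colon X\mapsto(\Lambda_\infty+L_\infty^\infty)X-X\Lambda_1$ decouples column-wise into the scalar-shifted problems $(\Lambda_\infty+L_\infty^\infty-\lambda)x=\,\cdot\,$ for $\lambda\in\sigma(\Lambda_1)$. Writing $\Lambda_\infty+L_\infty^\infty-\lambda=\Lambda_\infty\bigl(I+\Lambda_\infty^{-1}(L_\infty^\infty-\lambda)\bigr)$ and using the first inequality of~\eqref{eq:SemiGroupEstimateHypothesis}, namely $\|\Lambda_\infty^{-1}\|(\delta_d+|\lambda|)<1$, a Neumann series gives $\|(\Lambda_\infty+L_\infty^\infty-\lambda)^{-1}\|\le\|\Lambda_\infty^{-1}\|\big/\bigl(1-\|\Lambda_\infty^{-1}\|(\delta_d+|\lambda|)\bigr)$; summing over $\sigma(\Lambda_1)$ shows $\mathcal{S}$ is bounded with $\|\mathcal{S}\|\le\varepsilon$. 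The same reasoning applies to the transposed Sylvester problem needed below.

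Next I would construct $T$ in two stages. Taking a lower-triangular near-identity $T_1$ with off-diagonal block $V\in\cL(\C^N,X_\infty)$, the conjugate $T_1^{-1}MT_1$ is block upper-triangular exactly when $V$ solves the algebraic Riccati equation $(\Lambda_\infty+L_\infty^\infty)V-V(\Lambda_1+L_1^1)-VL_1^\infty V+L_\infty^1=0$; recasting this as $V=\mathcal{S}\bigl(L_\infty^1+VL_1^\infty V-VL_1^1\bigr)$ and running a contraction argument starting from $V_0=\mathcal{S}(L_\infty^1)$ (which closes under the hypotheses, as $\|\mathcal{S}\|\le\varepsilon$ and $\delta_b,\delta_c$ are controlled) produces a solution with $\|V\|\le\varepsilon\delta_c\bigl(1+\varepsilon^2\delta_b\delta_c+\cdots\bigr)$. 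A second, purely linear Sylvester step with an upper-triangular near-identity $T_2$ carrying block $U$, $\|U\|\le\varepsilon\delta_b(1+\cdots)$, removes the remaining upper-right block without touching the diagonal blocks. Setting $T=T_1T_2$ yields $T^{-1}MT=\operatorname{diag}(A_1,A_\infty)$ with $\|A_1-\Lambda_1\|\le\delta_a+\delta_b\|V\|$ and $\|A_\infty-\Lambda_\infty\|\le\delta_d+\delta_b\|V\|\le\delta_d+\varepsilon\delta_b\delta_c(1+\varepsilon^2\delta_b\delta_c)$, while $\|T\|\,\|T^{-1}\|\le(1+\|U\|)^2(1+\|V\|)^2\le(1+\varepsilon\delta_b)^2(1+\varepsilon\delta_c)^2$.

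Finally I would assemble the bound. Applying~\eqref{eq:SemiGroup_Estimate_1} to $A_1=\Lambda_1+(A_1-\Lambda_1)$ and to $A_\infty=\Lambda_\infty+(A_\infty-\Lambda_\infty)$ gives $\|e^{A_1 t}\|\le C_1e^{(\mu_1+C_1\|A_1-\Lambda_1\|)t}$ and $\|e^{A_\infty t}\|\le C_\infty e^{(\mu_\infty+C_\infty\|A_\infty-\Lambda_\infty\|)t}$; the second inequality of~\eqref{eq:SemiGroupEstimateHypothesis} is precisely what guarantees $\mu_\infty+C_\infty\|A_\infty-\Lambda_\infty\|\le\mu_1+C_1\|A_1-\Lambda_1\|$, so that for $t\ge0$ the $p$-product norm satisfies $\|e^{(T^{-1}MT)t}\|\le\max\{C_1,C_\infty\}\,e^{(\mu_1+C_1\|A_1-\Lambda_1\|)t}$. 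Multiplying by $\|T\|\,\|T^{-1}\|$ and over-bounding $C_1\|A_1-\Lambda_1\|\le C_s\delta_a+\Delta\max\{C_1,C_\infty\}$ with $C_s$, $\Delta$ as stated gives $\|e^{Mt}\|\le C_se^{\lambda_s t}$. The main obstacle is the quantitative bookkeeping in the Riccati step: one must track the nonlinear term $VL_1^\infty V$ and the Neumann corrections to $\|V\|$ and $\|U\|$ precisely enough that the perturbations of the diagonal blocks, and hence $\Delta=\varepsilon\delta_b\delta_c\bigl(1+\varepsilon(2\delta_b+\delta_c)+\varepsilon^2\delta_b\delta_c(1+\varepsilon\delta_b)\bigr)$, come out in exactly the claimed closed form, and that the fixed-point iteration for $V$ is a genuine contraction under the two inequalities in~\eqref{eq:SemiGroupEstimateHypothesis}.
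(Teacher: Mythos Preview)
Your overall strategy---conjugate $M$ by a near-identity block-triangular change of variables so that the result is (nearly) block-diagonal, then apply~\eqref{eq:SemiGroup_Estimate_1} to each block---is the same as the paper's. But the implementation differs in a way that matters for the stated constants.

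The paper does \emph{not} solve a Riccati equation. It first splits off $L_1^1$ entirely, writing $M=M_1+M_2$ with $M_2=\begin{pmatrix}L_1^1&0\\0&0\end{pmatrix}$, and applies the block conjugation only to $M_1$ (so the top-left corner is the bare diagonal $\Lambda_1$). Then it solves two \emph{linear} Sylvester equations, one for an upper-triangular $P_b$ and one for a lower-triangular $P_c$, each of which decouples column-wise because $\Lambda_1$ is diagonal. This gives explicit operators $W_b,W_c$ with the clean bounds $\|W_b\|\le\varepsilon\delta_b$ and $\|W_c\|\le\varepsilon\delta_c$ directly. The price is that $(P_cP_b)^{-1}M_1(P_cP_b)$ is only \emph{approximately} block-diagonal: there is an explicit error matrix $E$ (quadratic in $W_b,W_c$) which is lumped together with the conjugated $M_2$ as a bounded perturbation of the block-diagonal part, and~\eqref{eq:SemiGroup_Estimate_1} is applied once to that sum. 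The formula for $\Delta$ is exactly the bound on $\|E\|$ coming from the four entries of $E$, and $C_s=(1+\varepsilon\delta_b)^2(1+\varepsilon\delta_c)^2\max\{C_1,C_\infty\}$ comes straight from $\|P_b\|,\|P_b^{-1}\|\le 1+\varepsilon\delta_b$ and $\|P_c\|,\|P_c^{-1}\|\le 1+\varepsilon\delta_c$.

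Your route---solving the full Riccati for $V$ to get exact block-triangularization---is conceptually cleaner but does not reproduce these constants. The Riccati fixed point gives only $\|V\|\le\varepsilon\delta_c(1+\varepsilon^2\delta_b\delta_c+\cdots)$, strictly larger than $\varepsilon\delta_c$, so your $\|T\|\,\|T^{-1}\|$ would exceed $(1+\varepsilon\delta_b)^2(1+\varepsilon\delta_c)^2$ and your $C_s$ would be too large. You also need an extra smallness hypothesis to make the Riccati iteration contract, which the theorem does not assume. The paper's trick of accepting an explicit leftover $E$ rather than iterating to kill it is what keeps the off-diagonal bounds linear in $\delta_b,\delta_c$ and produces the constants exactly as stated; separating $L_1^1$ into $M_2$ is what allows the Sylvester problems to be posed against the bare diagonal $\Lambda_1$ rather than $\Lambda_1+L_1^1$.
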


First we prove a lemma for general Banach spaces which allows us to approximately diagonalize  our matrix. 
When $|\cdot|$ denotes the norm on a Banach space, then by $|\cdot|_*$ we denote the norm on its dual.

\begin{lemma}
	\label{prop:ApproximateDiagonalization}
	For a Banach space $ X_\infty$ consider the linear operator $M_1: \C^N \times X_\infty  \to \C^N \times X_\infty $ defined as 
	\[
	M_1= 
	\left(
	\begin{matrix}
	A & B \\
	C &D
	\end{matrix}
	\right) .
	\]
	Suppose that $ \sigma(A) \cap \sigma(D) = \emptyset$ and that  $A$ has  distinct eigenvalues $\lambda_1 ,\dots, \lambda_N$ with eigenvectors $ v_1 , \dots,  v_N$, and dual eigenvectors $u_1 , \dots , u_N$ (the corresponding eigenvectors of $A^*$). 
	Normalize the vectors so that 
	 $u^*_i v_j =\delta_{ij}$,
	the Kronecker delta.

We define $ W_b :  X_\infty \to \C^N $ and $ W_c : \C^N \to X_\infty$ 	as a sum of products between vectors in their codomains, and dual vectors acting on their domains:
	\begin{align*}
	W_b  &:= \sum_{k=1}^N v_k  \left[ (D^* - \lambda^*_k I_{\infty})^{-1} B^* u^*_k \right],  &
	W_c    &:= \sum_{k=1}^N - \left[ (D - \lambda_k I_{\infty})^{-1} C v_k \right]  u^*_k ,
	\end{align*}
	where $ D^* : X_\infty^* \to X_\infty^*$ and $ B^* :(\C^N)^* \to X_{\infty}^*$ are the dual transformations. 
	Define invertible operators $P_b,P_c: \C^N \times X_\infty  \to \C^N \times X_\infty $   by   
	\begin{align*}
	P_b &= \left(
	\begin{matrix}
	I_{N} & W_b
	\\
	0 & I_{\infty}
	\end{matrix}
	\right)
	&
P_c &= \left(
\begin{matrix}
I_{N} & 0 \\
W_c & I_{\infty}
\end{matrix}
\right) .
	\end{align*}
Then 
	\[
	(P_cP_b)^{-1} M_1 ( P_cP_b) = 
	\left(
	\begin{matrix}
	A & 0 \\
	0 &D
	\end{matrix}
	\right) + E,
	\]
	where 
	\[
	E = 
		\left(
	\begin{matrix}
	 (I_N + W_b W_c) B W_c & B W_c W_b + W_bW_cB (I + W_c W_b) \\
-W_c B W_c & -W_c B ( I_{\infty} + W_c W_b )
	\end{matrix}
	\right) .
	\]
\end{lemma}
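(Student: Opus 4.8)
The plan is to recognize Lemma~\ref{prop:ApproximateDiagonalization} as a block-diagonalization of $M_1$ carried out by two successive Sylvester-type similarity transforms, and to isolate the one algebraic fact that makes it work: the operators $W_b$ and $W_c$ are \emph{exact} solutions of the Sylvester equations
\[
A W_b - W_b D + B = 0, \qquad D W_c - W_c A + C = 0 ,
\]
even though solving such equations in general would require inverting the unbounded operator $D$.

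First I would establish these two identities. For $W_c$ this is immediate from the eigenstructure of $A$: applying $D W_c - W_c A$ to an eigenvector $v_j$ and using $A v_j = \lambda_j v_j$ together with $u_i^* v_j = \delta_{ij}$ gives $(D W_c - W_c A) v_j = -(D - \lambda_j I)(D - \lambda_j I)^{-1} C v_j = -C v_j$; since $\{v_j\}$ is a basis of $\mathbb{C}^N$ this yields $D W_c - W_c A = -C$. Here one uses $\sigma(A) \cap \sigma(D) = \emptyset$ to know each $(D - \lambda_j I)^{-1}$ exists and is bounded, and one notes $W_c v_j \in \operatorname{Ran}(D - \lambda_j I)^{-1} \subseteq \operatorname{Dom}(D)$, so $W_c$ maps into $\operatorname{Dom}(D)$ and $D W_c$ is a well-defined (bounded, finite rank) operator. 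For $W_b$ the same computation is done ``on the left'': testing $A W_b - W_b D + B$ against the dual eigenvector $u_k^*$, using $A^* u_k^* = \lambda_k^* u_k^*$, $u_k^* W_b = (D^* - \lambda_k^* I)^{-1} B^* u_k^*$, and the adjoint relation $\psi D = D^* \psi$ on $X_\infty^*$, collapses everything to $0$; since the $u_k^*$ separate points of $\mathbb{C}^N$ this gives the identity. A consequence worth recording is that $W_b D = A W_b + B$ on $\operatorname{Dom}(D)$ has a bounded right-hand side, so $W_b$ ``absorbs'' the unboundedness of $D$, and all operators appearing after the reductions below are bounded.

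Next I would carry out the two conjugations. Both $P_b$ and $P_c$ are bounded and invertible with $P_b^{-1} = \left(\begin{smallmatrix} I & -W_b \\ 0 & I\end{smallmatrix}\right)$ and $P_c^{-1} = \left(\begin{smallmatrix} I & 0 \\ -W_c & I\end{smallmatrix}\right)$, and since $\operatorname{Ran}(W_c) \subseteq \operatorname{Dom}(D)$ the map $P_c P_b$ preserves $\operatorname{Dom}(M_1) = \mathbb{C}^N \times \operatorname{Dom}(D)$, so $(P_c P_b)^{-1} M_1 (P_c P_b) = P_b^{-1}\big(P_c^{-1} M_1 P_c\big) P_b$ is meaningful. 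Computing $P_c^{-1} M_1 P_c$ by block multiplication, the $(2,1)$ block is $C + D W_c - W_c A - W_c B W_c$, which by the $W_c$-identity reduces to $-W_c B W_c$, giving $P_c^{-1} M_1 P_c = \left(\begin{smallmatrix} A + B W_c & B \\ -W_c B W_c & D - W_c B\end{smallmatrix}\right)$. Conjugating this by $P_b$ and using the $W_b$-identity $A W_b - W_b D + B = 0$ to cancel the leading part of the new $(1,2)$ block, the four blocks collect exactly into $\left(\begin{smallmatrix} A & 0 \\ 0 & D\end{smallmatrix}\right) + E$ with
\[
E = \begin{pmatrix} (I_N + W_b W_c) B W_c & B W_c W_b + W_b W_c B (I + W_c W_b) \\ -W_c B W_c & -W_c B (I_\infty + W_c W_b)\end{pmatrix},
\]
obtained by reading off the remainders. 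This last step is a bookkeeping exercise in non-commutative $2\times 2$ block arithmetic.

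The main obstacle is not conceptual but a matter of care with the unbounded, densely defined operator $D = \Lambda_\infty$: one must check that each manipulation ($D W_c$, $W_b D$, the block products, the two conjugations) is performed on the correct domain, and that the Sylvester identities — proved pointwise on $\operatorname{Dom}(D)$ — are precisely what upgrade the eventual remainder $E$ to a genuinely bounded operator. A secondary, purely notational point is the complex-conjugate bookkeeping in the definition of $W_b$ through $D^* - \lambda_k^* I$, which is what matches the adjoint of $D - \lambda_k I$; in the intended application the eigenvalues of $\Lambda_1$ are real, so this is harmless, but it should be stated cleanly. Beyond these points the proof is a direct verification.
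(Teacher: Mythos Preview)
Your proof is correct and follows essentially the same approach as the paper: the identities $-W_cA + C + DW_c = 0$ and $AW_b + B - W_bD = 0$ that the paper verifies on (dual) eigenvectors are exactly your Sylvester equations, and both arguments finish by block arithmetic (the paper organizes the computation by first diagonalizing the upper- and lower-triangular parts separately and then handling the $B$-remainder, whereas you do it in one pass, but this is cosmetic). Your extra care about domains and the boundedness of $DW_c$, $W_bD$ is welcome and is not made explicit in the paper.
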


\begin{proof}

	First we show that 
	\begin{align}
P_b^{-1} 
\left(
\begin{matrix}
A & B \\
0 & D
\end{matrix}
\right) 
P_b &= 
\left(
\begin{matrix}
A & 0 \\
0 & D
\end{matrix}
\right) ,
&
P_c^{-1} 
\left(
\begin{matrix}
A & 0 \\
C & D
\end{matrix}
\right) 
P_c &= 
\left(
\begin{matrix}
A & 0 \\
0 & D
\end{matrix}
\right) .
\label{eq:SemiGroupCoV}
	\end{align}
	We begin with the second equality in \eqref{eq:SemiGroupCoV}, and calculate 
	\begin{align*}
	P_c^{-1} 
	\left(
	\begin{matrix}
	A & 0 \\
	C & D
	\end{matrix}
	\right) 
	P_c &= \left(
	\begin{matrix}
	A & 0 \\
	-W_cA + C +DW_c & D
	\end{matrix}
	\right) .
	\end{align*}
	We compute the action of $-W_cA + C +DW_c$ on an eigenvector $ v_k$ of $A$ as follows:  
	\begin{align*}
	( 	-W_cA + C +DW_c ) v_k &=  C v_k +( D -\lambda_k I_{\infty} )W_c v_k  .
	\end{align*}
To see that the right hand side is equal to zero, we calculate, using 
$u^*_i v_j =\delta_{ij}$, 
	\[
	W_c v_k = - \left(  D - \lambda_k I_{\infty}  \right)^{-1} C v_k .
	\]
	Since the eigenvectors $ v_1 \dots v_N$ span $\C^N$, 
	then  $-W_c A + C +DW_c  =0$, yielding the desired equality. 
	
	The argument is analogous for the first identity in \eqref{eq:SemiGroupCoV}. 
	Again we begin by calculating
		\begin{align*}
	P_b^{-1} 
	\left(
	\begin{matrix}
	A & B \\
	0 & D
	\end{matrix}
	\right) 
	P_b &= \left(
	\begin{matrix}
	A & AW_b + B -W_bD \\
	0 & D
	\end{matrix}
	\right) .
	\end{align*}
	Hence, we would like to show the map $(AW_b + B -W_bD): X_\infty \to \C^N$ is the zero map, which we do by arguing that $u^*_k (AW_b + B -W_bD) =0$ for all $k$. The latter follows from a calculation similar to the one performed above.
	
Finally, we calculate $(P_cP_b)^{-1} M_1 P_cP_b$ as follows: 
	\begin{align*}
			(P_cP_b)^{-1} M_1 ( P_cP_b) &= 
			P_b^{-1} \left( 
				\left(
			\begin{matrix}
			A & 0 \\
			0 &D
			\end{matrix}
			\right)
			+
			P_c^{-1}
				\left(
			\begin{matrix}
			0 & B \\
			0 & 0
			\end{matrix}
			\right)
			P_c
			 \right)
			 P_b
			 \\
			  &= 
			 P_b^{-1} \left( 
			 \left(
			 \begin{matrix}
			 A & B \\
			 0 &D
			 \end{matrix}
			 \right)
			 +
			 \left(
			 \begin{matrix}
			 B W_c & 0 \\
			 -W_c B W_c & -W_c B
			 \end{matrix}
			 \right)
			 \right)
			 P_b
			 \\
			 &= 
			 \left(
			 \begin{matrix}
			 A & 0 \\
			 0 &D
			 \end{matrix}
			 \right)
			 +
			 \left(
			 \begin{matrix}
			 (I_N + W_b W_c) B W_c & B W_c W_b + W_bW_cB (I + W_c W_b) \\
			 -W_c B W_c & -W_c B ( I_{\infty} + W_c W_b )
			 \end{matrix}
			 \right). \qedhere
	\end{align*}

\end{proof}

\begin{proof}[Proof of Theorem \ref{prop:FastSlowExponentialEstimate}.]
	
	Let $ M = M_1 + M_2 $, where  
\begin{align*}
	M_1 &:= \left(
\begin{matrix}
A&B\\
C&D
\end{matrix}
\right)  :=
	\left(
	\begin{matrix}
	\Lambda_1  & L_1^\infty \\
	L_\infty^1 & \Lambda_\infty + L_\infty^\infty 
	\end{matrix}
	\right), 
	&
	M_2 &:= 
	\left(
	\begin{matrix}
	L_1^1  & 0 \\
	0 &  0
	\end{matrix}
	\right) .
\end{align*}	

We will apply Lemma \ref{prop:ApproximateDiagonalization} to the matrix $M_1$.
Since we have assumed that $\Lambda_1$ is diagonal we may take $u_k=v_k=e_k$, the standard basis vectors in $\C^{N}$.
We begin by proving $ \| W_b\|  \leq \varepsilon \delta_b$ and $\| W_c \| \leq \varepsilon \delta_c$. 
We first calculate
\begin{align*}
	(D-\lambda_k I_\infty)^{-1} =
	( \Lambda_\infty + L_\infty^\infty  - \lambda_k I_{\infty} )^{-1}
	&= ( I_{\infty} + \Lambda_\infty^{-1} ( L_\infty^\infty -\lambda_k I_{\infty}))^{-1}	\Lambda_\infty^{-1} .
\end{align*}
By our hypothesis, we are allowed to apply the Neumann series and we obtain
\begin{align}
\label{eq:SemiGroupD_lambdaBound}
\|	(D-\lambda_k I_\infty)^{-1} \|
& \leq \frac{\| \Lambda_\infty^{-1}\|}{1 - \| \Lambda_\infty^{-1}\|  ( \delta_d + | \lambda_k| )}.
\end{align}
We note that the same estimate holds for the dual operator $(D^*-\lambda_k^* I_\infty)^{-1}$. 

We now show that $\|W_b\| \leq \varepsilon \delta_b$. Namely, by using that $\|u^*_k\|_{(\C^N)^*}=\|v_k\|_{\C^N}=1$ we find that
\begin{align*}
	\| W_b \| &= \sup_{ x \in X_\infty, \| x\| =1} 
\Big\| \sum_{ \lambda_k \in \sigma(\Lambda_1) } 
v_k \left[(D^* - \lambda^*_k I_{\infty} )^{-1} B^* u^T_k\right] x
 \Big\|_{\C^N} \\
 &\leq \sup_{ x \in X_\infty, \| x\| =1} 
\sum_{ \lambda_k \in \sigma(\Lambda_1) } 
 \Big|  \left[(D^* - \lambda^*_k I_{\infty} )^{-1} B^* u^T_k\right] x
 \Big|  \\
 &\leq   
 \sum_{ \lambda_k \in \sigma(\Lambda_1) } 
 \Big\|   (D^* - \lambda^*_k I_{\infty} )^{-1} B^* 
 \Big\|_{\cL( (\C^{N})^* ,X_\infty^*) }  \\
 &\leq \| B^* \| \sum_{ \lambda_k \in \sigma(\Lambda_1) }  \frac{ \| \Lambda_\infty^{-1} \|    }{1 - \| \Lambda_\infty^{-1} \|   ( \delta_d  + | \lambda_k| )}   .
\end{align*}
Hence, by plugging in $\| B^*\| = \|L_1^\infty\|$ we obtain $ \| W_b \|  \leq \varepsilon \delta_b $. The proof of the estimate
$\| W_c\| \leq \varepsilon \delta_c$
is analogous.
Next, we note that
\begin{align*}
\|P_b\| ,\|P_b^{-1}\| &\leq 1+\varepsilon \delta_b &
\|P_c\| ,\|P_c^{-1}\| &\leq 1+\varepsilon \delta_c .
\end{align*}
By Lemma \ref{prop:ApproximateDiagonalization} we have
	\begin{align} \label{eq:SemiGroup_Conjugate}
			(P_cP_b)^{-1}( M_1 + M_2)( P_cP_b) 
		&= M_3 +M_4 + (P_b P_b)^{-1} M_2 (P_c P_b),
	\end{align}
	where 
	\begin{align*}
M_3 &:= 
\left(
\begin{matrix}
\Lambda_1   & 0 \\
0 & \Lambda_\infty + L_\infty^\infty -W_c L_1^\infty  ( I_{\infty} + W_c W_b )
\end{matrix}
\right) , \\
M_4 &:= 
\left(
\begin{matrix}
(I_N + W_b W_c) L_1^\infty  W_c & L_1^\infty  W_c W_b + W_bW_c L_1^\infty  (Id + W_c W_b) \\
-W_c L_1^\infty  W_c & 0
\end{matrix}
\right).
	\end{align*}
	For $ ( x_N , x_\infty) \in \C^N \times X_\infty$ we see that
	\begin{align*}
		e^{M_3 t} (x_N , x_\infty) &= \left(e^{\Lambda_1 t}x_N, e^{(\Lambda_\infty + L_\infty^\infty -W_c L_1^\infty  ( I_{\infty} + W_c W_b )) t} x_\infty \right).
	\end{align*}
We also have $ \| L_\infty^\infty -W_c L_1^\infty  ( I_{\infty} + W_c W_b ) \| \leq  \delta_d + \varepsilon \delta_b \delta_c (1 + \varepsilon_b \varepsilon_c)$. 
By applying the estimate \eqref{eq:SemiGroup_Estimate_1} 
	we obtain, for all $t \geq 0$,
	\begin{align*}
		\| e^{\Lambda_1 t} x_N \| & \leq C_1 e^{\mu_1 t}\| x_N \| , \\
		\| e^{(\Lambda_\infty + L_\infty^\infty -W_c L_1^\infty  ( I_{\infty} + W_c W_b )) t} x_\infty \| & \leq C_\infty e^{(\mu_\infty + C_\infty[ \delta_d + \varepsilon \delta_b \delta_c (1 + \varepsilon_b \varepsilon_c)]) t} \|x_\infty\|. 
	\end{align*}	
From our assumption in \eqref{eq:SemiGroupEstimateHypothesis} that
	$
	\mu_1   > \mu_\infty + C_\infty [\delta_d + \varepsilon \delta_b \delta_c (1 + \varepsilon^2 \delta_b \delta_c)]
	$,
we obtain, for any $p$-norm, $1 \leq p \leq \infty$, on the product $ \C^N \times X_\infty$,  
	\[
	\| e^{ M_3 t } (x_N,x_\infty) \| \leq  \max \{ C_1, C_\infty\} e^{\mu_1 t} \| (x_N,x_\infty) \|.
	\]	
	We may estimate the norm of the components of $M_4$ as
\begin{align*}
\|(I_N + W_b W_c) L_1^\infty  W_c \| & \leq \varepsilon \delta_b \delta_c ( 1 + \varepsilon^2 \delta_b \delta_c)  ,
\\
\|-W_c L_1^\infty  W_c \| & \leq  
 \varepsilon^2 \delta_b \delta_c^2 ,
\\
\| L_1^\infty  W_c W_b + W_bW_c L_1^\infty  (Id + W_c W_b)\| &\leq 
\varepsilon^2 \delta_b^2 \delta_c ( 2 + \varepsilon^2 \delta_b \delta_c) .
\end{align*}
	We then obtain the bound 
	\[
	\|M_4\| \leq  \Delta := \varepsilon \delta_b \delta_c  \left( 1 + \varepsilon(2 \delta_b +  \delta_c) + \varepsilon^2 \delta_b \delta_c (1 + \varepsilon \delta_b) \right)
	\]
 by summing the component bounds. 
  	
	We now perform the final estimate.
By using 
\eqref{eq:SemiGroup_Conjugate} we obtain 
	\begin{align*}
		 e^{Mt} &= 		
		 (P_cP_b)
		 \exp
		 \left\{ 
		 \left[M_3+M_4 + (P_cP_b)^{-1}M_2(P_cP_b)\right]
		 t
		 \right\}
		 (P_cP_b)^{-1} .
	\end{align*}
By then applying \eqref{eq:SemiGroup_Estimate_1} to the sum of $ M_3$ and the bounded operator $M_4 + (P_cP_b)^{-1}M_2(P_cP_b)$ we obtain,
with $C_{1,\infty} := \max \{C_1,C_\infty \}  $,
	\begin{align*}
		\| e^{Mt}\| &\leq
				\|P_cP_b\|
					\cdot 
				\|(P_cP_b)^{-1}\|
	C_{1,\infty} 
		 \exp
		 \left\{ \mu_1 +  C_{1,\infty} 
		 	\left\|
		 	M_4 
		 	+
		 	 (P_cP_b)^{-1}M_2 (P_cP_b)
		 	 \right\|
		 	t
		 	\right\} .
	\end{align*}
Defining $C_s =  \max \{ C_1, C_\infty\} (1+\varepsilon \delta_b)^2 ( 1 + \varepsilon \delta_c)^2
$ and plugging in our bounds, we finally infer 
	\[
\|	e^{Mt} \| \leq C_s
 e^{( \mu_1 + C_s \delta_a+ \Delta \max \{ C_1, C_\infty\}   )t} .
  \qedhere
	\]

	%
\end{proof}

\begin{remark}
	\label{rem:SemiGroupEll1}
	If we use the $p=1$ norm for the product space $ \C^N \times X_\infty$ then our bound for $ \Delta $ can be sharpened to
		\[
	\|M_4\| \leq \varepsilon \delta_b \delta_c   \max \left\{ 1 + \varepsilon \delta_c(1 + \varepsilon \delta_b ) , \varepsilon \delta_b ( 2 + \varepsilon^2 \delta_b \delta_c)  \right\}. 
	\]
\end{remark}

\end{document}